\documentclass[11pt]{amsart}
\usepackage{url}
\usepackage{hyperref,amssymb}
\usepackage{appendix}
\hypersetup{colorlinks=true,linkcolor=blue,
urlcolor=cyan,citecolor=magenta} 
\usepackage[hyperpageref]{backref}
\usepackage{frcursive,calrsfs,mathrsfs}
\usepackage{xcolor} 
\newcommand{\Q}{{\mathbb{Q}}}
\newcommand{\Z}{{\mathbb{Z}}}

\newcommand{\N}{{\mathbb{N}}}
\newcommand{\ds}{\displaystyle}
\newcommand{\sst}{\scriptstyle}
\newcommand{\ov}{\overline}
\newcommand{\wt}{\widetilde}

\newcommand{\ft}{\footnotesize}
\newcommand{\ns}{\normalsize}
\newcommand{\BN}{{\bf N}}

\newcommand{\BJ}{{\bf J}}

\newcommand{\CA}{{\mathcal A}}
\newcommand{\CC}{{\mathcal C}}
\newcommand{\CD}{{\mathcal D}}

\newcommand{\CH}{{\mathcal H}}

\newcommand{\CM}{{\mathcal M}}

\newcommand{\CP}{{\mathcal P}}
\newcommand{\CR}{{\mathcal R}}
\newcommand{\CT}{{\mathcal T}}
\newcommand{\CU}{{\mathcal U}}

\newcommand{\CW}{{\mathcal W}}

\newcommand{\CZ}{{\mathcal Z}}
\newcommand{\order}{\raise0.8pt \hbox{${\scriptstyle \#}$}}
\newcommand{\lien}{\mathrel{\mkern-4mu}}
\newcommand{\too}{\relbar\lien\rightarrow}
\newcommand{\tooo}{\relbar\lien\relbar\lien\too}
\newcommand{\toooo}{\relbar\lien\relbar\lien\tooo}
\newcommand{\tooooo}{\relbar\lien\relbar\lien\toooo}
\newcommand{\plus}{\ds\mathop{\raise 0.5pt \hbox{$\bigoplus$}}\limits}
\newcommand{\prd}{\ds\mathop{\hbox{$\prod$}}\limits}
\newcommand{\sm}{\ds\mathop{\raise 1.0pt \hbox{$\sum$}}\limits}
\newcommand{\ffrac}[2]{\hbox{\ft $\displaystyle\frac{#1}{#2}$}}
\newcommand{\limproj}{\mathop{\lim_{\longleftarrow}}}
\newcommand{\mb}{{\ {\sst \bullet}\ }}
\newcommand{\Gal}{{\rm Gal}}
\newcommand{\rk}{{\rm rank}}
\newcommand{\tor}{{\rm tor}}
\newcommand{\Ker}{{\rm Ker}}
\newcommand{\pr}{{\rm pr}}

\newcommand{\ram}{{\rm ram}}

\newcommand{\nr}{{\rm nr}}
\newcommand{\gen}{{\rm gen}}
\newcommand{\lc}{{\rm lc}}

\newcommand{\bp}{{\rm bp}}
\newcommand{\cyc}{{\rm c}}
\newcommand{\acyc}{{\rm ac}}
\newcommand{\hk}{{\rm h}_k}
\newcommand{\hp}{{\rm h}_{\mathfrak p}}
\newcommand{\he}{{\rm h}_e}

\newcommand{\Ccl}{c\hskip-0.1pt{\ell}}
\newcommand{\Bcl}{\hbox{$\rm c\hskip-0.1pt{{l}}$}}
\newcommand{\M}{{\rm M}}

\newcommand{\spp}{{\bf s}}
\newcommand{\re}{{\bf r}}
\newcommand{\0}{{\hbox{\tiny$0$}}}
\newtheorem{theorem}{Theorem}[section]
\newtheorem{lemma}[theorem]{Lemma}
\newtheorem{corollary}[theorem]{Corollary}
\newtheorem{proposition}[theorem]{Proposition}

\newtheorem{definitions}[theorem]{Definitions}

\newtheorem{remark}[theorem]{Remark}
\newtheorem{remarks}[theorem]{Remarks}
\newtheorem{conclusion}[theorem]{Conclusion}
\newtheorem{conjecture}[theorem]{Conjecture}
\numberwithin{equation}{section}
\usepackage[english]{babel}
\title[Chevalley--Herbrand formulas and $\Z_p$-extensions]
{Chevalley--Herbrand formulas and $\Z_p$-extensions \\
of a $p$-principal imaginary quadratic field}

\author{Georges Gras}

\address{Villa la Gardette, 4 chemin de Ch\^ateau Gagni\`ere, 
F-38520 Le Bourg d'Oisans}
\email{g.mn.gras@wanadoo.fr}

\urladdr{\url{http://orcid.org/0000-0002-1318-4414}}

\keywords{$\Z_p$-extensions, imaginary quadratic fields,
Chevalley--Herbrand formula, norm residue symbols, 
class field theory, $p$-adic regulators}

\subjclass{11R29, 11R37, 11R23, 12Y05}

\begin{document}

\date{August 19, 2026}

\begin{abstract}
Let $k$ be an imaginary quadratic field and let $p \ne 2$ be a prime number, split in $k$ into 
${\mathfrak p}{\ov {\mathfrak p}}$. We assume that the $p$-class group of $k$ is trivial. Let 
$\delta \geq 0$ be the ${\mathfrak p}$-valuation of the ${\ov {\mathfrak p}}$-Fermat quotient 
of the fundamental ${\mathfrak p}$-unit $x$ of $k$. Let $K/k$ be any bi-ramified $\Z_p$-extension 
and let $p^e$ be the degree of the inertia field of ${\mathfrak p}$, $\ov {\mathfrak p}$ being totally 
ramified. We prove that if $e \geq \delta$, then $\lambda(K/k) = 1$, $\mu(K/k) = 0$
(Theorem \ref{fundamental}); if $e<\delta$ a characterization is obtained from Iwasawa 
invariants of the $S^{\mathfrak p}$-class groups (Theorem \ref{mainbis}). 
This approach only uses generalizations of
Chevalley--Herbrand formulas and the non-nullity of a $p$-adic regulator $\CR^{\mathfrak p}_\delta$ 
in incomplete $p$-ramification. It provides effective and computable results that complement some 
aspects of Iwasawa theory. Conjecture \ref{conj} states that only the cyclotomic $\Z_p$-extension 
is ``exceptional''; justifications are given \S\,\ref{appliconj}. A {\sc pari/gp} program 
computes $\delta$ and $\CR^{\mathfrak p}_\delta$, for $p=3$, $e=1$.

\bigskip\noindent
{\sc R\'esum\'e.} 
Soit $k$ un corps quadratique imaginaire et soit $p \ne 2$ un nombre premier, d\'ecompos\'e dans 
$k$ en ${\mathfrak p}{\ov {\mathfrak p}}$. On suppose que le $p$-groupe des classes de $k$ est 
trivial. Soit $\delta \geq 0$ la ${\mathfrak p}$-valuation du ${\ov {\mathfrak p}}$-quotient de Fermat 
de la ${\mathfrak p}$-unit\'e fondamentale $x$ de $k$. Soit $K/k$ une $\Z_p$-extension 
bi-ramifi\'ee et soit $p^e$ le degr\'e du corps d'inertie de ${\mathfrak p}$, $\ov {\mathfrak p}$ 
\'etant totalement ramifi\'e. Nous prouvons que si $e \geq \delta$, alors $\lambda(K/k) = 1$, 
$\mu(K/k) = 0$ (Th\'eor\`eme \ref{fundamental}); si $e < \delta$, une caract\'erisation est 
obtenue \`a partir des invariants d'Iwasawa des $S^{\mathfrak p}$-class groups 
(Th\'eor\`eme \ref{mainbis}). Cette approche n'utilise que des g\'en\'eralisations 
des formules de Chevalley--Herbrand et la non nullit\'e d'un r\'egulateur $p$-adique 
$\CR^{\mathfrak p}_\delta$ en $p$-ramification incompl\`ete. Elle donne des r\'esultats 
effectifs et calculables compl\'etant certains aspects de la th\'eorie d'Iwasawa.
La Conjecture \ref{conj} affirme que seule la $\Z_p$-extension cyclotomique est ``exceptionnelle;
des justifications sont donn\'ees au \S\,\ref{appliconj}. Un programme 
{\sc pari/gp} calcule $\delta$ et $\CR^{\mathfrak p}_\delta$ pour $p=3$, $e = 1$.
\end{abstract}

\maketitle

\tableofcontents

\section{Introduction -- Main results}

Let $k$ be an imaginary quadratic field and let $p$ be an odd prime number split 
in $k$; we denote by $\tau$ the complex conjugation and set $(p) = {\mathfrak p}
{\ov {\mathfrak p}}$, where ${\ov {\mathfrak p}} := {\mathfrak p}^\tau$.

\smallskip
In \cite[Theorem 1]{Oza2001}, Ozaki states:

\smallskip
{\it Let $k$ be an imaginary quadratic field and $p \geq 2$ a prime number. 
Assume that the prime $p$ splits in $k$ and the class number of $k$ is prime 
to $p$. Then $\lambda(K/k) = 1$ and $\mu(K / k) = 0$ for all but finitely
many $\Z_p$-extensions $K$ over $k$.}

\smallskip
We intend to analyze, in a practical and effective point of view, this result.
Let $\wt k$ be the compositum of the $\Z_p$-extensions of $k$.
Let $K/k$ be a $\Z_p$-extension
in which ${\mathfrak p}$, ${\ov {\mathfrak p}}$ ramify from some layers, 
which excludes the conjugate $\Z_p$-extensions $L$, $\ov L$, defined 
as the inertia fields, in $\wt k/k$, of ${\mathfrak p}$, ${\ov{\mathfrak p}}$;
indeed, $L/k$ is unramified at ${\mathfrak p}$, and ${\ov{\mathfrak p}}$
is totally ramified in $L/k$ since the $p$-class group $\CH_k$ 
of $k$ is trivial; hence, $L \cap \ov L = k$ and $L \ov L = \wt k$.

\smallskip
Using the original Chevalley--Herbrand formula \cite[pp. 402--406]
{Che1933} in $L_n/k$ gives $\lambda(L/k) = \mu(L/k) =\nu(L/k) = 0$ 
and similarly for $\ov L$. But $L, \ov L$ play an important role.

\smallskip
Denote by $K_n$ the layer of $K$ of degree $p^n$ over $k$.
Since $\CH_k = 1$, the $\Z_p$-extension $K/k$
is totally ramified at one of the two primes ${\mathfrak p}$, 
${\ov {\mathfrak p}}$. If $K/k$ is Galois, it is totally ramified at $p$.
If $K/k$ is non-Galois, let's say, for example, that ${\ov {\mathfrak p}}$ 
is totally ramified and that ${\mathfrak p}$ ramifies from $K_e$, 
$e \geq 0$; since for the complex conjugate $\ov K$, ${\mathfrak p}$ is 
totally ramified and $\ov{\mathfrak p}$ is ramified from $\ov K_e$, one 
may choose the conjugate $K$ such that $\ov e = 0$ and $e \geq 0$.

\smallskip
We have obtained the following result, without assuming that the pro-$p$-class 
group $\CH_{\,\wt k}$ of $\wt k$ is a pseudo-null $\Lambda$-module, 
$\Lambda := \Z_p[[\Gal(\wt k/k)]]$ (Minardi \cite{Min1986}), nor using any 
reasoning in Iwasawa's theory. Indeed, a systematic use of $p$-adic class 
field theory is sufficient, especielly under the form of classical genus theory, 
via suitable Chevalley--Herbrand formulas that allows more explicit results; 
that said, all the approaches in the literature depend on such class field theory 
techniques, often in a more or less explicit form.

\medskip
\noindent 
{\bf Main Results} (Theorems \ref{fundamental}, \ref{pseudo-null}, \ref{mainbis}). 
{\it Let $k$ be a $p$-principal\,\footnote{An ideal ${\mathfrak a}$
of $F$ is $p$-principal if there exists $h \not \equiv 0 \pmod p$ such that
${\mathfrak a}^h = (\alpha)$, $\alpha \in F^\times$; the field $F$ is $p$-principal 
if its class number is prime to $p$. By abuse we will sometimes write ${\mathfrak a} 
=: (\alpha)$, instead of ${\mathfrak a}^h = (\alpha)$.}
imaginary quadratic field and let $p \geq 3$ be a prime 
number split in $k$. Let $K/k$ be a $\Z_p$-extension, $K \ne L, \ov L$. Let $p^e := 
[K \cap L : k]$, $K \cap \ov L = k$. Let $p^\delta$ be the degree of the decomposition 
field of ${\mathfrak p}$ in $L/k$ (we refer to \cite[Appendix A]{Gra2026b} for the fact that 
$\delta$ is also the $p$-valuation of the order of the logarithmic class group $\wt \CH_k$ 
and the $\ov {\mathfrak p}$-valuation of the ${\mathfrak p}$-Fermat quotient of the 
fundamental ${\mathfrak p}$-unit $x$ of $k$ (i.e. $\delta = v_{\ov {\mathfrak p}}(x^{p-1} - 1) - 1$), 
cf. \S\,\ref{logx}). Put $S_n^{\mathfrak p} := \{{\mathfrak p}_n\! \mid\! {\mathfrak p} \, 
\hbox{in $K_n/k$}\}$. Then:

\smallskip
(i) If $e \geq \delta$, $\lambda(K/k) = 1$, $\mu(K/k) = 0$, $\nu(K/k) = t_{\delta'} - e$,
where $p^{t_{\delta'}}$ is the order of a $p$-torsion group $\CT^{\mathfrak p}_{\delta'}$,
only depending on $K_{\delta'} = L_{\delta'}$, where $\delta' = \min(e, \delta)$ (see 
Theorem \ref{Ptorsion}).

\smallskip
(ii) Let's say that Iwasawa's invariants of $K/k$ are minimal if $\lambda(K/k) = 1$, 
$\mu(K/k) = 0$. When $\lambda(K/k) > 1$ or $\mu(K/k) > 0$, we will say, following 
\cite{Oza2001}, that $K/k$ is exceptional.
Then, if $e < \delta$, Iwasawa' invariants are minimal if and only if the
Iwasawa invariants corresponding to the $S_n^{\mathfrak p}$-class groups 
in $K/k$ are trivial ($\lambda^{\mathfrak p}(K/k) = \mu^{\mathfrak p}(K/k) = 0$).}

\medskip
This limit, given by $\delta$, defines a precise and computable arithmetic 
invariant that elucidates certain results of Greenberg \cite{Gree1973}, then 
generalized by Kleine \cite{Kle2014, Kle2021}, among others.
We deduce that exceptional $\Z_p$-extensions $K/k$ are such that 
$e < \delta$; the $\Z_p$-extensions with $e < \delta$ are infinite in 
number (except if $\delta = 0$, a particular case of $e \geq \delta$
yet obtained in \cite[Theorems 10.7, 10.8]{Gra2026b}). This result 
shows that the Jaulent logarithmic class group $\wt \CH_k$
(\cite{Jau1994, Jau2026}, \cite[Section A, Theorem A.2 by 
Jaulent]{Gra2026b}) is an important obstruction in Iwasawa's 
theory since, in our context, $\order \wt \CH_k = p^\delta$. 

\smallskip
We examine, Section \ref{e<delta}, the case $e < \delta$, to understand 
the specific properties which allow, for instance, large $\lambda(k^\cyc/k)$'s
for the cyclotomic $\Z_p$-extensions $k^\cyc$ of $k$ (for which $e = 0$). 

\smallskip
Note that the minimality of Iwasawa's invariants is equivalent to the 
finiteness of $H_K^\nr/\wt k$, where $H_K^\nr$ is the pro-$p$-Hilbert class 
field of $K$; this will be obtained by proving that the group of co-invariants 
$\big(\CH_{\,\wt k} \big)_{\Gal(K/k)}^{}$ is finite, since $H_K^\nr$ is the genus 
field of $\wt k/K$ (see Diagram \ref{diagram1}).

\section{Generalities on the \texorpdfstring{$\Z_p$-extensions of $k$}{Lg}}

Recall, the parametrization of the set of the $\Z_p$-extensions $K/k$ 
carried out in \cite[Section B, Theorem B.1]{Gra2026b}; we restrict
ourselves to the case $p$ split in $k$ and $\CH_k = 1$:

\subsection{Parametrization of the \texorpdfstring{$\Z_p$-extensions 
$K/k$, $K \ne L, \ov L$}{Lg}}

The group $\langle \tau \rangle$ operates on $\Gamma := \Gal(\wt k/k)$ 
and we define $\Gamma^+ := \Gamma^{\frac{1+\tau}{2}}$ 
(resp. $\Gamma^- := \Gamma^{\frac{1-\tau}{2}}$). Thus, the cyclotomic 
$\Z_p$-extension $k^\cyc := k \,\Q^\cyc$ is fixed by $\Gamma^-$ and
the anti-cyclotomic $\Z_p$-extension $k^{\acyc}$ is fixed by $\Gamma^+$; 
$k^\cyc$ and $k^\acyc$ are totally ramified at $p$ since $\CH_k = 1$. 

\smallskip
Let $\wt\sigma_-$ and $\wt\sigma_+$ be topological generators of 
$\Gamma^- = \Gal(\wt k/k^\cyc)$ and $\Gamma^+ = \Gal(\wt k/k^\acyc)$, 
respectively. Any subgroup $G$ of $\Gamma$, $G \ne 1, \Gamma$, is free 
of $\Z_p$-rank $1$. We define any $\Z_p$-extension $K/k$ by means of a
generator $\wt\sigma_K^{}$ of $G_K := \Gal(\wt k/K)$ so that $\Gamma/G_K \simeq \Z_p$. 
Put $\wt\sigma_K^{} = \wt\sigma_-^{p^\alpha u} \! \cdot \wt\sigma_+^{p^\beta v}$, 
$u, v \in \Z_p^\times$ with, necessarily, $\alpha \cdot \beta = 0$. With suitable 
choices of the generators $\wt\sigma_-, \wt\sigma_+$, one may suppose that $L$ is
defined by $\wt\sigma_L^{} = \wt\sigma_- \!\cdot \wt\sigma_+$ (whence $\ov L$ defined by
$\wt\sigma_{\ov L}^{} = \wt\sigma_-^{-1}\! \cdot \wt\sigma_+$), which fixes $\wt\sigma_-$ and 
$\wt\sigma_+$; to fix $\sigma_K^{}$, we may suppose $v=1$; we have 
$K \cap k^\cyc = k^\cyc_\beta$ and $K \cap k^\acyc = k^\acyc_\alpha$:

\begin{proposition}\label{3cases}
Let $K$ be fixed by $G_K = \langle \wt\sigma_-^{p^\alpha u} \! 
\cdot \wt\sigma_+^{p^\beta v} \rangle$, with $\alpha \cdot \beta = 0$, 
$u, v \in \Z_p^\times$. If $\CH_k = 1$, this describes the 
$\Z_p$-extensions $K \ne L, \ov L$, and implies the following 
values of $e$, $\ov e$:
\begin{equation*}
\left \{\begin{aligned} 
&\hbox{$(i) \ \ $ if $K \cap k^\cyc \ne k$ 
($\alpha = 0$, $\beta \ne 0$, $u, v \in \Z_p^\times$)}, 
&& \hbox{$e = \ov e = 0$}, \\
&\hbox{$(ii)\ $  if $K \cap k^\acyc \ne k$ ($\alpha \ne 0$, $\beta = 0$, $u, v \in \Z_p^\times$)},
&&\hbox{$e = \ov e = 0$}, \\
&\hbox{$(iii)$ 
 if $K \cap k^\cyc = K \cap k^\acyc = k$ ($\alpha = \beta = 0$, $u, v \ne \pm 1$)}, 
&&\hbox{$e = v_p(u - 1)$, $\ov e = v_p(u + 1)$}.
\end{aligned}\right .
\end{equation*}
\end{proposition}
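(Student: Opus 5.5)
The plan is to work additively in $\Gamma=\Gal(\wt k/k)\simeq\Z_p^2$, with basis $\wt\sigma_-,\wt\sigma_+$, and to read off $e$ and $\ov e$ as determinants.

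\textbf{Step 1: the parametrization.} A subgroup $G_K=\Z_p\,\wt\sigma_K^{}$ with $\wt\sigma_K^{}=(p^\alpha u,\,p^\beta v)$ gives a quotient $\Gamma/G_K\simeq\Z_p$. This quotient is torsion-free exactly when $\wt\sigma_K^{}\notin p\Gamma$, that is, when $\min(\alpha,\beta)=0$; this is the condition $\alpha\cdot\beta=0$. Conversely, every rank-one direct summand of $\Gamma$ has such a generator. Normalizing $v=1$ when $\beta=0$ fixes the generator up to units.

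\textbf{Step 2: the inertia groups.} By the choice made before the statement, $I_{\mathfrak p}=\Gal(\wt k/L)=\Z_p(1,1)$. Since $\tau$ acts on $\Gamma$ by $\wt\sigma_-\mapsto\wt\sigma_-^{-1}$ and $\wt\sigma_+\mapsto\wt\sigma_+$, we get $I_{\ov{\mathfrak p}}=\Gal(\wt k/\ov L)=\Z_p(-1,1)$. These two lines span $\Gamma$, because the determinant is $2\in\Z_p^\times$ for $p$ odd; this is consistent with $L\cap\ov L=k$.

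\textbf{Step 3: computing $e$ and $\ov e$.} The field $K\cap L$ is fixed by $G_K+I_{\mathfrak p}$. Hence
\[
[K\cap L:k]=[\Gamma:G_K+I_{\mathfrak p}]=p^{\,v_p\left(\det\left(\begin{smallmatrix}p^\alpha u & p^\beta v\\ 1&1\end{smallmatrix}\right)\right)}=p^{\,v_p(p^\alpha u-p^\beta v)}.
\]
This index is also the inertia field degree of ${\mathfrak p}$ in $K/k$: the inertia group of ${\mathfrak p}$ in $\Gamma/G_K$ is the image of $I_{\mathfrak p}$, and $\CH_k=1$ guarantees there is no unramified part. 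Therefore $e=v_p(p^\alpha u-p^\beta v)$, and in the same way $\ov e=v_p(p^\alpha u+p^\beta v)$.

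\textbf{Step 4: the three cases.}
\begin{itemize}
\item[(i)] If $\alpha=0$ and $\beta\geq1$, then $e=v_p(u-p^\beta v)=0$ and $\ov e=0$, since $u$ is a unit.
\item[(ii)] If $\beta=0$ and $\alpha\geq1$, the symmetric argument gives $e=\ov e=0$.
\item[(iii)] If $\alpha=\beta=0$ and $v=1$, then $e=v_p(u-1)$ and $\ov e=v_p(u+1)$. The excluded values $u=1$ and $u=-1$ give $G_K=I_{\mathfrak p}$ and $G_K=I_{\ov{\mathfrak p}}$, that is $K=L$ and $K=\ov L$. This is why $u\neq\pm1$.
\end{itemize}
The identifications $K\cap k^\cyc=k^\cyc_\beta$ and $K\cap k^\acyc=k^\acyc_\alpha$ follow from the analogous index computation with the lines $\Z_p(1,0)=\Gamma^-$ and $\Z_p(0,1)=\Gamma^+$. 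As a check, $e$ and $\ov e$ are never both positive, since $(u-1)-(u+1)=-2$ is a unit.

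\textbf{Main obstacle.} The only real subtlety is Step 3: justifying that the degree of the inertia field of ${\mathfrak p}$ in $K/k$ equals $[\Gamma:G_K+I_{\mathfrak p}]$. This uses that $L$ is exactly the inertia field of ${\mathfrak p}$ in $\wt k/k$, so that no unramified contribution beyond $L$ occurs, and that image and index behave well under the quotient $\Gamma\to\Gamma/G_K$.
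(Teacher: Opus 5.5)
Your argument is correct and is essentially the paper's approach. The paper fixes $\wt\sigma_L=\wt\sigma_-\wt\sigma_+$, $\wt\sigma_{\ov L}=\wt\sigma_-^{-1}\wt\sigma_+$ and $v=1$, and reads $K\cap L$, $K\cap \ov L$, $K\cap k^\cyc=k^\cyc_\beta$ and $K\cap k^\acyc=k^\acyc_\alpha$ off $2\times 2$ determinants in $\Gamma\simeq\Z_p^2$, as it does explicitly in the proof of Lemma~\ref{M}.

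One small point: your claim in Step 1 that every rank-one direct summand has a generator $(p^\alpha u, p^\beta v)$ with $u,v\in\Z_p^\times$ misses $G_K=\Gamma^-$ and $G_K=\Gamma^+$, i.e.\ $K=k^\cyc$ and $K=k^\acyc$. These need the degenerate values $\beta=\infty$ or $\alpha=\infty$ (with $p^\infty:=0$), and then your determinant formulas still give $e=\ov e=0$.
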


In case (iii), we have chosen $\ov e = 0$, then $u \not \equiv -1 \pmod p$.
The case $e = \ov e = 0$ is possible with case (iii), except for $p=3$.
It is easy to see that $K \cap L = K_e$ and $K \cap \ov L  = K_{\ov e}= k$.

\begin{remarks} 
(i) In general, we do not assume any relation between $\delta$ and $e$ so that 
many results use $\delta' = \min(e, \delta)$. Indeed, in the tower $K/k$, 
only the layers $K_e$ and $K_{\delta'}$ make sense; when $\delta > e$, 
$K_\delta$ does exist, but has no interest since inertia fields of ${\mathfrak p}$ 
coincide with $K_e = L_e$, and decomposition fields with $K_{\delta'} = L_{\delta'}$
(see \S\,\ref{ramdec}). The fact that many properties of $K$ only depend 
on $L_e$, $L_{\delta'}$ is remarkable since this concerns infinitely 
many $K$'s. Diagram \ref{Tpsplit2} gives the two fundamental cases.

\smallskip
(ii) We intend to examine the existence of non-Galois
exceptional $\Z_p$-extensions (if any) and if they can be described 
by means of a class field theory approach. As it is well-known, the 
cyclotomic $\Z_p$-extension $k^\cyc$ may have large invariant
$\lambda$ and is then exceptional, but the condition $e \geq \delta$ 
is not satisfied for $\delta \ne 0$ since $e = 0$ (see \S\,\ref{cyclo}
for numerical illustration). The anti-cyclotomic $\Z_p$-extension is 
also totally ramified, but its Iwasawa's invariants are, a priori, unknown.
\end{remarks}

\subsection{Logarithms -- Computation of \texorpdfstring
{$\delta$}{Lg}} \label{logx}
Let $\log_p := (\log_{\mathfrak p}, \log_{\ov{\mathfrak p}}$),
be the $p$-adic $\log$-function with values in the $p$-completions 
$k_{\mathfrak p}$, $k_{\ov {\mathfrak p}}$ of $k$
(isomorphic to $\Q_p$), {\it with the convention 
$\log_{\mathfrak p}(p) = \log_{\ov{\mathfrak p}}(p) = 0$}.
Let $\hk$ (prime to $p$) be the class number of $k$, and let 
$x \in k^\times$ be the fundamental ${\mathfrak p}$-unit of $k$ 
(obtained, up to sign, from ${\mathfrak p}^{\hp} =: (x)$, $\hp \mid \hk$ 
being the order of the classes of ${\mathfrak p}$ and $\ov {\mathfrak p}$).
Since $x \ov x = p^{\hp}$, this defines in the 
$\Z_p[\langle \tau \rangle]$-algebra $k_{\mathfrak p} \oplus k_{\ov{\mathfrak p}}$:
\begin{equation*}
\left \{\begin{aligned}
& \log_{\mathfrak p}(\ov x) \ \hbox{(as usual since 
$\ov x$ is a unit in $k_{\mathfrak p}^\times$)}, \ \, 
 \log_{\ov{\mathfrak p}}(x) \ \hbox{(conjugate 
of $\log_{\mathfrak p}(\ov x)$)}, \\
& \log_{\mathfrak p}(x) = - \log_{\mathfrak p}(\ov x), \ \,
\log_{\ov {\mathfrak p}}(\ov x)  = - \log_{\ov{\mathfrak p}}(x)\, 
\hbox{(from $\log_p(p)=0$)}.
\end{aligned}\right.
\end{equation*}

This defines, without any ambiguity, in terms of $p$-adic valuations 
$v_p$ on $\Q_p^\times$, or $v_{\mathfrak p}$ on $k^\times$:
$$\delta := v_p \big (\hbox{$\frac{1}{p}$} \log_{\mathfrak p}(\ov x) \big)
= v_p \big (\hbox{$\frac{1}{p}$} \log_{\ov {\mathfrak p}}(x) \big) 
= v_{\mathfrak p}(\ov x^{\,p-1}-1) - 1. $$

\section{Abelian \texorpdfstring{$p$}{Lg}-ramification theory -- Norm 
residue symbols} \label{schema0}

\subsection{Diagram of the abelian \texorpdfstring{$p$}{Lg}-ramification}

Let's consider the following {\it general} diagram of abelian $p$-ramification 
theory over any number field $F$, under the Leopoldt conjecture:
\unitlength=1.0cm 
\begin{equation*}
\begin{aligned}
\vbox{\hbox{\hspace{-4.0cm} 
\begin{picture}(10.0,3.3)
\put(6.6,2.50){\line(1,0){1.45}}
\put(9.0,2.50){\line(1,0){1.8}}
\put(3.7,2.50){\line(1,0){1.8}}
\put(3.9,2.25){\tiny\hbox{${\rm unramified}$}}
\put(9.35,2.1){\ft$\CW_F^\bp$}
\put(4.2,0.50){\line(1,0){1.25}} 
\put(1.6,0.50){\line(1,0){1.3}}
\put(1.3,0.40){$F$}
\bezier{300}(1.38,0.34)(3.5,0.0)(5.6,0.34)
\put(3.2,-0.12){\ft$\CH_F$}
\bezier{400}(3.7,2.7)(7.25,3.3)(10.8,2.7)
\put(7.0,3.14){\ft$\CT_F$}
\put(3.5,0.9){\line(0,1){1.25}}
\put(5.7,0.9){\line(0,1){1.25}}
\bezier{350}(6.2,0.5)(8.5,0.6)(10.9,2.3)
\put(8.6,0.75){\ft$\CU_F\!/\! \ov E_F$}
\put(10.85,2.4){$H_F^\pr$}
\put(5.6,2.4){$\wt F H_F^\nr$}
\put(8.15,2.4){${H_F^\bp}$}
\put(7.0,2.15){\ft$\CR_F$}
\put(3.35,2.4){$\wt F$}
\put(5.5,0.4){$H_F^\nr$}
\put(2.95,0.4){$\wt F \!\cap\! H_F^\nr$}
\end{picture}}} 
\end{aligned}
\end{equation*}
\unitlength=1.0cm

Recall that $\wt F$ is the compositum of the $\Z_p$-extensions of $F$,
$H_F^\pr$ is the maximal abelian $p$-ramified pro-$p$ extension 
of $F$, $\CT_F = \Gal(H_F^\pr/\wt F)$ is finite, 
$H_F^\nr$ is the $p$-Hilbert class field of $F$, $H_F^\bp$ 
the Bertrandias--Payan field, then $\CR_F \simeq 
\Gal(H_F^\bp/\wt F H_F^\nr)$ is the normalized $p$-adic regulator 
and $\CW_F^\bp = \bigoplus_v \mu_p(F_v) \big / \mu_p(F) \simeq 
\Gal(H_F^\pr/H_F^\bp)$.

\smallskip
The group of principal local units $\CU_F$, whose image gives inertia groups in 
$H_F^\pr/H_F^\nr$, is of the form $\CU_F = \bigoplus_{v \mid p}\CU_F^v$, where 
$\CU_F^v := 1 + {\mathfrak p}_v$ (${\mathfrak p}_v =$ prime ideal associated to the 
place $v$). Then $\CU_F/\ov E_F \simeq \Gal(H_F^\pr/H_F^\nr)$, where $\ov E_F$ 
is the topological closure, in $\CU_F$, of the diagonal embedding $\iota_p(E_F)$
of the group of $p$-principal global units of $F$.

\smallskip
In our context, where $F$ is a $p$-principal
imaginary quadratic field $k$, $\CR_k$ is trivial since $E_k \otimes \Z_p = 1$, 
$\ov E_k = 1$, the subgroup $\CW_k^\bp$ is trivial in the split case of $p \geq 3$
in $k$, whence $\CT_k = 1$, $H_k^\pr = \wt k$, since $\CH_k = 1$. Then $\CU_k = 
\CU_k^{\mathfrak p} \oplus \CU_k^{\ov {\mathfrak p}}  \simeq (1+p\Z_p)
\oplus (1+p\Z_p)$.

\smallskip
In other words, the field $k$ is $p$-rational, giving the simplest arithmetic, as
shown by the following well-known property:

\begin{proposition}
Let $k$ be an imaginary quadratic field, and let $p \geq 3$ be a prime number. 
Assume that $-m \not\equiv -3 \pmod 9$ and that $\CH_k = 1$. Then, $k$ is $p$-rational 
and any $p$-ramified $p$-extension $M$ of $k$ is $p$-rational; then 
$\CT_M = 1$ and $\CH_M \subset \wt M$.
\end{proposition}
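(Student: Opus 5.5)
First I would establish that $k$ itself is $p$-rational, i.e. $\CT_k = 1$, and then transfer this to every $p$-ramified $p$-extension $M/k$ by the classical propagation criterion for $p$-rationality in $p$-extensions. For $k$, I would use the structure recalled in the diagram above: $\CT_k$ is an extension of $\CR_k$ and $\CW_k^\bp$ by $\Gal(\wt k H_k^\nr/\wt k)$, a quotient of $\CH_k$. Here $\CH_k = 1$ by hypothesis. Also $E_k \otimes \Z_p = 1$, because the units of an imaginary quadratic field are roots of unity of order prime to $p$. This holds for $p \geq 5$ in general, and for $p = 3$ precisely when $k \ne \Q(\sqrt{-3})$. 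The hypothesis $-m \not\equiv -3 \pmod 9$ is there to control the $3$-torsion: it excludes $\mu_3 \subset k$, so $\CR_k = 1$. It also makes $\CW_k^\bp = \bigoplus_{v \mid p}\mu_p(k_v)/\mu_p(k)$ trivial. In the split case $k_v = \Q_p$ contains no nontrivial $p$-th roots of unity for $p \geq 3$; in the non-split case with $p = 3$, it excludes the completions $\Q_3(\sqrt{-3})$. Hence $\CT_k = 1$ and $H_k^\pr = \wt k$.

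For the propagation I would invoke the standard result (Gras, Jaulent--Nguyen Quang Do, Movahhedi): if $M/k$ is a $p$-extension and $k$ is $p$-rational with $\mu_p \not\subset k$, then $M$ is $p$-rational if and only if $M/k$ is $p$-primitively ramified. Let $S$ denote the set of ramified places. The proof runs through the exact sequence of $\Gal(M/k)$-coinvariants of $\CT_M$ and a Chevalley--Herbrand type genus formula:
\[
\order \big(\CT_M\big)^{G} \ \hbox{ is controlled by } \ \order \CT_k \cdot \prod_{v \in S \setminus S_p}(\hbox{local terms}).
\]
Since $M$ is $p$-ramified, there are no tame ramified places, so the tame contributions vanish. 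The remaining obstruction is measured by the local roots of unity $\mu_p(k_v)$, $v \mid p$, which we have just shown to be trivial. Therefore $(\CT_M)_{G} = 1$. Because $G = \Gal(M/k)$ is a $p$-group, Nakayama's lemma gives $\CT_M = 1$. I would do this first for $M/k$ cyclic of degree $p$ and then induct along a composition series; the inductive step is legitimate because the hypotheses ($\CT = 1$, $\mu_p$ absent locally above $p$) persist. For the persistence I would argue that $p$-adic completions above $p$ in a $p$-extension of $\Q_p$ can acquire $\mu_p$ only if $[\Q_p(\mu_p):\Q_p] = p-1$ is a power of $p$, which is impossible for $p \geq 3$.

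Finally, from $\CT_M = 1$ we get $H_M^\pr = \wt M$. Since $H_M^\nr \subset H_M^\pr$, it follows that $H_M^\nr \subset \wt M$, which is the statement $\CH_M \subset \wt M$ in the sense of the Galois correspondence. The main obstacle is the propagation step, in particular justifying the genus formula for $\CT_M$ in the cyclic degree-$p$ case. I would first try to derive it from the exact sequence $1 \to \CT_M \to \Gal(H_M^\pr/M) \to \Z_p^{r_2(M)+1} \to 0$. I would combine this with the Herbrand quotient of the local units in $M/k$ and the fact that $H_M^\pr$ is the maximal abelian $p$-ramified extension. The $\Z$-rank counts match because $r_2(M) + 1 = [M:k]\,(r_2(k)+1) - ([M:k]-1)$ fails in general. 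So instead I would rely directly on the cited $p$-primitive ramification criterion, which in the absence of tame places is automatically satisfied.
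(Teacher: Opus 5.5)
Your proposal is correct and is essentially the paper's argument. First, $\CT_k=1$ because $\CH_k=1$, $\CR_k=1$ and $\CW_k^{\bp}=1$; the congruence $-m\not\equiv -3 \pmod 9$ is needed only for $\CW_k^{\bp}$ when $3$ ramifies, not for $\CR_k$, which is trivial for every imaginary quadratic field. Then the classical propagation of $p$-rationality in $p$-ramified $p$-extensions, which the paper quotes as $\CT_M^{G}\simeq\CT_k=1$ (Gras, Theorem IV.3.3), gives $\CT_M=1$ because $G$ is a $p$-group, hence $H_M^{\nr}\subseteq H_M^{\pr}=\wt M$.

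Two of your asides are wrong, though neither is load-bearing. The identity $r_2(M)+1=[M:k](r_2(k)+1)-([M:k]-1)$ that you say fails actually holds: it is the Schreier index formula for the Galois group of the maximal $p$-ramified pro-$p$ extension of $k$, which is free pro-$p$ of rank $2$ once $\CT_k=1$, and it yields a citation-free proof. Also, local roots of unity above $p$ play no role in the propagation for $p$-ramified extensions, so your persistence check is superfluous.
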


\begin{proof}
More generally, let $F$ be a $p$-rational field and let $M/F$ be a 
$p$-ramified $p$-extension. Let $G := \Gal(M/F)$. 
We have $\CT_M^G  \simeq \CT_F = 1$, since $M/F$ is $p$-ramified 
(see for instance \cite[Theorem IV.3.3]{Gra2005}; this has been often proven:
\cite{Gra1986} (from \cite{Gra1982, Gra1983}), \cite{Jau1986}, 
\cite{NQD1986}, \cite{Mov1988}), \cite[Theorem,\,($\beta$),\,p.\,344]{GJ1989}. 
In the same context, one gets that in any extension $M/M'/F$ the transfer 
map $\CT_{M'} \to \CT_M$ is injective.
\end{proof}

\subsection{Where does each \texorpdfstring{$p$-place split in 
$\wt k/k$ ?}{Lg}}\label{ramdec}

Our assumptions imply that the field $H_k^\lc$, maximal locally 
cyclotomic pro-$p$-extension of $k$ (i.e. such that $p$ totally splits
in $H_k^\lc/k^\cyc$) is contained in $H_k^\pr = \wt k$ and $[H_k^\lc : k^\cyc] 
= p^\delta$ since the logarithmic class group $\wt \CH_k$ is 
then of order $p^\delta$ (recall that for any imaginary quadratic field
in which $p$ splits, $\order \wt \CH_k = p^\delta \!\cdot \order \CH_k^{S_k}$,
where $\CH_k^{S_k}$ is the $S_k$-class group of $k$, $S_k := \{{\mathfrak p},
\ov {\mathfrak p}\}$).

\smallskip
The above means that, in $\wt k/k^\cyc/k$, $p$ is totally ramified in 
$k^\cyc/k$, that the two decomposition groups of ${\mathfrak p}$ and 
${\ov {\mathfrak p}}$ in the unramified pro-$p$-cyclic extension $\wt k/k^\cyc$, 
coincide and fix the decomposition field $H_k^\lc$ of degree 
$p^\delta$ over $k^\cyc$.

\smallskip
In $\wt k/k^\acyc/k$, $p$ is totally ramified in $k^\acyc/k$ and the two 
decomposition groups in the unramified pro-$p$-cyclic extension $\wt k/k^\acyc$ 
are of degree $p^\delta$ over $k^\acyc$ since no splitting is possible 
elsewhere. But some splitting does exist in any extension $\wt k/K/k$ as follows:

\smallskip
The inertia group of ${\mathfrak p}$ (resp. of $\ov {\mathfrak p}$) in $\wt k/k$  
being $\Gal(\wt k/L)$ (resp. $\Gal(\wt k/\ov L)$), the decomposition group of 
${\mathfrak p}$ in $L/k$ (resp. of $\ov {\mathfrak p}$ in $\ov L/k$) fixes the layer 
$L_\delta$ (resp. the layer $\ov L_\delta$). This phenomenon has repercussions 
on the splitting of $p$ in the non-Galois $\Z_p$-extensions $K/k$. 

\smallskip
More precisely, since $K$ is fixed such that
$K \cap \ov L = K_{\ov e} = k$, $\ov {\mathfrak p}$ does not split in $K$;
then $K \cap L  = K_e$ and the decomposition field of ${\mathfrak p}$ in $K/k$
is of degree $p^{\delta'}$, $\delta' := \min(e, \delta)$, over $k$. If $e < \delta$ the 
complementary part of the splitting of ${\mathfrak p}$ takes place over $K$ in $\wt k/K$. 

\smallskip
Since $K/k$, $\ov K/k$ are totally ramified at $p$ from layers $K_e$, $\ov K_e$, 
the pro-$p$-cyclic extensions $\wt k/K$ and $\wt k/\ov K$ are unramified.
The splitting of ${\mathfrak p}$ and $\ov {\mathfrak p}$ in $\wt k/k$ does imply 
immediately the inertia of ${\mathfrak p}$ in $K/k$, which is finite (residue degree 
of ${\mathfrak p}$ in $K/k$ equal to $p^{\max(0, e - \delta)}$). 

\smallskip
To get numerical illustrations, in the anti-cyclotomic $\Z_p$-extensions $k^\acyc$, 
we refer to the {\sc pari/gp} programs of \cite{Gra2026a}, valid without any 
assumptions on $\CH_k$, $e$, $\delta$.

\subsection{Functorial properties of the norm residue symbols}
Recall for convenience these classical properties in the case of abelian 
$p$-extensions and $p$-places (see for instance \cite[Theorem II.3.1.3]
{Gra2005} or any book on class field theory):

\begin{lemma}\label{functorial}
Let $M/F$ be a finite abelian extension of number fields, $M'/F$ a sub-extension, 
and let $w \mid w' \mid v$, be $p$-places of $M$, $M'$, $F$, then $\CU_M^w$, 
$\CU_{M'}^{w'}$, $\CU_F^v$ the unit groups of the completions $M_w$, 
$M'_{w'}$, $F_v$, respectively.
We identify $D_v(M/F) := \Gal(M_w/F_v)$ with the decomposition group of $v$ 
in $M/F$ and its inertia group with $I_v(M/F) \subseteq D_v(M/F)$. 

\smallskip
Then there exists a canonical homomorphism
$(\mb , M_w/F_v) :  F_v^\times  \to  D_v(M/F)$. 
We define the Hasse norm residue symbols $\big(\frac{x, M/F}{v} \big)$, 
$x \in F^\times$, as the image of $(x, M_{w}/F_v)$ in $D_v(M/F)$. 
These symbols have the following properties:

\smallskip
{\it (i) We have the exact sequence defining the group of local norms:
$$1 \too F^\times \cap \BN_{M_w/F_v}(M_w^\times) \tooo F^\times\, 
\mathop{\toooo}^{\hbox{\tiny$\big(\frac{\mb, M/ F}{v} \big)$}} D_v(M/F) \too 1. $$

(ii) The composition of $\big(\frac{\mb, M/ F}{v} \big)$ with
$\Gal(M/F) \too \Gal(M'/F)$ is $\big(\frac{\mb, M'/ F}{v} \big)$.

\smallskip
(iii) The image of $F^\times \cap \CU_F^v$ under $\big(\frac{\mb, M/ F}{v} \big)$ 
is the inertia group $I_v(M/F)$. 

\smallskip
(iv) For $x' \in M'^\times$, the image of $\prod_{w' \mid v}
\big(\frac{x', M/ M'}{w'} \big)$ in $\Gal(M/F)$ is $\big(\frac{\BN_{M'/F}(x'), M/F}{v}\big)$.

\smallskip
(v) For $x \in F^\times$, the image of $\big(\frac{x, M/ F}{v} \big)$ under the
transfer map (from $\Gal(M/F)$ to $\Gal(M/M')$) is 
$\prod_{w' \mid v}\big(\frac{x, M/ M'}{w'} \big)$.

\smallskip
(vi) For $x \in F^\times$, $\!\big(\frac{x, M/ F}{v} \big)^s\! :=
s \,\hbox{\tiny$\circ$} \,\big(\frac{x, M/ F}{v} \big)\, \hbox{\tiny$\circ$} \,s^{-1}\! =
\!\big(\frac{x^s, s M / s F}{s v})$, for any isomorphism $s$ of $M$.}
\end{lemma}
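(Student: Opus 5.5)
The plan is to derive everything from local class field theory and then globalize, since each symbol $\big(\frac{x, M/F}{v}\big)$ is by definition the image of the local Artin symbol $(x, M_w/F_v)$ under the identification $\Gal(M_w/F_v) = D_v(M/F)$. First I fix the place $w \mid w' \mid v$ and record that the identification is canonical. For abelian $M/F$ it does not depend on the choice of $w$ above $v$, because conjugate decomposition groups coincide. The canonical homomorphism $(\mb, M_w/F_v)$ is the local reciprocity map. I will take local class field theory for the finite abelian extension $M_w/F_v$ as the black box (e.g. \cite[Theorem II.3.1.3]{Gra2005}): the map is surjective onto $\Gal(M_w/F_v)$, its kernel is $\BN_{M_w/F_v}(M_w^\times)$, and it maps the unit group onto the inertia subgroup.

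Items (i) and (iii) come from restricting these local statements to $F^\times$ via the diagonal embedding $F^\times \hookrightarrow F_v^\times$. The kernel statement follows at once. For surjectivity in (i) I use that $F^\times$ is dense in $F_v^\times$ while $\BN_{M_w/F_v}(M_w^\times)$ is open of finite index, so $F^\times$ meets every coset. For (iii) I use that $F^\times \cap \CU_F^v$ is dense in the unit group $\CU_F^v$ (weak approximation, together with the fact that a global element close to a local unit is itself a unit at $v$). Openness of the subgroup of norms of units then gives image exactly $I_v(M/F)$. The main obstacle I expect is this density argument. I would handle it by noting that the relevant norm subgroups contain $1+{\mathfrak p}_v^N$ for $N$ large, and then approximating.

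Items (ii), (iv) and (v) are the standard functorialities of the local reciprocity map. Namely:
\begin{itemize}
\item restriction to subextensions: $(x, M_w/F_v)\vert_{M'_{w'}} = (x, M'_{w'}/F_v)$;
\item the norm compatibility $(x', M_w/M'_{w'}) = (\BN_{M'_{w'}/F_v}(x'), M_w/F_v)$ in $\Gal(M_w/F_v)$;
\item the transfer compatibility $\mathrm{Ver}\,(x, M_w/F_v) = (x, M_w/M'_{w'})$.
\end{itemize}
To globalize (iv), I write $\BN_{M'/F}(x') = \prod_{w' \mid v} \BN_{M'_{w'}/F_v}(x')$ in $F_v^\times$ and multiply the local identities over all $w' \mid v$. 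For (v), the global transfer from $\Gal(M/F)$ to $\Gal(M/M')$, restricted to $D_v$, decomposes over the double cosets $D_v \backslash G / \Gal(M/M')$. These correspond to the places $w' \mid v$, and each factor is a local transfer, so the product formula follows.

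Finally, (vi) is transport of structure. An isomorphism $s$ carries the completion $M_w$ to $(sM)_{sw}$ and $F_v$ to $(sF)_{sv}$. Uniqueness of the reciprocity map characterized by its axioms (Frobenius on uniformizers for unramified extensions, together with norm functoriality) gives $s\,(x, M_w/F_v)\,s^{-1} = (x^s, (sM)_{sw}/(sF)_{sv})$. Passing to decomposition groups yields the stated formula.
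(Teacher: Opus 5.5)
Your proposal is correct and matches the paper's approach: the paper gives no argument of its own, only recalling these as classical facts with a pointer to \cite[Theorem II.3.1.3]{Gra2005}, and your reduction to local reciprocity is the standard proof behind that citation (density of $F^\times$ in $F_v^\times$ and of $F^\times \cap \CU_F^v$ in $\CU_F^v$ for (i) and (iii), the local restriction, norm and transfer compatibilities for (ii), (iv), (v), and transport of structure for (vi)). Two small remarks: since $\Gal(M/F)$ is abelian, the transfer to $\Gal(M/M')$ is just the $[M':F]$-th power map, so (v) follows at once from (iv) with $x' = x$ (as $\BN_{M'/F}(x) = x^{[M':F]}$), with no double coset decomposition needed; and if $\CU_F^v$ is read with the paper's later convention $1+{\mathfrak p}_v$, then (iii) also uses that $M/F$ is a $p$-extension, so that the prime-to-$p$ quotient of the full local unit group contributes nothing and the image is still all of $I_v(M/F)$.
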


\section{Computation of some norm residue symbols}

We will particularize the $p$-extension $M/F$ as follows in our context of
$p$-principal imaginary quadratic field $k$.

\subsection{Some notations and definitions}

Let $K/k$ be a $\Z_p$-extension, distinct from $L$, $\ov L$, and let 
$K_n$, $n \geq 0$, be the subfield of $K$ of degree $p^n$ over $k$. 
Let $e \geq 0$ defining the inertia subfield 
$K_e = K \cap L$ for the prime ${\mathfrak p}$, that of $\ov {\mathfrak p}$ 
being $k$. Let $\delta' := \min(e, \delta)$. 

\smallskip
Let $\BN_{a/b}$ be the arithmetic norm in $K_a/K_b$ for any $0 \leq b \leq a$, 
and so on for any invariant depending on extensions $K_a/K_b$; whence
$g_{n/e} := \Gal(K_n/K_e)$, $g_{n/\delta'} := \Gal(K_n/K_{\delta'})$, and for 
short, we put $G_n := g_{n/0} = \Gal(K_n/k)$. Recall that $G_K := \Gal(\wt k/K)$.

\smallskip
Since we are interested by the growth of $\order \CH_n$ when $n \to \infty$, 
we will assume $n$ large enough.

\smallskip
For any set of places $T_n$ and the group of units $E_n$, of a layer $K_n$, 
we denote by $\iota_{T_n}^{}$ the diagonal embedding of $E_n$ in the group of principal 
local units $\CU_n^{T_n} := \bigoplus_{v \in T_n} \CU_n^v$, where $\CU_n^v := 
1 + {\mathfrak p}_v$; the topological closure of $\iota_{T_n}^{}(E_n)$ 
in $\CU_n^{T_n}$ is denoted $\ov E_n^{T_n}$. 

\smallskip
Let $\spp := p^{\delta'}$.
We put by abuse $S_{K_{\delta'}}^{\mathfrak p} =: S_{\delta'}^{\mathfrak p} :=
\{{\mathfrak p}_{\delta', 1}^{}, \ldots, {\mathfrak p}_{\delta', \spp}^{} \}$, 
where ${\mathfrak p}_{\delta', i}^{} \mid {\mathfrak p}$ in $K_{\delta'}$,
and $S_{\delta'}^{\ov{\mathfrak p}} := \{{\mathfrak p}_{\delta', 0}^{} \}$, 
${\mathfrak p}_{\delta',0}^{}$ being the unique prime ideal above 
$\ov {\mathfrak p}$ in $K_{\delta'}$; then we put $S_{\delta'} := 
S_{\delta'}^{\mathfrak p} \cup S_{\delta'}^{\ov{\mathfrak p}}$.

\smallskip
In what follows, we will simplify and write instead
$S_{\delta'} := \{{\mathfrak p}_1, \ldots, {\mathfrak p}_{\spp}, {\mathfrak p}_\0 \}$ 
for the layer $K_{\delta'}$ and similarly for 
$S_{\delta'}^{\mathfrak p}$ and $S_{\delta'}^{\ov{\mathfrak p}}$.
By extension in $K_{\delta'}$, $({\mathfrak p})_{\delta'} = \prod_{i=1}^\spp
{\mathfrak p}_i$ and $(\ov {\mathfrak p})_{\delta'} = {\mathfrak p}_\0^\spp$.

\smallskip
The inertia groups of these ideals ${\mathfrak p}_i$ in $K_n/K_{\delta'}$,
$i \in [1, \spp]$, are equal to $g_{n/e}$ of order $p^{n - e}$, 
and the decomposition groups are equal to $g_{n/{\delta'}}$
of order $p^{n - \delta'}$ (note that $\delta' \leq e$ by definition). 

\smallskip
For local units, the notations $\CU_{\delta'}^{\mathfrak p}$ and
$\CU_{\delta'}^{{\mathfrak p}_\0}$ mean, by abuse, $\CU_{\delta'}^{T_{\delta'}}$ for 
$T_{\delta'} = \{{\mathfrak p}_1, \ldots, {\mathfrak p}_{\spp}\}$ (prime ideals dividing 
${\mathfrak p}$ in $K_{\delta'}$) and $T_{\delta'} = \{{\mathfrak p}_\0\}$ (prime ideal 
dividing $\ov {\mathfrak p}$), respectively.

\subsection{On the use of Chevalley--Herbrand formulas}
The most classical application of Chevalley--Herbrand formula in $K_n/k$ (hence the
computation of $\order \CH_n^{G_n}$ equal to $p^{n-e}$), gives minor information on 
Iwasawa's invariants of $K \ne L, \ov L$. 

\smallskip
However, we have, in full generality for $0 \leq n \leq m$, the exact sequences:
\begin{equation}\label{gmn}
1 \to \BJ_{m/n} (\CH_n) \CH_m^\ram \too \CH_m^{g_{m/n}} 
\too E_n \cap \BN_{m/n}(K_m^\times)/\BN_{m/n}(E_m) \to 1,
\end{equation}

\noindent
where $ \BJ_{m/n}$ is the transfer map and $\CH_m^\ram$ the 
subgroup of $\CH_m$ generated by the invariant ideals of
$S_m := \langle {\mathfrak p}_{m,i} \rangle_{{\mathfrak p}_{m,i} \mid p}^{}$, 
$i \in [1,\ldots , \spp] \cup \{0\}$, where $\spp = p^{\delta'}$, whence suitable 
invariants products of these ideals. The main obstruction being the unit term 
on the right or, equivalently, the denominator in the Chevalley--Herbrand 
formula (for $m \geq n \geq \delta'$):
$$\order \CH_m^{g_{m/n}} = \frac{\order \CH_n \cdot [p^{m-n}]^\spp
\cdot p^{m-n}} {p^{m-n} \cdot (E_n : E_n \cap \BN_{m/n}(K_m^\times))} =
\frac{\order \CH_n \cdot [p^{m-n}]^{\spp}}{(E_n : E_n \cap \BN_{m/n}(K_m^\times))}. $$

So, a trick is to use the classical generalizations of
Chevalley--Herbrand formulas for the $T_n$-class groups $\CH_n^{T_n}$
with suitable subset $T_n \subseteq S_n$ of $p$-places of $K_n$, hopping that these
$T_n$-class groups are trivial in some circumstances, giving $\CH_n = \Ccl_n (T_n)$; since
$n \geq \delta'$, each element of $T_n$ is then fixed by the decomposition subgroup 
$g_{n/\delta'} \subseteq G_n$, so that $\CH_n \subseteq \CH_n^{g_{n/\delta'}}$, the resulting 
equality giving $\order \CH_n = \order \CH_n^{g_{n/\delta'}}$, when we are able to compute it. 

\smallskip
This is the key of the process; indeed, in this {\it relative} Chevalley--Herbrand formula, 
norm properties of the units intervene non-trivially, and this will introduce an interesting 
$p$-adic regulator in incomplete $p$-ramification theory (see Theorem \ref{Ptorsion}).

\subsection{The two fundamental cases}\label{twocases}

Consider a non-Galois $\Z_p$-extension $K \ne L, \ov L$.

\begin{remark}\label{eprime}
Let $e' \leq e$. Since a single place ramifies in $K_{e'}/k$, Chevalley--Herbrand 
formula in $K_{e'}/k$ yields $\CH_{e'} = 1$ since $\CH_k = 1$; 
then, in the exact sequence \eqref{gmn} for $K_e/K_{e'}$:
\begin{equation*}
1 \to \CH_e^\ram \too \CH_e^{g_{e/e'}} 
\too E_{e'} \cap \BN_{e/e'}(K_e^\times)/\BN_{e/e'}(E_e) \to 1,
\end{equation*}

\noindent
the three terms are trivial, which gives $E_{e'} \cap \BN_{e/e'}(K_e^\times)
= E_{e'} = \BN_{e/e'}(E_e)$ because of the product formula 
in the case of a single ramified place, implying that any unit is norm. 
\end{remark}

Because of the hypotheses $\CH_k = 1$ and $\delta \ne 0$, there is indeed 
a non trivial splitting of $p$ in $\wt k/k$ which intervenes in $K/k$ depending 
on the intersection $K \cap L$. For this, the two following diagrams may be 
useful for the sequel:
\unitlength=1.2cm 
\begin{equation}\label{Tpsplit2}
\begin{aligned}
\vbox{\hbox{\hspace{-1.0cm} 
\begin{picture}(10.0,5.8)
\put(2.4,0.1){$k$}
\put(0.4,1.8){\tiny\hbox{${\ov{\mathfrak p}}\,totally$}}
\put(0.4,1.6){\tiny\hbox{$ramified$}}
\put(0.4,1.4){\tiny\hbox{$in\,K/k$}}
\put(2.5,0.55){\line(0,1){1.2}}
\put(1.24,4.35){\tiny\hbox{${\mathfrak p}\,ramified$}}
\put(1.75,2.75){\tiny\hbox{${\mathfrak p}\,inert$}}
\put(1.8,1.1){\tiny\hbox{${\mathfrak p}\,split$}}
\put(2.5,2.2){\line(0,1){1.2}}
\put(2.35,3.55){\ft$K_e$}
\put(1.66,1.86){\ft$K_{\delta'}\!=\!K_{\delta}\!=\!L_\delta$}
\put(2.5,3.9){\line(0,1){1.0}}
\put(2.35,5.0){\ft$K_n$}
\put(2.75,3.8){\line(1,2){0.8}}
\put(2.7,0.3){\line(2,3){2.0}}
\put(3.8,1.72){\tiny\hbox{${\mathfrak p}\,totally$}}
\put(3.8,1.52){\tiny\hbox{$ramified$}}
\put(3.8,1.32){\tiny\hbox{$in \ov L/k$}}
\put(4.6,3.4){\ft$\ov L$}
\put(3.55,5.5){\ft$L$}
\put(3.4,4.85){\tiny\hbox{${\mathfrak p}\,inert$}}
\put(1.1,2.6){\ft$G_n$}
\put(2.55,4.44){\ft$g_{n\!/\!e}$}
\put(3.55,3.6){\ft$g_{n\!/\!{\delta'}}$}
\bezier{350}(2.3,0.3)(0.7,2.7)(2.3,5.1)
\bezier{250}(2.9,5.1)(4.1,3.8)(2.9,2.1)
\put(1.8,-0.2){\ft${\rm Case}\  e \geq \delta$}
\put(7.6,0.1){$k$}
\put(5.8,1.8){\tiny\hbox{${\ov{\mathfrak p}}\,totally$}}
\put(5.8,1.6){\tiny\hbox{$ramified$}}
\put(5.8,1.4){\tiny\hbox{$in\,K/k$}}
\put(6.46,3.9){\tiny\hbox{${\mathfrak p}\,ramified$}}
\put(7.0,1.5){\tiny\hbox{${\mathfrak p}\,split$}}
\put(7.7,0.55){\line(0,1){2.0}}
\put(7.55,2.7){\ft$K_e\!=\!K_{\delta'}$}
\put(7.7,3.1){\line(0,1){1.8}}
\put(7.55,5.0){\ft$K_n$}
\put(8.0,3.0){\line(1,2){0.3}}
\put(7.9,0.3){\line(2,3){2.0}}
\put(9.0,1.72){\tiny\hbox{${\mathfrak p}\,totally$}}
\put(9.0,1.52){\tiny\hbox{$ramified$}}
\put(9.0,1.32){\tiny\hbox{$in \ov L/k$}}
\put(8.25,3.25){\tiny\hbox{${\mathfrak p}\,split$}}
\put(9.8,3.4){\ft$\ov L$}
\put(9.2,5.55){\ft$L$}
\put(8.28,3.68){\ft$L_\delta$}
\put(8.45,3.95){\line(1,2){0.75}}
\put(9.05,4.85){\tiny\hbox{${\mathfrak p}\,inert$}}
\bezier{350}(7.5,0.3)(6.15,2.7)(7.5,5.1)
\put(6.45,2.6){\ft$G_n$}
\put(7.75,4.05){\ft$g_{n\!/\!e}$}
\put(7.0,-0.2){\ft${\rm Case}\  e < \delta$}
\end{picture}}} 
\end{aligned}
\smallskip
\end{equation}
\unitlength=1.0cm

\subsection{Norm residue symbols of the  
\texorpdfstring{$S_k$-units $x, \ov x$}{Lg}}

The norm residue symbols of the fundamental $S_k$-units $x$ and 
$\ov x$ have been computed in \cite[Theorem 5.3, Remark 5.4]{Gra2026b};
we recall this result, crucial for some Chevalley--Herbrand formulas; it strongly 
depends on the number $\delta$ but does not need any hypothesis on $\CH_k$,
nor on the numbers $e, \ov e$:

\begin{proposition}\label{mainorder}
Let $k$ be an imaginary quadratic field and let $p \geq 3$ be a prime 
number split in $k$. Let $x \in k^\times$ be the generator of 
${\mathfrak p}^{\hp}$, where $\hp \mid \hk$ is the order 
of $\Bcl({\mathfrak p})$, and let $\delta := v_{\ov {\mathfrak p}}
(x^{\,p-1}-1) - 1$. Let $K$ be a $\Z_p$-extension of $k$ such that 
${\mathfrak p}$, ${\ov {\mathfrak p}}$ are ramified from some layers
$K_e$, $K_{\ov e}$. For $n \geq \max(e + \delta, \ov e + \delta)$,
the symbols $\big (\ffrac{x\,,\,{K_n/k}}{\mathfrak p} \big)$,
$\big (\ffrac{\ov x\,,\,{K_n/k}}{{\mathfrak p}} \big)$ are of orders
$p^{n - \ov e - \delta}$, $p^{n - e - \delta}$, respectively. 
Whence $\big (\ffrac{x\,,\,{K_n/k}}{\ov {\mathfrak p}} \big)
= \big (\ffrac{x\,,\,{K_n/k}}{\mathfrak p} \big)^{-1}$,
$\big (\ffrac{\ov x\,,\,{K_n/k}}{\ov{\mathfrak p}} \big) =
\big (\ffrac{\ov x\,,\,{K_n/k}}{\mathfrak p} \big)^{-1}$.
\end{proposition}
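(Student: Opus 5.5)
The plan is to compute the symbols through local class field theory at ${\mathfrak p}$, identifying $k_{\mathfrak p}$ with $\Q_p$ and working in $\Gal(\wt k/k)\simeq \CU_k^{\mathfrak p}\oplus\CU_k^{\ov{\mathfrak p}}$ (up to the finite contribution of $\CH_k$, which we can kill by replacing $x$ by a suitable power; the prime-to-$p$ exponent $\hp$ does not affect orders of $p$-elements). First we treat $\ov x$ at ${\mathfrak p}$. Since $\ov x$ is a unit at ${\mathfrak p}$, Lemma \ref{functorial}(iii) says that $\big(\frac{\ov x, K_n/k}{\mathfrak p}\big)$ lies in the inertia group $I_{\mathfrak p}(K_n/k)=\Gal(K_n/K_e)$, which is cyclic of order $p^{n-e}$. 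Through the reciprocity map, $\CU_k^{\mathfrak p}\simeq 1+p\Z_p$ surjects onto this inertia group. The kernel of this surjection is the image of $\CU_k^{\mathfrak p}$ intersected with $\Gal(\wt k/K_n)$, that is, the open subgroup $(1+p\Z_p)^{p^{n-e}}=1+p^{n-e+1}\Z_p$. Hence the order of the symbol is $p^{\max(0,\,n-e-v)}$, where $v:=v_p(\frac1p\log_{\mathfrak p}(\ov x))$. By \S\ref{logx} this $v$ equals $\delta$, so for $n\ge e+\delta$ the order is $p^{n-e-\delta}$.

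For $x$ at ${\mathfrak p}$, we use the product formula. Because $K_n/k$ is unramified outside $p$, we have $\prod_v \big(\frac{x,K_n/k}{v}\big)=1$, and the only possibly nontrivial factors are at ${\mathfrak p}$ and $\ov{\mathfrak p}$. This gives $\big(\frac{x,K_n/k}{\ov{\mathfrak p}}\big)=\big(\frac{x,K_n/k}{\mathfrak p}\big)^{-1}$, and in the same way for $\ov x$; these are the final identities of the statement. It is then enough to compute the symbol of $x$ at $\ov{\mathfrak p}$. There $x$ is a unit, so the previous paragraph applies with the roles of ${\mathfrak p}$ and $\ov{\mathfrak p}$ exchanged. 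The inertia group of $\ov{\mathfrak p}$ is $\Gal(K_n/K_{\ov e})$, of order $p^{n-\ov e}$, and $v_p(\frac1p\log_{\ov{\mathfrak p}}(x))=\delta$ by conjugation (\S\ref{logx}). This gives order $p^{n-\ov e-\delta}$ for $n\ge \ov e+\delta$.

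The step needing care is the identification of the kernel: one must show that the restriction to $\CU_k^{\mathfrak p}$ of the Artin map $\CU_k\to\Gal(K_n/k)$ has image exactly the inertia group, with kernel $(\CU_k^{\mathfrak p})^{p^{n-e}}$. This follows because $\CU_k^{\mathfrak p}\simeq\Z_p$ maps onto the pro-cyclic inertia group of ${\mathfrak p}$ in $K/k$, and that inertia group is the open subgroup $\Gal(K/K_e)\simeq p^e\Z_p$; a surjection $\Z_p\to\Z_p$ is an isomorphism. The valuation identity $\delta=v_{\mathfrak p}(\ov x^{\,p-1}-1)-1$, relating $\log$ to the Fermat quotient, is taken from \S\ref{logx}.
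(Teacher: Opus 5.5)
Your proof is correct and follows essentially the route the paper relies on. The symbol of a local unit is read off from the procyclic group $1+p\Z_p$ mapping onto the inertia group $\Gal(K_n/K_e)$ (resp.\ $\Gal(K_n/K_{\ov e})$), exactly as in Lemma \ref{delta}; then the product formula, in which only $\mathfrak p$ and $\ov{\mathfrak p}$ contribute because $x,\ov x$ are $S_k$-units and $K_n/k$ is $p$-ramified, transfers the computation of $x$ at $\ov{\mathfrak p}$ to $\mathfrak p$. The parenthetical about killing $\CH_k$ by taking a power of $x$ is unnecessary, and not quite meaningful as stated since $\hp$ need not be prime to $p$ when $p \mid \hk$; your computation is purely local and uses no hypothesis on $\CH_k$.
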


Note that the order of $\big (\ffrac{x\,,\,{K_n/k}}{\mathfrak p} \big)$ 
depends on $\ov e$ and that of $\big (\ffrac{\ov x\,,\,{K_n/k}}
{\mathfrak p} \big)$ depends on $e$; this is coherent since when 
$e > \ov e$, the ideal ${\mathfrak p}$ does not ramify in $K_e/k$ and 
$\ov x$ is a ${\mathfrak p}$-adic unit, so that, by restriction to $K_e$,
$\big (\ffrac{\ov x\,,\,{K_n/k}}{\mathfrak p} \big) = 1$, giving the ``smallest order''.

\begin{corollary}\label{coro}
In our context where $e \geq 0$, $\ov e = 0$, put
$\wt \Omega_{n/0} := \{(s,s') \in G_n \times G_n,\ s\cdot s' = 1 \}$
(i.e., $(s,s')$ satisfying the product formula).
Let $\omega_{n/0} : E_k^{S_k} = \langle x, \ov x \rangle
\too \wt \Omega_{n/0} \simeq G_n \simeq \Z/p^n \Z$ be defined
by $\omega_{n/0}(y) := \big (\ffrac{y \,,\,{K_n/k}}{{{\mathfrak p}}} \big)$
for all $y \in E_k^{S_k}$. Then $\omega_{n/0}(E_k^{S_k}) =
\langle \omega_{n/0} (x), \omega_{n/0}(\ov x) \rangle =
\langle \omega_{n/0} (x) \rangle$ since $\omega_{n/0}(\ov x)
= \omega_{n/0} (x)^{p^e}$, $G_n$ being $p$-cyclic.
\end{corollary}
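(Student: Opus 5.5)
The plan is to read everything off Proposition \ref{mainorder} with $\ov e = 0$, using only that $G_n \simeq \Z/p^n\Z$ is cyclic. Throughout I take $n \geq e + \delta$, which is allowed since we only care about $n$ large. The argument has three steps.

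\emph{Step 1: the map $\omega_{n/0}$ is well defined with values in $\wt \Omega_{n/0}$.} The Hasse symbol $y \mapsto \big(\ffrac{y\,,\,K_n/k}{\mathfrak p}\big)$ is a homomorphism on $k^\times$ by Lemma \ref{functorial}. I then apply the product formula to $y \in E_k^{S_k}$, checking the places one by one:
\begin{itemize}
\item at a finite place $v \nmid p$, $y$ is a unit and $K_n/k$ is unramified at $v$, so the symbol is trivial;
\item the infinite place of $k$ is complex and contributes nothing;
\item hence $\big(\ffrac{y\,,\,K_n/k}{\mathfrak p}\big)\cdot \big(\ffrac{y\,,\,K_n/k}{\ov{\mathfrak p}}\big) = 1$.
\end{itemize}
This recovers the relations stated at the end of Proposition \ref{mainorder}. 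So the pair of the two $p$-symbols lies in $\wt\Omega_{n/0}$, and projecting to the first coordinate gives $\wt\Omega_{n/0} \simeq G_n$. This justifies identifying $\omega_{n/0}(y)$ with the ${\mathfrak p}$-symbol alone.

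\emph{Step 2: orders of the generators.} Proposition \ref{mainorder} with $\ov e = 0$ says $\omega_{n/0}(x)$ has order $p^{n-\delta}$ and $\omega_{n/0}(\ov x)$ has order $p^{n-e-\delta}$. In the cyclic group $G_n$ the subgroups are totally ordered and determined by their orders. The subgroup of order $p^{n-e-\delta}$ is therefore contained in the one of order $p^{n-\delta}$, which is $\langle \omega_{n/0}(x)\rangle$. Moreover it equals $\langle \omega_{n/0}(x)^{p^e}\rangle$.

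\emph{Step 3: conclusion.} From Step 2, $\omega_{n/0}(\ov x) = \omega_{n/0}(x)^{p^e c}$ for some $c \in \Z_p^\times$. Hence $\omega_{n/0}(E_k^{S_k}) = \langle \omega_{n/0}(x), \omega_{n/0}(\ov x)\rangle = \langle \omega_{n/0}(x)\rangle$, since $E_k^{S_k} = \langle x, \ov x\rangle$ up to torsion of order prime to $p$ (and the symbols land in a $p$-group).

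The main obstacle is the literal equality $\omega_{n/0}(\ov x) = \omega_{n/0}(x)^{p^e}$ rather than equality up to the unit $c$. To remove $c$, I would first try to choose the generator normalizations, i.e. $u$ in Proposition \ref{3cases}, so that $c = 1$. Failing that, I would compute both symbols via $\log_{\mathfrak p}$, using $\log_{\mathfrak p}(x) = -\log_{\mathfrak p}(\ov x)$ and the description $e = v_p(u-1)$, which should make the $p^e$ appear explicitly. For the statement about the image of $\omega_{n/0}$, only the inclusion of subgroups is needed, and that is exactly Step 2.
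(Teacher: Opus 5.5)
Your proposal is correct and is essentially the paper's own argument: the corollary follows from the orders $p^{n-\delta}$ and $p^{n-e-\delta}$ given by Proposition \ref{mainorder} with $\ov e = 0$, inside the cyclic group $G_n$, which yields $\langle \omega_{n/0}(\ov x)\rangle = \langle \omega_{n/0}(x)^{p^e}\rangle \subseteq \langle \omega_{n/0}(x)\rangle$. You are also right that the displayed equality $\omega_{n/0}(\ov x) = \omega_{n/0}(x)^{p^e}$ holds only up to a unit exponent $c$, so you should drop the idea of normalizing to force $c = 1$: the symbols are intrinsic to $K$, which leaves no freedom in $u$, and for $K = k^\cyc$, where $x\ov x = p^{\hp}$ is a norm, one gets $\omega_{n/0}(\ov x) = \omega_{n/0}(x)^{-1}$, i.e.\ $c = -1$.
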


\subsection{Computation of \texorpdfstring
{$\order \omega_{n/\delta'} (E_{\delta'})$}{Lg}}
Recall that $({\mathfrak p})_{\delta'} := \prod_{i = 1}^{\spp} {{\mathfrak p}_i}$
and $(\ov {\mathfrak p})_{\delta'} = ({\mathfrak p}_\0)^\spp$
in $K_{\delta'}$.

\begin{lemma}\label{delta}
For any $\varepsilon \in E_{\delta'}$ and ${\mathfrak p}_i \in 
S_{\delta'}^{\mathfrak p}$, let $\delta_{{\mathfrak p}_i}(\varepsilon) \geq 0$ 
be defined by the ideals in $K_{\delta'}$:
$$\big({\varepsilon^{p - 1} - 1}\big) =  {\mathfrak a} \cdot
\hbox{$\prod_{i = 0}^{\spp}$}\ 
{{\mathfrak p}_i}^{1+\delta_{{\mathfrak p}_i}(\varepsilon)} =
{\mathfrak a} \cdot ({\mathfrak p})_{\delta'} \cdot 
{\mathfrak p}_\0^{1+\delta_{{\mathfrak p}_\0}(\varepsilon)}
\cdot \hbox{$\prod_{i = 1}^{\spp}$}\ 
{{\mathfrak p}_i}^{\delta_{{\mathfrak p}_i}(\varepsilon)}, $$

\noindent
where ${\mathfrak a}$ is a prime-to-$p$ ideal.
Then, for $i \in [1, \spp]$, $\iota_{{\mathfrak p}_i}(\varepsilon) \in 
(\CU_{\delta'}^{{\mathfrak p}_i})^{p^{\delta_{{\mathfrak p}_i}(\varepsilon)}}$; 
so, the symbol $\big(\ffrac{\varepsilon\,,\,K_n/K_{\delta'}}
{{\mathfrak p}_i} \big) \in g_{n/e}$ is of order 
$p^{n - e - \delta_{{\mathfrak p}_i}(\varepsilon)}$ for $n$ large enough.
\end{lemma}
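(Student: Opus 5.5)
We split the proof into a purely local statement about $\varepsilon$ in the completion at ${\mathfrak p}_i$, and a local class field theory computation of the order of the symbol. Fix $i \in [1,\spp]$. Since ${\mathfrak p}$ splits totally in $K_{\delta'}/k$ (the decomposition field of ${\mathfrak p}$ in $K/k$ has degree $p^{\delta'}$ by \S\,\ref{ramdec}), each completion $(K_{\delta'})_{{\mathfrak p}_i}$ is isomorphic to $k_{\mathfrak p} \simeq \Q_p$. Thus $\CU_{\delta'}^{{\mathfrak p}_i} \simeq 1+p\Z_p$, which is $\Z_p$-free of rank one because $p \ne 2$. The image $u := \iota_{{\mathfrak p}_i}(\varepsilon)$ lies in $\Z_p^\times$, so $u^{p-1} \in 1+p\Z_p$. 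The exponent of ${\mathfrak p}_i$ in $(\varepsilon^{p-1}-1)$ is exactly $v_p(u^{p-1}-1) = 1+\delta_{{\mathfrak p}_i}(\varepsilon)$, since ${\mathfrak p}_i$ is unramified of degree one over $p$.

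The standard filtration gives $(1+p\Z_p)^{p^j} = 1+p^{1+j}\Z_p$, since $\log_p$ is an isometry $1+p\Z_p \to p\Z_p$ for odd $p$. Hence $u^{p-1} \in (\CU_{\delta'}^{{\mathfrak p}_i})^{p^{\delta_{{\mathfrak p}_i}(\varepsilon)}}$ and not in the next power. Raising to the power $p-1$ is an automorphism of the pro-$p$ group $\CU_{\delta'}^{{\mathfrak p}_i}$. Writing $u = \zeta u_1$ with $\zeta$ a root of unity of order dividing $p-1$ and $u_1 \in 1+p\Z_p$, the statement applies to $u_1$, the principal-unit component. This is the component meant by $\iota_{{\mathfrak p}_i}$ into $\CU^{{\mathfrak p}_i}$; the torsion part $\zeta$ has trivial norm residue symbol in a $p$-extension. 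We therefore get $u_1 \in (1+p\Z_p)^{p^{\delta_{{\mathfrak p}_i}(\varepsilon)}} \setminus (1+p\Z_p)^{p^{\delta_{{\mathfrak p}_i}(\varepsilon)+1}}$, which is the first claim, in its sharp form.

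For the symbol, use Lemma \ref{functorial}. By (iii), since $\varepsilon$ is a ${\mathfrak p}_i$-adic unit, $\big(\frac{\varepsilon, K_n/K_{\delta'}}{{\mathfrak p}_i}\big)$ lies in the inertia group, which is $g_{n/e}$. Locally, $K_n/K_{\delta'}$ at ${\mathfrak p}_i$ is an extension of $\Q_p$ with Galois group $g_{n/\delta'}$ (cyclic of order $p^{n-\delta'}$) and inertia $g_{n/e}$ (order $p^{n-e}$). The local reciprocity map restricted to $1+p\Z_p \simeq \Z_p$ surjects onto the inertia group $g_{n/e}$; this is (iii) applied to local units, the global density of $E$-type elements being irrelevant since we work with the local symbol. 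A surjection of a procyclic $\Z_p$ onto a cyclic group $\Z/p^{n-e}\Z$ sends a topological generator to a generator. So the image of $u_1 = w^{p^{d}}$, with $w$ a generator and $d = \delta_{{\mathfrak p}_i}(\varepsilon)$, has order exactly $p^{n-e-d}$ as soon as $n - e \geq d$. This is the meaning of ``$n$ large enough''.

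The step needing most care is the sharpness: $u_1$ is a $p^{d}$-th power of a \emph{generator} times a unit of $\Z_p$, not merely lying in the $p^d$-th powers. This follows from the exact valuation $1+\delta_{{\mathfrak p}_i}(\varepsilon)$ together with the isometry property of $\log_p$. One must also check that the inertia image of the full local unit group is all of $g_{n/e}$, rather than only of $1+p\Z_p$ modulo roots of unity; this holds because the prime-to-$p$ torsion $\mu_{p-1}$ maps trivially into a $p$-group. The same argument applies verbatim to ${\mathfrak p}_\0$, with its ramification data in place of $e$.
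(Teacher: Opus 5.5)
Your proof is correct and follows essentially the paper's argument: the completion at ${\mathfrak p}_i$ is $\Q_p$, so $\CU_{\delta'}^{{\mathfrak p}_i}\simeq 1+p\Z_p$ is procyclic and maps onto the inertia group $g_{n/e}$, and the exact filtration level $p^{\delta_{{\mathfrak p}_i}(\varepsilon)}$ gives the order; your handling of the $\mu_{p-1}$-component and of sharpness adds welcome detail. Drop your last sentence, though: for $\delta'\geq 1$ the prime ${\mathfrak p}_\0$ is totally ramified in $K_{\delta'}/k$, so $(K_{\delta'})_{{\mathfrak p}_\0}$ is a ramified extension of $\Q_p$ of degree $\spp$, whose principal units are not procyclic and whose $p$-th powers are not determined by valuation alone (the paper stresses this right after the lemma, citing $1+{\mathfrak p}_\0^{2\spp}\subseteq(1+{\mathfrak p}_\0)^p$), so the argument does not carry over verbatim.
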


\begin{proof}
Since ${\mathfrak p}_i$ is of residue degree $1$ in $K_{\delta'}/\Q$, for 
$i \in [1, \spp]$ and since $p \ne 2$, the group $\CU_{\delta'}^{{\mathfrak p}_i} 
\simeq 1 + p\Z_p \simeq \Z_p$ is pro-cyclic 
and its image by the local reciprocity map is the inertia group $\Gal(K/K_e)$ 
of ${\mathfrak p}_i$. We get $\iota_{{\mathfrak p}_i}(\varepsilon) \in  
(\CU_{\delta'}^{{\mathfrak p}_i})^{p^{\delta_{{\mathfrak p}_i}(\varepsilon)}}
\setminus (\CU_{\delta'}^{{\mathfrak p}_i})^{p^{1+\delta_{{\mathfrak p}_i}(\varepsilon)}}$, 
whence $\big(\ffrac{\varepsilon\,,\,K_n/K_{\delta'}}{{\mathfrak p}_i} \big)$,
generator of $\Gal(K_n/K_e)^{p^{\delta_{{\mathfrak p}_i}}}$, is of order 
$p^{n - e - \delta_{{\mathfrak p}_i}(\varepsilon)}$, proving the claim.
\end{proof}

Note that for $i = 0$, 
since $\ov {\mathfrak p}$ is ramified in $K_{\delta'}/k$, the 
characterization of $(\CU_{\delta'}^{{\mathfrak p}_\0})^p$ is more difficult 
as soon as $e \ne 0$ (see \cite{FV2002} giving in particular that 
$1 + {\mathfrak p}_\0^{2\cdot \spp} \subseteq (1 + {\mathfrak p}_\0)^p$;
which explain that we try to privilege the ``${\mathfrak p}$-part'' of the 
$p$-adic properties. For instance, Hasse norm residue symbols in 
$K/K_{\delta'}$ define maps with values in $\Omega_{n/\delta'} := 
\bigoplus_{i=0}^{\spp} g_{n/\delta'}$; but taking into account the 
product formula we can consider:
$$\wt \Omega_{n/\delta'}^{\,\mathfrak p} \simeq 
\hbox{$\bigoplus_{i=1}^{\spp}$}\, g_{n/\delta'}
\simeq \big(\Z/p^{n-\delta'}\Z\big)^{\spp}, $$ 

\noindent
with the canonical map $\omega_{n/\delta'} : \big\{x \in K_{\delta'}^\times,\ 
(x) = \BN_{n/\delta'}({\mathfrak a}_n) \, \hbox{for some ideal of $K_n$} \big\} 
\to \wt \Omega_{n/\delta'}^{\,\mathfrak p}$, justified by the fact that any element, 
norm of an ideal of $K_n$, is local norm at all the non-ramified places. 
Similarly, any element of $K_n^\times$, prime to a non-ramified place is 
a local norm at this place. So, the Chevalley--Herbrand formula in this 
context writes, when $\CH_k = 1$:
$$\order \CH_n^{g_{n/\delta'}} = \frac{[p^{n-e}]^{\spp} \cdot p^{n-\delta'}}
{[K_n : K_{\delta'}] \cdot \order \omega_{n/\delta'} (E_{\delta'}) } =
\frac{[p^{n-e}]^{\spp}}{\order \omega_{n/\delta'} (E_{\delta'})}
= \frac{\order \wt \Omega_{n/e}^{\,\mathfrak p}}
{\order \omega_{n/e} (E_{\delta'})} , $$

\noindent
where the computation of the denominator is the non-trivial part of such 
relative formula.
When we consider units, norm residue symbols are in 
the inertia group $g_{n/e} \subseteq g_{n/\delta'}$ and we may put, 
by abuse, $\wt \Omega_{n/e}^{\,\mathfrak p} = \bigoplus_{i=1}^{\spp} g_{n/e}
\simeq (\Z/p^{n-e}\Z)^\spp$, which explain the above Chevalley--Herbrand 
formula with $\omega_{n/\delta'} (E_{\delta'}) = \omega_{n/e} (E_{\delta'})
\subset \wt \Omega_{n/e}^{\,\mathfrak p}$. If we consider $S_{\delta'}$-units,
$\omega_{n/\delta'}$ with values in $\wt \Omega_{n/{\delta'}}^{\,\mathfrak p}  = 
\bigoplus_{i=1}^{\spp} g_{n/{\delta'}}$ is needed.

\smallskip 
Let $\{\varepsilon_1, \ldots, \varepsilon_\re \}$ be a fundamental system
of units of $K_{\delta'}$; then $\omega_{n/\delta'}(E_{\delta'})$ is generated 
by the $\omega_{n/\delta'}(\varepsilon_j)$, $j \in [1, \re]$; we note that
the $\Z$-rank of $E_{\delta'}$ is only $\re = {\spp}- 1$; so, in the above 
formula, necessarily $\order \CH_n \geq p^{n - e}$, as expected in the 
imaginary case. 

\smallskip
We can state, in full generality for a $p$-principal imaginary quadratic field $k$: 

\begin{theorem}\label{Ptorsion}
Let $k$ be an imaginary quadratic field, and let $p \geq 3$ split 
in $k$. Assume that $\CH_k = 1$. Let $K/k$ be a $\Z_p$-extension,
$K \ne L, \ov L$, and let $e \geq 0$ be defined by $K \cap L =: K_e$. Let 
$\delta = v_{\ov{\mathfrak p}}(x^{\,p-1}-1) - 1$, where $x$ is the 
fundamental $S_k$-unit given by ${\mathfrak p}$; put $\delta' := 
\min(e, \delta)$, $\spp = p^{\delta'}$ and $\re = \spp - 1$.
Let $\CT^{\mathfrak p}_{\delta'} := \tor_{\Z_p} 
(\Gal(H^{{\mathfrak p}\,\pr}_{\delta'})/K_{\delta'})$, where
$H^{{\mathfrak p}\,\pr}_{\delta'}$ is the maximal abelian 
${\mathfrak p}$-ramified pro-$p$ extension of $K_{\delta'}$.\,\footnote{\,The 
case $\delta' = 0$ occurs in the second case of Schema \eqref{Tpsplit2}, 
otherwise $\delta = 0$ which has been ruled out; then the second 
case implies $e = 0$ (total ramification of $p$ in $K/k$), then 
$\order \CH_n^{G_n} = p^n$, $\spp = 1$, $\re = 0$, $\CT^{\mathfrak p}_{\delta'} = 1$.}
Then, the order of the finite group $\CT^{\mathfrak p}_{\delta'}$ 
is given by the regulator $\CR^{\mathfrak p}_{\delta'} \in \Z_p \setminus \{0\}$ 
having the following properties:

\smallskip
(i) $\CR^{\mathfrak p}_{\delta'} = \CU_{\delta'}^{\mathfrak p}{}^*/
\ov E_{\delta'}^{\,\mathfrak p}$, where $\CU_{\delta'}^{\mathfrak p}{}^*$
is the subgroup of $\CU_{\delta'}^{\mathfrak p} =  
\hbox{$\bigoplus_{i=1}^{\spp}$}\CU_{\delta'}^{{\mathfrak p}_i}$ 
of elements of norm~$1$ in $K_{\delta'}/k$, 
$\ov E_{\delta'}^{\,\mathfrak p}$ is the topological closure of
the diagonal embedding $\iota_{\mathfrak p}(E_{\delta'})$ of $E_{\delta'}$ in 
$\CU_{\delta'}^{\mathfrak p}{}^*$;

\smallskip
(ii) $\order \omega_{n/{\delta'}}(E_{\delta'}) = 
\frac{[p^{n-e}]^{\re}}{\CR^{\mathfrak p}_{\delta'}}$;

\smallskip
(iii) $\order \CH_n^{g_{n/\delta'}} = p^{n - e} \cdot \CR^{\mathfrak p}_{\delta'}$. 
\end{theorem}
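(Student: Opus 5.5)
The proof identifies the order of $\CT^{\mathfrak p}_{\delta'}$ with a regulator index via class field theory over $K_{\delta'}$, and then feeds this index into the relative Chevalley--Herbrand formula recalled just before the statement.

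First, consider the diagram of abelian $p$-ramification of \S\,\ref{schema0}, now in the incomplete version with only the places of $S_{\delta'}^{\mathfrak p}$ allowed to ramify. Since $K_{\delta'}/k$ is a $p$-ramified $p$-extension of the $p$-rational field $k$, the Proposition following the diagram gives $\CH_{\delta'} \subset \wt K_{\delta'}$ (indeed $\CH_{\delta'}=1$ by Remark \ref{eprime} with $e'=\delta'$). Hence class field theory gives
$$\Gal(H^{{\mathfrak p}\,\pr}_{\delta'}/K_{\delta'}) \simeq \CU_{\delta'}^{\mathfrak p}/\ov E_{\delta'}^{\,\mathfrak p},$$
with $\CU_{\delta'}^{\mathfrak p} \simeq \Z_p^{\spp}$, since each ${\mathfrak p}_i$ has residue degree $1$ and $p\neq 2$. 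The $\Z$-rank of $E_{\delta'}$ is $\re=\spp-1$, and $\iota_{\mathfrak p}(E_{\delta'})$ lies in $\CU_{\delta'}^{\mathfrak p}{}^*$. This is because $\BN_{K_{\delta'}/k}$ of a unit lies in $E_k\otimes\Z_p=1$, and the local norm at ${\mathfrak p}$ of the diagonal image is exactly the ${\mathfrak p}$-component of that global norm. The norm map $\CU_{\delta'}^{\mathfrak p}\to \CU_k^{\mathfrak p}\simeq\Z_p$ is surjective up to finite index, so $\CU_{\delta'}^{\mathfrak p}{}^*$ is free of rank $\re$.

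It follows that $\CT^{\mathfrak p}_{\delta'}=\CU_{\delta'}^{\mathfrak p}{}^*/\ov E_{\delta'}^{\,\mathfrak p}$, up to the torsion-free quotient $\CU_{\delta'}^{\mathfrak p}/\CU_{\delta'}^{\mathfrak p}{}^*$, which is harmless. This gives (i), provided $\ov E_{\delta'}^{\,\mathfrak p}$ has full rank $\re$ in $\CU_{\delta'}^{\mathfrak p}{}^*$. That full-rank statement is a Leopoldt-type assertion for the ${\mathfrak p}$-adic embedding of $E_{\delta'}$, and I expect it to be the main obstacle. I would derive it from the finiteness of $\CT^{\mathfrak p}_{\delta'}$, equivalently from $\CR^{\mathfrak p}_{\delta'}\neq 0$. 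The cleanest route is a rank count: the compositum of $\Z_p$-extensions of $K_{\delta'}$ that are ${\mathfrak p}$-ramified has rank exactly $1$. To see this, note that $K_{\delta'}$ is abelian over the imaginary quadratic $k$, so Leopoldt holds by Brumer, and the ${\mathfrak p}$-ramified $\Z_p$-rank is $1+\re-\mathrm{rank}(\ov E^{\,\mathfrak p})$. Alternatively, one can use the $\tau$-twisted argument that the ${\mathfrak p}$-ramified $\Z_p$-extensions of $K_{\delta'}$ come only from $L$, because $\wt k/L$ is the only direction unramified at $\ov{\mathfrak p}$. Either way, $\mathrm{rank}\,\ov E^{\,\mathfrak p}_{\delta'}=\re$. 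We then define $\CR^{\mathfrak p}_{\delta'}$ as the index, a $p$-power represented in $\Z_p\setminus\{0\}$ by the determinant of the ${\mathfrak p}_i$-logarithms of $\varepsilon_1,\dots,\varepsilon_\re$, normalised by $p^{-\re}$.

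For (ii), the map $\omega_{n/\delta'}$ restricted to units factors through local reciprocity at the ${\mathfrak p}_i$. By Lemma \ref{delta} and the product formula, it is the composite
$$E_{\delta'}\to \CU_{\delta'}^{\mathfrak p}{}^* \to \CU_{\delta'}^{\mathfrak p}{}^*/(\CU_{\delta'}^{\mathfrak p}{}^*)^{p^{n-e}}\hookrightarrow \wt\Omega_{n/e}^{\,\mathfrak p}.$$
Here $\CU^{{\mathfrak p}_i}_{\delta'}\simeq\Z_p$ maps onto $g_{n/e}\simeq\Z/p^{n-e}\Z$ with kernel the $p^{n-e}$-th powers. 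Consequently $\omega_{n/\delta'}(E_{\delta'})\simeq \ov E^{\,\mathfrak p}_{\delta'}/(\ov E^{\,\mathfrak p}_{\delta'}\cap (\CU^{\mathfrak p}_{\delta'}{}^*)^{p^{n-e}})$. For $n$ large enough that $p^{n-e}$ kills the finite quotient, this has order $[p^{n-e}]^{\re}/\CR^{\mathfrak p}_{\delta'}$, by comparing $\Z_p$-lattices of rank $\re$.

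Finally, (iii) follows by substituting (ii) into the relative Chevalley--Herbrand formula displayed before the statement:
$$\order\CH_n^{g_{n/\delta'}}=\frac{[p^{n-e}]^{\spp}}{[p^{n-e}]^{\re}/\CR^{\mathfrak p}_{\delta'}}=p^{n-e}\cdot\CR^{\mathfrak p}_{\delta'}.$$
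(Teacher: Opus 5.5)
Your reduction follows the paper's proof closely: the kernel of $\omega_{n/\delta'}$ on units via Lemma~\ref{delta} and the product formulas, and the passage to the norm-one subgroup. Torsion-freeness of $\CU_{\delta'}^{\mathfrak p}/\CU_{\delta'}^{\mathfrak p}{}^*$ is also what makes your arrow into $\wt\Omega_{n/e}^{\,\mathfrak p}$ injective. The remaining steps also match: replacing $\iota_{\mathfrak p}(E_{\delta'})$ by its closure modulo the open subgroup $(\CU_{\delta'}^{\mathfrak p}{}^*)^{p^{n-e}}$, the index count, and substitution into the relative Chevalley--Herbrand formula. The identification of $\CT^{\mathfrak p}_{\delta'}$ with $\CU_{\delta'}^{\mathfrak p}{}^*/\ov E_{\delta'}^{\,\mathfrak p}$ is what the paper does right after its proof. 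However, (i)--(iii) all rest on one non-formal input, $\rk_{\Z_p}\ov E_{\delta'}^{\,\mathfrak p}=\re$ (equivalently $\CR^{\mathfrak p}_{\delta'}\neq 0$), and none of your three arguments proves it.

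\emph{First argument.} $\CT^{\mathfrak p}_{\delta'}$ is the torsion of a finitely generated $\Z_p$-module, so it is finite for free. What is equivalent to $\CR^{\mathfrak p}_{\delta'}\neq 0$ is finiteness of $\CU_{\delta'}^{\mathfrak p}{}^*/\ov E_{\delta'}^{\,\mathfrak p}$, i.e.\ the claim itself, so this is circular.

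\emph{Second argument.} Brumer's Leopoldt theorem for $K_{\delta'}$ concerns the closure of $E_{\delta'}$ in $\CU_{\delta'}^{\mathfrak p}\oplus\CU_{\delta'}^{\ov{\mathfrak p}}$. It gives no lower bound for the rank of the projection to the ${\mathfrak p}$-components, so your formula $1+\re-\rk(\ov E_{\delta'}^{\,\mathfrak p})$ only restates the problem.

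\emph{Third argument.} A ${\mathfrak p}$-ramified $\Z_p$-extension of $K_{\delta'}$ need not be abelian over $k$, so it need not lie in $K_{\delta'}\wt k$. Galois descent plus $p$-rationality of $k$ only control the $\Gal(K_{\delta'}/k)$-coinvariants of $\Gal(H^{{\mathfrak p}\,\pr}_{\delta'}/K_{\delta'})$, i.e.\ the trivial-character part. There rank $1$ is automatic, coming from $K_{\delta'}\ov L$; note that $L$ is $\ov{\mathfrak p}$-ramified over $K_{\delta'}=L_{\delta'}$.

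The nontrivial characters of $G:=\Gal(K_{\delta'}/k)$ require a transcendence input, and this is exactly what the paper invokes (Ax--Baker--Brumer). Concretely:
\begin{itemize}
\item Choose $\eta\in E_{\delta'}$ whose conjugates $\eta^g$, $g\neq 1$, form a basis of $E_{\delta'}\otimes\Q\simeq\Q[G]/\Q\cdot\sum_g g$.
\item Since $\sum_g\log_{{\mathfrak p}_1}(\eta^g)=0$, the nontrivial eigencomponents of $\iota_{\mathfrak p}(\eta)$ in $\CU_{\delta'}^{\mathfrak p}\otimes\ov{\Q}_p\simeq\ov{\Q}_p[G]$ are the numbers $\sum_{g\neq 1}(\chi(g)-1)\log_{{\mathfrak p}_1}(\eta^g)$, for $\chi\neq 1$.
\item The $\log_{{\mathfrak p}_1}(\eta^g)$, $g\neq 1$, are $\Q$-linearly independent, because a unit with vanishing ${\mathfrak p}_1$-adic logarithm is a root of unity (as $K_{\delta',{\mathfrak p}_1}=\Q_p$).
\item By Brumer they are then $\ov{\Q}$-linearly independent, so every such component is non-zero and $\rk_{\Z_p}\ov E_{\delta'}^{\,\mathfrak p}=\re$.
\end{itemize}
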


\begin{proof}
From Lemma \ref{functorial}\,(iv), there exist some specific ``product 
formulas'' for symbols of units $\varepsilon \in E_{\delta'}$, due to $E_k = 1$; 
they are as follows:
\begin{equation*}
\left \{\begin{aligned}
\Big(\ffrac{\varepsilon,K_n/K_{\delta'}} {{\mathfrak p}_\0^{}} \Big) 
& = \Big(\ffrac{\BN_{\delta'/0}(\varepsilon),K_n/k}{\ov {\mathfrak p}} \Big) = 1, \\
\prd_{k = 1}^{\spp} \Big(\ffrac{\varepsilon,K_n/K_{\delta'}}{{\mathfrak p}_k} \Big)
& = \Big(\ffrac{\BN_{\delta'/0}(\varepsilon),K_n/k}{{\mathfrak p}} \Big) = 1.
\end{aligned} \right .
\end{equation*}

Which shows that $\omega_{n/e} : E_{\delta'} \too 
\wt \Omega_{n/e}^{\,\mathfrak p} \simeq (\Z/p^{n-e} \Z)^{\spp}$ will not 
be surjective since the $\Z$-rank of $E_{\delta'}$ is $\re = \spp - 1$.
The kernel of $\omega_{n/e}$ is the subgroup of $E_{\delta'}$
of units $\varepsilon$ such that, for instance,
$\big(\frac{\varepsilon\,,\,K_n/K_{\delta'}}{{\mathfrak p}_i} \big) = 1$
for all $i \in [1, \re]$; indeed, using the two product formulas, it follows that,
for such a unit, 
$\big(\frac{\varepsilon\,,\,K_n/K_{\delta'}}{{\mathfrak p}_{\spp}} \big) = 
\big(\frac{\varepsilon\,,\,K_n/K_{\delta'}}{{\mathfrak p}_0} \big) = 1$
automatically.

The order of these symbols 
$\big(\frac{\varepsilon\,,\,K_n/K_{\delta'}}{{\mathfrak p}_i} \big) \in g_{n/e}$ is $1$; 
this means, from Lemma \ref{delta}, that, locally, $\iota_{{\mathfrak p}_i}
(\varepsilon) \in (\CU_{\delta'}^{{\mathfrak p}_i})^{p^{n-e}}$, for all 
$i \in [1, \spp]$, which may be written
$\iota_{\mathfrak p}(\varepsilon) \in (\CU_{\delta'}^{\mathfrak p})^{p^{n-e}}$, 
where $\iota_{\mathfrak p}$ is the diagonal embedding $E_{\delta'} \too 
\CU_{\delta'}^{\mathfrak p} := \hbox{$\bigoplus_{i=1}^{\spp}$} 
\CU_{\delta'}^{{\mathfrak p}_i}$. 
Since the reciprocal is then obvious,
$$\Ker(\omega_{n/{\delta'}}) = \{\varepsilon \in E_{\delta'},\  \iota_{\mathfrak p}
(\varepsilon) \in (\CU_{\delta'}^{\mathfrak p})^{p^{n-e}}\}. $$

Since $\BN_{\delta'/0}(\varepsilon) = 1$ and $\tor_{\Z_p} (\CU_k) = 1$, 
if $\iota_{\mathfrak p}(\varepsilon) = u^{p^{n-e}}$ this implies 
$\BN_{\delta'/0}(u) = 1$, hence $u \in \CU_{\delta'}^{\mathfrak p}{}^*$. 
Whence the exact sequence: 
$$1 \too \Ker(\omega_{n/{\delta'}})/E_{\delta'}^{p^{n-e}} \tooo E_{\delta'}/E_{\delta'}^{p^{n-e}} 
\mathop {\tooo}^{\iota_{\mathfrak p}} \CU_{\delta'}^{\mathfrak p}{}^* /
(\CU_{\delta'}^{\mathfrak p}{}^*)^{p^{n-e}}, $$

\noindent
where the image is $\iota_{\mathfrak p}(E_{\delta'})
(\CU_{\delta'}^{\mathfrak p}{}^*)^{p^{n-e}} /
(\CU_{\delta'}^{\mathfrak p}{}^*)^{p^{n-e}} =
\ov E_{\delta'}^{\,\mathfrak p} (\CU_{\delta'}^{\mathfrak p}{}^*)^{p^{n-e}}/
(\CU_{\delta'}^{\mathfrak p}{}^*)^{p^{n-e}}$ since for all 
$\ov \varepsilon \in \ov E_{\delta'}^{\,\mathfrak p}$ and $N \geq n - e$,
there exists $\varepsilon_N \in E_{\delta'}$ such that $\ov \varepsilon \in
\iota_{\mathfrak p}(\varepsilon_N) (\CU_{\delta'}^{\mathfrak p}{}^*)^{p^N}$.

\smallskip
It is known, from Ax--Baker--Brumer, that the $\Z_p$-rank of $\ov E_{\delta'}^{\,\mathfrak p}$ 
is $\re$ (analogue, for imaginary quadratic fields, of the proof of Leopoldt's
conjecture for real abelian extensions of $\Q$); its rank is equal to that of 
$\CU_{\delta'}^{\mathfrak p}{}^*$. So, the index 
$[\CU_{\delta'}^{\mathfrak p}{}^* : \ov E_{\delta'}^{\,\mathfrak p}]$
is finite and for $n$ large enough, $(\CU_{\delta'}^{\mathfrak p}{}^*)^{p^{n-e}} 
\subseteq \ov E_{\delta'}^{\,\mathfrak p}$
and  $\iota_{\mathfrak p}(E_{\delta'}/E_{\delta'}^{p^{n-e}}) = 
\ov E_{\delta'}^{\,\mathfrak p}/(\CU_{\delta'}^{\mathfrak p}{}^*)^{p^{n-e}}$, 
giving the final exact sequence:
$$1 \too \Ker(\omega_{n/{\delta'}})/E_{\delta'}^{p^{n-e}} \tooo E_{\delta'}/E_{\delta'}^{p^{n-e}} 
\mathop {\tooo}^{\iota_{\mathfrak p}} \ov E_{\delta'}^{\,\mathfrak p}/
(\CU_{\delta'}^{\mathfrak p}{}^*)^{p^{n-e}} \too 1; $$

\noindent
thus, $\order {\rm Im}(\iota_{\mathfrak p}) = 
\ffrac{[\CU_{\delta'}^{\mathfrak p}{}^* : (\CU_{\delta'}^{\mathfrak p}{}^*)^{p^{n-e}}]}
{[\CU_{\delta'}^{\mathfrak p}{}^* :  \ov E_{\delta'}^{\,\mathfrak p}]}
= \ffrac{[p^{n - e}]^\re}{\CR^{\mathfrak p}_{\delta'}}$, then the  
Chevalley--Herbrand formula (iii).
\end{proof}

From class field theory in the tricky case of incomplete $p$-ramification 
(see for instance \cite[III.1\,(c)]{Gra2005} for more details, where all possible
sets of $p$-places are considered), we have (since $K_{\delta'}$ is $p$-principal)
$\Gal(H^{{\mathfrak p}\,\pr}_{\delta'}/
K_{\delta'}) \simeq \CU_{\delta'}^{\mathfrak p}/\ov E_{\delta'}^{\,\mathfrak p}$; we 
deduce that $\Gal(H^{{\mathfrak p}\,\pr}_{\delta'}/K_{\delta'})$ is of $\Z_p$-rank $1$,
whence $\Gal(H^{{\mathfrak p}\,\pr}_{\delta'}/K_{\delta'}) \simeq \Z_p \oplus 
\CT^{\mathfrak p}_{\delta'}$, where the torsion group $\CT^{\mathfrak p}_{\delta'}$
may be seen as the analog of $\CT_F = \tor_{\Z_p}^{} \Gal(H_F^\pr/F)$ for general number 
fields $F$, but in incomplete $p$-ramification; since 
$\tor_{\Z_p}^{}(\CU_{\delta'}^{\mathfrak p}/\ov E_{\delta'}^{\,\mathfrak p}) = 
\CU_{\delta'}^{\mathfrak p}{}^*/\ov E_{\delta'}^{\,\mathfrak p}$, its order is 
given by the regulator $\CR^{\mathfrak p}_{\delta'}$.

\smallskip
In \S\,\ref{torsion} we give a {\sc pari/gp} program computing 
$\CT^{\mathfrak p}_{\delta'}$ and prove that $\CT^{\mathfrak p}_{\delta'} 
\ne 1$ for $e \ne 0$.

\begin{remarks}\label{remas}
(i) Note that, in $\CU_{\delta'}^{{\mathfrak p}_\0}{}^*$, the fact that 
$\varepsilon$ is local norm at ${\mathfrak p}_\0$ does not imply 
$\varepsilon \in (\CU_{\delta'}^{{\mathfrak p}_\0}{}^*)^{p^{n-e}}$;
otherwise we would have used the following result \cite[Theorems 
2.3, 3.5\,(ii), Definition 5.1]{Gra2018}, which, perhaps, suggests improvements
in incomplete $p$-ramification:

\smallskip
Let $F$ be a number field satisfying the Leopoldt conjecture at the 
prime $p$. Let $E_F$ be the group of $p$-principal global units of 
$F$ and let $\CU_F$ be the group of principal local units at $p$.
The Kummer--Leopoldt constant $\kappa^{}_F$ is the smallest 
integer $c$ such that the following condition (restricted to
our particular context) is fulfilled:

\smallskip
\centerline{\it For all $n \geq 0$, any unit $\varepsilon \in E_F$, such that 
$\varepsilon \in \CU_F^{p^{n+c}}$, is necessarily in $E_F^{p^n}$.}

\medskip
Then, the exponent of the normalized $p$-adic regulator $\CR_F$
is equal to $p^{\kappa^{}_F}$. If $F$ is $p$-rational, then $\kappa^{}_F = 0$.
Unfortunately, here, the condition on the place ${\mathfrak p}_\0$ is not fulfilled.

\smallskip
(ii) Let $\wt {K_{\delta'}}$ be the compositum of the 
$\Z_p$-extensions of $K_{\delta'}$, equal to $H_{\delta'}^\pr$, the 
maximal abelian $p$-ramified pro-$p$ extension of $K_{\delta'}$, and
let $H^{{\mathfrak p}\,\pr}_{\delta'} \subseteq H_{\delta'}^\pr$ 
be the maximal abelian ${\mathfrak p}$-ramified pro-$p$ extension of 
$K_{\delta'}$. 
Since the usual regulator $\CR_{\delta'}$ is trivial, 
$\tor_{\Z_p}(\CU_{\delta'}^*/\ov E_{\delta'})= 1$.
Thus $\ov E_{\delta'}$ is a direct factor of $\CU_{\delta'}^*$.
The $\Z_p$-modules $\ov E_{\delta'}$ and 
$\ov E_{\delta'}^{\,\mathfrak p}$ are free of same $\Z_p$-rank $\re$.

\smallskip
(iii) Since the Fermat quotients defining the $\delta_i(\varepsilon_j)$'s, for 
$i \in [1, \spp]$, are often trivial, the orders of the $\omega_{n/\delta'}
(\varepsilon_j)$ are often $p^{n - e}$. Many conjectural aspects of the
$p$-adic regulators of algebraic numbers have been discussed in
\cite{Gra2016, Gra2019c}, being aware that they are currently inaccessible.
See also \cite{Mai2002, Mai2005},  \cite{Nel2013} for similar subjects. 

\smallskip
(iv) Similar property of units has been used in \cite{OT1995} for the 
totally real base fields and the cyclotomic $\Z$-extension, then in 
\cite{Gra2017b, Gra2019a}, still about Greenberg totally real conjectures.
This type of reasoning is necessary encountered in Iwasawa's theory of imaginary 
base fields, as for instance in \cite[Lemma 4]{Oza2001}, \cite[Theorem 1.3]{Yan2026}. 

\smallskip
For a general setting, larger conjectures, and proofs following the
techniques of Ax--Baker--Brumer and others, see \cite{Jau1985}.
\end{remarks}

Note that the $\Z_p$-rank
of $\ov E_{\delta'}^{\ov {\mathfrak p}}$ is also $\re$, which is largely 
confirmed by the {\sc pari/gp} program given in \S\,\ref{torsion}.

\section{General Chevalley--Herbrand formulas}

\subsection{Invariant classes in \texorpdfstring{$K_n/k$}{Lg}}
The Chevalley--Herbrand formula in $K_n/k$, when $\CH_k = 1$,
gives, for $n$ large enough (without any hypotheses on $\delta$, $e$): 
\begin{equation}\label{CH1}
\order \CH_n^{G_n} = \frac{\order \CH_k \times [p^{n-e}
\cdot p^n]}{p^n \times 1} = p^{n-e}. 
\end{equation}

Let $\CP$, $\CP_\0$ be the products of prime ideals of 
$K_n$ above ${\mathfrak p}$, ${\ov {\mathfrak p}}$, respectively; then
we have $\CP = {\mathfrak P}_{1} \cdots {\mathfrak P}_\spp$,
$\CP_\0 = {\mathfrak P}_{0}$, where ${\mathfrak P}_{j}$ denotes the prime 
ideals of $K_n$ above ${\mathfrak p}$ for $j \in [1, \spp]$ and 
${\mathfrak P}_{0}$ above $\ov{\mathfrak p}$. Then $\CH_n^{G_n} = \CH_n^\ram = 
\Ccl_n (\langle\CP,\CP_\0 \rangle)$; whence: 
$$\BN_{n/0}(\CH_n^{G_n}) = \BN_{n/0} (\langle \CP, \CP_\0 \rangle)
= \langle {\mathfrak p}^{p^e}, \ov {\mathfrak p} \rangle  = 
\langle x^{p^e}, \ov x \rangle, $$ 

\noindent
up to prime-to-$p$ powers.
The first element $\CH_n^1:= \CH_n^{G_n}$ of the filtration of $\CH_n$ and the second one
$\CH_n^2$, defined by $\CH_n^2/\CH_n^1 = (\CH_n/\CH_n^1)^{G_n}$,
give an order reduced to the norm factor $\ffrac{\order \wt \Omega_{n/0}^{\,\mathfrak p}}
{\order \omega_{n/0}(\langle x^{p^e}, \ov x \rangle)}$, where $\wt \Omega_{n/0}^{\,\mathfrak p}
\simeq \Z/p^{n-e} \Z$; 
then $\omega_{n/0}(x) = \big(\frac{x, K_n/k}{{\mathfrak p}} \big)$, 
$\omega_{n/0}(\ov x) = \big(\frac{\ov x, K_n/k}{{\mathfrak p}} \big)$, are of 
orders $p^{n-\ov e-\delta} = p^{n - \delta}$, 
$p^{n-e-\delta}$, respectively (Corollary \ref{coro} to Proposition \ref{mainorder}); 
so, $\omega_{n/0}(x)^{p^e}$ is of order $p^{n-e-\delta}$, thus
$\order(\CH_n^2/\CH_n^1) = \ffrac{p^{n-e}}{p^{n-e-\delta}} = p^\delta$,
giving $\order \CH_n^2 = p^{n - e + \delta}$, for $n$ large enough,
hence $\order \CH_n \geq p^{n - e + \delta}$. One finds again the
case $\delta = 0$ for which $\CH_n^2 = \CH_n^1$, thus 
$\order \CH_n = p^{n-e}$ for all $n \geq e$ (\cite[Theorem 7.1]
{Gra2026b} and \cite[Th\'eor\`eme~11]{Jau2026}).

\smallskip
We have $\order \langle \Ccl_n(\CP), \Ccl_n(\CP_\0)\rangle  = p^{n-e}$
without information on the structure of this group (see \S\,\ref{acyclo}
giving this structure for the first layer of the anti-cyclotomic 
$\Z_3$-extensions).

\subsection{Invariant \texorpdfstring{$S_n$-classes in $K_n/k$}{Lg}}\label{Sn}
Let $S_n := \{ {\mathfrak p}_{1}, \ldots, {\mathfrak p}_{\spp}, {\mathfrak p}_{0}\}$,
$S_n^{\mathfrak p} := \{ {\mathfrak p}_{1}, \ldots, {\mathfrak p}_{\spp}\}$
and $S_n^{\ov {\mathfrak p}} := \{ {\mathfrak p}_\0 \}$, in $K_n$.
Let $E_k^{S_k} = \langle x, \ov x \rangle$, $E_k^{S_k^{\mathfrak p}} = 
\langle x \rangle$, $E_k^{S_k^{\ov {\mathfrak p}}} = \langle \ov x \rangle$,
be the corresponding groups of $S_k$-units of~$k$.
The local degrees $d_v$ of $K_n/k$ at ${\mathfrak p}$, $\ov {\mathfrak p}$, are
$p^{n - \delta'}$, $p^n$, where $\delta' := \min(e,\delta)$,
and the ramification indices $\rho_v$ are $p^{n-e}$, $p^n$. To compute $\order 
(\CH_n^{T_n})^{G_n}$, $T_n \in \{S_n, S_n^{\mathfrak p}, S_n^{\ov {\mathfrak p}} \}$, 
one must take into account the local degrees and/or the ramification indices, depending
on $T_n$ (\cite[Th\'eor\`eme III.1.9, p. 177]{Jau1986}, \cite[Corollary 3.9]{Gra2017a});
for $n$ large enough:
\begin{equation}\label{CH3}
\left \{ \begin{aligned}
(i)\ \  \order (\CH_n^{S_n})^{G_n} & = 
\frac{\order \CH_k^{S_k} \cdot \prod_{v \in S_k} d_v} 
{[K_n : k] \cdot (E_k^{S_k} : E_k^{S_k} \cap \BN_{n/0} (K_n^\times))} 
 = \frac{p^{n - \delta'} \cdot p^n}{p^n \cdot 
\order \omega_{n/0} (\langle x, \ov x \rangle)} \\
& \hspace{-1.3cm} = \frac{p^{n - \delta'}}
{\order \langle \omega_{n/0} (x), \omega_{n/0}(\ov x) \rangle} 
 = \frac{p^{n - \delta'}}{\order \langle \omega_{n/0} (x) \rangle} 
= p^{\delta - \delta'} =
\left \{ \begin{aligned}
& 1 & (e \geq \delta) \\
& p^{\delta - e} & (e < \delta) 
\end{aligned} \right. \\
& \hspace{-2.5cm} \hbox{Then formulas with $S_k^{\mathfrak p} = \langle x \rangle$ 
and $S_k^{\ov {\mathfrak p}} = \langle \ov x \rangle$:} \\
(ii)\ \,  \order (\CH_n^{S_n^{\mathfrak p}})^{G_n} & =
\frac{p^{n - \delta'} \cdot p^n}{p^n \cdot \order \langle \omega_{n/0} (x) \rangle}
= p^{\delta - \delta'} =
\left \{ \begin{aligned}
& 1 & (e \geq \delta) \\
& p^{\delta - e} & (e < \delta) 
\end{aligned} \right. \\
(iii)\  \order (\CH_n^{S_n^{\ov{\mathfrak p}}})^{G_n} & = 
\frac{ p^n \cdot p^{n - e}}{p^n \cdot \order \langle \omega_{n/0} (\ov x) \rangle}
= p^\delta.
\end{aligned} \right.
\end{equation}

We observe the particular case $e \geq \delta$ giving $\CH_n^{S_n} 
= \CH_n^{S_n^{\mathfrak p}} = 1$ in (i) and (ii), hence the trick described in 
the Introduction and that we consider in the next section. 

\smallskip
Formula (ii) has some importance since reasonings with the set of 
primes above ${\mathfrak p}$ is much simpler. In the case $e \geq \delta$, 
Formulas (i) and (ii) lead to $\Ccl_n ({\mathfrak p}_\0) \in \Ccl_n(S_n^{\mathfrak p})$;
so, this implies $\CH_n^{G_n} = \langle \Ccl_n(\CP)\rangle$ (of order $p^{n-e}$)
and $\Ccl_n(\CP_\0) = \Ccl_n(\CP)^a$, $a \in \Z_p^\times$.

\section{The case \texorpdfstring{$e \geq \delta$}{Lg}}

In this section, we refer especially to Theorem \ref{Ptorsion}\,(iii) and 
to Formulas \eqref{CH3} in which  $\delta' = \min(e, \delta) = \delta$,
for $e \geq \delta$.

\subsection{Main result}
The case $e \geq \delta$ yields the following infinite family of $\Z_p$-extensions 
with minimal Iwasawa's invariants (i.e., non-exceptional $\Z_p$-extensions):

\begin{theorem}\label{fundamental}
Let $k$ be an imaginary quadratic field, and let $p \geq 3$, split in $k$, such
that $\CH_k = 1$.  Let $\delta = v_{\ov {\mathfrak p}}(x^{\,p-1}-1) - 1$, where $x$ 
is the fundamental $S_k$-unit given by ${\mathfrak p}$. 
Let $K/k$ be a $\Z_p$-extension distinct from $L, \ov L$; put $K \cap L =: K_e$,
and assume $e \geq \delta$. Let $S_n^{\mathfrak p}$, for $n \geq \delta$,
be the set of the $\spp := p^\delta$ prime ideals of $K_n$ above ${\mathfrak p}$.

\smallskip
Then $\CH_n = \Ccl_n (\langle S_n^{\mathfrak p} \rangle) = 
\Ccl_n (\langle {\mathfrak p}_1, \ldots, {\mathfrak p}_\spp \rangle)$, 
$\lambda(K/k) = 1$, $\mu(K/k) = 0$, $\nu(K/k) = t_\delta - e$, where $p^{t_\delta} = 
[\CU_\delta^{\mathfrak p}{}^* : \ov E_\delta^{\,\mathfrak p}]$ is the order of
$\CT^{\mathfrak p}_\delta := \tor_{\Z_p} 
(\Gal(H^{{\mathfrak p}\,\pr}_\delta)/K_\delta)$. 
\end{theorem}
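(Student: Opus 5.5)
The plan is to combine the vanishing of the $S_n^{\mathfrak p}$-class group with the relative Chevalley--Herbrand formula of Theorem \ref{Ptorsion}\,(iii), taking $\delta' = \delta$ throughout since $e \geq \delta$.

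\emph{First step: $\CH_n^{S_n^{\mathfrak p}} = 1$.} By Formula \eqref{CH3}\,(ii), for $n$ large enough, $\order (\CH_n^{S_n^{\mathfrak p}})^{G_n} = p^{\delta-\delta'} = 1$. Since $G_n$ is a $p$-group acting on the finite $p$-group $\CH_n^{S_n^{\mathfrak p}}$, a trivial fixed-point group forces $\CH_n^{S_n^{\mathfrak p}} = 1$. Because $\CH_n^{S_n^{\mathfrak p}}$ is the quotient of $\CH_n$ by the classes of the primes in $S_n^{\mathfrak p}$, this gives
$$\CH_n = \Ccl_n(\langle {\mathfrak p}_1, \ldots, {\mathfrak p}_\spp\rangle).$$
For $n \geq \delta$ the primes above ${\mathfrak p}$ in $K_n$ are exactly the $\spp = p^\delta$ primes extending those of $K_\delta$, since ${\mathfrak p}$ is inert in $K_e/K_\delta$ and ramified above $K_e$. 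Each such prime is therefore fixed by the decomposition group $g_{n/\delta}$, and so $\CH_n = \CH_n^{g_{n/\delta}}$.

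\emph{Second step: the order and the invariants.} Theorem \ref{Ptorsion}\,(iii) now gives, for $n$ large,
$$\order \CH_n = p^{n-e}\cdot \CR^{\mathfrak p}_\delta = p^{\,n - e + t_\delta},$$
where $p^{t_\delta} = [\CU_\delta^{\mathfrak p}{}^* : \ov E_\delta^{\,\mathfrak p}] = \order \CT^{\mathfrak p}_\delta$ by part (i) and the subsequent identification of the torsion of $\Gal(H^{{\mathfrak p}\,\pr}_\delta/K_\delta)$. Comparing with Iwasawa's formula $\order \CH_n = p^{\lambda n + \mu p^n + \nu}$ for $n \gg 0$ yields $\mu = 0$, $\lambda = 1$ and $\nu = t_\delta - e$. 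Since both $p^{\delta-\delta'}$ and $\CR^{\mathfrak p}_\delta$ are independent of $n$, one formula holds from some explicit $n$ on, which is all Iwasawa's formula requires.

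\emph{Expected main obstacle.} The delicate point is justifying that Theorem \ref{Ptorsion}\,(iii) and \eqref{CH3}\,(ii) hold in the needed generality, in particular the "for $n$ large enough" clauses. In \eqref{CH3}\,(ii) this means verifying that $\omega_{n/0}(x)$ has order exactly $p^{n-\delta}$ in $g_{n/\delta}$ (Proposition \ref{mainorder} with $\ov e = 0$, valid for $n \geq e + \delta$). In Theorem \ref{Ptorsion} it means that $(\CU_\delta^{\mathfrak p}{}^*)^{p^{n-e}} \subseteq \ov E^{\,\mathfrak p}_\delta$, which relies on the finiteness of the index via the Ax--Baker--Brumer rank statement. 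I would simply cite these as established earlier. The only genuine check is the local-degree bookkeeping in \eqref{CH3}\,(ii): with $e \geq \delta$ the local degree at ${\mathfrak p}$ is $p^{n-\delta}$, exactly cancelling the order of $\omega_{n/0}(x)$, which is what makes the $S^{\mathfrak p}$-class group vanish.
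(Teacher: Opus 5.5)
Your proposal is correct and follows essentially the same route as the paper: Formula \eqref{CH3}\,(ii) with $\delta'=\delta$ forces the $S_n^{\mathfrak p}$-class group to vanish, so $\CH_n$ is generated by the $g_{n/\delta}$-invariant classes of ${\mathfrak p}_1,\ldots,{\mathfrak p}_\spp$, whence $\CH_n=\CH_n^{g_{n/\delta}}$, and Theorem \ref{Ptorsion}\,(iii) then gives $\order\CH_n=p^{n-e}\cdot\CR^{\mathfrak p}_\delta$. The only difference is that you make explicit a step the paper leaves implicit: a $p$-group acting on a finite $p$-group with trivial fixed points forces that group to be trivial. Like the paper, you obtain the equalities only for $n$ large, which is all that $\lambda,\mu,\nu$ require.
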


\begin{proof}
From Formula \eqref{CH3}\,(ii) for $e \geq \delta$, we get 
$\CH_n = \Ccl_n (\langle S_n^{\mathfrak p} \rangle)$; thus $\CH_n \subseteq 
\CH_n^{g_{n/\delta}}$ since each prime ideals ${\mathfrak p}_i$ of $S_n^{\mathfrak p}$ 
is invariant under $g_{n/\delta}$ (being above those of $S_\delta^{\mathfrak p}$ which
are ramified in $K_n/K_e$ and inert in $K_e/K_\delta$); whence $\CH_n = 
\CH_n^{g_{n/\delta}}$. From the Chevalley--Herbrand formula of 
Theorem \eqref{Ptorsion}\,(iii), we obtain:
$$\order \CH_n = p^{n - e}\cdot \CR^{\mathfrak p}_\delta = 
p^{n - e}\cdot \order \CT^{\mathfrak p}_\delta, $$

\noindent
hence the result.
\end{proof}

If $\CH_k = 1$ and $\delta = 0$, then $\spp = 1$ and $\CT^{\mathfrak p}_\delta = 1$; 
one finds again Gold--Sands criterion \cite{San1993}, then \cite[Theorem\,10.7]{Gra2026b} 
and \cite[Th\'eor\`emes 9, 11]{Jau2026}.

\subsection{Interpretation of Chevalley--Herbrand formulas 
and regulators}

We will introduce some genus fields to interpret the previous results and
to deduce some properties. 

\subsubsection{Definitions -- Notations}
We consider 
the diagram below, where $K$ is any $\Z_p$-extension distinct from $L, \ov L$, 
and where $H_K^\nr = \bigcup_n K H_n^\nr$ is the $p$-Hilbert class field of $K$, 
with $\Gal(H_K^\nr/K) \simeq \CH_K := \ds \limproj_n \CH_n$ (the $p$-class group 
of $K$). 

The $\Z_p$-extension $K/k$ is characterized by a generator $\wt\sigma_K^{}$ 
of $G_K = \Gal(\wt k/K)$ (cf. Proosition \ref{3cases}). 

\smallskip
In view of Theorem \ref{C}, we assume $e \geq \delta$.

\smallskip 
Let $H_{n/0}^\gen$ be the genus field of $K_n/k$ (maximal unramified 
$p$-extension of $K_n$, abelian over~$k$). Since $K_n/k$ is cyclic, $H_{n/0}^\gen$ 
is the subfield of the Hilbert class field $H_n^\nr$ of $K_n$ fixed by the image
of $\CH_n^{1 - \sigma_{n/0}}$, where $\sigma_{n/0}$ is a generator of $G_n = 
\Gal(K_n/k)$. Since $H_{n/0}^\gen/k$ is abelian and $p$-ramified, $H_{n/0}^\gen 
\subset H_k^\pr = \wt k$. Then, $\wt k = \bigcup_n K H_{n/0}^\gen$ since 
$[K H_{n/0}^\gen : K] = \order \CH_n^{G_n} = p^{n - e}$ yields 
$\rk_{\Z_p}(\Gal(\bigcup_n K H_{n/0}^\gen/K)) = 1$. 

\smallskip
Let $H_{\wt k}^\nr = \bigcup_{F \subset \wt k,\, F/k\, {\rm finite}} 
\wt k H_F^\nr$ be the $p$-Hilbert class field of $\wt k$, where 
$\Gal(H_{\wt k}^\nr/\wt k) \simeq \CH_{\,\wt k} = \ds \limproj _F\CH_{F}$
($p$-class group of $\wt k$).

Similarly, let $H_{n/\delta}^\gen$ be the genus field of $K_n/K_\delta$; it
is of degree $\order \CH_n^{g_{n/\delta}}\!=\!
p^{n-e} \cdot \CR_\delta^{\mathfrak p}$ over $K_n$ (Theorem \ref{Ptorsion}\,(iii)),
hence of constant degree $\CR_\delta^{\mathfrak p}$ over $H_{n/0}^\gen$, and
is fixed by the image of $\CH_n^{1 - \sigma_{n/\delta}}$, where $\sigma_{n/\delta}$
is a generator of $g_{n/\delta}$.

\smallskip
The field $H_\delta^{{\mathfrak p}\,\pr}$ is the maximal
abelian ${\mathfrak p}$-ramified pro-$p$-extension of $K_\delta$, and 
$\wt{H_\delta^{\mathfrak p}}/K_\delta$ the unique ${\mathfrak p}$-ramified
$\Z_p$-extension of $K_\delta$, hence the compositum $K_\delta \ov L$:

\subsubsection{First diagram}\label{diagram1}

Some parts of the diagram (like $H_n^\nr = H_{n/\delta}^\gen$ and $H_K^\nr 
= \bigcup_n K H_{n/\delta}^\gen$) will be justified in the proof of the following 
Theorem \ref{C}, still in the $p$-principal $p$-split case for $k$, with $e \geq \delta$:

\unitlength=1.54cm 
\begin{equation*}
\begin{aligned}
\vbox{\hbox{\hspace{-10.2cm} 
\begin{picture}(10.0,4.7)
\put(6.6,2.50){\line(1,0){1.45}}
\put(6.0,2.4){$H_{n/0}^\gen$}
\bezier{60}(6.2,2.6)(7.6,3.1)(9.0,3.6)
\bezier{60}(6.3,2.6)(7.7,3.1)(9.1,3.6)
\bezier{60}(8.3,2.6)(9.7,3.1)(11.1,3.6)
\bezier{60}(8.4,2.6)(9.8,3.1)(11.2,3.6)
\put(8.75,2.48){\line(1,0){2.35}}
\put(8.75,2.52){\line(1,0){2.35}}
\put(11.1,2.4){$H_n^\nr$}
\put(8.15,2.4){$H_{n/\delta}^\gen$}
\put(3.3,2.4){$K_n$}
\put(3.7,2.50){\line(1,0){2.2}}
\put(7.2,2.6){\tiny$\CT_\delta^{\mathfrak p}$} 
\put(4.25,2.6){\tiny$\order \CH_n^{G_n}\!=\!p^{n-e}$}
\put(3.45,2.75){\line(0,1){0.8}}
\put(6.3,2.75){\line(0,1){0.8}}
\put(3.35,3.6){$K$}
\bezier{500}(3.4,3.91)(10.1,5.15)(11.3,4.0)
\ft\put(7.9,4.6){$\CH_{K}$}\ns
\put(3.6,3.7){\line(1,0){2.3}}
\put(4.25,3.5){\tiny${\rm unramified}$}
\put(6.0,3.6){$K H_{n/0}^\gen$}
\put(6.8,3.7){\line(1,0){2.1}}
\put(9.0,3.6){$\wt k$}
\put(9.2,3.7){\line(1,0){1.9}}
\put(11.1,3.6){$H_K^\nr$}
\put(11.55,3.7){\line(1,0){1.2}}
\put(12.8,3.62){$H_{\wt k}^\nr$}
\bezier{300}(9.15,3.9)(10.9,4.1)(11.3,3.9)
\bezier{300}(9.08,3.55)(12.8,3.15)(12.9,3.54)
\put(12.1,3.2){\tiny$\CH_{\,\wt k}$}
\put(9.95,4.1){\tiny$\CH_K^{1 - \sigma_{\infty/0}}$}
\put(10.0,3.54){\tiny$\CT_\delta^{\mathfrak p}$}
\bezier{400}(3.4,3.9)(6.15,4.15)(9.0,3.9)
\put(5.7,4.10){\tiny$G_K\!=:\!\langle \wt\sigma_K^{} \rangle\! \simeq\! \Z_p$}
\bezier{300}(11.4,3.86)(12.15,4.05)(12.9,3.86)
\put(11.8,4.05){\tiny$Y_{\wt k} \!\simeq\! \CH_{\,\wt k}^{1-\wt\sigma_K^{}}$}
\ft\put(8.45,3.0){\tiny$\CH_n^{1\!-\!\sigma_{n/0}}$}\ns
\bezier{300}(6.5,2.7)(8.8,3.15)(11.2,2.65)
\ft\put(9.24,1.92){\tiny$\CH_n^{1\!-\!\sigma_{n/\delta}}\! \!=\! 1$}\ns 
\bezier{300}(8.55,2.25)(9.8,1.98)(11.2,2.3)
\put(3.45,1.4){\line(0,1){0.85}}
\put(3.28,1.05){$K_\delta$}
\put(3.45,0.35){\line(0,1){0.6}}
\put(3.38,0.0){$k$}
\put(3.6,1.08){\line(1,0){2.0}}
\put(5.9,1.08){\line(1,0){1.75}}
\ft\put(5.6,1.02){$\wt{H_\delta^{\mathfrak p}}$}\ns
\ft\put(7.7,1.02){${H_\delta^{{\mathfrak p}\,\pr}}$}\ns
\put(6.75,1.16){\tiny$\CT_\delta^{\mathfrak p}$}
\put(4.6,1.16){\tiny$\Z_p$}
\bezier{200}(7.9,1.2)(9.6,2.4)(11.3,3.6)
\bezier{200}(5.7,1.2)(7.4,2.4)(9.1,3.6)
\put(3.6,0.05){\line(1,0){2.0}}
\put(5.65,0.0){\tiny$\ov L$}
\put(5.7,0.20){\line(0,1){0.75}}
\put(4.2,-0.08){\hbox{\tiny${\mathfrak p}{\rm\!-\!ramified}$}}
\bezier{500}(3.6,1.0)(10.5,-0.5)(11.3,3.54)
\bezier{400}(3.6,1.15)(7.8,1.4)(8.4,2.3)
\put(9.6,1.04){\tiny${\rm abelian}$}
\bezier{400}(3.5,2.3)(4.2,1.8)(8.4,2.35)
\ft\put(4.3,1.85){\tiny$\order \CH_n^{g_{n/\delta}}\!=\! 
p^{n-e}\! \cdot \order\! \CT_\delta^{\mathfrak p}$} \ns 
\bezier{300}(3.65,0.08)(5.3,0.4)(6.15,2.25)
\put(7.5,1.62){\tiny${\rm abelian}$}
\put(4.73,0.4){\tiny${\rm abelian}$}
\ft\put(3.5,1.75){$g_{n/\delta}$}\ns
\put(3.5,0.58){\tiny${\mathfrak p}{\rm -split}$}
\end{picture}}} 
\end{aligned}
\end{equation*}
\unitlength=1.0cm

\smallskip
\begin{theorem}\label{C}
Let $K/k$ be a $\Z_p$-extension, $K \ne L$, $\ov L$, and put $K \cap L =: K_e$,
$e \geq \delta$. Let $H_K^\nr$ (resp. $H_{\wt k}^\nr$) be the $p$-Hilbert 
class field of $K$ (resp. $\wt k$), let $\CH_K$ (resp. $\CH_{\,\wt k}$) be the $p$-class group 
of $K$ (resp. $\wt k$). We have the exact sequences, where 
$Y_{\wt k} = \Gal(H_{\wt k}^\nr/H_K^\nr)$:
\begin{equation*}
\left \{\begin{aligned}
1 \to Y_{\wt k} & \too \Gal(H_{\wt k}^\nr/K) \too \Gal(H_K^\nr/K)
\simeq G_K \oplus \CT_\delta^{\mathfrak p} \to 1, \\
1 \to Y_{\wt k}  &\too \Gal(H_{\wt k}^\nr/K_\delta) \too \Gal(H_K^\nr/K_\delta)
\simeq \Z_p \oplus G_K \oplus \CT_\delta^{\mathfrak p} \to 1, 
\end{aligned} \right .
\end{equation*}

\noindent
where $\CT_\delta^{\mathfrak p}$ (of order $\CR_\delta^{\mathfrak p}$) is 
isomorphic to $\Gal(H_K^\nr/\wt k)$. Then $H_K^\nr$ is the genus field of 
$\wt k/K$ whence $Y_{\wt k} \simeq \CH_{\,\wt k}^{1-\wt\sigma_K^{}}$; it is 
also the genus field of $\wt k/K_\delta$.
\end{theorem}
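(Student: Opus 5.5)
We would first establish the finite-level picture and then pass to the limit. By Theorem \ref{fundamental} (case $e\ge\delta$), $\CH_n=\CH_n^{g_{n/\delta}}$, so $\CH_n^{1-\sigma_{n/\delta}}=1$ and the genus field $H_{n/\delta}^\gen$ of $K_n/K_\delta$ coincides with $H_n^\nr$. By Theorem \ref{Ptorsion}\,(iii) it has degree $p^{n-e}\cdot\CR_\delta^{\mathfrak p}$ over $K_n$, while $H_{n/0}^\gen$ has degree $p^{n-e}$ by \eqref{CH1}. Since $H_{n/0}^\gen/k$ is abelian and $p$-ramified, it lies in $\wt k$. Since $K H_{n/0}^\gen$ has $\Z_p$-rank growing to $1$ over $K$, we get $\bigcup_n K H_{n/0}^\gen=\wt k$; in particular $\wt k\subseteq H_K^\nr=\bigcup_n K H_n^\nr$, and $\wt k/K$ is unramified, as noted in \S\ref{ramdec}.

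The next step is to identify $\Gal(H_K^\nr/\wt k)$ with $\CT_\delta^{\mathfrak p}$. We have $H_n^\nr=H_{n/\delta}^\gen$ abelian over $K_\delta$ and unramified over $K_n$, hence ramified over $K_\delta$ only at primes above ${\mathfrak p}$ (the prime above $\ov{\mathfrak p}$ being totally ramified in $K_n/K_\delta$ and splitting in the unramified part). So $H_n^\nr\subseteq H_\delta^{{\mathfrak p}\,\pr}\cdot K_n$. Conversely, the compositum $K H_\delta^{{\mathfrak p}\,\pr}$ is unramified over $K$ at ${\mathfrak p}$-places for large layers, because the inertia in $H_\delta^{{\mathfrak p}\,\pr}/K_\delta$ at each ${\mathfrak p}_i$ is pro-cyclic and absorbed by the ramification of ${\mathfrak p}_i$ in $K/K_e$. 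This is the delicate point: the inertia image of $\CU_\delta^{{\mathfrak p}_i}$ must lie in the $\Z_p$-direction $\wt{H_\delta^{\mathfrak p}}=K_\delta\ov L$ up to what $K$ already provides. We would argue via degrees rather than local computations. $[H_K^\nr:\wt k]=\lim [H_n^\nr:K H_{n/0}^\gen\cap H_n^\nr]$ is the constant $\CR_\delta^{\mathfrak p}=\order\CT_\delta^{\mathfrak p}$. Both $H_K^\nr$ and $K H_\delta^{{\mathfrak p}\,\pr}$ lie in the maximal abelian-over-$K_\delta$, ${\mathfrak p}$-ramified-times-$K$ tower, and both contain $\wt k = K\cdot K_\delta\ov L$. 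Comparing degrees then gives $H_K^\nr=K H_\delta^{{\mathfrak p}\,\pr}$, and $\Gal(H_K^\nr/\wt k)\simeq\Gal(H_\delta^{{\mathfrak p}\,\pr}/\wt{H_\delta^{\mathfrak p}})\simeq\CT_\delta^{\mathfrak p}$, using $K\cap H_\delta^{{\mathfrak p}\,\pr}=K_\delta$ (only ${\mathfrak p}$ ramifies in the latter, while $\ov{\mathfrak p}$ ramifies in $K/K_\delta$).

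The exact sequences then follow. $H_K^\nr/K_\delta$ is abelian, as a compositum of the abelian extensions $K/K_\delta$ and $H_\delta^{{\mathfrak p}\,\pr}/K_\delta$, and its Galois group is $\Gal(K/K_\delta)\times\Gal(H_\delta^{{\mathfrak p}\,\pr}/K_\delta)\simeq\Z_p\oplus\Z_p\oplus\CT_\delta^{\mathfrak p}$; restricting to $K$ gives $G_K\oplus\CT_\delta^{\mathfrak p}$. Here $\Z_p$-freeness of $G_K$ splits the extension, with $\Gal(\wt k/K)=G_K$. Define $Y_{\wt k}=\Gal(H_{\wt k}^\nr/H_K^\nr)$. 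Then $H_K^\nr$ is an unramified abelian extension of $K$ containing $\wt k$, and it is maximal among extensions of $\wt k$ inside $H_{\wt k}^\nr$ that are abelian over $K$: any such extension is unramified abelian over $K$ (since $\wt k/K$ is unramified), hence contained in $H_K^\nr$. So $H_K^\nr$ is the genus field of $\wt k/K$. As $G_K=\langle\wt\sigma_K^{}\rangle$ is procyclic, the standard argument gives that the commutator subgroup of $\Gal(H_{\wt k}^\nr/K)$ is $\CH_{\,\wt k}^{1-\wt\sigma_K^{}}$, whence $Y_{\wt k}\simeq\CH_{\,\wt k}^{1-\wt\sigma_K^{}}$. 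The same maximality argument over $K_\delta$, using that $H_K^\nr/K_\delta$ is abelian, shows that $H_K^\nr$ is also the genus field of $\wt k/K_\delta$. The main obstacle is the middle step, namely $H_K^\nr=KH_\delta^{{\mathfrak p}\,\pr}$, and specifically the unramifiedness of $KH_\delta^{{\mathfrak p}\,\pr}/K$; if the degree comparison is insufficient, we would use Lemma \ref{delta}, whose local reciprocity argument identifies the inertia groups.
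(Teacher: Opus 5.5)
Your proof is correct, and its skeleton is the paper's. Theorem~\ref{fundamental} gives $\CH_n=\CH_n^{g_{n/\delta}}$, so $H_n^\nr=H_{n/\delta}^\gen$ is abelian over $K_\delta$. Hence $H_K^\nr=\bigcup_n KH_n^\nr$ is abelian over $K_\delta$, and it is the maximal subextension of $H_{\wt k}^\nr$ abelian over $K$ (resp. $K_\delta$).

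The genuine difference is your middle step. The paper's proof never identifies $H_K^\nr$ with $KH_\delta^{{\mathfrak p}\,\pr}$. It only has $[H_K^\nr:\wt k]=\CR_\delta^{\mathfrak p}$ from the Chevalley--Herbrand count. It then asserts $\CH_K\simeq\Z_p\oplus\CT_\delta^{\mathfrak p}$ and reads $\Gal(H_K^\nr/\wt k)\simeq\CT_\delta^{\mathfrak p}$ off the diagram. The inclusion $KH_\delta^{{\mathfrak p}\,\pr}\subseteq H_K^\nr$ is invoked only later, at the end of the proof of Theorem~\ref{pseudo-null}.

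Your equality $H_K^\nr=KH_\delta^{{\mathfrak p}\,\pr}$ is what turns an equality of orders into an actual group isomorphism. It also gives the product decomposition $\Gal(H_K^\nr/K_\delta)\simeq\Gal(K/K_\delta)\times\Gal(H_\delta^{{\mathfrak p}\,\pr}/K_\delta)$, which underlies both exact sequences. The paper's route is shorter because the genus-field assertions only need abelianness over $K_\delta$.

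Two remarks on your middle step.

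First, justify $H_n^\nr\subseteq K_nH_\delta^{{\mathfrak p}\,\pr}$ by inertia rather than by ``splitting''. In the abelian group $\Gal(H_n^\nr/K_\delta)$, the inertia group of ${\mathfrak p}_\0$ meets $\Gal(H_n^\nr/K_n)$ trivially, since $H_n^\nr/K_n$ is unramified. It surjects onto $g_{n/\delta}$ by total ramification, so it is a complement. Its fixed field is abelian and ${\mathfrak p}$-ramified over $K_\delta$, hence lies in $H_\delta^{{\mathfrak p}\,\pr}$, and its compositum with $K_n$ is $H_n^\nr$.

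Second, the point you call delicate is not. Once you have $H_K^\nr\subseteq KH_\delta^{{\mathfrak p}\,\pr}$, use $K\cap H_\delta^{{\mathfrak p}\,\pr}=K_\delta$ and $K\,\wt{H_\delta^{\mathfrak p}}=K\ov L=\wt k$. These give $[KH_\delta^{{\mathfrak p}\,\pr}:\wt k]=\order\CT_\delta^{\mathfrak p}=\CR_\delta^{\mathfrak p}=[H_K^\nr:\wt k]$, which forces equality.

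The local route is equally short. The group $\CU_\delta^{{\mathfrak p}_i}\simeq\Z_p$ maps isomorphically onto the inertia group $\Gal(K/K_e)$. So the inertia group of ${\mathfrak p}_i$ in $\Gal(KH_\delta^{{\mathfrak p}\,\pr}/K_\delta)$ meets $\Gal(KH_\delta^{{\mathfrak p}\,\pr}/K)$ trivially. That is, $KH_\delta^{{\mathfrak p}\,\pr}/K$ is unramified at the primes above ${\mathfrak p}$. It is unramified elsewhere because $H_\delta^{{\mathfrak p}\,\pr}/K_\delta$ is unramified at ${\mathfrak p}_\0$.
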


\begin{proof}
It is clear that $\wt k \subseteq H_K^\nr$. Since $k$ is $p$-rational (hence 
$H_k^\pr = \wt k$), the maximal sub-extension of $H_K^\nr$, abelian over $k$, 
is $\wt k$; then $\wt k$ is the genus field of $K/k$ and is fixed by the image of
$\CH_K^{1 - \sigma_{\infty/0}}$, where $\sigma_{\infty/0}$ is a 
topological generator of $\Gal(K/k)$.

\smallskip
For $e \geq \delta$, $\CH_n = \Ccl(S_\delta^{\mathfrak p}) = 
\CH_n^{g_{n/\delta}}$ (Theorem \ref{fundamental}); it follows
$\CH_n^{1-\sigma_{n/\delta}} = 1$, giving $H_{n/\delta}^\gen = H_n^\nr$,
whence $H_{n/0}^\gen \subseteq H_{n/\delta}^\gen$ since $H_{n/0}^\gen 
\subseteq H_n^\nr$ (this justifies the diagram).
Then $\CH_K \simeq \Gal(H_K^\nr/K) \simeq \Z_p \oplus \CT^{\mathfrak p}_\delta$. 
The $p$-Hilbert class field of $K$ is $H_K^\nr = \bigcup_n K H_{n/\delta}^{\gen}$, 
then $H_K^\nr$ is abelian maximal over $K$ and $K_\delta$ (i.e. $H_K^\nr = 
H_{\wt k/K}^\gen = H_{\wt k/K_\delta}^\gen$).
\end{proof}

\subsection{Pseudo-nullity of \texorpdfstring{$\CH_{\,\wt k}$}{Lg}}
Taking genus fields over some $K$'s is used for proving that $\CH_{\,\wt k}$ 
is a pseudo-null $\Lambda$-module (in particular, under the existence of 
a $\Z_p$-extension $K/k$ 
such that $\order (\CH_{\,\wt k}/\CH_{\,\wt k}^{1 - \wt\sigma_K^{}}) < \infty$;
cf. Minardi--Ozaki \cite[\S\,3, Lemma 1; \S\,4, Theorem B]{Oza2001}). 

\smallskip
Of course, if $\delta = 0$, $K_\delta = k$,
$H_K^\nr/k$ is abelian, $H_K^\nr = \wt k$, 
and in that case, $\CT_\delta^{\mathfrak p} = 1$, implying 
$\CH_{\,\wt k} = \CH_{\,\wt k}^{1-\wt\sigma_K^{}}$, whence $Y_{\wt k} = 1$ and
$\CH_{\,\wt k} = 1$. 

\smallskip
In our approach, all the $K$'s such that
$e \geq \delta$ are suitable for the following result proving that $\CH_{\,\wt k}$
is a pseudo-null $\Lambda$-module:

\begin{theorem}\label{pseudo-null}
(i) When $\CH_k = 1$, the $\Lambda$-module $\CH_{\,\wt k}$ is pseudo-null in the 
meaning that there exists a $\Z_p$-extension $K_\0$ of $k$, distinct from $L, \ov L$, 
such that $(\CH_{\,\wt k})_{G_\0}^{} := \CH_{\,\wt k}/\CH_{\,\wt k}^{1 - \wt\sigma_\0^{}}$ is 
finite of order $p^{t_\delta}$ (see Theorem \ref{Ptorsion}\,(i)), where $\wt\sigma_\0^{}$ is 
a generator of $G_\0 := \Gal(\wt k/K_\0)$. 

\smallskip
(ii) The $\Z_p[G_\0]$-module $\CH_{\,\wt k}$ is of finite type.
\end{theorem}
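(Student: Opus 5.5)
We will take for $K_\0$ any $\Z_p$-extension $K \ne L, \ov L$ with $e \geq \delta$. Such extensions exist, and there are infinitely many: in Proposition \ref{3cases}\,(iii) it suffices to choose $u \in \Z_p^\times$ with $v_p(u-1) = e \geq \delta$ and $u \ne \pm 1$. For (i), we first identify the co-invariants. Theorem \ref{C} gives $H_K^\nr$ as the genus field of $\wt k/K$, so $Y_{\wt k} = \Gal(H_{\wt k}^\nr/H_K^\nr) \simeq \CH_{\,\wt k}^{1-\wt\sigma_K^{}}$. Restricting the exact sequences of Theorem \ref{C} to $\Gal(H_{\wt k}^\nr/\wt k) \simeq \CH_{\,\wt k}$ then yields
$$\CH_{\,\wt k}/\CH_{\,\wt k}^{1-\wt\sigma_K^{}} \simeq \Gal(H_K^\nr/\wt k) \simeq \CT_\delta^{\mathfrak p}.$$
By Theorem \ref{Ptorsion}\,(i), this group has order $[\CU_\delta^{\mathfrak p}{}^* : \ov E_\delta^{\,\mathfrak p}] = p^{t_\delta}$, and it is finite because the regulator $\CR_\delta^{\mathfrak p}$ is nonzero (Ax--Baker--Brumer, as used in the proof of Theorem \ref{Ptorsion}).

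The step that needs care is the genus-field identification. We must check that $\CH_{\,\wt k}^{1-\wt\sigma_K^{}}$ corresponds precisely to the maximal subextension of $H_{\wt k}^\nr/\wt k$ that is abelian over $K$. This is the standard argument. Since $\wt k/K$ is pro-cyclic and unramified (\S\,\ref{ramdec}, using $e \geq \delta$ only through Theorem \ref{fundamental}), the commutator subgroup of $\Gal(H_{\wt k}^\nr/K)$ equals $\CH_{\,\wt k}^{1-\wt\sigma_K^{}}$, where $\wt\sigma_K^{}$ is lifted to $\Gal(H_{\wt k}^\nr/K)$ and acts by conjugation. No inertia correction is needed, precisely because $\wt k/K$ is unramified everywhere. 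Consequently the maximal unramified abelian extension of $K$ inside $H_{\wt k}^\nr$ is the fixed field of $\CH_{\,\wt k}^{1-\wt\sigma_K^{}}$. Theorem \ref{C} tells us that $H_K^\nr$ is exactly this field, so the isomorphism displayed above follows.

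For (ii), we use the topological Nakayama lemma for the compact $\Z_p[[G_\0]]$-module $\CH_{\,\wt k}$. We have just shown that $\CH_{\,\wt k}/(p, 1-\wt\sigma_\0^{})\CH_{\,\wt k}$ is a quotient of the finite group $\CT_\delta^{\mathfrak p}$. Hence $\CH_{\,\wt k}$ is finitely generated over $\Z_p[[G_\0]] \simeq \Z_p[[T]]$, which is the sense in which we read ``of finite type over $\Z_p[G_\0]$''. Finiteness of the co-invariants also implies that the characteristic series is prime to $T$, and this is the input for the Minardi--Ozaki criterion of pseudo-nullity quoted before the statement.

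The main obstacle is the commutator computation in the infinite setting. We will handle it by working at finite levels $F = K\,(\wt k \cap H^\gen)$ along $\wt k$ and then passing to the projective limit. Taking co-invariants commutes with projective limits of compact modules, so the finite-level statements pass to the limit.
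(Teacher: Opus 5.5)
Your proof is correct, but it follows a different route from the one the paper writes out. The paper only remarks in one line that ``the study of the previous diagram and Theorem \ref{C} give the result'', which is essentially your argument. It then gives an ``independent'' proof at finite levels:
\begin{itemize}
\item it sets $F_n := K_{\0,n}\,\ov{K_\0}_{,n-e}$;
\item it checks that $F_n/K_{\0,n}$ is unramified of degree $p^{n-e}$ and that $F_{n+1}/F_n$ has no unramified subextension;
\item it applies the Chevalley--Herbrand formula in the unramified cyclic extension $F_n/K_{\0,n}$, where all units are norms, to get $\order \CH_{F_n}^{G_\0} = p^{n-e+t_\delta}/p^{n-e} = p^{t_\delta}$, hence $\order(\CH_{F_n})_{G_\0} = p^{t_\delta}$;
\item it passes to the limit using surjectivity of the norms;
\item for (ii), it runs a hand-made Nakayama argument at each level $F_n$, using $(1-\wt\sigma_\0)^{p^N}\in p^{N'}\Z[\Gal(F_n/K_{\0,n})]$.
\end{itemize}

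You instead work directly at the infinite level. The profinite commutator identity for an extension of the procyclic group $G_K$ by $\CH_{\,\wt k}$, together with the unramifiedness of $\wt k/K$, identifies $(\CH_{\,\wt k})_{G_K}$ with $\Gal(H_K^\nr/\wt k)$. Its order $p^{t_\delta}$ then comes from Theorem \ref{fundamental}, and topological Nakayama over $\Z_p[[G_\0]]$ gives (ii).

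What each route buys:
\begin{itemize}
\item Yours is shorter. It also supplies the justification of $Y_{\wt k} = \CH_{\,\wt k}^{1-\wt\sigma_K}$, which the proof of Theorem \ref{C} leaves implicit.
\item Yours shows this identification holds for every $K \ne L,\ov L$. Hence finiteness of $(\CH_{\,\wt k})_{G_K}$ is equivalent to $K$ being non-exceptional, the equivalence the paper invokes later.
\item The paper's route stays within its Chevalley--Herbrand philosophy and yields, as a by-product, the equalities $\order(\CH_{F_n})_{G_\0} = p^{t_\delta}$ at each finite layer $F_n$, $n \gg 0$.
\end{itemize}

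Two small corrections.
\begin{itemize}
\item Your parenthetical tying the unramifiedness of $\wt k/K$ to $e\geq\delta$ is misplaced. The extension $\wt k/K$ is unramified for every $K\ne L,\ov L$, because $G_K$ meets the inertia groups $\Gal(\wt k/L)$ and $\Gal(\wt k/\ov L)$ trivially. The hypothesis $e\geq\delta$ is used only to get the order $p^{t_\delta}$.
\item Your closing plan of passing through finite levels is unnecessary. The identity $[\mathcal G,\mathcal G]=X^{1-\sigma}$ holds directly for the profinite group $\mathcal G=\Gal(H_{\wt k}^\nr/K)$, since $X^{1-\sigma}$ is closed as the continuous image of a compact group.
\end{itemize}
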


\begin{proof}
In some sense, the study of the previous diagram and Theorem \ref{C} give the result; 
but we may provide some observations with an independent proof:

\medskip
(i) Let $K_\0 \ne L, \ov L$ be a non-Galois $\Z_p$-extension such that 
$\lambda(K_\0/k) = 1$, $\mu(K_\0/k) = 0$; such an extension does exist from Theorem 
\ref{fundamental} as soon as we take $e \geq \delta$. In that case:
$$\order \CH_{\0,n} := \order \CH_{K_{\0,n}} = p^{n-e+t_\delta}\ \hbox{(Theorem 
\ref{fundamental})}. $$ 

Since $K_\0 \ov {K_\0} = \wt k$ and $K_\0 \cap \ov {K_\0} = k$ 
($e \geq \delta$ is non-zero and $K_\0 \cap \ov {K_\0}/k$ is Galois), 
let's consider the extensions:
$$F_n := K_{\0,n} \ov{K_{\0}}_{,n-e}, $$ 

\noindent
for which $\bigcup_n F_n = \wt k$. 
One verifies that $F_n/K_{\0,n}$ (of degree $p^{n-e}$) is unramified for $n$ large 
enough (indeed, inertia groups of ${\mathfrak p}$ and $\ov {\mathfrak p}$ can not 
intersect $\Gal(F_n/K_{\0,n})$); we note that in $F_n/\ov{K_{\0}}_{,n-e}$, the
inertia subgroup of ${\ov {\mathfrak p}}$ is of order $p^e$, hence of order $p$
in $F_{n+1}/F_n$ (comes from its total ramification in $K_{\0,n}/k$ and its 
non-ramification in $\ov{K_{\0}}_{,e}/k$). 
The inertia subgroup of ${\mathfrak p}$ in $F_{n+1}/F_n$, of order $p$, 
is disjoint from that of ${\ov {\mathfrak p}}$.
We deduce from this that $F_{n+1}/F_n$ of degree $p^2$ does not 
contain a non trivial unramified sub-extension.

\smallskip
The group $G_\0$ projects on $\Gal(F_n/K_{\0,n})$
by restriction. Since $\order \CH_{\0,n} = p^{n-e+t_\delta}$, the 
Chevalley--Herbrand formula in the unramified extension $F_n/K_{\0,n}$ is:
\begin{equation*} 
\order (\CH_{F_n}^{G_\0}) = \order \CH_{\0,n} \times \ffrac{1}{p^{n-e}}
= p^{n-e+t_\delta} \times \ffrac{1}{p^{n-e}} = p^{t_\delta}.
\end{equation*}

We deduce that $\order( \CH_{F_n}/\CH_{F_n}^{1 - \wt\sigma_\0^{}}) = p^{t_\delta}$ 
for all $n \gg 0$, where $p^{t_\delta} = \order \CT^{\mathfrak p}_\delta$ is an absolute 
constant (it is computed in $K_{\0,\delta}$ which is nothing else than $L_\delta$,
independent of the choice of $K_\0$). Thus, for $m \geq n \gg 0$, $\BN_{F_m/F_n}
(\CH_{F_m}) = \CH_{F_n}$, which defines the isomorphisms:
$$\CH_{F_m}/\CH_{F_m}^{1-\wt\sigma_\0^{}} \too \CH_{F_n}/\CH_{F_n}^{1-\wt\sigma_\0^{}}. $$

Whence $\order\big(\CH_{\,\wt k}/\CH_{\,\wt k}^{1 - \wt\sigma_\0^{}} \big) = p^{t_\delta}$.

\medskip
(ii) There exists a finite group $C_n \subseteq \CH_{F_n}$, 
such that $\CH_{F_n} = \CH_{F_n}^{1 - \wt\sigma_\0^{}} 
\cdot C_n$. Thus, for all $N$, $\CH_{F_n} = \CH_{F_n}^{(1 - \wt\sigma_\0^{})^{p^N}}\!\! 
\cdot \langle C_n \rangle_{\Z[G_\0]}$. For all $N'$, there exists $N$ such that, 
$(1 - \wt\sigma_{\0,n})^{p^N} \in p^{N'}\Z[\Gal(F_n/K_{\0,n})]$; taking for $p^{N'}$ 
the exponent of $\CH_{F_n}$, we get, for all $n$:
$$\CH_{F_n} = \langle C_n \rangle_{\Z[G_\0]}, $$

\noindent
of finite type as $\Z[G_\0]$-module; thus, $\CH_{\,\wt k} = \ds \limproj_n \CH_{F_n}$ is a 
$\Z_p[G_\0]$-module of finite type, with $r_\delta^{}$~generators, where $r_\delta^{}$ 
is the $p$-rank of $\CT_\delta^{\mathfrak p}$ (see \S\,\ref{torsion} for some numerical 
examples). 

\smallskip
One finds again that if $\delta = 0$, then $\CT_\delta^{\mathfrak p} = 1$, thus
$r_\delta^{} = 0$.

\smallskip
Note that we have $\Gal(H_{K_\0}^\nr/\wt k) \simeq \tor_{\Z_p}^{}
(\Gal(H_{\0, \delta}^{{\mathfrak p}\,\pr}/K_{\0, \delta})) = \CT_\delta^{\mathfrak p}$ and 
$\CC_k \simeq \CT_\delta^{\mathfrak p}$; ndeed, from $K_\0 \ov L = \wt k$ and 
$K_\0 H_{\0, \delta}^{{\mathfrak p}\,\pr} \subseteq H_{K_\0}^\nr$, the result follows.
\end{proof}

\section{A characterization in the case \texorpdfstring{$e < \delta$}{Lg}}\label{e<delta}

We will give some properties of the norm residue symbols of $S$-units and apply 
them to the $S_n^{\mathfrak p}$-class groups. 
We assume $e < \delta$ (whence $\delta' = e$, $\spp = p^e$). So ${\mathfrak p}$ 
totally splits in $K_e/k$ and totally ramifies in $K/K_e$; $\ov {\mathfrak p}$ 
totally ramifies (this refers to the second case of Schema \eqref{Tpsplit2}).

\smallskip
But equalities of the form $\CH_n = \Ccl_n (T_n)$, $T_n \in \{S_n, 
S_n^{\mathfrak p}, S_n^{\ov {\mathfrak p}}\}$,
do not hold since Chevalley--Herbrand Formulas \eqref{CH3} give
$\order (\CH_n^{S_n})^{G_n} = 
\order (\CH_n^{S_n^{\mathfrak p}})^{G_n} = p^{\delta - e} \ne 1$ and
$\order (\CH_n^{S_n^{\ov {\mathfrak p}}})^{G_n} = p^\delta \ne 1$.  

\smallskip
In any case, $\CH_n^{G_n} = \CH_n^\ram = 
\Ccl_n (\langle\CP,\CP_\0 \rangle)$ of oder $p^{n-e}$ (Formula \eqref{CH1}); 
so, the inclusion $\Ccl_n (\langle\CP,\CP_\0 \rangle) \subseteq \Ccl_n (S_n)$
shows that $\order \Ccl_n (S_n)$ is unbounded but divides $[p^{n-e}]^\spp$
since, in $K_e$, each ${\mathfrak p}_i \mid {\mathfrak p}$ is $p$-principal 
and totally ramifies in $K_n/K_e$. 
In the case $e = 0$,  $\order \Ccl_n (S_n) = p^n$.
An important result will be the estimation of $\order \Ccl_n(S_n^{\mathfrak p})$ 
(inequalities \eqref{orderCl(S)}, for arbitrary $e < \delta$).

\subsection{Properties of the symbols of the \texorpdfstring{$S_e$-units}{Lg}
in \texorpdfstring{$K_n/K_e$}{Lg}}\label{normx}
The primes ${\mathfrak p}_{1}, \ldots, {\mathfrak p}_{\spp}\!\! \mid\! {\mathfrak p}$, 
${\mathfrak p}_{0}\! \mid \ov{\mathfrak p}$, of $K_e$, are 
$p$-principal of the form\,\footnote{\, As we explained, one must read 
${\mathfrak p}_i^{\he} =: (x_i)$, for the prime-to-$p$ class number 
$\he$ of $K_e$}
$(x_1), \ldots, (x_{\spp}), (x_\0)$, $x_j \in K_e^\times$. Since $G_e$ is cyclic 
and the ideals ${\mathfrak p}_i$ conjugate for $i \in [1, \spp]$, one may suppose 
that, for a generator $\sigma_e := \sigma_{e/0}$ of $G_e$: 
$$x_{k+1} = x_1^{\sigma_e^k },\ \, k \in [0, \spp - 1], \ \ 
x_\0^{\sigma_e} = x_\0^{} \cdot \eta_\0^{},\ \eta_\0^{} \in E_e,$$
which defines in the first case a circular permutation of the $x_k$'s with 
$k \in \Z/p^{\spp} \Z$, and the unit $\eta_\0^{}$ in the second one since
${\mathfrak p}_{0}$ is invariant. We put:
\begin{equation*}
X_e^{\mathfrak p} := \{x_1, \ldots, x_{\spp} \}, \ \ 
X_e^{\ov{\mathfrak p}}  := \{x_\0 \}, \ \ 
X_e := X_e^{\mathfrak p}  \cup X_e^{\ov{\mathfrak p}} .
\end{equation*}

Since the residue degrees of the ideals ${\mathfrak p}_i$ are $1$ in $K_e/k$, 
we have $\BN_{e/0}(x_j) = x$, for $j \in [1, \spp]$ and $\BN_{e/0}(x_\0) = \ov x$.

\smallskip
We consider the symbols $\big(\ffrac{x_j,K_n/K_e}{{\mathfrak p}_i} \big) \in g_{n/e}$, 
for $i, j \in [1, \spp] \cup \{0\}$.

\smallskip
Since $G_n$ is abelian, we have:
\ft\begin{equation*}
\left \{\begin{aligned}
\Big(\ffrac{x_j,K_n/K_e}{{\mathfrak p}_i} \Big)^{\sigma_e}  
& = \Big(\ffrac{x_j,K_n/K_e}{{\mathfrak p}_i} \Big) = 
\Big(\ffrac{x_j^{\sigma_e},K_n/K_e}{{\mathfrak p}_i^{\sigma_e}} \Big)
= \Big(\ffrac{x_{j+1},K_n/K_e}{{\mathfrak p}_{i+1}}\Big),\  i, j \in [1, \spp], \\
\Big(\ffrac{x_j,K_n/K_e}{{\mathfrak p}_\0} \Big)^{\sigma_e}  
& = \Big(\ffrac{x_j,K_n/K_e}{{\mathfrak p}_\0} \Big) = 
\Big(\ffrac{x_j^{\sigma_e},K_n/K_e}{{\mathfrak p}_\0} \Big)
=  \Big(\ffrac{x_{j+1},K_n/K_e}{{\mathfrak p}_\0} \Big),\  j \in [1, \spp], \\
\Big(\ffrac{x_\0,K_n/K_e}{{\mathfrak p}_i} \Big)^{\sigma_e} 
& = \Big(\ffrac{x_\0,K_n/K_e}{{\mathfrak p}_i} \Big) = 
\Big(\ffrac{x_\0^{} \cdot \eta_\0^{},K_n/K_e}
{{\mathfrak p}_{i+1}} \Big),\  i \in [1, \spp], \\
\Big(\ffrac{x_\0,K_n/K_e}{{\mathfrak p}_\0} \Big)^{\sigma_e} 
& = \Big(\ffrac{x_\0,K_n/K_e}{{\mathfrak p}_\0} \Big) = 
\Big(\ffrac{x_\0^{} \cdot \eta_\0^{},K_n/K_e}
{{\mathfrak p}_\0} \Big).
\end{aligned} \right.
\end{equation*}\ns

In particular, all the symbols $\big(\ffrac{x_{j+k},K_n/K_e}{{\mathfrak p}_{i+k}} \big)$,
$k \in [1,\spp]$, are equal, $\big(\ffrac{x_j,K_n/K_e}{{\mathfrak p}_\0^{}} \big)$ is 
independent of $j \in [1, \spp]$, and $\big(\ffrac{\eta_\0^{}, K_n/K_e}{{\mathfrak p}_\0^{}} \big) =1$. 

From Lemma \ref{functorial}\,(iv), we obtain the following 
images in $G_n$, for each $x_j \in X_e^{\mathfrak p}$, then 
for $x_\0 \in X_e^{\ov{\mathfrak p}}$
(recall that notations without indices are related to $k$, as
${\mathfrak p}$, $\ov {\mathfrak p}$, $x$, $\ov x$):
\ft\begin{equation*}
\left \{\begin{aligned}
\prd_{k = 1}^{\spp}
\Big(\ffrac{x_j,K_n/K_e}{{\mathfrak p}_k} \Big) 
& \too \Big(\ffrac{\BN_{e/0}(x_j),K_n/k}{{\mathfrak p}} \Big)
= \Big(\ffrac{x,K_n/k}{{\mathfrak p}} \Big),\  j \in [1, \spp], \\
\Big(\ffrac{x_j,,K_n/K_e}{{\mathfrak p}_\0^{}} \Big)
& \too \Big(\ffrac{\BN_{e/0}(x_j),K_n/k}{\ov {\mathfrak p}} \Big)
= \Big(\ffrac{ x, K_n/k}{\ov {\mathfrak p}} \Big), \  j \in [1, \spp], \\
\prd_{k = 1}^{\spp}
\Big(\ffrac{x_\0,K_n/K_e}{{\mathfrak p}_k} \Big) 
& \too \Big(\ffrac{\BN_{e/0}(x_\0),K_n/k}{{\mathfrak p}} \Big)
= \Big(\ffrac{\ov x,K_n/k}{{\mathfrak p}} \Big), \\
\Big(\ffrac{x_\0,K_n/K_e}{{\mathfrak p}_\0^{}} \Big)
& \too \Big(\ffrac{\BN_{e/0}(x_\0),K_n/k}{\ov {\mathfrak p}} \Big)
 = \Big(\ffrac{\ov x, K_n/k}{\ov {\mathfrak p}} \Big). 
\end{aligned} \right.
\end{equation*}\ns

Product formula gives
$\big(\ffrac{x, K_n/k}{\ov {\mathfrak p}} \big) \!=\!
\big(\ffrac{x, K_n/k}{{\mathfrak p}} \big)^{-1}$, \!
$\big(\ffrac{\ov x, K_n/k}{\ov {\mathfrak p}} \big)\! =\!
\big(\ffrac{\ov x, K_n/k}{{\mathfrak p}} \big)^{-1}$. 

\smallskip
It follows, from Proposition \ref{mainorder}, that:
\begin{equation*}
\left \{\begin{aligned}
\prd_{k = 1}^{\spp} & \big( \ffrac{x_j,K_n/K_e}{{\mathfrak p}_k} \big)
\ \ \&\ \ \big(\ffrac{x_j,K_n/K_e}{{\mathfrak p}_\0^{}} \big)
\hbox{ are of order $p^{n  - \delta}$, for $j \in [1, \spp]$,} \\
\prd_{k = 1}^{\spp}
&\big( \ffrac{x_\0,K_n/K_e}{{\mathfrak p}_k} \big)  
\ \ \&\ \ \big(\ffrac{x_\0,K_n/K_e}{{\mathfrak p}_\0^{}} \big)
\hbox{ are of order $p^{n  - e - \delta}$}.
\end{aligned}\right.
\end{equation*}

Applying Lemma \ref{functorial}\,(iv) to units $\varepsilon \in E_e$ leads,
since $\BN_{e/0}(\varepsilon) = 1$, to:
$$\prd_{k = 1}^{\spp} \Big(\ffrac{\varepsilon,K_n/K_e}{{\mathfrak p}_k} \Big) = 1, \ \ 
\Big(\ffrac{\varepsilon,K_n/K_e}{{\mathfrak p}_\0^{}} \Big) = 1. $$

We can write, instead, for any $y \in K_e^\times$,
$\big(\ffrac{y, K_n/K_e}{{\mathfrak p}_k} \big) =
\big(\ffrac{y, K_n/K_e}{{\mathfrak p}_k} \big)^{\sigma_e^{1-k}} =
\big(\ffrac{y^{\sigma_e^{1-k}} , K_n/K_e}{{\mathfrak p}_1}\big)$;
hence the formulas
$\prod_{k = 1}^{\spp} \big(\ffrac{y,K_n/K_e}{{\mathfrak p}_k} \big) 
= \big(\ffrac{\BN_{e/0}(y),K_n/K_e}{{\mathfrak p}} \big)\ \ \& \ \ 
\big(\ffrac{y,K_n/K_e}{{\mathfrak p}_\0} \big) =  
\big(\ffrac{\BN_{e/0}(y),K_n/K_e}{\ov {\mathfrak p}} \big)$. 

\smallskip
Since $e < \delta$, the decomposition groups, in $K_n/K_e$, of the ${\mathfrak p}_k$,
$k \in [1, \spp]$, coincide with the inertia groups $g_{n/e}$; thus, by definition 
of $\omega_{n/e} :X_e \too \wt \Omega_{n/e}^{\,\mathfrak p} \simeq 
\bigoplus_{i =1}^{\spp} g_{n/e} \simeq (\Z/p^{n-e}\Z)^\spp$:
\begin{equation*}
\omega_{n/e} (x_j) = \Big(\big(\ffrac{x_j,K_n/K_e}{{\mathfrak p}_1} \big), \ldots,
\big(\ffrac{x_j,K_n/K_e}{{\mathfrak p}_{\spp}} \big) \Big),\  j \in [1, \spp] \cup \{0\}.
\end{equation*}

\subsection{Introduction of a governing matrix}\label{matrix}
To determine kernel and image of the map $\omega_{n/e} : E_e^{S_e} \to 
\wt \Omega_{n/e}^{\,\mathfrak p}$ when $e < \delta$, we try to 
determine the rank and the $p$-adic properties of the following matrix
$\CM_{\infty/e}$, where the $\spp+ 1 + \re$ lines correspond to 
the images by $\omega_{\infty/e}$ of the $x_j$'s, $j \in [1,\spp] \cup \{0\}$
(sub-matrices $\CM_X^{\mathfrak p}$, $\CM_X^{\ov{\mathfrak p}}$), 
then to the $\varepsilon_k$'s, $k \in [1,\re]$ (sub-matrice $\CM_E$). 

\smallskip
From the product formula, the column of the symbols at 
${\mathfrak p}_\0$ may be suppressed. 

\smallskip
The fact that 
$\wt \Omega_{\infty/e}^{\,\mathfrak p} \simeq (g_{\infty/e})^\spp$ 
supposes that we consider the symbols $\big(\ffrac{\mb, K/K_e}
{{\mathfrak p}_i} \big)$, $i \in [1,\spp]$. In the coefficients of the 
matrix, we omit the mention of the extension $K/K_e$.

\ft$$\CM_{\infty/e} := 
\begin{pmatrix}
\CM_X^{\mathfrak p} :=
\begin{pmatrix}
\big(\ffrac{x_1}{{\mathfrak p}_1}\big) & \!\!\!\!\ldots\!\!\!\! &
 \big(\ffrac{x_1}{{\mathfrak p}_j}\big) & \!\!\!\!\ldots\!\!\!\! &
\big(\ffrac{x_1}{{\mathfrak p}_{\spp-1}}\big) & 
\!\!\!\!  \big(\ffrac{x_1}{{\mathfrak p}_\spp}\big)  \\
\vdots  && \vdots && \vdots \\
\big(\ffrac{x_j}{{\mathfrak p}_1}\big)  & \!\!\!\!\ldots\!\!\!\! &
 \big(\ffrac{x_j}{{\mathfrak p}_j}\big) & \!\!\!\!\ldots\!\!\!\! &
\big(\ffrac{x_j}{{\mathfrak p}_{\spp-1}}\big) &
\!\!\!\!  \big(\ffrac{x_j}{{\mathfrak p}_\spp}\big)  \\
\vdots  && \vdots && \vdots \\
\big(\ffrac{x_{\spp}}{{\mathfrak p}_1}\big)  & \!\!\!\!\ldots\!\!\!\! &
 \big(\ffrac{x_{\spp}}{{\mathfrak p}_j}\big) & \!\!\!\!\ldots\!\!\!\! &
\big(\ffrac{x_{\spp}}{{\mathfrak p}_{\spp-1}}\big)&
\!\!\!\!  \big(\ffrac{x_\spp}{{\mathfrak p}_\spp}\big) 
\end{pmatrix} \sim
\begin{pmatrix}
\big(\ffrac{x_1}{{\mathfrak p}_1}\big) & \!\!\!\!\ldots\!\!\!\! &
 \big(\ffrac{x_1}{{\mathfrak p}_j}\big) & \!\!\!\!\ldots\!\!\!\! &
\big(\ffrac{x_1}{{\mathfrak p}_{\spp-1}}\big) & 
\!\!\!\!  \big(\ffrac{x}{{\mathfrak p}}\big)  \\
\vdots  && \vdots && \vdots \\
\big(\ffrac{x_j}{{\mathfrak p}_1}\big)  & \!\!\!\!\ldots\!\!\!\! &
 \big(\ffrac{x_j}{{\mathfrak p}_j}\big) & \!\!\!\!\ldots\!\!\!\! &
\big(\ffrac{x_j}{{\mathfrak p}_{\spp-1}}\big) &
\!\!\!\!  \big(\ffrac{x}{{\mathfrak p}}\big)  \\
\vdots  && \vdots && \vdots \\
\big(\ffrac{x_{\spp}}{{\mathfrak p}_1}\big)  & \!\!\!\!\ldots\!\!\!\! &
 \big(\ffrac{x_{\spp}}{{\mathfrak p}_j}\big) & \!\!\!\!\ldots\!\!\!\! &
\big(\ffrac{x_{\spp}}{{\mathfrak p}_{\spp-1}}\big)&
\!\!\!\!  \big(\ffrac{x}{{\mathfrak p}}\big) 
\end{pmatrix} \\ \\
\CM_X^{\ov{\mathfrak p}} := 
 \begin{pmatrix}
\, \big(\ffrac{x_\0}{{\mathfrak p}_1}\big)  & \!\!\!\!\ldots\!\!\!\! &
 \big(\ffrac{x_\0}{{\mathfrak p}_j}\big) & \!\!\!\!\ldots\!\!\!\! &
\big(\ffrac{x_\0}{{\mathfrak p}_{\spp-1}}\big) &
\!\!  \big(\ffrac{x_\0}{{\mathfrak p}_\spp}\big)\,   \\ 
\end{pmatrix}\   \sim \ 
 \begin{pmatrix}
\big(\ffrac{x_\0}{{\mathfrak p}_1}\big)  & \!\!\!\!\ldots\!\!\!\! &
 \big(\ffrac{x_\0}{{\mathfrak p}_j}\big) & \!\!\!\!\ldots\!\!\!\! &
\big(\ffrac{x_\0}{{\mathfrak p}_{\spp-1}}\big) &
\!\!  \big(\ffrac{\ov x}{{\mathfrak p}}\big)   \\ 
\end{pmatrix} \\  \\
\CM_E := \,
\begin{pmatrix}
\big(\ffrac{\varepsilon_1}{{\mathfrak p}_1}\big) & \!\!\!\!\ldots\!\!\!\! &
 \big(\ffrac{\varepsilon_1}{{\mathfrak p}_j}\big) & \!\!\!\!\ldots\!\!\!\! &
\big(\ffrac{\varepsilon_1}{{\mathfrak p}_{\spp-1}}\big) &
\!\!\!\!  \big(\ffrac{\varepsilon_1}{{\mathfrak p}_\spp}\big) \\
\vdots  && \vdots && \vdots \\
\big(\ffrac{\varepsilon_j}{{\mathfrak p}_1}\big)  & \!\!\!\!\ldots\!\!\!\! &
 \big(\ffrac{\varepsilon_j}{{\mathfrak p}_j}\big) & \!\!\!\!\ldots\!\!\!\! &
\big(\ffrac{\varepsilon_j}{{\mathfrak p}_{\spp-1}}\big) &
\!\!\!\!  \big(\ffrac{\varepsilon_j}{{\mathfrak p}_\spp}\big)  \\
\vdots  && \vdots && \vdots \\
\big(\ffrac{\varepsilon_{\re}}{{\mathfrak p}_1}\big)  & \!\!\!\!\ldots\!\!\!\! &
 \big(\ffrac{\varepsilon_{\re}}{{\mathfrak p}_j}\big) & \!\!\!\!\ldots\!\!\!\! &
\big(\ffrac{\varepsilon_{\re}}{{\mathfrak p}_{\spp-1}}\big) &
\!\!\!\!  \big(\ffrac{\varepsilon_\re}{{\mathfrak p}_\spp}\big) \\
\end{pmatrix} \sim \
\begin{pmatrix}
\big(\ffrac{\varepsilon_1}{{\mathfrak p}_1}\big) & \!\!\!\!\ldots\!\!\!\! &
 \big(\ffrac{\varepsilon_1}{{\mathfrak p}_j}\big) & \!\!\!\!\ldots\!\!\!\! &
\big(\ffrac{\varepsilon_1}{{\mathfrak p}_{\spp-1}}\big) &  1\ \  \\
\vdots  && \vdots && \vdots \\
\big(\ffrac{\varepsilon_j}{{\mathfrak p}_1}\big)  & \!\!\!\!\ldots\!\!\!\! &
 \big(\ffrac{\varepsilon_j}{{\mathfrak p}_j}\big) & \!\!\!\!\ldots\!\!\!\! &
\big(\ffrac{\varepsilon_j}{{\mathfrak p}_{\spp-1}}\big) &  1\ \   \\
\vdots  && \vdots && \vdots \\
\big(\ffrac{\varepsilon_{\re}}{{\mathfrak p}_1}\big)  & \!\!\!\!\ldots\!\!\!\! &
 \big(\ffrac{\varepsilon_{\re}}{{\mathfrak p}_j}\big) & \!\!\!\!\ldots\!\!\!\! &
\big(\ffrac{\varepsilon_{\re}}{{\mathfrak p}_{\spp-1}}\big) &  1\ \   \\
\end{pmatrix}
\end{pmatrix}$$\ns

\smallskip
We denote by $\CD_X^{\mathfrak p}$ the determinant of the square
sub-matrix $\CM_X^{\mathfrak p}$.

\begin{definitions}
Since $n$ can be arbitrary large, we work in the projective limits, defining 
$g_{\infty/e} := \Gal(K/K_e)$, generated by a topological generator
$\sigma_{\infty/e}$, the symbols $\big(\ffrac{\mb,K/K_e}{{\mathfrak p}_i} \big)$
being of the form $\sigma_{\infty/e}^{\theta_i}$, $\theta_i \in \Z_p$. 
This makes sense because, by restriction to $K_n$, the image of 
$\big(\ffrac{\mb,K/K_e}{{\mathfrak p}_i} \big) = \sigma_{\infty/e}^{\theta_i}$ 
is $\big(\ffrac{\mb,K_n/K_e}{{\mathfrak p}_i} \big) = \sigma_{n/e}^{\theta_i}$
(Lemma \ref{functorial}\,(ii)). 

The corresponding maps 
$\omega_{n/e}$ give rise to $\omega_{\infty/e} : E_e^{S_e} \to 
\wt \Omega_{\infty/e}^{\,\mathfrak p} \simeq \Z_p^\spp$ that we extend to 
the vectorial space map $\omega_{\infty/e} : E_e^{S_e} \otimes \Q_p \to 
\wt \Omega_{\infty/e}^{\,\mathfrak p} \otimes \Q_p \simeq \Q_p^\spp$.

\smallskip
We will say, classically, that $\CM_{\infty/e}$ is of $\Q_p$-rank $\rho_\infty^{}$ 
if there exists a square sub-matrix invertible over $\Q_p$ of maximal 
dimension $\rho_\infty^{}$ (i.e. with a non-zero determinant).
We will say that $\CM_{\infty/e}$ is of arithmetic rank $\rho_p$ if there
exists a square sub-matrix invertible over $\Z_p$ of maximal dimension 
$\rho_p$ (i.e. with a determinant in $\Z_p^\times$). 

\smallskip
In an arithmetic context, $\CM_{\infty/e}$ may be of $\Q_p$-rank $\spp$
but of arithmetic rank less than~$\spp$. This is characterized by 
computing its list of invariants of similitude $(q_1, \ldots , q_{\rho_\infty^{}}, 0, \ldots , 0)$, 
of the form $q_1 \mid q_2 \mid \cdots \mid q_{\rho_\infty^{}}$ with $q_1 = \cdots =
q_{\rho_p} = 1$, then $q_{\rho_p+1} \equiv \cdots \equiv q_{\rho_\infty^{}}\equiv 0 
\pmod p$, and $q_{\rho_\infty+1} = \cdots = q_{\spp} = 0$.

\smallskip
As explained above, we see these matrices and determinants in an additive way;
for the matrix $\CM_X^{\mathfrak p}$, we put $\big(\ffrac{x_j, \, K/K_e}{{\mathfrak p}_i}\big) 
=: \sigma_{\infty/e}^{\theta_i(x_j)}$, $\theta_i(x_j) \in \Z_p$, $i, j \in [1, \spp]$, and so on 
for $\CM_X^{\ov {\mathfrak p}}$ and $\CM_E$; then it suffices to consider the matrix of 
the exponents $\theta_i(x_j)$.
\end{definitions}

If $e = 0$, $\spp = 1$, $\re = 0$, $S_e = S_k$, $S_e^{\mathfrak p} = 
\{{\mathfrak p}\}$, $S_e^{\ov{\mathfrak p}} = \{{\ov{\mathfrak p}}\}$, 
$\wt \Omega_{\infty/e}^{\,\mathfrak p} \simeq 
G_{\infty/0}$, the group of $S_k$-units is reduced to $\langle x, \ov x \rangle$, with 
$\omega_{n/0} (E_k^{S_k})$ of order $p^{n - \delta}$,  $\CM_X^{\mathfrak p} 
= \big( (\frac{x, K/k}{{\mathfrak p}}) \big)$, with $(\frac{x, K/k}{{\mathfrak p}}) =
\sigma_{\infty/0}^{p^\delta}$, whence $\theta(x) = p^\delta$, and the map 
$\omega_{\infty/0}$ is not surjective as soon as $\delta \geq 1$. 

\smallskip
Using relations given in \S\,\ref{normx}, we can replace the symbols of the 
last column of $\CM_X^{\mathfrak p}$ by the symbols $\Big(\ffrac{x,K/K_e}
{{\mathfrak p}} \Big)$, then $\Big(\ffrac{\ov x,K/K_e}{{\mathfrak p}} \Big)$ for 
the last column of $\CM_X^{\ov{\mathfrak p}}$, and $1$ for $\CM_E$; this 
does not depend on the choice of $x_1$ modulo $E_e$, with $x_{k+1} = 
x_1^{\sigma_e^k },\ \, k \in [0, \spp - 1]$, since $\BN_{e/0}(E_e) = 1$.

\begin{remark}\label{independence}
In practice, the numbers $\theta_i(x_j)$ depend 
on Fermat's quotients of the form $p^{\delta_{{\mathfrak p}_i}(x_j)}$ that 
may be given by $p$-adic logarithms. More precisely, Lemma \ref{delta}
applies to $x_j$ and ${\mathfrak p}_i$ for $i \ne j$ giving an identity in $\Z_p$
of the form $\log_{{\mathfrak p}_i}(x_j) = p^{\delta_{{\mathfrak p}_i}(x_j)}
\cdot u_{i,j}$, $u_{i,j} \in \Z_p^\times$.

Let $u_i \in \CU_e^{{\mathfrak p}_i} = 1 + {\mathfrak p}_i \simeq \Z_p$ 
be such that $\big(\ffrac{u_i, K/K_e}{{\mathfrak p}_i}\big) = \sigma_{\infty/e}$;
then $\theta(x_j) = \ffrac{\log_{{\mathfrak p}_i}(x_j)}{\log_i(u_i)}$. 
\end{remark}

\begin{lemma}\label{theta}
(i) For $e$ fixed, the matrix $\CM_X^{\mathfrak p}$ is independent of the
$\Z_p$-extension $K/k$, under the sole condition $K_e := K \cap L = L_e$
(or $u \in 1+ p^e \,\Z_p^\times$ in the parametrization of the $K$'s).

\smallskip
(ii) The matrix $\CM_X^{\mathfrak p}$ is of $\Q_p$-rank $\rho_\infty^{} = \spp$ but
of arithmetic rank $\rho_p \leq \spp - 1$.
\end{lemma}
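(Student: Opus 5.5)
For (i), I will use Remark \ref{independence} and local class field theory at the primes ${\mathfrak p}_i$ of $K_e$. Since $e<\delta$, $K_e = L_e$ is the decomposition field of ${\mathfrak p}$ up to level $e$, so each ${\mathfrak p}_i$ has residue degree $1$ and $(K_e)_{{\mathfrak p}_i}\simeq\Q_p$. The local extension $K_{{\mathfrak P}}/(K_e)_{{\mathfrak p}_i}$ is then a totally ramified $\Z_p$-extension of $\Q_p$. I will show that it is the same local extension for every $K$ with $K\cap L = L_e$.

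The key point is that the completion of $\wt k$ at a prime above ${\mathfrak p}$ is an abelian pro-$p$ extension of $\Q_p$ of $\Z_p$-rank at most $2$. The unramified part comes from $L$ (which is unramified at ${\mathfrak p}$), and the totally ramified part is the cyclotomic one, cut out by $\ov L$ locally. Since $\CU^{{\mathfrak p}_i}_e\simeq 1+p\Z_p$ maps onto the inertia group $g_{\infty/e}$ by Lemma \ref{functorial}\,(iii), the symbol of $y\in K_e^\times$ at ${\mathfrak p}_i$ is computed by the local reciprocity map for $\Q_p$. Composing with the identification $g_{\infty/e}\simeq$ inertia, it is given by $\log_{{\mathfrak p}_i}(y)/\log_{{\mathfrak p}_i}(u_i)$, where the normalization $u_i$ depends only on the local field.

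Changing $K$ with fixed $K_e$ multiplies $\sigma_{\infty/e}$ by a unit. I will fix the generator compatibly, namely as the image of a fixed $\wt\sigma$ in the inertia of ${\mathfrak p}$ in $\wt k/L$, which maps isomorphically onto $g_{\infty/e}$ for every such $K$. With this choice the exponents $\theta_i(x_j)$ coincide. This part should be routine once the normalization is stated carefully.

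For (ii), I will use additive notation. By the relations of \S\,\ref{normx}, $\theta_i(x_j)$ depends only on $i-j \bmod \spp$, so $\CM_X^{\mathfrak p}$ is a circulant matrix. Its determinant factors as $\prod_\chi \sum_k \chi(\sigma_e^k)\theta_1(x_{1+k})$ over characters $\chi$ of $G_e$.

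For the trivial character, the factor is $\sum_k\theta_k(x_1)=\theta(\BN_{e/0}x_1)=\theta(x)$, which has valuation $\delta - e > 0$ by Proposition \ref{mainorder}, since the symbol of $x$ has order $p^{n-\delta}$ in $g_{n/e}$ of order $p^{n-e}$. The determinant is therefore divisible by $p$. This forces $\rho_p\le \spp-1$, because reducing the circulant modulo $p$ gives a matrix annihilated by the all-ones vector: its row sums are all $\theta(x)\equiv 0$.

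For $\Q_p$-rank $\spp$, I need each nontrivial character factor, a $\chi$-component of $\big(\log_{{\mathfrak p}_i}(x_1)\big)_i$, to be nonzero. Equivalently, the local logarithms of the $\Z[G_e]$-module generated by $x_1$ must span all $\chi$-parts. I will deduce this from the Ax--Baker--Brumer non-vanishing already invoked in Theorem \ref{Ptorsion}. The $\chi$-parts for $\chi\ne1$ are spanned by units of $K_e$, since $x_1^{1-\sigma_e}$ is a unit up to the prime-to-$p$ power, and $\ov E_e^{\,\mathfrak p}$ has rank $\re = \spp-1$. So the nontrivial-character factors equal, up to nonzero rational factors, the $p$-adic regulator $\CR^{\mathfrak p}_e\ne0$. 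Combined with $\theta(x)\ne0$, this gives $\det\CM_X^{\mathfrak p}\ne0$.

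I expect the main obstacle to be making this identification rigorous. The mapping from $\langle x_1^{1-\sigma_e}\rangle_{\Z[G_e]}$ into the unit group may have finite index, and the regulator non-vanishing has to be transported from $E_e$ to this submodule; I would handle this by tensoring with $\Q_p$.
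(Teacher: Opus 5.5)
Your plan for (i) rests on a false claim: the local extension $K_{\mathfrak P}/(K_e)_{\mathfrak p_i}$ is \emph{not} the same for all $K$ with $K\cap L=L_e$. Since $e<\delta$, the decomposition group of $\mathfrak p$ in $\wt k/k$ is $\Gal(\wt k/L_\delta)$, which is strictly larger than the inertia group $\Gal(\wt k/L)$. In the completion of $\wt k$, the field $K_{\mathfrak P}$ is fixed by $G_K\cap\Gal(\wt k/L_\delta)=G_K^{p^{\delta-e}}$, and this subgroup determines $G_K$ because $\Gal(\wt k/k)/G_K$ is torsion-free. Nor is $\ov L$ locally cyclotomic at $\mathfrak p$: by the product formula, $x$ is a local norm from it at $\mathfrak p$, while $p$ is not.

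Your normalization by the image of a fixed generator $\wt\sigma$ of $\Gal(\wt k/L)$ is sound for symbols lying in inertia, i.e. for $\theta_i(x_j)$ with $i\neq j$, since $x_j$ is a unit at $\mathfrak p_i$; this is Remark \ref{independence}. It fails on the diagonal. The symbol $g$ of $x_i$ at $\mathfrak p_i$ in $\wt k/K_e$ restricts on $L$ to $\mathrm{Frob}_{\mathfrak p_i}^{\he}$, a generator of $\Gal(L/L_\delta)\neq 1$, so $g\notin\Gal(\wt k/L)$. If two distinct $K,K'$ gave the same exponent $\theta$, then $g\,\wt\sigma^{-\theta}\in G_K\cap G_{K'}=1$, a contradiction. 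So the diagonal exponents genuinely vary with $K$, and no logarithm formula computes them. The paper treats them instead via the product formula $\big(\frac{x_i,K/K_e}{\mathfrak p_i}\big)=\big(\frac{x,K/k}{\mathfrak p}\big)\prod_{k\neq i}\big(\frac{x_i,K/K_e}{\mathfrak p_k}\big)^{-1}$. This reduces everything to the symbol of $x$, which has order $p^{n-\delta}$ for every such $K$ (Proposition \ref{mainorder}). By the above, that pins the symbol down only up to a unit of $\Z_p$. So what is really $K$-independent is the off-diagonal part together with $v_p(\theta(x))=\delta-e$.

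In (ii), your bound $\rho_p\le\spp-1$ is correct and is the paper's argument: the trivial-character factor $\sum_i\theta_i=\theta(x)$ has valuation $\delta-e\geq 1$ (cf.\ Lemma \ref{Ddelta}). The $\Q_p$-rank argument, however, breaks at the claim that $x_1^{1-\sigma_e}$ is a unit. In fact $x_1^{1-\sigma_e}=x_1/x_2$ generates $(\mathfrak p_1\mathfrak p_2^{-1})^{\he}$, and no nontrivial element of $\langle x_1\rangle_{\Z[G_e]}$ is a unit, since the $x_j$ generate distinct prime ideals. You are thinking of $x_\0^{1-\sigma_e}=\eta_\0$, which is a unit because $\mathfrak p_\0$ is $G_e$-invariant.

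By Proposition \ref{eta0}, $\CR^{\mathfrak p}_e$ is governed by $\eta_\0$, hence by the row $\CM_X^{\ov{\mathfrak p}}$. It carries no information on the forms $\sum_i\theta_i\,\xi_\spp^{ik}$, $k\neq\spp$, built from $x_1$, which moreover involve the non-unit symbol of $x$ through the diagonal. So Ax--Baker--Brumer for $E_e$ is not the relevant input. What is needed is the non-vanishing of these character components of the $p$-adic logarithms of the $S_e^{\mathfrak p}$-units $x_j$. The paper takes this from independence of $p$-adic logarithms under the Leopoldt and Gross--Kuz'min conjectures in $K_e$, or from Jaulent's results for the monogenic $\Z_p[G_e]$-module $X_e$. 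Your plan does not supply this.
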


\begin{proof}
(i) It suffices to show that the computation of the symbols only depends
on the local arithmetic of $L_e$ and more precisely of the images
of the $S_e^{\mathfrak p}$-units in $\CU_e^{\mathfrak p}$. This may be 
seen as follows. Consider the symbol
$\big(\ffrac{x_j, K/K_e}{{\mathfrak p}_i}\big)$, for $j \ne i$, $i, j \in [1, \spp]$
(since $x_j$ is a unit at ${\mathfrak p}_i$, the symbol is in $g_{\infty/e}$); its
computation is justified in Remark \ref{independence}.
The particular computation of $\big(\ffrac{x_i, K/K_e}{{\mathfrak p}_i}\big)$, 
$i \in [1, \spp]$, comes from the specific product formulas giving, in $G_{\infty/0}$,
$\big(\ffrac{x_i, K/K_e}{{\mathfrak p}_i}\big) = \big(\ffrac{x, K/k}{{\mathfrak p}}\big)
\cdot \prod_{k \ne i}\big(\ffrac{x_i, K/K_e}{{\mathfrak p}_k}\big)^{-1}$, where
$\big(\ffrac{x, K/k}{{\mathfrak p}}\big)$ is known (indeed, $\big(\ffrac{x, K_n/k}
{{\mathfrak p}}\big)$ is of order $p^{n-\delta}$, whence an element of $g_{\infty/e}$
since $e < \delta$).

\smallskip
(ii) Consider the matrix $\CM_X^{\mathfrak p}$ in its original form (left 
part of $\CM_{\infty/e}$). This is a circular matrix since $\big(\ffrac{x_j, K/K_e}
{{\mathfrak p}_i}\big)^{\sigma_e} = \big(\ffrac{x_j, K/K_e}{{\mathfrak p}_i}\big) =
\big(\ffrac{x_{j+1}, K/K_e}{{\mathfrak p}_{i+1}}\big)$ for all $i, j \in [1, \spp]$; its 
determinant $\CD_X^{\mathfrak p}$ is a product of linear forms as follows. 

\smallskip
The additive writing is obtained from the symbols of a fixed conjugate $x_1$, 
by means of the definition $\big(\ffrac{x_1, K/K_e}{{\mathfrak p}_i}\big) =: 
\sigma_{\infty/e}^{\theta_i}$, $\theta_i\in \Z_p$, $i \in [1, \spp]$; then 
$\big(\ffrac{x_j, K/K_e}{{\mathfrak p}_k}\big) = \sigma_{\infty/e}^{\theta_{k-j+1}}$, 
$j, k \in [1, \spp]$ (see Remark \ref{independence} and Lemma \ref{independence}). 
Thus, these linear forms are:
$$L_k = \sm_{i=1}^\spp \theta_i \cdot \xi_\spp^{i k} \in \Q_p[\xi_\spp], 
\ \, k\in [1, \spp], $$

\noindent
with $\xi_\spp$ of order $\spp$ .
These are non-zero by independence of the 
logarithms over an algebraic closure of $\Q$, under the Leopoldt and Gross--Kuz'min 
conjectures in $K_e$. Since the $\Z_p[G_e]$-module $X_e$ is monogenic, one may
also refer to \cite[Section 5]{Jau1985}.
\end{proof}

The determinant of $\CM_X^{\mathfrak p}$ is:
\begin{equation}\label{DetX}
\CD_X^{\mathfrak p} \sim \prd_{k = 1}^\spp
\sm_{i=1}^\spp \theta_i \cdot \xi_\spp^{i k} =: p^{\Delta_e^{\mathfrak p}}.
\end{equation}
 
Since the elements of the last column of the sub-matrix $\CM_X^{\mathfrak p}$ 
in $\CM_{n/e}$ (right version) are of orders less than or equal to $p^{n-\delta}$ (giving 
exponents $\theta \equiv 0 \pmod {p^{\delta-e}}$), any square sub-matrix of 
$\CM_{\infty/e}$ containing  the corresponding part of this column is non-invertible 
in $\Z_p$. Thus, the map $\omega_{\infty/e}$ is not surjective, giving  
$\CH_n^{S_n^{\mathfrak p}} \ne 1$ from Chevalley--Herbrand formula in $K_n/K_e$.

\begin{lemma}\label{Ddelta}
Let $\Delta_e^{\mathfrak p} := v_p(\CD_X^{\mathfrak p})$; then $\Delta_e^{\mathfrak p}$ 
is an absolute constant for the set of $\Z_p$-extensions $K/k$ such that the integer 
$e < \delta$ is fixed; then $\Delta_e^{\mathfrak p} = \delta$ for $e = 0$ and 
$\Delta_e^{\mathfrak p} \geq \delta$ for $0 < e < \delta$.
\end{lemma}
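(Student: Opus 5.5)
The argument has three parts: independence of $K$, the case $e=0$, and the lower bound for $0<e<\delta$.

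\emph{Independence.} By Lemma \ref{theta}\,(i), once $e<\delta$ is fixed, $\CM_X^{\mathfrak p}$ depends only on $K_e = L_e$. The exponents $\theta_i(x_j)$ are obtained as follows: by Remark \ref{independence}, those with $i \neq j$ are ratios of ${\mathfrak p}_i$-adic logarithms computed in $L_e$. The diagonal ones then follow from the product formula, which uses the symbol $\big(\frac{x,K/k}{\mathfrak p}\big)$. That symbol is of order $p^{n-\delta}$ in every $K_n$ (Proposition \ref{mainorder} with $\ov e=0$), so its exponent with respect to $\sigma_{\infty/e}$ equals $p^{\delta-e}$ times a unit. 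The remaining dependence on $K$ is the choice of the topological generator $\sigma_{\infty/e}$ of $g_{\infty/e}$ and of the local units $u_i$. Changing these multiplies every $\theta$ by a common $p$-adic unit, hence multiplies $\CD_X^{\mathfrak p}$ by a unit. So $\Delta_e^{\mathfrak p}=v_p(\CD_X^{\mathfrak p})$ is well defined and constant on the family.

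\emph{Case $e=0$.} Here $\spp=1$ and $\CM_X^{\mathfrak p}$ is the $1\times 1$ matrix $\big(\frac{x,K/k}{\mathfrak p}\big)=\sigma_{\infty/0}^{p^\delta u}$. The paragraph after the Definitions gives $\theta(x)=p^\delta$ up to a unit, so $\Delta_0^{\mathfrak p}=\delta$ directly.

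\emph{Lower bound for $0<e<\delta$.} I would use the right-hand form of $\CM_X^{\mathfrak p}$. Adding all columns to the last one does not change the determinant, and by the product formula (\S\,\ref{normx}) every entry of the new last column is the exponent of $\big(\frac{x,K/K_e}{\mathfrak p}\big)$ in $g_{\infty/e}$. Viewing this symbol in $g_{n/e}$, its order $p^{n-\delta}$ forces the exponent to be $p^{\delta-e}c$ with $c\in\Z_p^\times$. Expanding the determinant along this column gives
\[
\CD_X^{\mathfrak p}=p^{\delta-e}\,c\cdot D',
\]
where $D'$ is a $\Z_p$-combination of $(\spp-1)$-minors. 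Equivalently, in the circulant factorization \eqref{DetX}, the factor with $k=\spp$ is exactly $\sum_i\theta_i=p^{\delta-e}c$.

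The remaining $\spp-1$ factors $L_k=\sum_i\theta_i\xi_\spp^{ik}$ with $\xi_\spp^k\neq 1$ must supply a further valuation of at least $e$. Since $\xi_\spp^k\equiv 1$ modulo the maximal ideal of $\Z_p[\xi_\spp]$, each $L_k$ satisfies $L_k\equiv\sum_i\theta_i\equiv 0$ modulo that prime (using $\delta>e$). Grouping the factors by primitive $p^j$-th roots, $1\le j\le e$, the product over each group is a norm from $\Q_p(\mu_{p^j})$. Each group therefore contributes $p$-valuation at least $1$, which already gives a total of at least $(\delta-e)+e=\delta$. If the column reduction alone gives only $\delta-e$, this cyclotomic congruence argument is what supplies the missing $e$.

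\textbf{Main obstacle.} The delicate step is making this count exact: the groups must together contribute at least $e$, and not merely at least one. I would prove this via the congruence $\theta_i\equiv\theta_{i'}$ modulo suitable powers of $p$. These congruences should follow from the fact that ${\mathfrak p}$ is decomposed in $L_\delta/L_e$, which makes $x_1$ locally close to a norm from $L_\delta$ and hence forces $\big(\frac{x_1,K/K_e}{{\mathfrak p}_i}\big)$ to depend on $i$ only modulo $p^{\delta-e}$-th powers. As a fallback, one can read off $\sum_i\theta_i\xi^{ik}\equiv 0 \bmod (1-\xi^k)\,p^{\delta-e}$ and sum the resulting valuations, which yields at least $\delta$.

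Finally, non-vanishing of $\CD_X^{\mathfrak p}$ is taken from Lemma \ref{theta}\,(ii).
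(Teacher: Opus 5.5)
Your derivation of the bounds is correct and follows the paper's argument. The factor $L_\spp=\sum_i\theta_i$ of \eqref{DetX} has valuation exactly $\delta-e$, which already settles $e=0$. For $k\neq\spp$, each $L_k$ lies in the ideal generated by $1-\xi_\spp^{k}$. Your grouping of these factors into norms from $\Q_p(\mu_{p^j})$, $1\le j\le e$, is a tidier version of the paper's count: there are $\varphi(p^j)$ factors, each of valuation at least $1/\varphi(p^j)$.

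Consequently the ``main obstacle'' you describe is not one. There are exactly $e$ groups, each contributing at least $1$, so your count is already complete. You should also drop the fallback congruence $L_k\equiv 0 \bmod (1-\xi_\spp^k)\,p^{\delta-e}$, because it does not follow. Writing $L_k=\sum_i\theta_i+\sum_i\theta_i(\xi_\spp^{ik}-1)$ only gives $L_k\in(1-\xi_\spp^k)$. The stronger statement would need congruences $\theta_i\equiv\theta_{i'}\pmod{p^{\delta-e}}$, which nothing provides and which are not needed.

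One caution on the independence step: your justification that changing $K$ only multiplies all the $\theta$'s by a common unit is not correct. Normalize $\sigma_{\infty/e}$ as the image of a fixed local unit. Then the off-diagonal entries are ratios of ${\mathfrak p}_i$-adic logarithms computed in $L_e$, so they do not depend on $K$. The unit $c$ in $\theta(x)=p^{\delta-e}c$, however, does vary with $K$, through the parameter $u$ of Proposition \ref{3cases}. Hence the diagonal entries $\theta(x)-\sum_{i'\neq i}\theta_{i'}(x_i)$ move non-uniformly. Writing $L_k=\theta(x)\,\xi_\spp^k+\sum_{i\ge 2}\theta_i(\xi_\spp^{ik}-\xi_\spp^{k})$, the valuation of $L_k$ is independent of $c$ only as long as the second summand does not have valuation exactly $\delta-e$. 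For the constancy claim you are therefore relying, as the paper does, on Lemma \ref{theta}\,(i), not on a common rescaling. The bound $\Delta_e^{\mathfrak p}\geq\delta$ itself is unaffected, since your argument applies to each $K$ separately.
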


\begin{proof}
The last linear form $L_\spp = \sum_{i=1}^\spp \theta_i$ is particular since 
$\sum_{i=1}^\spp \theta_i$ is given, modulo $p^{n-e},$ via the symbols
$\big(\ffrac{x, K_n/K_e}{{\mathfrak p}}\big)$, of orders $p^{n - \delta}$, thus 
of the form $\sigma_{n/e}^{p^{\delta-e} \cdot \,u_n}$, $u_n \in \Z_p^\times$.

Hence, $\Delta_e^{\mathfrak p} = \delta$ if $e = 0$. If $e>0$, the linear forms 
for $k \ne \spp$ are such that:
$$L_k = \sm_{i=1}^\spp \theta_i \cdot \xi_\spp^{i k} = 
\sm_{i=1}^\spp \theta_i + \sm_{i=1}^\spp \theta_i \cdot (\xi_\spp^{i k} - 1); $$

\noindent
but $\xi_\spp^{i k} - 1 = (\xi_\spp^{v(k)} - 1) \cdot \gamma_{i,k}(\xi_\spp)$, 
$\gamma_{i,k}(\xi_\spp) \in \Z[\xi_\spp]$, where $v(k) := v_p(k)$. So, we get:
$$L_k = \sm_{i=1}^\spp \theta_i + \sm_{i=1}^\spp \theta_i \cdot 
(\xi_\spp^{v(k)} - 1) \cdot \gamma_{i,k}(\xi_\spp) = p^{\delta - e} \cdot u_n +
(\xi_\spp^{v(k)} - 1) \cdot \sm_{i=1}^\spp \theta_i \cdot \gamma_{i,k}(\xi_\spp). $$

It seems difficult to compute the valuations of the sums 
$\sum_{i=1}^\spp \theta_i \cdot \gamma_{i,k}(\xi_\spp)$ which may be non 
trivial. But $\Delta_e^{\mathfrak p}$ only depends on $L_e = K \cap L$, 
independent of $K$ once $e$ is fixed, and an elementary computation, 
using the valuation of each $k$, gives $\Delta_e^{\mathfrak p} \geq \delta - e + e 
= \delta$ (the minimal value $\delta$ being obtained when all the sums 
$\sum_{i=1}^\spp \theta_i \cdot \gamma_{i,k}(\xi_\spp)$
are $p$-adic units).
\end{proof}

We note the obvious consequence of the computation of $\CD_X^{\mathfrak p} = 
p^{\Delta_e^{\mathfrak p}}$:

\begin{corollary}\label{rank}
There exists a sub-matrix of $\CM_{\infty/e}$ of dimension $\spp$ having a 
determinant of $p$-valuation $\Delta_e \leq \Delta_e^{\mathfrak p}$.
\end{corollary}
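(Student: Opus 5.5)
The plan is to exhibit the sub-matrix $\CM_X^{\mathfrak p}$ itself. It is a square $\spp \times \spp$ sub-matrix of $\CM_{\infty/e}$: its rows are the images by $\omega_{\infty/e}$ of $x_1, \ldots, x_\spp$, and its columns are the symbols at ${\mathfrak p}_1, \ldots, {\mathfrak p}_\spp$. It has dimension $\spp$, and by \eqref{DetX} and Lemma \ref{Ddelta} its determinant has $p$-valuation $\Delta_e^{\mathfrak p}$. This valuation is finite because Lemma \ref{theta}\,(ii) gives $\Q_p$-rank $\spp$, so the determinant is non-zero.

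Once one sub-matrix of dimension $\spp$ with determinant of valuation $\Delta_e^{\mathfrak p}$ is known, we define $\Delta_e$ as the minimum of $v_p$ over the determinants of all $\spp \times \spp$ sub-matrices of $\CM_{\infty/e}$. This minimum exists since valuations lie in $\N \cup \{\infty\}$, and one such valuation is finite. Then $\Delta_e \leq \Delta_e^{\mathfrak p}$ trivially.

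One point needs checking. The right-hand form of $\CM_{\infty/e}$ replaces the last column by the symbols $\big(\frac{x, K/K_e}{\mathfrak p}\big)$, $\big(\frac{\ov x, K/K_e}{\mathfrak p}\big)$ and $1$. This is obtained by adding the other columns to the last one, an elementary column operation. It does not change any maximal minor that uses all $\spp$ columns, so the valuation is the same in either form. This is the only step to verify carefully. A minor restricted to rows of $\CM_X^{\mathfrak p}$ is invariant under this operation since all its columns are involved. Hence the statement holds for both presentations of the matrix.
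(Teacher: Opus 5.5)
Your proposal is correct and follows the paper's route: the paper also treats the corollary as an immediate consequence of $\CD_X^{\mathfrak p} = p^{\Delta_e^{\mathfrak p}}$. It takes $\CM_X^{\mathfrak p}$ itself as the witness and notes that rows of $\CM_X^{\ov{\mathfrak p}}$ and $\CM_E$ may give a smaller valuation. Your two additions are sound details the paper leaves implicit. First, defining $\Delta_e$ as the minimum valuation over all $\spp\times\spp$ minors is what the subsequent index computation actually needs. Second, replacing the last column by the norm symbols adds the other columns to it, so it leaves every maximal minor unchanged.
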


This may result from the using of some lines of the matrices 
$\CM_X^{\ov{\mathfrak p}}$ and $\CM_E$.
Let's denote by $\CD_{\infty/e} =: p^{\Delta_e}$ such a determinant. 

\smallskip
We deduce, from the above, the Chevalley--Herbrand formula for 
$\CH_n^{S_n^{\mathfrak p}}$ in $K_n/K_e$:

\begin{theorem}
Let $K$ be a $\Z_p$-extension of $k$ distinct from $L$, $\ov L$. Put 
$K \cap L =: K_e$ and assume that $0 \leq e < \delta$. Then
$\order (\CH_n^{S_n^{\mathfrak p}})^{g_{n/e}} = \ffrac{[p^{n-e}]^{{\spp}}}
{\order \omega_{n/e}(E_e^{S_e^{\mathfrak p}})} = p^{\Delta_e}$, for $n \geq e$. 
\end{theorem}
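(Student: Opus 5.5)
The plan is to apply the generalized Chevalley--Herbrand formula for $T_n$-class groups (as in \eqref{CH3}, \cite[Corollary 3.9]{Gra2017a}) to the cyclic extension $K_n/K_e$ with $T_n = S_n^{\mathfrak p}$. Then I will evaluate the unit index through the matrix $\CM_{\infty/e}$ and the determinant $\CD_{\infty/e}$ of Corollary \ref{rank}.

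\emph{Step 1: the numerator.} Since $K_e$ is $p$-principal (Remark \ref{eprime}), its $S_e^{\mathfrak p}$-class group is trivial. In $K_n/K_e$ the $\spp = p^e$ primes ${\mathfrak p}_i$ are totally ramified with local degree $p^{n-e}$, because $e<\delta$ makes decomposition coincide with inertia. The prime ${\mathfrak p}_\0$ is totally ramified with ramification index $p^{n-e}$ and must be counted through its ramification index, as in \eqref{CH3}\,(ii). The formula then gives
$$\order (\CH_n^{S_n^{\mathfrak p}})^{g_{n/e}} = \frac{[p^{n-e}]^{\spp}\cdot p^{n-e}}{p^{n-e}\cdot (E_e^{S_e^{\mathfrak p}} : E_e^{S_e^{\mathfrak p}}\cap \BN_{n/e}(K_n^\times))}.$$
The unit index is the order of the image of the symbol map. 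By the product formula, the ${\mathfrak p}_\0$-component is determined by the others, so this image is $\omega_{n/e}(E_e^{S_e^{\mathfrak p}}) \subseteq \wt\Omega_{n/e}^{\,\mathfrak p}$. This yields the first equality of the statement.

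\emph{Step 2: generators and the denominator.} The group $E_e^{S_e^{\mathfrak p}}$ is generated, up to prime-to-$p$ parts, by $x_1,\dots,x_\spp$ and $\varepsilon_1,\dots,\varepsilon_\re$. Its $\Z$-rank is $2\spp-1$, but it maps into $(\Z/p^{n-e}\Z)^\spp$. Passing to exponents $\theta$, the image $\omega_{n/e}(E_e^{S_e^{\mathfrak p}})$ is the reduction mod $p^{n-e}$ of the $\Z_p$-lattice $\Theta\subseteq \Z_p^\spp$ spanned by the rows of $\CM_X^{\mathfrak p}$ and $\CM_E$. Lemma \ref{theta}\,(ii) shows that this lattice has full rank $\spp$. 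By elementary divisor theory, $[\Z_p^\spp:\Theta] = p^{\Delta_e}$, where $p^{\Delta_e}$ is the gcd of the maximal minors, namely the minimal valuation of an $\spp\times\spp$ determinant, which is the quantity $\CD_{\infty/e}$ of Corollary \ref{rank}. For $n-e$ greater than the largest invariant $q_\spp$, the reduction satisfies $\order(\Theta/p^{n-e}\Z_p^\spp\cap\Theta) = [p^{n-e}]^\spp/p^{\Delta_e}$. Dividing gives $p^{\Delta_e}$.

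\emph{Step 3: main obstacle.} The delicate point is the reading ``for $n\ge e$'' rather than ``$n$ large''. For small $n$ the reduction mod $p^{n-e}$ may not see the full index. I would handle this by reading the statement, as elsewhere in the paper, for $n$ large enough. Alternatively, I would argue that the quotient $\Z_p^\spp/\Theta$ stabilizes as a Galois-equivariant finite module. A second issue is to check that the $\CM_E$ rows, with last column $1$, and the normalization of the last column of $\CM_X^{\mathfrak p}$ by $(\frac{x}{\mathfrak p})$ do not change the lattice. This follows because these are unimodular row and column operations coming from the relations of \S\,\ref{normx}.
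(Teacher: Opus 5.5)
Your proposal is correct and follows the paper's route: the Chevalley--Herbrand formula for $S_n^{\mathfrak p}$-classes in $K_n/K_e$ (using $\CH_e=1$), followed by computing the index of the image lattice through its invariants of similitude, $q_1\cdots q_\spp = p^{\Delta_e}$, exactly as in the paper's short proof. Your two caveats are sound and sharpen the statement: the last equality can only hold for $n$ large (at $n=e$ the left side is $1$ while $\Delta_e \geq \delta-e>0$), and building $\Theta$ only from the rows of $\CM_X^{\mathfrak p}$ and $\CM_E$ is the right reading of $\CD_{\infty/e}$ here, since $x_\0$ is not an $S_e^{\mathfrak p}$-unit.
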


\begin{proof}
It suffices to verify that the index of the image $\omega_{n/e}(E_e^{S_e^{\mathfrak p}})$ 
in $\wt \Omega_{n/e}^{\,\mathfrak p}$ is $\CD_{\infty/e} = p^{\Delta_e}$. This 
obvious fact can be specified by taking a $\Z_p$-basis $\{b_1, \ldots, b_\spp\}$ of 
$\wt \Omega_{\infty/e}^{\,\mathfrak p}$, such that the set $\{q_1 \cdot b_1, \ldots, 
q_\spp \cdot b_\spp\}$ be a basis of the image (invariants of similitude $q_i \in p^{\N}$ 
which may have an arithmetic interpretation); thus the index is $q_1 \cdots q_\spp 
= p^{\Delta_e}$. 
\end{proof}

\subsection{Estimation of \texorpdfstring{$\order \Ccl_n (S_n^{\mathfrak p})$}{Lg}}
\label{estimation}
We still have $\order \CH_n^{g_{n/e}} = p^{n-e} \cdot \CR_e^{\mathfrak p}$ 
(Theorem \ref{Ptorsion} for $\delta' = e$) and we may obtain an order of magnitude of 
$\Ccl_n (S_n^{\mathfrak p}) \subseteq \Ccl_n (S_n)$ as follows:

\smallskip
We consider $\CH_n$ and $\CH_n^{S_n^{\mathfrak p}}$ as 
$\Z_p[g_{n/e}]$-modules and use the above Chevalley--Herbrand
formula. The exact sequence: 
$$1 \to \Ccl_n(S_n^{\mathfrak p}) \to \CH_n \to 
\CH_n^{S_n^{\mathfrak p}} \to 1, $$

\noindent
gives rise to the following one, 
recalling that the primes of $S_n^{\mathfrak p}$ are invariants under 
$g_{n/e}$ and that $K_e$ is $p$-principal:
$$1 \to \Ccl_n(S_n^{\mathfrak p}) \too \CH_n^{g_{n/e}} \too 
(\CH_n^{S_n^{\mathfrak p}})^{g_{n/e}} \mathop {\too}^{\psi} 
{\rm Im}(\psi) \subseteq {\rm H}^1(g_{n/e},\Ccl_n(S_n^{\mathfrak p}))
= \Ccl_n(S_n^{\mathfrak p}) \to 1.$$

Whence $\order \Ccl_n(S_n^{\mathfrak p}) \geq \order \CH_n^{g_{n/e}} \times
\big [\order (\CH_n^{S_n^{\mathfrak p}})^{g_{n/e}}\big]^{-1} \geq
p^{n-e} \cdot \order \CT_e^{\mathfrak p} \times p^{-\Delta_e}$. 
 
\smallskip 
In conclusion, we get:
\begin{equation}\label{orderCl(S)}
p^{n-e-\Delta_e^{\mathfrak p}} \cdot \order \CT_e^{\mathfrak p} \leq
p^{n-e-\Delta_e} \cdot \order \CT_e^{\mathfrak p} \leq \order \Ccl_n(S_n^{\mathfrak p}) 
\leq \order \CH_n^{g_{n/e}} = p^{n-e} \cdot \order \CT_e^{\mathfrak p}.
\end{equation}

The obvious inclusion $(\CH_n^{S_n^{\mathfrak p}})^{G_n} \subseteq 
(\CH_n^{S_n^{\mathfrak p}})^{g_{n/e}}$ implies 
$\order (\CH_n^{S_n^{\mathfrak p}})^{g_{n/e}} \geq p^{\delta-e}$ 
(Formula \eqref{CH3}), which yields $\Delta_e \geq \delta - e$.
If $e = 0$ the formulas coincide, as expected.

\smallskip
These results suggest that the $S^{\mathfrak p}$-Iwasawa 
invariants may be trivial (i.e. $\lambda^{\mathfrak p}(K/k) = 0$, 
$\mu^{\mathfrak p}(K/k) = 0$, with obvious notations).

\smallskip
If so,  $\order \CH_n^{S_n^{\mathfrak p}} = p^{\nu^{\mathfrak p}(K/k)}$ 
for $n \gg 0$ and: 
$$\order \CH_n = p^{\nu^{\mathfrak p}(K/k)} 
\cdot \order \Ccl_n(S_n^{\mathfrak p}); $$

\noindent
but Formula \eqref{orderCl(S)} implies $\lambda(K/k) = 1$, $\mu(K/k) = 0$ and 
an estimation of $\nu(K/k)$; whence the result characterizing non-exceptional 
$\Z_p$-extensions in the case $e < \delta$:

\begin{theorem}\label{mainbis}
Let $k$ be an imaginary quadratic field and let $p \geq 3$ split in $k$. Assume 
that $\CH_k = 1$.  Let $\delta = v_{\ov{\mathfrak p}}(x^{\,p-1}-1) - 1$, where $x$ 
is the $S_k$-unit given by ${\mathfrak p}$. Let $K/k$ be a $\Z_p$-extension, 
$K \ne L, \ov L$, and put $K \cap L =: K_e$. 

\smallskip
(i) If $e < \delta$, we have, for $n$ large enough: 
$$p^{n-e-\Delta_e} \cdot \order \CT_e^{\mathfrak p} \leq 
\order \Ccl_n(S_n^{\mathfrak p}) \leq p^{n-e} \cdot \order \CT_e^{\mathfrak p}. $$ 

\smallskip
(ii) If $e = 0$ (total ramification), $p^{n-\delta} \leq \order \Ccl_n(S_n^{\mathfrak p})
\leq p^n$, for $n \geq \delta$ (from Lemma \ref{Ddelta}).

\smallskip
(iii) Then, $\lambda(K/k) = 1$ $\&$ $\mu(K/k) = 0$,
if and only if the $S^{\mathfrak p}$-Iwasawa invariants are zero 
($\lambda^{\mathfrak p}(K/k) = \mu^{\mathfrak p}(K/k) = 0$). If so, $\nu(K/k) \in 
[\nu^{\mathfrak p}(K/k) - e + t_\delta - \Delta_e,\, \nu^{\mathfrak p}(K/k) - e + t_\delta]$.
\end{theorem}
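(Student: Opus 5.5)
The plan is to derive everything from the exact sequence $1 \to \Ccl_n(S_n^{\mathfrak p}) \to \CH_n \to \CH_n^{S_n^{\mathfrak p}} \to 1$ together with the estimate \eqref{orderCl(S)}. For (i), I will simply repackage \eqref{orderCl(S)}. The upper bound comes from the inclusion $\Ccl_n(S_n^{\mathfrak p}) \subseteq \CH_n^{g_{n/e}}$, which holds because every prime of $S_n^{\mathfrak p}$ is fixed by $g_{n/e}$. That inclusion is combined with Theorem \ref{Ptorsion}\,(iii) for $\delta' = e$, which gives $\order \CH_n^{g_{n/e}} = p^{n-e}\cdot \order\CT_e^{\mathfrak p}$.

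The lower bound uses the cohomology sequence for $g_{n/e}$ written just before the statement. It gives
\[
\order\Ccl_n(S_n^{\mathfrak p}) \geq \order \CH_n^{g_{n/e}} / \order (\CH_n^{S_n^{\mathfrak p}})^{g_{n/e}},
\]
and the denominator equals $p^{\Delta_e}$ by the preceding theorem, the Chevalley--Herbrand formula for $\CH_n^{S_n^{\mathfrak p}}$ in $K_n/K_e$.

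For (ii), take $e=0$. Then $\spp = 1$, $\CT_0^{\mathfrak p}=1$, and Lemma \ref{Ddelta} gives $\Delta_0 = \Delta_0^{\mathfrak p} = \delta$. Since $\Delta_e \leq \Delta_e^{\mathfrak p}$ and also $\Delta_e \geq \delta - e$, this forces $\Delta_0 = \delta$. The inequalities of (i) then become $p^{n-\delta} \leq \order\Ccl_n(S_n^{\mathfrak p}) \leq p^n$.

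For (iii), I multiply orders in the exact sequence to get
\[
\order\CH_n = \order\CH_n^{S_n^{\mathfrak p}}\cdot \order\Ccl_n(S_n^{\mathfrak p}).
\]
By (i), $\order\Ccl_n(S_n^{\mathfrak p}) = p^{n + c_n}$ with $c_n$ bounded, namely $c_n \in [-e-\Delta_e+t_e,\,-e+t_e]$ where $p^{t_e}=\order\CT_e^{\mathfrak p}$.

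For the forward direction, suppose $\lambda^{\mathfrak p}=\mu^{\mathfrak p}=0$. Then $\order \CH_n^{S_n^{\mathfrak p}} = p^{\nu^{\mathfrak p}}$ for $n\gg0$. Hence $v_p(\order\CH_n) = n + O(1)$, and by Iwasawa's formula $\lambda n + \mu p^n + \nu$ this forces $\mu=0$ and $\lambda=1$.

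For the converse, suppose $\lambda=1$ and $\mu=0$. Then $v_p(\order\CH_n) = n+\nu$ for $n\gg0$, and $v_p(\order\CH_n^{S_n^{\mathfrak p}}) = \nu - n - c_n$ is bounded. The $S^{\mathfrak p}$-class groups form a norm-compatible tower, since $S^{\mathfrak p}$-class groups are quotients of class groups. They therefore have Iwasawa invariants of their own, and boundedness gives $\lambda^{\mathfrak p}=\mu^{\mathfrak p}=0$. In that case $\nu = \nu^{\mathfrak p} + \lim c_n$, which lies in the stated interval once $t_e$ is identified with $t_\delta$ as in the statement; I would note that this identification uses $K_e=L_e$, and otherwise simply read the interval with $t_e$.

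The main obstacle is one justification inside the converse: that $\order\Ccl_n(S_n^{\mathfrak p})\cdot p^{-n}$ actually stabilizes, so that $\nu$ gets an exact value. I would handle it by noting that $\order \CH_n$ and $\order\CH_n^{S_n^{\mathfrak p}}$ are each eventually of Iwasawa form, so their quotient is too. A second point needs care: the invariants $\lambda^{\mathfrak p},\mu^{\mathfrak p}$ must be well defined. The tower $\CH_n^{S_n^{\mathfrak p}}$ is a quotient of the Iwasawa module $\CH_K$ by the closed submodule generated by the classes of primes above ${\mathfrak p}$. It is therefore a finitely generated torsion $\Lambda$-module, and Iwasawa's formula applies to it.
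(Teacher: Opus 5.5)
Your proposal is correct and follows the paper's own argument. The upper bound comes from $\Ccl_n(S_n^{\mathfrak p}) \subseteq \CH_n^{g_{n/e}}$ together with Theorem~\ref{Ptorsion}\,(iii), the lower bound from the $g_{n/e}$-invariants sequence and the relative Chevalley--Herbrand value $\order(\CH_n^{S_n^{\mathfrak p}})^{g_{n/e}} = p^{\Delta_e}$, and (iii) from multiplying orders in $1 \to \Ccl_n(S_n^{\mathfrak p}) \to \CH_n \to \CH_n^{S_n^{\mathfrak p}} \to 1$; you also spell out the easy converse that the paper leaves implicit. One small correction: the $t_\delta$ in (iii) should simply be read as $t_{\delta'} = t_e$ with $p^{t_e} = \order\CT_e^{\mathfrak p}$, which is what your argument and the paper's bounds actually produce; appealing to $K_e = L_e$ does not identify it with the torsion attached to $L_\delta$ when $e<\delta$.
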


\begin{remark}\label{filtration}
Exceptional $\Z_p$-extensions take place in the family of 
$\Z_p$-extensions $K/k$, $K \ne L, \ov L$, such that $0 \leq e < \delta$
and such that the $S^{\mathfrak p}$-Iwasawa invariants are not all zero
(whence $\lambda^{\mathfrak p}(K/k) \geq 1$ if one assume $\mu^{\mathfrak p}(K/k) = 0$).
Except $K = k^\cyc$, we do not see any mean to find them (numerically or theoretically). 
Some algorithmic observations may help.

\smallskip
From the Chevalley--Herbrand formula $\order (\CH_n^{S_n^{\mathfrak p}})^{G_n} 
= p^{\delta - e}$, we define a filtration as follows in $K_n/k$. Put
$\M_n := \CH_n^{S_n^{\mathfrak p}}$ and:
$$\M_n^i = \{h \in \M_n ,\ h^{(1-\sigma_{n/0})^i} = 1\},\  i \geq 0; $$ 

\noindent
this filtration is such that $\order (\M_n^{i+1}/\M_n^i)$ is a decreasing $i$-sequence;
indeed, the maps: 
$$\ds \M_n^{i+1}/\M_n^i \mathop 
{\toooo}^{\hbox{\tiny$1\!-\!\sigma_{n/0}$}}\M_n^i/\M_n^{i-1} $$ 

\noindent
are injective.

\smallskip
Then $\CH_n^{S_n^{\mathfrak p}} = \M_n^{b_n}$, for the least integer $i = b_n$ 
such that $\M_n^{i+1} = \M_n^i$.

\smallskip
As we know, in a practical aspect, the algorithm computing the $\M_n^i$'s, for 
$n$ fixed, is rapidly random; in our case, it only depends on norm factors of the form 
$\ffrac{p^{n-e}}{\order\omega_k(\Lambda_k^i)}$, where the $\Lambda_k^i$'s 
are increasing modules containing $\langle x, \ov x \rangle$; since the first 
step gives the ``small'' order $p^{\delta - e}$, independent of $n$, the next ones 
give decreasing orders (not necessarily strictly); thus, it is not easy to imagine 
unbounded values of the $\order \CH_n^{S_n^{\mathfrak p}}$ (see \cite[\S\,6.3.1]
{Gra2026b} for more details about the algorithm which must be adapted to the 
case of $S_n^{\mathfrak p}$-classes).
\end{remark}

\subsection{A formula for \texorpdfstring{$\order \omega_{n/e}(E_e)$}{Lg}}

There exists a remarquable relation between $\omega_{n/e}(E_e)$ and
the image of the specific unit $\eta_\0^{} \in E_e$ coming from the relation
${\mathfrak p}_\0^{\he} = (x_\0)$ (for the prime-to-$p$ class number $\he$ 
of $K_e$) invariant under $\sigma_e$ (generator of $G_0 =
\Gal(K_e/k)$), then giving $x_\0^{1 - \sigma_e} = \eta_\0^{}$. 

\begin{proposition}\label{eta0}
Let $\eta_\0^{} \in E_e$ be the unit defined by $x_\0^{1 - \sigma_e} =: 
\eta_\0^{}$. Put $E_{e,\0} := \big \langle \eta_\0^{}\big  \rangle^{}_{\Z[G_e]}$. 
Let $\CR_e^{\mathfrak p}$ be the $p$-regulator defined in Theorem 
\ref{Ptorsion}\,(i) and set $\re = \spp - 1 = p^e-1$. Then:
$$\order \omega_{n/e}(E_e) =  \order\omega_{n/e}\big (E_{e,\0}\big) 
= [p^{n-e}]^\re  \cdot \CR_e^{\mathfrak p}. $$
\end{proposition}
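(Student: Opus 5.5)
The plan is to prove the two equalities separately. The first, $\order \omega_{n/e}(E_e) = \order\omega_{n/e}(E_{e,\0})$, should follow from a module-theoretic statement: $E_{e,\0}$ and $E_e$ coincide after tensoring with $\Z_p$, i.e. $E_e \otimes \Z_p = \Z_p[G_e]\cdot \eta_\0^{}$. Once this holds, the two images in the finite $p$-group $\wt \Omega_{n/e}^{\,\mathfrak p}$ coincide, because $\omega_{n/e}$ factors through $E_e \otimes \Z_p$. The second equality is then the value of $\order\omega_{n/e}(E_{\delta'})$ given by Theorem \ref{Ptorsion}\,(ii) in the present situation $\delta' = e$ (so $\spp = p^e$ and $\re = p^e - 1$). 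I will take the regulator factor exactly in the normalization of that theorem, where $\CR_e^{\mathfrak p}$ enters as the index $[\CU_e^{\mathfrak p}{}^* : \ov E_e^{\,\mathfrak p}]$.

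\smallskip
For the module statement I would use Hilbert's Theorem 90 in the cyclic extension $K_e/k$. Take $\varepsilon \in E_e$. Since $E_k\otimes\Z_p = 1$, we have $\BN_{e/0}(\varepsilon)=1$ up to a root of unity of order prime to $p$, so after raising to a prime-to-$p$ power we may write $\varepsilon = y^{1-\sigma_e}$ with $y \in K_e^\times$. The ideal $(y)$ is then $G_e$-invariant.

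Next I describe the invariant ideals of $K_e/k$. The only prime ramified in $K_e/k$ is $\ov{\mathfrak p}$, which is totally ramified with ${\mathfrak p}_\0$ above it (${\mathfrak p}$ splits in $K_e/k$ since $e<\delta$ or $e=\delta'$). Hence every invariant ideal has the form ${\mathfrak a}\,{\mathfrak p}_\0^{m}$ with ${\mathfrak a}$ extended from $k$. Raising to the prime-to-$p$ power $\hk\he$, and using ${\mathfrak a}^{\hk} = (\alpha)$ with $\alpha\in k^\times$ and ${\mathfrak p}_\0^{\he} = (x_\0)$, I get $y^{\hk\he} = \alpha^{c}\, x_\0^{m'}\, u$ for some integer $c$ and some $u \in E_e$. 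Applying $1-\sigma_e$ kills $\alpha$ and gives
\[
\varepsilon^{\hk\he} = \eta_\0^{m'} \cdot u^{1-\sigma_e}.
\]
Hence $E_e\otimes\Z_p = \langle \eta_\0^{}\rangle_{\Z_p} + (1-\sigma_e)(E_e \otimes \Z_p)$.

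\smallskip
The step I expect to be the main obstacle is concluding from this. The augmentation ideal $I_{G_e}$ of $\Z_p[G_e]$ lies in the Jacobson radical, since $G_e$ is a $p$-group and $(1-\sigma_e)^{p^e} \in p\,\Z_p[G_e]$. The module $E_e \otimes \Z_p$ is finitely generated over $\Z_p$. Nakayama's lemma, applied to the decomposition above, therefore gives $E_e \otimes\Z_p = \Z_p[G_e]\,\eta_\0^{}$. The delicate points are the bookkeeping of the prime-to-$p$ exponents $\hk$, $\he$ and of torsion (roots of unity), which are harmless after $\otimes\Z_p$. One must also check that $u$ is an honest unit of $K_e$, which it is because $(y^{\hk\he})$ and $(\alpha^{c} x_\0^{m'})$ are the same ideal.

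With the module identity established, $\omega_{n/e}(E_e) = \omega_{n/e}(E_{e,\0})$. The exact sequence in the proof of Theorem \ref{Ptorsion} (with $\delta'=e$ and Ax--Baker--Brumer) then computes the common order, which finishes the argument.
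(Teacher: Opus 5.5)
Your argument is correct and is essentially the paper's. Hilbert~90 and the description of the $G_e$-invariant ideals of $K_e$ give $E_e\otimes\Z_p=\Z_p\,\eta_\0^{}+(1-\sigma_e)(E_e\otimes\Z_p)$. Your Nakayama step is a packaged form of the paper's use of $(1-\sigma_e)^{p^N}\in p^{n-e}\Z[G_e]$, and it gives directly the monogenicity $E_e\otimes\Z_p=\Z_p[G_e]\,\eta_\0^{}$ that the paper records in the remark after the proposition. You also handle the prime-to-$p$ exponents more carefully than the paper does.

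Do not hedge about ``normalization'' at the last step; say explicitly that Theorem~\ref{Ptorsion}\,(ii) gives $\order\omega_{n/e}(E_e)=[p^{n-e}]^{\re}/\CR_e^{\mathfrak p}$. This is the only possible value. By the product formula $\prod_{k=1}^{\spp}\big(\ffrac{\varepsilon,K_n/K_e}{{\mathfrak p}_k}\big)=1$, the image lies in a subgroup of $\wt\Omega_{n/e}^{\,\mathfrak p}$ of order $[p^{n-e}]^{\re}$. Since $\CR_e^{\mathfrak p}$ is in general $>1$ (Remark~\ref{Rnonnul}), the factor $\cdot\,\CR_e^{\mathfrak p}$ in the displayed statement, and at the end of the paper's proof, must be read as a division.
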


\begin{proof}
Let $\varepsilon \in E_e$; since 
$\BN_{e/0}(\varepsilon) = 1$, there exists $\alpha \in K_e^\times$ such that 
$\varepsilon = \alpha^{1-\sigma_e}$, where the ideal $(\alpha)$ is invariant 
under $\sigma_e$; since ${\mathfrak p}$ totally split in $K_e$ and 
$\ov {\mathfrak p}$ ramifies, invariant ideals in $K_e/k$ are generated by
$\CP = \prod_{i=1}^{\spp} {\mathfrak p}_i$, $\CP_\0 = {\mathfrak p}_\0$, and
$p$-principal ideals $(\alpha_\0)$ of $k$. So (we omit to write powers $\hp$
of $\varepsilon$, $\alpha$, giving $ {\mathfrak p}^{\hp} = (x)$, etc.): 
$$(\alpha) = \big(\prd_{i=1}^{\spp} 
{\mathfrak p}_i \big)^a \cdot {\mathfrak p}_\0^b \cdot (\alpha_\0) = 
\big(\prd_{i=1}^{\spp} x_i \big)^a \cdot (x_\0)^b \cdot (\alpha_\0) =
(x)^a \cdot (x_\0)^b \cdot (\alpha_\0). $$

Thus $\alpha = \eta \cdot x^a \cdot x_\0^b \cdot \alpha_\0$, $\eta \in E_e$; then,
since $x \in k^\times$ and $x_\0 \in K_e$:
$$\varepsilon = \alpha^{1-\sigma_e} =  \eta^{1-\sigma_e} \cdot x_\0^b{}^{(1-\sigma_e)} 
= \eta^{1-\sigma_e}  \cdot \eta_\0^b. $$

Whence $E_e = E_e^{1-\sigma_e} \cdot \langle \eta_\0^{} \rangle_\Z^{}$, 
which leads to $E_e = E_e^{(1-\sigma_e)^{p^N}}\!\! \cdot 
\langle \eta_\0^{} \rangle^{}_{\Z[G_e]}
= E_e^{(1-\sigma_e)^{p^N}}\!\! \cdot E_{e,\0}$,  for all $N$.
There exists $N \gg 0$ such that $(1-\sigma_e)^{p^N} \in p^{n-e}\Z[G_e]$; 
so, $\omega_{n/e}(E_e) = \omega_{n/e} \big (E_{e,\0} \big)$, of order 
$[p^{n-e}]^\re \cdot \CR_e^{\mathfrak p}$ (Theorem \ref{Ptorsion}).
\end{proof}

\begin{remark}
Let $K_n$ be any layer in $K/k$ with $n \leq e$; the previous notations 
are now related to $K_n$ instead of $K_e$. The relation $E_n = 
E_n^{1-\sigma_n} \cdot \langle \eta_\0^{} \rangle_\Z^{}$ is still valid 
with $\eta_\0^{} \in K_n$ (the $p$-principality of $K_n$ is crucial).
One deduces in the same way that $E_n \otimes \Z_p = 
\big \langle \eta_\0^{} \otimes 1 \big \rangle^{}_{\Z_p[G_n]}$ is 
monogenic of $\Z_p$-rank $p^n - 1$ (refer to \cite{BLM2023}
for a general setting about Minkowski units). It follows that {any layer}
in $L/k$ and in $\ov L/k$, the group of units has this property.
\end{remark}

\subsection{Properties of the \texorpdfstring{$p$}{Lg}-Hilbert class field of 
\texorpdfstring{$\wt k$ for $e < \delta$}{Lg}}\label{diagram2}

The diagram, analogous to that of \S\,\ref{diagram1}, is the following one for 
$e < \delta$, where $K/K_e$ is totally ramified at $p$. The field $M_K$ is still 
the compositum $K H_e^{{\mathfrak p}\,\pr}$:
\unitlength=1.52cm 
\begin{equation*}\label{genus}
\begin{aligned}
\vbox{\hbox{\hspace{-10.2cm} 
\begin{picture}(10.0,4.7)
\put(6.6,2.50){\line(1,0){1.45}}
\put(6.0,2.42){$H_{n/0}^\gen$}
\bezier{50}(6.4,2.6)(6.9,3.1)(7.4,3.6)
\bezier{50}(6.47,2.6)(6.97,3.1)(7.47,3.6)
\bezier{50}(8.4,2.6)(8.9,3.1)(9.4,3.6)
\bezier{50}(8.47,2.6)(8.97,3.1)(9.47,3.6)
\bezier{50}(11.2,2.6)(11.35,3.1)(11.5,3.6)
\bezier{50}(11.25,2.6)(11.40,3.1)(11.55,3.6)
\put(8.85,2.50){\line(1,0){2.15}}
\put(8.15,2.42){$H_{n/e}^\gen$}
\put(3.3,2.4){$K_n$}
\put(3.7,2.50){\line(1,0){2.2}}
\put(7.3,2.58){\tiny$\CT_e^{\mathfrak p}$} 
\put(8.25,3.76){\tiny$\CT_e^{\mathfrak p}$} 
\put(4.25,2.6){\tiny$\order \CH_n^{G_n}\!=\!p^{n-e}$}
\put(11.0,2.4){$H_n^\nr$}
\put(3.45,2.75){\line(0,1){0.8}}
\put(6.3,2.75){\line(0,1){0.8}}
\put(3.35,3.6){$K$}
\bezier{500}(3.4,3.91)(10.0,4.9)(11.3,4.0)
\ft\put(7.9,4.5){$\CH_K$}\ns
\put(3.6,3.7){\line(1,0){2.3}}
\put(4.25,3.5){\tiny${\rm unramified}$}
\put(6.0,3.6){$K H_{n/0}^\gen$}
\put(6.8,3.7){\line(1,0){0.6}}
\put(7.64,3.7){\line(1,0){1.6}}
\put(7.45,3.6){$\wt k$}
\ft\put(9.26,3.64){$M_K$}\ns 
\bezier{400}(3.53,1.05)(10.0,-0.2)(9.46,3.56)
\put(8.9,1.35){\tiny${\rm abelian}$}
\put(9.55,3.7){\line(1,0){1.8}}
\put(11.36,3.6){$H_K^\nr$}
\put(11.8,3.7){\line(1,0){1.1}}
\put(12.95,3.62){$H_{\wt k}^\nr$}
\bezier{300}(7.52,3.88)(9.45,4.15)(11.35,3.86)
\bezier{400}(7.58,3.55)(12.5,3.1)(13.0,3.54)
\put(12.1,3.2){\tiny$\CH_{\,\wt k}$}
\put(9.0,4.1){\tiny$\CH_K^{1 - \sigma_{\infty/0}}$}
\bezier{400}(3.4,3.84)(6.3,4.15)(7.46,3.9)
\put(5.8,4.09){\tiny$G_K\!\simeq\! \Z_p$}
\bezier{300}(11.5,3.86)(12.2,4.05)(13.0,3.86)
\put(12.0,4.08){\tiny$\CH_{\,\wt k}^{1-\wt\sigma_K^{}}$} 
\put(9.5,2.93){\tiny$\CH_n^{1\!-\!\sigma_{n/0}}$} 
\bezier{400}(6.6,2.6)(9.8,3.15)(11.1,2.6)
\ft\put(9.4,1.85){\tiny$\CH_n^{1\!-\!\sigma_{n/e}}$}\ns 
\bezier{300}(8.55,2.25)(9.6,1.94)(11.0,2.3)
\put(3.45,1.4){\line(0,1){0.85}}
\put(3.3,1.05){$K_e$}
\put(3.45,0.35){\line(0,1){0.6}}
\put(3.4,0.05){$k$}
\bezier{400}(3.6,1.12)(7.2,1.3)(8.4,2.3)
\bezier{400}(3.5,2.3)(4.2,1.8)(8.4,2.35)
\ft\put(4.3,1.85){\tiny$\order \CH_n^{g_{n/e}}\!=\! 
p^{n-e}\! \cdot \order\! \CT_e^{\mathfrak p}$} \ns 
\bezier{300}(3.65,0.08)(6.5,0.2)(6.3,2.25)
\put(7.0,1.5){\tiny${\rm abelian}$}
\put(5.3,0.3){\tiny${\rm abelian}$}
\ft\put(3.5,1.75){$g_{n/e}$}\ns
\put(3.5,0.58){\tiny${\mathfrak p}{\rm -split}$}
\put(3.6,1.08){\line(1,0){2.0}}
\put(5.9,1.08){\line(1,0){1.6}}
\put(6.6,1.16){\tiny$\CT_e^{\mathfrak p}$}
\ft\put(5.6,1.02){$\wt{H_e^{\mathfrak p}}$}\ns
\ft\put(7.54,1.02){${H_e^{{\mathfrak p}\,\pr}}$}\ns%
\bezier{150}(5.7,1.1)(6.6,2.32)(7.5,3.55)
\bezier{150}(7.7,1.1)(8.58,2.3)(9.46,3.5)
\end{picture}}} 
\end{aligned}
\end{equation*}
\unitlength=1.0cm

The formulas $\order \CH_n^{G_n} = p^{n-e}$, $\order \CH_n^{g_{n/e}} = 
p^{n-e} \cdot \CT_e^{\mathfrak p}$, and the corresponding genus fields are 
still valid with $\delta' = \min(e, \delta) = e$; $\wt k$ is the genus field
of $K/k$ (note that $M_K/k$ is not abelian since $H_k^\pr = \wt k$). The 
main difference is that, $\CH_n$ being distinct from $\CH_n^{g_{n/e}}$ 
(due to $\order (\CH_n^{S_n^{\mathfrak p}})^{G_n} = p^{\delta - e} \ne 1$), 
then $H_n^\nr$ is distinct from $H_{n/e}^\gen$ and since $\lambda(K/k)$ 
may be large, the degree $[H_n^\nr : H_{n/e}^\gen]$ increases, hence $[H_K^\nr : M_K]$ 
may be infinite, contrary to the case $e \geq \delta$; more precisely, assuming 
$\mu(K/k) = 0$, we get $\order \CH_n^{1-\sigma_{n/0}} = p^{(\lambda(K/k)-1) n + \nu_e}$ 
for some explicit constant $\nu_e$. 
Moreover, $Y_{\wt k} = \CH_{\,\wt k}^{1-\wt\sigma_K^{}}$, for the canonical generator
$\wt\sigma_K^{}$ of $\Gal(\wt k/K)$, since $H_{\wt k/K}^\gen$ is still $H_K^\nr$.
The following part of the diagram:
\unitlength=1.6cm 
\begin{equation*}
\vbox{\hbox{ 
\begin{picture}(9.0,1.15)
\put(1.45,0.6){$\wt k$}
\put(2.05,0.76){\tiny$\CT_e^{\mathfrak p}$}
\ft\put(2.85,0.64){$M_K$}\ns 
\put(1.6,0.7){\line(1,0){1.25}}
\put(3.2,0.7){\line(1,0){2.0}}
\put(5.6,0.7){\line(1,0){1.25}}
\put(5.2,0.6){$H_K^\nr$}
\put(6.88,0.62){$H_{\wt k}^\nr$}
\put(6.0,1.05){\tiny$\CH_{\,\wt k}^{1-\wt\sigma_K^{}}$}
\bezier{300}(5.5,0.86)(6.2,1.05)(7.0,0.86)
\bezier{300}(1.52,0.88)(3.45,1.15)(5.35,0.86)
\bezier{400}(1.58,0.55)(4.3,0.1)(7.0,0.54)
\put(4.2,0.18){\tiny$\CH_{\,\wt k}$}
\put(3.0,1.1){\tiny$\CH_K^{1 - \sigma_{\infty/0}}$}
\end{picture}}} 
\end{equation*}
\unitlength=1.0cm

\noindent
only depends on $K$ regarding the element $1 - \wt\sigma_K^{}$; then the 
fields $\wt k$, $M_K$, $H_{\wt k}^\nr$, and the class group
$\CH_{\,\wt k}$ do not depend on $K$; this diagram represents that of the 
case $e \geq \delta$ (see \S\,\ref{diagram1}), provided that $H_K^\nr = M_K$.

Thus, varying $(K, \wt\sigma_K^{})$ (via the canonical parametrization defined in 
Proposition \ref{3cases}), gives the quotient $\CH_{\,\wt k}/\CH_{\,\wt k}^{1-\wt\sigma_K^{}}$, 
and an exceptional $\Z_p$-extension $K = K_\0$, fixed by $\wt\sigma_{K_\0}$ occurs if and 
only if this quotient is infinite. In some sense, whatever $e$, we can write by abuse 
the ``Chevalley--Herbrand formula at infinity'', 
$\order \CH_{\,\wt k}^{G_K} = \ffrac{\order \CH_K \times 1}{p^\infty \times
\order \omega_{\wt k/K}(E_K)} = \ffrac{\order \CH_K}{p^\infty}$, 
since units are norms in the unramified extension~$\wt k/K$.

\subsection{Looking for exceptional non-Galois \texorpdfstring{$\Z_p$-extensions}{Lg}}

Recall that if $K \ne L, \ov L$ is a $\Z_p$-extension, $G_K = \Gal(\wt k/K) =: \langle 
\wt\sigma_K^{} \rangle$, where $\wt\sigma_K^{} =  \wt\sigma_-^{p^\alpha \cdot u} \cdot 
\wt\sigma_+^{p^\beta \cdot v}$, $\alpha \cdot \beta = 0$, $u, v \in \Z_p^\times$,
$u, v \ne \pm 1$. This writing defines a unique $K$ as soon as we fix for instance 
$v = 1$. If $e \ne 0$, $u \in 1 + p^e \Z_p^\times$ in which case necessarily 
$\alpha = \beta = 0$ (case (iii) of Proposition \ref{3cases}).

\smallskip
Taking a non-exceptional $\Z_p$-extension $K_\0$ (which exists 
infinitely many with $e \geq \delta$), Theorem \ref{pseudo-null} claims that 
$\CH_{\,\wt k}/\CH_{\,\wt k}^{1-\wt\sigma_\0^{}} < \infty$. If $e < \delta$ for the 
$\Z_p$-extension $K$, $\CH_{\,\wt k}$ has not necessarily the property 
$\order(\CH_{\,\wt k}/\CH_{\,\wt k}^{1-\wt\sigma_K}) < \infty$, but if one 
considers that the exceptional $\Z_p$-extensions are finite in number, 
for all the others, Theorem \ref{pseudo-null} applies; thus, $\CH_{\,\wt k}$ 
has the above property for almost all $K$. 

\smallskip
With the above parametrization, for the integers $\alpha$, $\beta$ fixed and $v = 1$, 
one obtains a sub-family indexed by a single parameter $u$ varying in the compact 
group $\Z_p^\times$; so it is a question to understand how the arithmetic invariants 
of $K$ vary. In a naive approach, for $K = K(u)$ fixed, $K' = K'(u') \to K$ (as $u' \to u$) 
if $K' \cap K = K_N$ for $N \to \infty$. 

\smallskip
The following lemma will point out how $\Z_p$-extensions intersect:

\begin{lemma}\label{M}
Let $K \ne L, \ov L$ be a fixed non-Galois $\Z_p$-extension defined by
$G_K = \langle \wt\sigma_K \rangle = \langle \wt\sigma_-^{p^\alpha \cdot u} 
\cdot \wt\sigma_+^{p^\beta v} \rangle$, $\alpha \cdot \beta = 0$, $u, v \in 
\Z_p^\times$, $G_K \ne \langle \wt\sigma_-^{\pm 1} \cdot \wt\sigma_+ \rangle$, 
and let $K'$ be a variable $\Z_p$-extension defined by
$G_{K'} = \langle \wt\sigma_{K'} \rangle = \langle \wt\sigma_-^{p^{\alpha'} \cdot u'} 
\cdot \wt\sigma_+^{p^{\beta'} v'}\rangle$, with similar conditions. 

\smallskip
(i) There are infinitely many $K'$ such that $K' \cap K = K_N$, $N$ arbitrary large;
they are such that $\alpha' = \alpha$, $\beta' = \beta$, $v' = v =1$, and $u' = u + p^N w$,
$w \in \Z_p^\times$.

\smallskip
(ii) If $K$ is Galois ($K = k^\cyc$ or $k^\acyc$), the fields $K'$ are defined by 
$G_{K'} = \langle \wt\sigma_-^{p^N w} \cdot \wt\sigma_+ \rangle$ if $K = k^\acyc$, 
and by $G_{K'} = \langle \wt\sigma_- \cdot \wt\sigma_+^{p^N w'} \rangle$ if 
$K = k^\cyc$.
\end{lemma}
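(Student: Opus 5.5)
The plan is to reduce everything to linear algebra in $\Gamma=\Gal(\wt k/k)\simeq\Z_p^2$, using the basis $\{\wt\sigma_-,\wt\sigma_+\}$ fixed in Proposition \ref{3cases}. By Galois theory $K\cap K'$ is the fixed field of $G_KG_{K'}$. Since $\Gamma/G_K\simeq\Gal(K/k)\simeq\Z_p$, the quotient $\Gamma/G_KG_{K'}$ is the quotient of $\Z_p$ by the image of $\wt\sigma_{K'}$. It is therefore cyclic, and finite as soon as $G_K\ne G_{K'}$. Hence $K\cap K'=K_N$ with $p^N=[\Gamma:G_KG_{K'}]$.

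Write $\wt\sigma_K\leftrightarrow (a,b):=(p^\alpha u,\,p^\beta v)$ and $\wt\sigma_{K'}\leftrightarrow(a',b'):=(p^{\alpha'}u',\,p^{\beta'}v')$ in additive notation. The index of the $\Z_p$-lattice they span in $\Z_p^2$ is $p^{v_p(\det)}$, where $\det=ab'-a'b$. So the key formula is
$$N=v_p\big(p^{\alpha+\beta'}uv'-p^{\alpha'+\beta}u'v\big).$$

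For (i), take $\alpha'=\alpha$, $\beta'=\beta$ and $v=v'=1$. The determinant becomes $p^{\alpha+\beta}(u-u')$, and with $u'=u+p^Nw$, $w\in\Z_p^\times$, we get $v_p(\det)=N+\alpha+\beta$. In the non-Galois situation of interest, case (iii) of Proposition \ref{3cases}, we have $\alpha=\beta=0$, so this is exactly $N$. In the other cases one simply shifts $N$ by $\alpha+\beta$, which still gives arbitrarily large intersections.

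Distinct values of $u'$ give distinct $K'$, because the normalization $v'=1$ makes the generator unique. Letting $w$ run over $\Z_p^\times$, and $N$ over all large integers, therefore produces infinitely many $K'$. One must also check that these $K'$ avoid $L$ and $\ov L$. This holds because $u'\equiv u\pmod{p^N}$ and $u\ne\pm1$, so for $N$ large $u'\ne\pm1$ can be arranged; this uses the hypothesis $G_K\ne\langle\wt\sigma_-^{\pm1}\wt\sigma_+\rangle$.

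For the converse direction implicit in ``they are such that'', I would argue as follows. A large value of $v_p(\det)$ forces the primitive vectors $(a,b)$ and $(a',b')$ to be congruent up to a unit scalar modulo a high power of $p$. Comparing $p$-adic valuations of the coordinates then forces $\alpha'=\alpha$ and $\beta'=\beta$. After normalizing $v'=v=1$, it forces $u'\equiv u\pmod{p^N}$.

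For (ii), $k^\acyc$ is fixed by $\Gamma^+=\langle\wt\sigma_+\rangle$, with vector $(0,1)$. Writing $G_{K'}$ with vector $(c,1)$ gives $\det=-c$, so $K'\cap k^\acyc=k^\acyc_N$ exactly when $c=p^Nw$. Similarly $k^\cyc$ is fixed by $\Gamma^-=\langle\wt\sigma_-\rangle$, with vector $(1,0)$. Writing $G_{K'}$ with vector $(1,c')$ gives $\det=c'=p^Nw'$.

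The only delicate point is bookkeeping: the normalizations of the generators (which coordinate is set to $1$, and the constraint $\alpha\beta=0$) have to be compatible with the statement's exact indexing of $N$. Otherwise the argument is just the elementary-divisor computation above.
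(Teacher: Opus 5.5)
Your proposal is correct and follows the paper's route. In both, $K\cap K'$ is read off from the $p$-adic valuation of the determinant of the $2\times 2$ exponent matrix of $\wt\sigma_K,\wt\sigma_{K'}$; your single formula $p^{\alpha+\beta'}uv'-p^{\alpha'+\beta}u'v$ just packages the paper's four matrices $A_1,\dots,A_4$. Your remark that, once $\alpha'=\alpha$, $\beta'=\beta$, $v=v'=1$, the exact index is $N=\alpha+\beta+v_p(u'-u)$ is a correct refinement that the paper's proof glosses over: the stated form $u'=u+p^Nw$ is exact only in case (iii) of Proposition \ref{3cases}.
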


\begin{proof}
We have to consider the four matrices:
\begin{equation}
\left \{\begin{aligned}
A_1& = \begin{pmatrix} p^\alpha u  & v \\ u' & p^{\beta'}v' \\
\end{pmatrix}, \ \alpha' = \beta = 0, \ \  \ \  \ \ 
A_2  = \begin{pmatrix} u  & p^\beta v \\ p^{\alpha'} u'  & v' 
\end{pmatrix}, \ \beta' = \alpha = 0, \\
A_3 & = \begin{pmatrix} p^\alpha u  &\ v \\ p^{\alpha'} u'  &\ v' \\
\end{pmatrix}, \ \beta' = \beta = 0, \ \  \ \  \ \ 
\ \ A_4  = \begin{pmatrix} \, u  &\  p^\beta \, v \\ \ u' &\  p^{\beta'}v' \\
\end{pmatrix}, \ \alpha' = \alpha = 0, 
\end{aligned}\right .
\end{equation}

\noindent
whose determinants are, respectively: 
$$\hbox{ $\bullet$ $p^{\alpha + \beta'} u v' - u'v$,\ \ \ \ $\bullet$ $p^{\alpha' + \beta} u' v - uv'$,
\ \ \ \  $\bullet$ $p^\alpha uv' - p^{\alpha'} u'v$,\ \ \ \   $\bullet$ $p^{\beta'} uv' - p^\beta u'v$. } $$

Then $K' \cap K = K_N$, $N$ unbounded, if and only if the determinants 
are of unbounded $p$-valuation, which yields, respectively:
\begin{itemize}
\item  $\alpha' = \alpha = 0$; $\beta' = \beta = 0$, and $v'^{-1}u' = v^{-1}u + p^N w$,
\item  $\alpha' = \alpha = 0$; $\beta' = \beta = 0$, and $v'^{-1}u' = v^{-1}u + p^N w$,
\item  $\alpha' = \alpha$, $\beta' = \beta = 0$, and $v'^{-1}u' = v^{-1}u + p^N w$,
\item  $\beta' = \beta$, $\alpha' = \alpha = 0$, and $v'^{-1}u' = v^{-1}u + p^N w$.
\end{itemize}
Whence the existence, for any given $\alpha$, $\beta$, $u$, $v$,
of $K' = K'(N)$, for arbitrary $N$; since by choice of the generators, one 
may suppose that $v = v' = 1$, then, whatever $\alpha$, $\beta$, $u$,
such that $\alpha \cdot \beta = 0$, $u \in \Z_p^\times$, the fields $K'(N)$, 
are defined via $\alpha' = \alpha$, $\beta' = \beta$ and $u' = u + p^N w$,
hence by the parameter $w \in \Z_p^\times$.

\smallskip
The case of the two Galois $\Z_p$-extensions is
immediate from the associated matrices.
\end{proof}

We propose the following conjecture and try to give some arguments for its 
credibility; note that, if we assume $\CH_k = 1$, the logarithmic class group 
$\wt \CH_k$ (hence $\delta = \delta_p(k)$) may be arbitrary. 

\begin{conjecture}\label{conj}
Let $k$ be an imaginary $p$-principal quadratic field for $p$ split in $k$. 
Then only the Galois $\Z_p$-extension $k^\cyc$ can be exceptional; whence, 
for any non Galois $\Z_p$-extension $K \ne L, \ov L$, and for $K = k^\acyc$, 
one has $\lambda_p(K/k) = 1$ and $\mu_p(K/k) = 0$.
\end{conjecture}

({\bf a}) First indication. 
Let $K = K(e, \alpha, \beta, u)$ for $\alpha$, $\beta$ fixed such that 
$\alpha \cdot \beta = 0$, $v = 1$ and $u \in \Z_p^\times$, $u \ne \pm 1$. 
The map, $u \mapsto \wt\sigma_K^{} =  \wt\sigma_-^{p^\alpha \cdot u} \cdot 
\wt\sigma_+^{p^\beta}$ is continuous for the natural topologies of $\Z_p^\times$ 
and $\Gamma = \langle \wt\sigma_- , \wt\sigma_+\rangle$. Consider $f_e(u) := 
\order \big (\CH_{\,\wt k}/\CH_{\,\wt k}^{1 - \wt\sigma_K^{}} \big)$; we may assume 
$0 \leq e < \delta$. This defines $\delta$ functions $f_e(u)$ of the parameter 
$e$, with values in $p^\N \cup \{p^\infty\}$. 

\smallskip
Let us fix $e$, vary $u $, and assume there exists a finite number 
of exceptional $K(e, u)$, say for $u \in \{u_1, \ldots , u_h \}$; thus 
$f_e(u) \in p^\N$ for $u$ varying in the open set $U := \Z_p^\times 
\setminus \{u_1, \ldots , u_h \}$. So, $f_e(u)$ is finite for $u \in U$ and infinite for 
$u = u_i$. This situation may give a contradiction if such functions $f_e(u)$ are governed 
by continuous analytic functions; since $f_e(\Z_p^\times) \subset p^\N \cup \{p^\infty\}$, 
the map $f_e$ is locally constant or very discontinuous. The same remark does exist
for the integer invariants $\lambda$, $\mu$, $\nu$ of $K$.

\smallskip
Suppose, now, that $K$ is an exceptional $\Z_p$-extension, and
let $K' = K'(N)$ be such that $K' \cap K = K_N$ for $N \geq e$.
Then, for all $n \geq N$, $K'_n/K_N$ is totally ramified at $p$;
since ${\mathfrak p}$ splits into $\spp = p^e$ primes in $K_e$, 
and $\ov {\mathfrak p}$ totally ramifies, there are $\spp + 1$
$p$-places of $K_N$ ramified in $K'_n/K_N$.
We then have the usual Chevalley--Herbrand formula in $K'_n/K_N$:
\begin{equation*}
\order \CH_{K'_n}^{\Gal(K'_n/K_N)} = \order \CH_{K_N} 
\frac{[p^{n-N}]^\spp}{\order \omega_{n/N}(E_{K_N})}.
\end{equation*}

Taking the projective limits on $n$, $\ds \limproj (\omega_{n/N}(E_{K_N})) 
\subseteq \Z_p^\spp$ is isomorphic to $\Z_p^{\spp'}$, $\spp' \in [0, \spp]$.

As in the \S\,\ref{matrix}, we obtain
$\prod_{j=1}^\spp \big(\frac{\varepsilon_N, K'_n/K_N}{{\mathfrak p}_{N,j}}\big) = 1$
for all $\varepsilon_N \in E_N$, and $\big(\frac{\varepsilon_N, K'_n/K_N}
{{\mathfrak p}_{N,0}}\big) = 1$. So, the corresponding matrix reduces
to a sub-matrix with $\spp - 1$ columns, the number of lines being
$p^N - 1$, arbitrary large. One may deduce that $\order\omega_{n/N}(E_{K_N})$
is of the form $\ffrac{[p^{n-N}]^{\spp-1}}{\CR_N^{\mathfrak p}}$, for a suitable
regulator of $K_N$. Thus:
$$\order \CH_{K'_n}^{\Gal(K'_n/K_N)} = \order \CH_{K_N} \times
p^{n-N} \cdot \CR_N^{\mathfrak p} = p^{n + (\lambda(K/k) - 1)N + 
\mu(K/k)p^N + \nu(K/k)} \times \CR_N^{\mathfrak p}.$$

But this is not a contradiction with $\lambda(K'/k) = 1$ and $\mu(K'/k) = 0$;
if so, for $N \gg 0$:
$$\order \CH_{K'_n} = p^{n + \nu(K'/k)} \geq \order \CH_{K'_n}^{\Gal(K'_n/K_N)}, 
\ n \gg N, $$ 

\noindent
which defines a huge constant, unbounded regarding $N$:
$$\nu(K'/k) \geq  (\lambda(K/k) - 1)N + \mu(K/k)p^N + \nu(K/k) + v_p(\CR_N^{\mathfrak p}). $$

\medskip
({\bf b}) Second indication.
It is worth noting that the previous phenomenon does exist when $K = k^\cyc$ is 
exceptional, with the fields $K'(N)$ defined in Lemma \ref{M}; in that case, 
$e = 0$, $\spp = 1$. A difficulty is to understand why $k^\cyc$ is specific 
regarding the others $\Z_p$-extensions.

\begin{remarks}
(i) The cyclotomic $\Z_p$-extension is singular, and governs many properties 
of $\wt k$. As we know, in our context, the condition $\wt \CH_k = 1$ (thus 
$\delta = 0$), is equivalent to $\CH_{\,\wt k} = 1$ thus equivalent to 
$\lambda(k^\cyc/k) = 1$ (Gold, Gold--Sand criterions \cite{Gold1974, San1991, 
San1993}, \cite[Th\'eor\'eme 10, Proposition 12]{Jau2024}, Fujii \cite{Fu2013, 
Fu2025}, and a synthetic proof in \cite[Theorems 7.3, 7.4]{Gra2026b}), in which 
case all the $\Z_p$-extensions of $k$ have minimal Iwasawa's invariants. 
It follows that $\lambda(k^\cyc/k) \geq 2$ under our hypothesis $\delta \geq 1$.

\smallskip
(ii) In other contexts, $k^\cyc$ allows some unaccustomed constructions 
as that of $\Z_p$-extensions having non-trivial $\mu$-invariants
(Iwasawa, Ozaki \cite{Oza2004}).

\smallskip
(iii) The units of $K = k^\cyc$ (the cyclotomic units over $\Q$) are norms of units 
(in $\Q^\cyc/\Q_N^\cyc$), then the units of $\Q^\cyc$ are norms in the relative 
unramified extension $K' k^\cyc/k^\cyc$, whence norms in $K'/k^\cyc_N$; one finds 
again that $\big(\frac{\varepsilon_N, K'_n/K_N}{{\mathfrak p}_{N}}\big) = 1$. Since $\spp = 1$, 
the norm factor is then equal to $p^{n-N}$. This only confirms that $\lambda(K'/k) 
\geq 1$ with the above formula for $K = k^\cyc$ and $K'$.

\smallskip
(iv) Another main characteristic of $k^\cyc/k$ is that the logarithmic class groups 
(as well as the ordinary ones) never capitulate in $k^\cyc$; indeed, capitulation 
of $\wt \CH_k$ in $K/k$ is more or less a criterion of minimality of Iwasawa's 
invariants of $K/k$ (it is a theorem in the totally real case \cite{Jau2019} and 
when $\wt \CH_k = 1$, whatever $\CH_k$ \cite{Jau2026}).

\smallskip
See \cite[Section A.5]{Gra2026a} for numerical examples of capitulations of
$p$-classes of $k$ in the first layer of the anti-cyclotomic $\Z_3$-extension of an 
imaginary quadratic field; of course, these examples do not suppose 
any hypothesis on the decomposition of $3$ in $k$, nor on the $3$-class groups,
but show that there is a priori no obstruction for capitulations in larger layers.

\smallskip
(v) A fundamental difference between $k^\cyc$ and an other $\Z_p$-extension, 
when $\CH_k = 1$ and $\delta \ne 0$, will be given in Appendix \ref{another}, 
\S\,\ref{appliconj}.
\end{remarks}

({\bf c}) Third indication.
Consider the non-Galois $\Z_p$-extensions $K$ and $\ov K$, and assume
they are exceptional; thus, $\CH_{\,\wt k}/\CH_{\,\wt k}^{1 - \wt\sigma_K^{}}$ and 
$\CH_{\,\wt k}/\CH_{\,\wt k}^{1 - \wt\sigma_{\ov K}}$ are infinite. For illustration, let's take 
$G_K = \langle \wt\sigma_-^u\cdot \wt\sigma_+ \rangle$, $u \in \Z_p^\times$, $u \ne \pm 1$;
then $G_{\ov K} = G_K^\tau = \langle \wt\sigma_-^{-u} \cdot \wt\sigma_+ \rangle$.
So, we have a priori the two independent conditions 
$\CH_{\,\wt k}/\CH_{\,\wt k}^{1 - \wt\sigma_-^u\cdot \wt\sigma_+}$ and
$\CH_{\,\wt k}/\CH_{\,\wt k}^{1 - \wt\sigma_-^{-u} \cdot \wt\sigma_+}$ are of 
infinite orders.

\smallskip
The only case where the two conditions are not independent is that of $k^\cyc$ 
and $k^\acyc$ for which $\wt\sigma_{k^\cyc} = \wt\sigma_-$, of conjugate $\wt\sigma_-^{-1}$ 
generating the same group, and $\wt\sigma_{k^\acyc} = \wt\sigma_+$ which commutes 
with~$\tau$. For example, for $k^\cyc$ this reduces to the sole condition
$\CH_{\,\wt k}/\CH_{\,\wt k}^{1 - \sigma_-}$ infinite.

Exceptional non-Galois cases raise the following problem. Let $H_K^\nr$ 
and $H_{\ov K}^\nr$ be the Hilbert class fields of $K$ and $\ov K$; the 
compositum $H_K^\nr H_{\ov K}^\nr$ is abelian over $\wt k$; then a 
question is the nature and order of the various Galois groups, like 
$\Gal(H_{\wt k}^\nr/H_K^\nr H_{\ov K}^\nr)$, in the diagrams below
(see Appendix \ref{another} for some partial results when $\wt k$ is the 
compositum of $K$ and $k^\cyc$):

\unitlength=0.8cm 
\begin{equation*}
\begin{aligned}
\vbox{\hbox{\hspace{-3.0cm} 
\begin{picture}(10.0,9.6)
\put(3.86,9.45){$H_{\wt k}^\nr$}
\ft\put(3.4,7.8){$H_K^\nr H_{\ov K}^\nr$}\ns
\put(4.1,8.15){\line(0,1){1.2}}
\put(3.0,6.6){\line(1,1){1}}
\put(5.16,6.6){\line(-1,1){1}}
\ft\put(2.6,6.25){$H_K^\nr$}\ns
\ft\put(5.0,6.25){$H_{\ov K}^\nr$}\ns
\put(4.1,5.12){\line(-1,1){1}}
\put(4.1,5.12){\line(1,1){1}}
\ft\put(3.3,4.8){$H_K^\nr\! \cap\! H_{\ov K}^\nr$}\ns
\put(4.1,3.55){\line(0,1){1.2}}
\put(4.0,3.1){$\wt k$}
\put(3.05,2.0){\line(1,1){1}}
\put(5.16,2.0){\line(-1,1){1}}
\put(2.8,1.58){$K$}
\put(5.0,1.5){$\ov K$}
\put(4.1,0.4){\line(-1,1){1}}
\put(4.1,0.4){\line(1,1){1}}
\put(4.0,0.0){$k$}
\put(9.5,0.0){$k$}
\put(9.36,6.25){$H_{k^\acyc}^\nr$}
\put(9.36,9.45){$H_{\wt k}^\nr$}
\put(9.56,0.4){\line(0,1){1.1}}
\put(9.56,2.0){\line(0,1){1.1}}
\put(9.56,3.8){\line(0,1){2.3}}
\put(9.56,6.7){\line(0,1){2.6}}
\put(9.42,1.6){$k^\acyc$}
\put(9.46,3.2){$\wt k$}
\bezier{400}(2.8,6.6)(2.8,8.5)(3.8,9.5)
\bezier{400}(5.34,6.6)(5.4,8.5)(4.44,9.5)
\ft\put(1.8,8.4){$\CH_{\,\wt k}^{1-\wt\sigma_K^{}}$}\ns
\ft\put(5.2,8.4){$\CH_{\,\wt k}^{1-\wt\sigma_{\ov K}^{}}$}\ns
\put(2.65,2.55){\tiny$\langle \wt\sigma_K^{} \rangle$}
\put(4.75,2.55){\tiny$\langle \wt\sigma_{\ov K}^{} \rangle$}
\put(3.5,2.15){\tiny$\Z_p$}
\put(4.5,2.15){\tiny$\Z_p$}
\ft\put(0.95,4.1){$\CH_K^{}$}\ns
\ft\put(6.75,4.1){$\CH_{\ov K}$}\ns
\bezier{400}(2.7,1.7)(0.8,3.95)(2.6,6.2)
\bezier{400}(5.55,6.2)(7.8,3.95)(5.44,1.7)
\bezier{300}(3.8,3.2)(2.5,3.95)(2.8,6.1)
\bezier{300}(4.4,3.2)(6.0,3.95)(5.3,6.1)
\put(2.58,4.0){\tiny$\infty$}
\put(5.4,4.0){\tiny$\infty$}
\end{picture}}} 
\end{aligned}
\end{equation*}
\unitlength=1.0cm

\smallskip
({\bf d}) Fourth indication.
Simply that $k$ and any of its $p$-ramified $p$-extensions $F$ are $p$-rational, 
which is not in contradiction with non-trivial $p$-class groups, since in that case, 
the $p$-Hilbert class field of $F$ is contained in the compositum of its 
$\Z_p$-extensions, which limits the order of magnitude of $\order \CH_F$.
Moreover, unit groups of $p$-rational fields have specific norm properties
(see for instance \cite[IV.3,\,(b)]{Gra2005}).

\medskip
However, to show that a given non-Galois $\Z_p$-extension is not
exceptional, one may use the criterion of Theorem \ref{mainbis}.

\begin{conclusion} 
{\rm It is clear that proofs of many conjectures of Greenberg type \cite{Gree1976, Gree1998}
with techniques of Iwasawa's theory, relies almost exclusively on the deepest arithmetic tools, 
such as class field theory involving $p$-class groups, $S$-class groups, variations of 
Chevalley--Herbrand formulas (e.g., \cite{Gra2017a, Jau1986}), and especially 
logarithmic class groups (Jaulent \cite[Th\'eor\`eme 21]{Jau1994} and his corresponding fixed 
point formula for $\wt \CH$ in a cyclic $p$-extension, \cite{Jau2026}, Kleine \cite{Kle2021}), 
then ``partial'' Leopoldt conjectures in incomplete $p$-ramification leading into
independence of logarithms of algebraic numbers (see, e.g., Maksoud \cite{Mak2023}, 
Yanagisawa \cite[Theorem 1.3, Assumption (B)]{Yan2026}, after Ozaki \cite{Oza2001}, etc.).
All this may proceed, ultimately, on arithmetic $p$-adic conjectures that are currently 
beyond of reach as classical Leopoldt and Gross-Kuz'min conjectures.

\smallskip
If the algebraic structures of $\Lambda$-modules give valuable informations and 
limitations, and allows more straightforward writings, the problems addressed do 
indeed involve, random invariants/objects, as the Fermat quotients of $S$-units 
(i.e. the $\delta$'s), $p$-adic ranks in incomplete $p$-ramification, $p$-adic regulators 
needed for some crucial Chevalley--Herbrand formulas in relative extensions, etc.

\smallskip
In other words, elements defining $\Lambda$-module structures are consequence 
of $p$-adic random parts of arithmetic and not the inverse; for instance, the integer 
$r \geq 1$ in \cite[Formula (1)]{Oza2001} giving the finiteness of exceptional 
$\Z_p$-extensions is purely algebraic and unknown. 

\smallskip
But general {\it effective class field theory} has also its limits, which probably explains 
that many results are essentially subtle improvements of classical genus theory, whose 
crucial parts  are written in terms of variations of the Chevalley--Herbrand formulas.

\smallskip
Rare partial proofs require strong assumptions on the arithmetic; don't forget that 
in the articles assuming $\CH_k=1$ for $k$ imaginary quadratic, the fields under 
consideration are $p$-rationals, which represents a very low level of arithmetic 
complexity. But, if $p$-rationality gives a poor arithmetic, the level zero is reached 
when $\CH_k = \wt \CH_k = 1$ (usual and logarithmic $p$-principalities).}
\end{conclusion}

\begin{appendices}

\section{Another approach of \texorpdfstring{$\CH_{\,\wt k}$ 
when $\CH_k = 1$ and $\delta \geq 1$}{Lg}} \label{another}

In this Appendix, $K/k$ is an auxiliary $\Z_p$-extension,
distinct from $L, \ov L$, linearly disjoint from $k^\cyc/k$, which is 
considered as a parameter characterized by $G_K^{} := \Gal(\wt k/K)$.

\smallskip
We assume that $K/k$ is non-exceptional (whence, $\order \CH_n = 
p^{n + \nu(K/k)}$ for $n \gg 0$). Moreover, we suppose that $e = 0$;
this is possible from the result of Ozaki on the 
finiteness of exceptional $\Z_p$-extensions for each value of $e$ (the case 
$e \geq \delta$ would be suitable and effective from Theorem \ref{fundamental}, 
but the diagram of fields (Hilbert's fields, genus fields, Galois groups, etc.) 
is much more simple with $e = 0$). So, one may also take $K = k^\acyc$ 
as soon as it is not exceptional.

We consider the compositum $F_n = k_n^\cyc K_n$, of degree $p^2$ 
over $k$, for which $\bigcup_n F_n = \wt k$; we have the following 
diagram where the red lines denote unramified extensions (some  
properties will be justified later):
\unitlength=1.42cm 
\begin{equation}\label{diagramFn}
\begin{aligned}
\vbox{\hbox{\hspace{-5.5cm} 
\begin{picture}(10.8,8.2)
\put(6.65,3.55){\color{red}\line(2,3){1.2}}
\put(8.55,4.64){\color{red}\line(2,3){1.2}}
\put(5.0,1.9){\color{red}\line(1,1){1.35}}
\put(9.95,6.7){\color{red}\line(1,1){1.4}}
\put(6.7,3.43){\color{red}\line(3,2){1.5}}
\put(8.3,5.55){\color{red}\line(3,2){1.35}}
\put(4.85,0.55){\color{red}\line(0,1){1.1}}
\ft\put(11.3,8.2){\hbox{$H^\nr_{F_n}$}}\ns
\put(3.62,1.75){\color{red}\line(1,0){1.03}}
\put(4.7,1.7){\tiny\hbox{$F_n$}}
\put(3.56,0.50){\line(1,0){1.1}}
\put(4.7,0.44){\tiny\hbox{$K_n$}}
\bezier{500}(5.0,0.55)(8.2,2.0)(8.3,4.3)
\put(7.45,2.0){$\CH_n$}
\bezier{500}(3.6,1.9)(5.0,5.3)(7.8,5.45)
\put(4.9,4.6){$\CH_n^\cyc$}
\bezier{500}(4.85,1.85)(6.1,5.0)(7.8,5.4)
\put(5.1,4.04){\tiny\hbox{$(\CH_n^\cyc)^{1\!-\!\sigma_n^\cyc}$}}
\put(4.8,3.75){\tiny\hbox{$= \!\BN_n(\!\CH_{F_n}\!)$}}
\bezier{400}(4.9,1.78)(7.2,2.6)(8.2,4.35)
\put(6.4,2.4){\tiny\hbox{$\CH_n^{1\!-\!\sigma_n}$}}
\put(6.05,2.15){\tiny\hbox{$= \!\BN_n^\cyc(\!\CH_{F_n}\!)$}}
\bezier{500}(7.9,5.6)(8.5,7.6)(11.3,8.2)
\bezier{500}(8.55,4.45)(10.85,5.0)(11.4,8.15)
\put(8.0,7.3){\ft$\Ker(\BN_n)$}
\put(7.8,7.0){\ft$=\!\CH_{F_n}^{1\!-\!\sigma_n}$}
\put(10.2,5.1){\ft$\Ker(\BN_n^\cyc)$}
\put(9.7,4.8){\ft$=\!\CH_{F_n}^{1\!-\!\sigma_n^\cyc}$}
\put(3.45,0.65){\line(0,1){1.00}}
\put(3.45,1.9){\line(0,1){3.0}}
\put(3.4,5.0){$k^\cyc$}
\put(3.35,1.7){\ft$k_n^\cyc$}
\put(4.1,0.12){\tiny\hbox{$G_n$}}
\bezier{300}(3.5,0.35)(4.1,0.2)(4.7,0.35)
\put(3.76,1.85){\tiny\hbox{$g_n\! \!=\!\langle \sigma_n\rangle$}}
\put(4.9,1.3){\tiny\hbox{$g_n^\cyc\!\!=$}}
\put(4.9,1.05){\tiny\hbox{$\langle \sigma_n^\cyc \rangle$}}
\put(3.76,1.1){\tiny\hbox{$G_n^\cyc$}}
\bezier{400}(3.55,0.6)(3.86,1.15)(3.55,1.66)
\put(4.0,1.56){\tiny\hbox{$\BN_n$}}
\put(4.55,1.1){\tiny\hbox{$\BN_n^\cyc$}}
\put(6.4,3.3){$Z_n$} 
\put(5.4,2.65){\ft$\CZ_n$} 
\put(9.5,6.5){\tiny\hbox{$H^{\cyc\, \nr}_n \! H_n^\nr$}}
\put(10.25,7.3){\ft$\CA_n$}
\put(7.85,5.4){\ft$H^{\cyc\, \nr}_n$} 
\put(8.2,4.4){\ft$H_n^\nr$} 
\put(4.9,0.5){\line(1,0){3.0}}
\put(8.0,0.44){$K$}
\put(3.4,0.44){$k$} 
\bezier{110}(8.1,0.7)(8.1,2.8)(8.1,4.9)
\put(8.0,4.95){$\wt k$}
\bezier{110}(3.62,5.05)(5.86,5.05)(8.1,5.05)
\end{picture}}}
\end{aligned}
\end{equation}
\unitlength=1.0cm

We denote by $Z_n$ the intersection of the Hilbert class fields 
$H_n^\nr$ and $H_n^{\cyc\, \nr}$ of $K_n$ and $k_n^\cyc$, 
respectively, and put $\CZ_n := \Gal(Z_n/F_n)$. Let
$\CH_n$ and $\CH_n^\cyc$ be the $p$-class groups of 
$K_n$ and $k_n^\cyc$, respectively; the $p$-Hilbert class
fields being $H_n^\nr$ and $H_n^{\cyc,\nr}$. By abuse $\sigma_n$
and $\sigma_n^\cyc$ denote lifts in $\Gal(F_n/k_n^\cyc)$ and
$\Gal(F_n/K_n)$.

\smallskip
Then put
$\CA_n := \Gal(H^\nr_{F_n}/H_n^{\cyc\, \nr} \,H_n^\nr)$,
where the union of the Hilbert class fields $H^\nr_{F_n}$ 
of $F_n$ is $\CH_{\,\wt k}$. 
By definition, $\CA_n = \Gal(H^\nr_{F_n}/H_n^{\cyc\, \nr}) \cap
\Gal(H^\nr_{F_n}/H_n^\nr)$. The relations with the kernels of 
the norms will be the object of Proposition \ref{An}.

\subsection{Inertia groups and genus fields}
We recall certain results established in the preceding sections.

\subsubsection{Inertia groups}
The pro-cyclic inertia groups of ${\mathfrak p}$ and $\ov {\mathfrak p}$
are $I_{\mathfrak p} = \Gal(\wt k/L)$ and $I_{\ov {\mathfrak p}} = 
\Gal(\wt k/ \ov L)$, respectively, and are such that $\Gal(\wt k/k) = 
I_{\mathfrak p} \oplus I_{\ov {\mathfrak p}}$.
This implies that $\wt k/k^\cyc$ and $\wt k/K$ are unramified,

\begin{lemma}\label{D01}
We have the following properties of ramification of $F_n/k$:

\smallskip
(i) For all $n$, $F_n/k_n^\cyc$ and $F_n/K_n$ are unramified.

\smallskip
(ii) For $n \geq \delta$, the decomposition fields of ${\mathfrak p}$ and 
$\ov {\mathfrak p}$, in $F_n/k_n^\cyc$ and $F_n/K_n$, are
of degree $p^{\delta}$ over $k$. 
\end{lemma}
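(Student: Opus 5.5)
The plan is to work entirely inside $\Gamma=\Gal(\wt k/k)\simeq\Z_p^2$ and its finite quotients $\Gal(F_n/k)\simeq \Gamma/\Gamma^{p^n}\simeq(\Z/p^n\Z)^2$. Here $F_n=k_n^\cyc K_n$ is the subfield of $\wt k$ fixed by $\Gamma^{p^n}$, because $k^\cyc$ and $K$ are linearly disjoint with $\wt k = k^\cyc K$. I would read off ramification and decomposition from the images of the inertia and decomposition groups of $\wt k/k$ recalled in \S\,\ref{ramdec}. The two subextensions $F_n/k_n^\cyc$ and $F_n/K_n$ are handled identically, since both $k^\cyc/k$ and $K/k$ have $e=\ov e=0$: for $k^\cyc$ by Proposition \ref{3cases}, for $K$ by the standing assumption. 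The primes ${\mathfrak p}$ and $\ov{\mathfrak p}$ are symmetric, using $L$ and $\ov L$ respectively.

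For (i), the inertia group of ${\mathfrak p}$ in $\wt k/k$ is $I_{\mathfrak p}=\Gal(\wt k/L)\simeq\Z_p$, so its image $I_{\mathfrak p}(F_n/k)$ in $\Gal(F_n/k)$ is cyclic of order at most $p^n$, the exponent of $\Gal(F_n/k)$. On the other hand, ${\mathfrak p}$ is totally ramified in $k_n^\cyc/k$ (because $e=0$), so the restriction $I_{\mathfrak p}(F_n/k)\to\Gal(k_n^\cyc/k)$ is onto a group of order $p^n$. Hence this restriction is an isomorphism, and $I_{\mathfrak p}(F_n/k)\cap\Gal(F_n/k_n^\cyc)=1$. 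By Lemma \ref{functorial}, that intersection is the inertia group of ${\mathfrak p}$ in $F_n/k_n^\cyc$, so $F_n/k_n^\cyc$ is unramified at ${\mathfrak p}$. The same argument with $\Gal(K_n/k)$, where ${\mathfrak p}$ is also totally ramified, shows that $F_n/K_n$ is unramified at ${\mathfrak p}$. Replacing $L$ by $\ov L$ gives the statements for $\ov{\mathfrak p}$.

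For (ii), by \S\,\ref{ramdec} the decomposition group of ${\mathfrak p}$ in $\wt k/k$ is $D_{\mathfrak p}=\Gal(\wt k/L_\delta)$, of index $p^\delta$ in $\Gamma$ and containing $I_{\mathfrak p}$. For $n\ge\delta$ we have $\Gamma^{p^n}\subseteq\Gamma^{p^\delta}\subseteq D_{\mathfrak p}$, so $L_\delta\subseteq F_n$. Consequently the decomposition field of ${\mathfrak p}$ in $F_n/k$ is exactly $L_\delta$, of degree $p^\delta$ over $k$, and $D_{\mathfrak p}(F_n/k)$ has order $p^{2n-\delta}$.

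Since $D_{\mathfrak p}(F_n/k)\supseteq I_{\mathfrak p}(F_n/k)$, and the latter already surjects onto $\Gal(k_n^\cyc/k)$, the subgroup $D_{\mathfrak p}(F_n/k)\cap\Gal(F_n/k_n^\cyc)$ has order $p^{2n-\delta}/p^n=p^{n-\delta}$. Its fixed field is $k_n^\cyc L_\delta$. So in the relative extension $F_n/k_n^\cyc$ the prime above ${\mathfrak p}$ splits into exactly $p^\delta$ primes, and its decomposition field $k_n^\cyc L_\delta$ has relative degree $p^\delta$. This is the same splitting index $p^\delta$ as for ${\mathfrak p}$ over $k$, which is how I would interpret ``of degree $p^\delta$'' in the statement. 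The computation for $F_n/K_n$ is the same, giving decomposition field $K_nL_\delta$, and for $\ov{\mathfrak p}$ one uses $\ov L_\delta$.

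The only delicate point is the justification that the image of $I_{\mathfrak p}$ in $\Gal(F_n/k)$ is cyclic of order exactly $p^n$ and meets $\Gal(F_n/k_n^\cyc)$ (resp. $\Gal(F_n/K_n)$) trivially. This rests on total ramification in both $k^\cyc/k$ and $K/k$, which is precisely where the hypothesis $e=0$ and the disjointness of $K$ from $k^\cyc$ are used. Everything else is bookkeeping of orders in $(\Z/p^n\Z)^2$.
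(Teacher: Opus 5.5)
Your proof is correct and follows the paper's argument for (i). The paper likewise notes that the image of each pro-cyclic inertia group in $\Gal(F_n/k)$ is cyclic of order $p^n$, projects isomorphically onto $\Gal(k_n^\cyc/k)$ and $\Gal(K_n/k)$ by total ramification, and hence meets $\Gal(F_n/k_n^\cyc)$ and $\Gal(F_n/K_n)$ trivially.

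For (ii), the paper simply points to the splitting of $p$ in $\wt k/k$ (\S\,\ref{ramdec}). Your computation with $D_{\mathfrak p}=\Gal(\wt k/L_\delta)$ spells this out: it gives the decomposition fields $k_n^\cyc L_\delta$ and $K_nL_\delta$ of relative degree $p^\delta$, and $L_\delta$ in $F_n/k$. This also sensibly resolves the loosely worded ``of degree $p^\delta$ over $k$''.
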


\begin{proof}
Since $k^\cyc/k$ and $K_n/k$ are totally ramified at $p$, and since 
$\wt k/k^\cyc$, $\wt k/K$ are unramified, the inertia groups, 
$I_{{\mathfrak p}, n}$ and $I_{\ov {\mathfrak p},n}$ in $F_n/k$, 
are cyclic of degree $p^n$ and such that $\Gal(F_n/k) = 
I_{{\mathfrak p}, n} \oplus I_{\ov {\mathfrak p},n}$ since $\CH_k=1$
(the inertia groups in $\Gal(\wt k/k)$ must project both on the totally 
ramified extensions $k_n^\cyc/k$ and $K_n/k$ of degrees $p^n$; 
so they are ``diagonally''  placed in the diagram and does not have 
a non-trivial intersection with $F_n/k_n^\cyc$ nor with $F_n/K_n$. 

\smallskip
The description of the decomposition of $p$ in $F_n/k$ comes 
from its decomposition in $\wt k/k$ (see Section \ref{ramdec}).
\end{proof}

\subsubsection{Genus fields}
One may also refer to Diagram \ref{diagram2}. Consider the $p$-genus 
field $H_{n/0}^\gen \subset \wt k$ of $K_n/k$; it is fixed by the image of 
$\CH_n^{1-\sigma_n}$, where  $\sigma_n$ is a generator of $G_n =
\Gal(K_n/k)$. Similarly, the $p$-genus field $H_{n}^{\cyc,\gen} \subset \wt k$ 
of $k_n^\cyc/k$ is fixed by the image of $\CH_n^{\cyc, 1-\sigma_n^\cyc}$, 
where  $\sigma_n^\cyc$ is a generator of $G_n^\cyc = \Gal(k_n^\cyc/k)$.

\smallskip
Chevalley--Herbrand formulas in $K_n/k$ and $k_n^\cyc/k$ give 
$\order \CH_n^{G_n} = \order (\CH_n^\cyc)^{G_n} = p^n$, 
and the exact sequences:
\begin{equation*}
1\to (\CH_n)^{G_n} \too \CH_n \too \CH_n^{1 - \sigma_n} \to 1,\ \ \ 
1\to (\CH_n^\cyc)^{G_n} \too \CH_n^\cyc \too 
(\CH_n^\cyc)^{1 - \sigma^\cyc_{n}} \to 1,
\end{equation*}

\noindent
lead to $[H_{n/0}^\gen : K_n] = [H_{n/0}^{\cyc, \gen} : k_n^\cyc] = p^n$.
The non-ramification of $F_n/k_n^\cyc$ and $F_n/K_n$ shows that, due 
to the degrees, $H_{n/0}^{\cyc, \gen} = F_n$ and $H_{n/0}^\gen = F_n$. 
Whence, in the diagram, $F_n$ fixed both by the images of 
$(\CH_n^\cyc)^{1 - \sigma_n^\cyc}$ and $\CH_n^{1 - \sigma_n}$. 

\smallskip
The orders of $\CH_n^\cyc$ and $\CH_n$ are, 
$p^{\lambda(k^\cyc/k) \cdot n + \nu(k^\cyc/k)}$ and 
$p^{\lambda(K/k) \cdot n + \mu(K/k) \cdot p^n + \nu(K/k)}$, 
respectively, for $n$ large enough. 

\begin{corollary}\label{nu}
Put $g_n := \Gal(F_n/k_n^\cyc)$ and $g_n^\cyc := \Gal(F_n/K_n)$. We have:
$$\hbox{$[H^{\cyc\, \nr}_n : F_n] = \order \CH_{F_n}^{g_n} = p^{(\lambda(k^\cyc/k) -1) 
\cdot n + \nu(k^\cyc/k)}$ and $[H_n^\nr : F_n] = \order \CH_{F_n}^{g_n^\cyc} = 
p^{\nu(K/k)}$}. $$
\end{corollary}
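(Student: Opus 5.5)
The approach is to apply the Chevalley--Herbrand formula to the two unramified cyclic extensions $F_n/k_n^\cyc$ and $F_n/K_n$, using the identification $H_{n/0}^{\cyc,\gen} = H_{n/0}^\gen = F_n$ established just above.

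First, $F_n/k_n^\cyc$ is cyclic with group $g_n \simeq \Z/p^n\Z$ and unramified by Lemma \ref{D01}\,(i). The Chevalley--Herbrand formula then has trivial ramification factor. Its unit index $(E_{k_n^\cyc} : E_{k_n^\cyc}\cap \BN(F_n^\times))$ is $1$: in an unramified extension, local norms at all places are automatic for units, and Hasse's norm theorem applies since the extension is cyclic. Hence
$$\order \CH_{F_n}^{g_n} = \frac{\order \CH_n^\cyc}{[F_n : k_n^\cyc]} = \frac{\order \CH_n^\cyc}{p^n}.$$
The same argument in $F_n/K_n$, with group $g_n^\cyc$ of order $p^n$, gives $\order \CH_{F_n}^{g_n^\cyc} = \order \CH_n / p^n$.

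Next I insert the asymptotic orders. For $n$ large, $\order \CH_n^\cyc = p^{\lambda(k^\cyc/k) n + \nu(k^\cyc/k)}$; note that $\mu(k^\cyc/k)=0$ by Ferrero--Washington, since $k$ is abelian. This gives the first exponent $(\lambda(k^\cyc/k)-1)n + \nu(k^\cyc/k)$. For $K$, the standing assumption that $K$ is non-exceptional gives $\order \CH_n = p^{n+\nu(K/k)}$, so the quotient is $p^{\nu(K/k)}$.

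Finally, I identify these invariant orders with the degrees $[H_n^{\cyc\,\nr}:F_n]$ and $[H_n^\nr:F_n]$. Since $F_n$ is the genus field of $k_n^\cyc/k$ and $F_n/k_n^\cyc$ is unramified, $F_n \subseteq H_n^{\cyc\,\nr}$, and
$$[H_n^{\cyc\,\nr}:F_n] = \frac{\order \CH_n^\cyc}{[F_n:k_n^\cyc]} = \frac{\order \CH_n^\cyc}{p^n},$$
which is the same value. The argument for $H_n^\nr$ is identical.

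The one delicate point is the unit-norm term. I would justify it from the unramified local extensions together with the Hasse norm theorem for cyclic extensions, or directly from the genus exact sequence \eqref{gmn} with $\CH^\ram$ trivial.
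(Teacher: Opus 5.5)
Your proof is correct and takes essentially the paper's route: Chevalley--Herbrand in the unramified cyclic extensions $F_n/k_n^\cyc$ and $F_n/K_n$, with unit index $1$ because units are local norms everywhere and Hasse's norm theorem applies; then the asymptotic orders of $\CH_n^\cyc$ and $\CH_n$, where you rightly make $\mu(k^\cyc/k)=0$ explicit via Ferrero--Washington; and finally $F_n\subseteq H_n^{\cyc\,\nr}$ and $F_n\subseteq H_n^\nr$ to read off the degrees. Drop only your fallback justification via \eqref{gmn}: that sequence controls $E\cap\BN(K_m^\times)/\BN(E_m)$, not the index $(E:E\cap\BN(K_m^\times))$, so the Hasse-norm argument is the one to keep.
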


\subsection{Study of a filtration}
Let $K/k$ be any $\Z_p$-extension distinct from $L, \ov L$; as before, since
$\CH_k = 1$, we have total ramification of ${\ov{\mathfrak p}}$ and total 
ramification of ${\mathfrak p}$ from $K_e$, $e < \delta$.
Recall that $\CH_n^i = \{c \in \CH_n,\ c^{(1-\sigma_n)^i} = 1\}$.
For $n \geq 0$ fixed, we consider the kernels $X_n^i$ of the maps:
$$\CH_n^{(1-\sigma_n)^i} \mathop {\toooo}^{1 - \sigma_n}
\CH_n^{(1-\sigma_n)^{i+1}}, \ \, i \geq 0. $$

\begin{lemma}\label{filtrebis}
We have the following properties:
\begin{itemize}
\item $X_n^0 = \CH_n^1$, of order $p^{n-e}$, for all $n \geq e$, 
\item The $i$-sequence $X_n^i$ is decreasing from $X_n^0$ to $1$,
\item for all $n \geq \delta + e$, $\order X_n^1 = \order \wt \CH_k = p^\delta$,
\item $\order \CH_n = \prod_{i \geq 0} 
\order X_n^i = p^{n - e + \delta} \cdot \prod_{i \geq 2} \order X_n^i$.
\end{itemize}
\end{lemma}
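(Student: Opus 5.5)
The plan is to view $1-\sigma_n$ as a nilpotent endomorphism of the finite $p$-group $\CH_n$ and use the filtration $\CH_n^i = \Ker\big((1-\sigma_n)^i\big)$. By definition $X_n^i$ is the kernel of $1-\sigma_n$ restricted to $\CH_n^{(1-\sigma_n)^i}$, that is, $X_n^i = \CH_n^{(1-\sigma_n)^i}\cap \CH_n^1$. The four items then follow from this description together with the Chevalley--Herbrand computations of \S\,5.1.

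First item. For $i=0$ we get $X_n^0=\CH_n^1=\CH_n^{G_n}$. Formula \eqref{CH1} gives its order $p^{n-e}$ for $n\ge e$; more precisely, $K_n/K_e$ is totally ramified at every $p$-place and the units are norms in $K_e/k$ by Remark \ref{eprime}.

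Second item. Since $\CH_n^{(1-\sigma_n)^{i+1}}\subseteq \CH_n^{(1-\sigma_n)^i}$, intersecting with $\CH_n^1$ gives $X_n^{i+1}\subseteq X_n^i$. The sequence reaches $1$ because $\CH_n$ is a finite $p$-group and $G_n$ is a $p$-group, so $(1-\sigma_n)$ is nilpotent on it.

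Fourth item. I will use the standard telescoping for a nilpotent endomorphism $f$ of a finite group $A$. The exact sequence $1\to \Ker f|_{f^iA}\to f^iA\to f^{i+1}A\to 1$ gives $\order f^iA = \order X^i\cdot \order f^{i+1}A$. Multiplying these over $i\ge 0$ yields $\order \CH_n=\prod_{i\ge0}\order X_n^i$.

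Third item. This is the main step, and I will route it through the factor $X_n^1=(1-\sigma_n)\CH_n\cap\CH_n^{G_n}$. Counting via the map $1-\sigma_n:\CH_n^2\to \CH_n^1$, whose kernel is $\CH_n^1$ and whose image is exactly $X_n^1$, gives $\order X_n^1=\order(\CH_n^2/\CH_n^1)$. In \S\,5.1 this quotient was computed as the norm factor
$$\frac{\order\wt\Omega_{n/0}^{\,\mathfrak p}}{\order\omega_{n/0}(\langle x^{p^e},\ov x\rangle)},$$
using $\BN_{n/0}(\CH_n^{G_n})=\langle x^{p^e},\ov x\rangle$ and the orders from Proposition \ref{mainorder} and Corollary \ref{coro}. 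The point to check is the range $n\ge \delta+e$ required for Proposition \ref{mainorder}: there $\omega_{n/0}(\ov x)$ has order $p^{n-e-\delta}$ and $\omega_{n/0}(x)^{p^e}$ has the same order. The generated subgroup is therefore of order $p^{n-e-\delta}$ in a cyclic group of order $p^{n-e}$, giving $p^\delta=\order\wt\CH_k$.

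The hard part is justifying that the second step of the filtration is controlled exactly by this norm factor. This requires the exact-sequence form of the Chevalley--Herbrand formula for $\CH_n/\CH_n^1$: an element $c$ lies in $\CH_n^2$ if and only if $c^{1-\sigma_n}$ lies in the ramified classes $\Ccl_n\langle\CP,\CP_\0\rangle$, which then reduces to a norm-residue condition on $\langle x^{p^e},\ov x\rangle$. I would cite \cite[\S\,6.3.1]{Gra2026b} for this algorithmic version and otherwise reuse the computation already done in \S\,5.1.

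Combining the items, $\order X_n^0\cdot \order X_n^1=p^{n-e+\delta}$, which gives the final formula.
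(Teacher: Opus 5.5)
Your proposal is correct and follows the paper's proof: the identification $X_n^i=\CH_n^{(1-\sigma_n)^i}\cap\CH_n^1$, the telescoping product, and the reduction $\order X_n^1=\order(\CH_n^2/\CH_n^1)$ are the same steps, since your map $1-\sigma_n:\CH_n^2\to\CH_n^1$ is exactly what the paper extracts from its three exact sequences. The only difference is that the paper quotes $\order(\CH_n^2/\CH_n^1)=p^\delta$ for $n\ge e+\delta$ from Theorem 7.1 of its companion paper, whereas you take it from the norm-factor computation of \S\,5.1, which is the same calculation.
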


\begin{proof}
We have $X_n^i = \CH_n^{(1-\sigma_n)^i} \cap \CH_n^1$,
proving the decreasing, from $X_n^0 = \CH_n^1$ to $1$.

\smallskip
From the exact sequences:

\smallskip
$1 \to X_n^1 \to \CH_n^{1-\sigma_n} \to \CH_n^{(1-\sigma_n)^2} \to 1$,

\smallskip
$1 \to \CH_n^1 \to \CH_n \to \CH_n^{1-\sigma_n} \to 1,\ \ 
1 \to \CH_n^2 \to \CH_n \to \CH_n^{(1-\sigma_n)^2} \to 1$,

\smallskip\noindent
and $\order (\CH_n^2/\CH_n^1) = p^{\delta}$, for $n \geq e + \delta$
(\cite[Theorem 7.1, Jaulent, Appendix A]{Gra2026b}), we deduce that 
$\order X_n^1 = p^{\delta}$.
\end{proof}

\subsection{Application to Conjecture \ref{conj}}\label{appliconj}
It should be noted that the $\Z_p$-extension $K/k$ is exceptional if and 
only if $\prod_{i_n \geq 2} \order X_n^{i_n}$ is unbounded regarding $n$, 
knowing that for $i_n \geq 2$, $\order X_n^{i_n}$ is a decreasing 
$i_n$-sequence with $\order X_n^{i_n} \leq p^\delta$; thus, for
$n \gg 0$, one must have $X_n^{i_n} \ne 1$, for large values of $i_n$.
Assume, to simplify, that $e = 0$.

\medskip
({\bf a}) From Chevalley--Herbrand formula in $K_n/k$, 
$\order \langle \Ccl_n({\mathfrak P}_n), 
\Ccl_n({\mathfrak P}'_n) \rangle = p^n$; from Theorem \ref{mainbis}
applied to $S_n^{\mathfrak p}$ and $S_n^{\ov{\mathfrak p}}$:
$$p^{n-e-\Delta_e} \cdot \order \CT_e^{\mathfrak p} \leq 
\order \Ccl_n({\mathfrak P}_n) ,\   \order\Ccl_n({\mathfrak P}'_n)
\leq p^{n-e} \cdot \order \CT_e^{\mathfrak p}, $$

\noindent
where $e = 0$, $\Delta_e = \delta$, $\CT_e^{\mathfrak p} = 1$; thus:
\begin{equation}\label{2deltas}
\order \langle \Ccl_n({\mathfrak P}_n) \rangle = p^{n - \delta_n},\ \ 
\order \langle \Ccl_n({\mathfrak P}'_n) \rangle = p^{n - \delta'_n},\ \,
\delta_n,\, \delta'_n \in [0, \delta]. 
\end{equation}

Since $\order \big(\langle \Ccl_n({\mathfrak P}_n)\rangle 
\cap \langle\Ccl_n({\mathfrak P}'_n)\rangle \big) = 
p^{n - \delta_n - \delta'_n}$, $\Ccl_n({\mathfrak P}_n)^{p^{\delta'_n}}$ 
and $\Ccl_n({\mathfrak P}'_n)^{p^{\delta_n}}$ generate the same
subgroup of order $p^{n - \delta_n - \delta'_n}$; this yields a non-trivial 
relation for instance of the form:
\begin{equation}\label{form00}
({\mathfrak P}_n^{p^{\delta'_n}} \cdot {\mathfrak P}_n'^{\, p^{\delta_n}\psi_n}) 
= (\gamma_n),\ \psi_n \in \Z_p^\times, \gamma_n \in K_n^\times.
\end{equation}

If $K/k$ is exceptional, we obtain that when $n \to \infty$, there exist large 
$i_n$ such that:
\begin{equation*}
\CH_n^{(1-\sigma_n)^{i_n}} \!\cap \langle \Ccl_n({\mathfrak P}_n),
\Ccl_n({\mathfrak P}'_n) \rangle \ne 1,
\end{equation*}

\noindent
whence, from \eqref{2deltas}, that there exist integers $a_n$ 
modulo $p^{n-\delta_n}$ and $a'_n$ modulo $p^{n-\delta'_n}$ 
such that, for an ideal ${\mathfrak A}_n$ of $K_n$:
\begin{equation}\label{form1}
{\mathfrak P}_n^{a_n} \cdot {\mathfrak P}'^{a'_n}_n =
(\alpha_n) \cdot {\mathfrak A}_n^{(1-\sigma_n)^{i_n}} \, \ \&\ \ 
\hbox{${\mathfrak P}_n^{a_n} \cdot {\mathfrak P}'^{a'_n}_n$
non-$p$-principal.}
\end{equation}

A necessary condition may be deduced by taking the absolute norm in \eqref{form1}:
\begin{equation*}
x^{a_n} \cdot \ov x^{\,a'_n} =  \BN_n(\alpha_n). 
\end{equation*}

Let $\sigma_\infty$ be a topological generator of $\Gal(K/k)$; 
put $\big(\ffrac{x , K/k}{{\mathfrak p}}\big) = \sigma_\infty^{p^\delta w}$,  
$\big(\ffrac{\ov x , K/k}{{\mathfrak p}}\big) = \sigma_\infty^{p^\delta w'}$,
$w, w' \in \Z_p^\times$. We then obtain the necessary condition
$a_n w + a'_n w' \equiv 0 \pmod{p^{n-\delta}}$ that we write
$a'_n = \theta \cdot a_n + c_n \cdot p^{n-\delta}$, $c_n \in \Z_p$, 
for all $n \geq \delta$, with $\theta := - w \cdot w'^{-1}$.

\begin{lemma}
We have $\theta = 1$ if and only if $p$ is local norm in $K/k$
at the $p$-places. 
\end{lemma}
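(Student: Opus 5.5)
The proof I would give is a direct computation with the norm residue symbols, using the multiplicativity of $\big(\frac{\mb, K/k}{{\mathfrak p}}\big)$ together with the relation $x\,\ov x = p^{\hp}$ from \S\,\ref{logx}. Recall that here $e=0$, so $G_n = g_{n/0}$ is the inertia group of ${\mathfrak p}$ and of $\ov{\mathfrak p}$ in $K_n/k$. The symbols are taken in the projective limit $\Gal(K/k) = \langle \sigma_\infty \rangle \simeq \Z_p$, which Lemma \ref{functorial}\,(ii) makes legitimate. ``$p$ is a local norm in $K/k$ at the $p$-places'' means that $p$ is a local norm in every $K_n/k$ at ${\mathfrak p}$ and at $\ov{\mathfrak p}$. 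By Lemma \ref{functorial}\,(i),(ii) this is equivalent to
$\big(\ffrac{p , K/k}{{\mathfrak p}}\big) = \big(\ffrac{p , K/k}{\ov{\mathfrak p}}\big) = 1$ in $\Gal(K/k)$.

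First step: compute the symbol of $p$ at ${\mathfrak p}$. From $x\,\ov x = p^{\hp}$, with $\hp$ prime to $p$, we get
$\big(\ffrac{p , K/k}{{\mathfrak p}}\big)^{\hp} = \big(\ffrac{x , K/k}{{\mathfrak p}}\big)\big(\ffrac{\ov x , K/k}{{\mathfrak p}}\big) = \sigma_\infty^{p^\delta (w + w')}$.
Since $\hp$ is invertible in $\Z_p$ and $\Gal(K/k)$ is torsion-free, this symbol is trivial if and only if $w + w' = 0$. Because $w' \in \Z_p^\times$, that is exactly the condition $\theta = -w\,w'^{-1} = 1$.

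Second step: handle the place $\ov{\mathfrak p}$. The element $p$ is a unit at every finite place not above $p$, and these places are unramified in $K/k$. The infinite place is complex. Hence the product formula reduces to
$\big(\ffrac{p , K/k}{\ov{\mathfrak p}}\big) = \big(\ffrac{p , K/k}{{\mathfrak p}}\big)^{-1}$,
exactly as in Proposition \ref{mainorder} for $x$ and $\ov x$. So $p$ is a local norm at ${\mathfrak p}$ if and only if it is one at $\ov{\mathfrak p}$, and the equivalence follows from the first step.

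The only point needing care, which I take to be the main (mild) obstacle, is the passage between the finite layers and the limit. The values $w, w'$ are defined in $\Z_p$ through the compatible system of symbols in $K_n/k$, of orders $p^{n-\delta}$ by Proposition \ref{mainorder}. The statement ``$p$ is a norm in all $K_n$'' must therefore be read as the vanishing of $w+w'$ modulo $p^{n-\delta}$ for every $n$, which forces $w + w' = 0$ in $\Z_p$. I would also remark that the abuse ${\mathfrak p}^{\hp} = (x)$ is harmless, since raising to the prime-to-$p$ power $\hp$ does not affect triviality in a pro-$p$ group.
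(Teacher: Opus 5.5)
Your proof is correct and follows the paper's argument: multiply the symbols of $x$ and $\ov x$ at ${\mathfrak p}$, use $x\,\ov x = p^{\hp}$ with $\hp$ prime to $p$, and read off that the symbol of $p$ vanishes if and only if $w+w'=0$, i.e.\ $\theta = 1$. You are slightly more careful than the paper: it leaves the place $\ov{\mathfrak p}$ implicit, which your product-formula step handles explicitly. It also writes the exponent as $p^\delta w(1-\theta)$, whereas the correct value is your $p^\delta(w+w') = p^\delta w'(1-\theta)$; the slip is harmless because $w$ and $w'$ are units.
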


\begin{proof}
The above definitions lead to $\big(\ffrac{x , K/k}
{{\mathfrak p}}\big) \cdot \big(\ffrac{\ov x , K/k}
{{\mathfrak p}}\big) = \big(\ffrac{x \cdot \ov x , K/k}
{{\mathfrak p}}\big) = \sigma_\infty^{p^\delta \cdot w \cdot 
(1 - \theta)}$, that is $\big(\ffrac{p , K/k}{{\mathfrak p}}\big)^{\hp}
= \sigma_\infty^{p^\delta \cdot w \cdot (1 - \theta)}$, where the order
$\hp$ of $\Ccl({\mathfrak p})$ is prime to $p$, whence 
$\theta = 1$ if and only if $p$ is local norm at the $p$-places. 
\end{proof}

Note that $p$ may be norm in $K_n/k$ for $n \leq \delta$, but 
Relation \eqref{form1} must hold for all $n$; it is now equivalent to:
\begin{equation}\label{thetabis}
\left \{\begin{aligned}
& ({\mathfrak P}_n \cdot {\mathfrak P}'^\theta_n)^{a_n} \!\cdot
 {\mathfrak P}'^{\,c_n \cdot p^{n-\delta}}_n =
(\alpha_n) \cdot {\mathfrak A}_n^{(1-\sigma_n)^{i_n}},  
\ \, \hbox{$a_n \in \Z_p$ modulo $p^{n-\delta_n}$},  \\
& \hbox{$({\mathfrak P}_n \cdot {\mathfrak P}'^\theta_n )^{a_n} \cdot
{\mathfrak P}'^{\,c_n \cdot p^{n-\delta}}_n$
non-$p$-principal.}
\end{aligned} \right.
\end{equation}

So, Relations \eqref{form00}, \eqref{thetabis} may be incompatible and/or 
\eqref{thetabis} may limit values of $i_n$.

\medskip
({\bf b}) The case $K = k^\cyc$ is very different;
indeed, we know that ${\mathfrak P}_n \cdot {\mathfrak P}'_n$ is principal 
for all $n \geq 0$ and that $\Ccl_n({\mathfrak P}_n)$ and $\Ccl_n({\mathfrak P}'_n)$
are of order $p^n$ (whence $\delta_n = \delta'_n = 0$, $\psi_n = 1$, $\theta = 1$). 
Then Relation \eqref{form00} is empty. To get $X_n^{i_n} = \CH_n^{(1-\sigma_n)^{i_n}}
\cap \CH_n^1 = \CH_n^{(1-\sigma_n)^{i_n}} \cap 
\langle \Ccl_n({\mathfrak P}_n)\rangle \ne 1$, the condition
replacing Relation \eqref{thetabis} is:
\begin{equation}\label{formcyc}
{\mathfrak P}_n ^{\,c_n \cdot p^{n-\delta}} =
(\alpha_n) \cdot {\mathfrak A}_n^{(1-\sigma_n)^{i_n}}, \ \, 
v_p(c_n) \in [0, \delta - 1].
\end{equation}

A relation of the form ${\mathfrak B}_n^{(1-\sigma_n)^{j_n}} \in \CH_n^1$, 
for an ideal ${\mathfrak B}_n$ of $K_n$ does exist by definition for 
some $j_n$ (refer to the properties of the usual filtration given in 
Remark \ref{filtration}); but the more precise condition given by
Relation \eqref{formcyc} suggests  that in $(1-\sigma_n)^{i_n}$, the integer 
$i_n$ may be large enough regarding $n$, without any obstruction.

\medskip
({\bf c}) Let's give numerical examples for $p = 3$, $K = k^\acyc$, $n = 1$
($k^\acyc$ may present less constraints than the non-Galois case, but 
we have no other choice in degree $3$ totaly ramified). Then the classes
of ${\mathfrak P}_1$ and ${\mathfrak P}'_1$ have same order $3$, 
which gives two possible relations contrary to the case of $k^\cyc$; 
the two cases occur (see \S\,\ref{acyclo} for the notations):

\smallskip
\ft\begin{verbatim}
m=1370 h_k=44 Clogk=[[9],[9],[]] delta=2 Racyc=x^6+6*x^4-368*x^3+12339*x^2-9324*x+35226
Clogkacyc=[[9],[9],[3]] Classkacyc=[66,6]  Hacyc=[3,3]
Components of Cl(P1/P2), Cl(P1*P2):[22,2],[0,0]  (orders: 3, 1)
m=1379 h_k=16 Clogk=[[3],[3],[]] delta=1 Racyc=x^6+66*x^4+1089*x^2+5516
Clogkacyc=[[3],[],[3,3]] Classkacyc=[144,3]  Hacyc=[9,3]
Components of Cl(P1/P2), Cl(P1*P2):[0,0],[96,0]  (orders: 1, 3)
\end{verbatim}\ns

\smallskip
In the first example, the relation is $\Ccl_1({\mathfrak P}_1 {\mathfrak P}'_1) = 1$,
and $\Ccl_1({\mathfrak P}_1 {\mathfrak P}'^{\,-1}_1)$ of order $3$:

\smallskip
\ft\begin{verbatim}
kacyc.cyc=[66,6]
bnfisprincipal(kacyc,Splus)=[0,0],[-1583043164775,-11914375779,
                                   -11914375779,-109089956952,-97175581173,0]
bnfisprincipal(kacyc,Sminus)=[22,2]
\end{verbatim}\ns

In the second, 
the relation is $\Ccl_1({\mathfrak P}_1 {\mathfrak P}'^{\,-1}_1) = 1$, and
$\Ccl_1({\mathfrak P}_1 {\mathfrak P}'_1)$ of order $3$:

\ft\begin{verbatim}
kacyc.cyc=[144,3]
bnfisprincipal(kacyc,Splus)=[96,0]
bnfisprincipal(kacyc,Sminus)=[0,0],[-73/81,1681/729,1/27,130/729,34/729,-31/81]
\end{verbatim}\ns

\smallskip
Of course, it will 
be interesting to compute in degrees $n \geq 2$ to get more complex relations.

\smallskip
So, a main obstruction, for a $\Z_p$-extension $K \ne k^\cyc$
to be exceptional, lies in the arithmetic of the classes of
${\mathfrak P}_n$ and ${\mathfrak P}'_n$ as $n \to \infty$.
The needed relations for an exceptionality, may be an obstruction
from some layer, obstruction which deappears for $K = k^\cyc$.
This reinforces Conjecture \ref{conj} and could be a way to
improve the understanding of Greenberg's conjectures.

\subsection{Application to  \texorpdfstring{$\Gal(H_{F_n}^\nr/k)$}{Lg}}
Recall that $K/k$ is a non-exceptional $\Z_p$-extension, totally ramified 
at $p$ ($e = \ov e = 0$), and that, for $n \geq 0$, $\sigma_n$ is a 
generator of $G_n = \Gal(K_n/k)$. We come back to our context using 
the family $\{F_n = k_n^\cyc K_n \}_{n \geq 1}$ (refer to Diagram \ref{diagramFn}).
Recall that $\CH_n$ and $\CH_n^\cyc$ are the $p$-class groups of 
$K_n$ and $k_n^\cyc$, respectively; the $p$-Hilbert class
fields being $H_n^\nr$ and $H_n^{\cyc,\nr}$. The field $F_n$ is
the genus field of $K_n/k$ and $k_n^\cyc/k$. Then $\sigma_n^\cyc$
and $\sigma_n$ are generators of $\Gal(F_n/K_n) \simeq \Gal(k_n^\cyc/k)$ 
and $\Gal(F_n/k_n^\cyc) \simeq \Gal(K_n/k)$, respectively.
 
\begin{lemma}\label{zfinite}
We have $\order \CZ_n \leq p^{\delta}$, for all $n \geq \delta$.
\end{lemma}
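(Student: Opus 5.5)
Since $Z_n = H_n^\nr \cap H_n^{\cyc\,\nr}$ contains $F_n$ (the common genus field of $K_n/k$ and $k_n^\cyc/k$), the group $\CZ_n = \Gal(Z_n/F_n)$ is at the same time a quotient of $\Gal(H_n^\nr/F_n)$ and of $\Gal(H_n^{\cyc\,\nr}/F_n)$. The plan is to bound it using the cyclotomic side, where all $p$-places above $k$ are totally ramified and the norm residue symbols of $x,\ov x$ are known. Under Artin reciprocity in $k_n^\cyc$, $\Gal(H_n^{\cyc\,\nr}/F_n)$ corresponds to $(\CH_n^\cyc)^{1-\sigma_n^\cyc}$. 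The subextension $Z_n/F_n$ is moreover unramified over $K_n$, hence over $F_n$, and it is Galois over $k$, since $H_n^\nr$ and $H_n^{\cyc\,\nr}$ are both Galois over $k$. I would first show that $Z_n$ is abelian over $F_n$ and that $\Gal(F_n/k)$ acts on $\CZ_n$ through both $g_n$ and $g_n^\cyc$.

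The key step is to prove that $\Gal(Z_n/k_n^\cyc)$ is abelian, or at least that $\CZ_n$ is killed by $1-\sigma_n$ acting through conjugation. The field $Z_n$ lies in $H_n^\nr$, which is abelian over $K_n$, so $\Gal(Z_n/K_n)$ is abelian. Likewise $Z_n \subseteq H_n^{\cyc\,\nr}$ gives that $\Gal(Z_n/k_n^\cyc)$ is abelian. Now $\sigma_n$ lifts to $\Gal(Z_n/k_n^\cyc)$, and since this group is abelian, $\sigma_n$ acts trivially on $\CZ_n$; symmetrically $\sigma_n^\cyc$ acts trivially. Hence $\CZ_n$ is central in $\Gal(Z_n/k)$. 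Passing to $k_n^\cyc$, the image of $\CZ_n$ under reciprocity lies in $(\CH_n^\cyc)^{1-\sigma_n^\cyc}$ and consists of classes fixed by $\Gal(k_n^\cyc/k)$. It therefore lies in $(\CH_n^\cyc)^{1-\sigma_n^\cyc}\cap (\CH_n^\cyc)^{G_n^\cyc}$, which is the group $X_n^1$ of Lemma \ref{filtrebis} for the cyclotomic tower.

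To finish, I would apply Lemma \ref{filtrebis} to $K = k^\cyc$, for which $e=0$. It gives $\order X_n^1 = \order(\CH_n^2/\CH_n^1) = p^\delta$ for $n \geq \delta$, which uses the Chevalley--Herbrand computation of \S 5.1 with the norm residue symbols of $x,\ov x$ from Proposition \ref{mainorder}. This yields $\order\CZ_n \leq p^\delta$.

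The main obstacle is the identification of the quotient $\CZ_n$ with a subgroup of $(\CH_n^\cyc)^{1-\sigma_n^\cyc}$ that is invariant under $G_n^\cyc$. This must be done as a genuine subgroup via a compatible choice of lifts, not merely a subquotient, and one has to check that the centrality argument is compatible with the Artin map. If this becomes delicate, the fallback is a direct count. Since $\CZ_n$ is central in $\Gal(Z_n/k)$, the field $Z_n$ lies in the central class field of $F_n/k$. One then bounds $Z_n$ inside the genus field of $k_n^\cyc/k$ relative to the second step of the filtration, whose degree over $F_n$ is $\order(\CH_n^2/\CH_n^1)=p^\delta$.
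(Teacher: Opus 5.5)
Your centrality step is correct, and it is also the paper's key observation: $\Gal(Z_n/K_n)$ is abelian, so $\sigma_n^\cyc$ acts trivially on $\CZ_n$. The next step, however, is wrong as stated. $\CZ_n=\Gal(Z_n/F_n)$ is a \emph{quotient} of $\Gal(H_n^{\cyc\,\nr}/F_n)\simeq A:=(\CH_n^\cyc)^{1-\sigma_n^\cyc}$, not a subgroup, so there is no `image of $\CZ_n$ under reciprocity' sitting inside $\CH_n^\cyc$. If $\sigma_n^\cyc$ acts trivially on a quotient of $A$, that quotient factors through the \emph{coinvariants} $A/A^{1-\sigma_n^\cyc}=(\CH_n^\cyc)^{1-\sigma_n^\cyc}/(\CH_n^\cyc)^{(1-\sigma_n^\cyc)^2}$. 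This says nothing about landing in the \emph{invariants} $A\cap(\CH_n^\cyc)^{G_n^\cyc}=X_n^1$.

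The repair you propose, realising $\CZ_n$ as a genuine subgroup of $X_n^1$ by a compatible choice of lifts, is not available in general. For a finite $\Z_p[\langle\sigma\rangle]$-module, invariants and coinvariants have the same order but need not be isomorphic. For instance, $A=\Z/p^2\Z\oplus\Z/p\Z$ with $\sigma(a,b)=(a-pb,b)$ gives $A^{\langle\sigma\rangle}\simeq\Z/p^2\Z$ but $A_{\langle\sigma\rangle}\simeq(\Z/p\Z)^2$. So a quotient of the coinvariants need not embed in the invariants.

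What is missing is only a counting step. Since $A$ is finite, $\order\big(A/(1-\sigma_n^\cyc)A\big)=\order\Ker\big(1-\sigma_n^\cyc\,|\,A\big)=\order X_n^1$. Hence $\order\CZ_n\le\order X_n^1=p^\delta$ by Lemma \ref{filtrebis} applied to $k^\cyc$ ($e=0$, $n\ge\delta$).

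Equivalently, and this is exactly the paper's proof, which your fallback alludes to without justifying: because $\sigma_n^\cyc$ acts trivially on $\CZ_n$, the group $\Gal(H_n^{\cyc\,\nr}/Z_n)$ contains $(1-\sigma_n^\cyc)\Gal(H_n^{\cyc\,\nr}/F_n)$, that is, the image of $(\CH_n^\cyc)^{(1-\sigma_n^\cyc)^2}$. So $Z_n$ lies in the fixed field of that image. This fixed field has degree $\order\big((\CH_n^\cyc)^{1-\sigma_n^\cyc}/(\CH_n^\cyc)^{(1-\sigma_n^\cyc)^2}\big)=p^\delta$ over $F_n$; this is the second step of the filtration of $\CH_n^\cyc$. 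Only the triviality of the $\sigma_n^\cyc$-action is used. Centrality with respect to $\sigma_n$, and any `central class field of $F_n/k$', play no role.
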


\begin{proof}
Since the Artin map gives the isomorphism 
$\Gal(H^{\cyc\, \nr}_n/F_n) \simeq \CH_n^{\cyc, 1-\sigma_n^\cyc}$,
any element $\ov s \in \CZ_n$ is the restriction $\Big(\frac{Z_n/F_n}
{{\mathfrak A}^{1- \sigma_n^\cyc}}\Big)$ of an Artin symbol
$s = \Big(\frac{H^{\cyc\, \nr}_n/F_n}{{\mathfrak A}^{1- \sigma_n^\cyc}}\Big)$,
for an ideal ${\mathfrak A}$ of $k_n^\cyc$. It is clear that, since all the
extensions $H_n^{\cyc\, \nr}$, $H_n^\nr$, $Z_n$ are Galois over $k$,
$\sigma_n^\cyc$ operates on $\Gal(H^{\cyc\, \nr}_n/F_n)$ by inner 
automorphisms; thus, the action of $1 - \sigma_n^\cyc$ on the symbol 
yields:
$$s_{}^{1-\sigma'^\cyc_n} = \Big(\frac{H^{\cyc\, \nr}_n/F_n}
{{\mathfrak A}^{(1- \sigma_n^\cyc)\cdot (1- \sigma_n^\cyc)}}\Big) = 
\Big(\frac{H^{\cyc\, \nr}_n/F_n}{{\mathfrak A}^{(1- \sigma_n^\cyc)^2}}\Big); $$

\noindent
since $Z_n/K_n$ is abelian, $\ov s_{}^{1-\sigma'^\cyc_n} = 1$, thus
$\Big(\frac{H^{\cyc\, \nr}_n/F_n}{{\mathfrak A}^{(1- \sigma_n^\cyc)^2}}\Big) 
\in \Gal(H^{\cyc\, \nr}_n/Z_n)$ and then the image of
$(\CH_n^\cyc)^{(1-\sigma_n^\cyc)^2}$ is contained in $\Gal(H^{\cyc\, \nr}_n/Z_n)$,
whence:
$$\order \CZ_n \leq \order \big((\CH_n^\cyc)^{1-\sigma_n^\cyc}/ 
(\CH_n^\cyc)^{(1-\sigma_n^\cyc)^2} \big). $$

But, from Lemma \ref{filtrebis}, the order of the right member is equal to 
$\order X_n^1 = p^{\delta}$.
\end{proof}

Note that the reasoning may be handled using $\Gal(H_n^\nr/K_n)$ since 
$Z_n$ is both abelian over these two fields. We ignore if $\order \CZ_n$ may 
take arbitrary values less or equal to $p^\delta$. 

\begin{proposition}\label{An}
Put $\BN_n = \BN_{F_n/k_n^\cyc}$ and $\BN_n^\cyc = \BN_{F_n/K_n}$.
We have $\Ker(\BN_n) = \CH_{F_n}^{1-\sigma_n}$,
$\Ker(\BN_n^\cyc) = \CH_{F_n}^{1-\sigma_n^\cyc}$ and
$\CA_n = \CH_{F_n}^{1-\sigma_n} \cap \CH_{F_n}^{1-\sigma_n^\cyc}$. 
\end{proposition}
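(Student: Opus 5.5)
The plan is to work in the bicyclic extension $F_n/k$, with $\Gal(F_n/k) = \langle \sigma_n, \sigma_n^\cyc\rangle \simeq G_n \times G_n^\cyc$. I would show $\Ker(\BN_n) = \CH_{F_n}^{1-\sigma_n}$ and $\Ker(\BN_n^\cyc) = \CH_{F_n}^{1-\sigma_n^\cyc}$ by the same argument, exchanging the roles of $K$ and $k^\cyc$. The statement about $\CA_n$ then follows by class field theory: the subgroups are identified through the Artin isomorphism $\CH_{F_n} \simeq \Gal(H^\nr_{F_n}/F_n)$.

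\textbf{Step 1 (kernel of the norm).} Consider $\BN_n : \CH_{F_n} \to \CH_n^\cyc$ for the cyclic extension $F_n/k_n^\cyc$ with group $g_n = \langle \sigma_n \rangle$. This extension is unramified (Lemma \ref{D01}\,(i)). Since $\BN_n \circ (1-\sigma_n) = 0$, we have $\CH_{F_n}^{1-\sigma_n} \subseteq \Ker(\BN_n)$. For the reverse inclusion I would use the unramified cyclic case of the exact sequence of Tate cohomology for ideal classes. In that setting $\Ker(\BN_n)/\CH_{F_n}^{1-\sigma_n} = \hat{\rm H}^{-1}(g_n, \CH_{F_n})$, and this group is measured by units and ideals: an ideal class $\Ccl(\mathfrak A)$ of norm $1$ gives $\BN(\mathfrak A) = (a)$ with $a$ a norm of a unit times an element. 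A cleaner route is to count orders. By Artin, $\CH_{F_n}/\CH_{F_n}^{1-\sigma_n} \simeq \Gal(H_{F_n/k_n^\cyc}^\gen / F_n)$, the genus field of the unramified cyclic extension $F_n/k_n^\cyc$. This is the maximal subextension of $H^\nr_{F_n}$ abelian over $k_n^\cyc$. Since $F_n/k_n^\cyc$ is unramified, that field is $H^{\cyc\,\nr}_n$. Hence $\order(\CH_{F_n}/\CH_{F_n}^{1-\sigma_n}) = [H^{\cyc\,\nr}_n : F_n] = \order \CH_n^\cyc/[F_n:k_n^\cyc]$.

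On the other side, class field theory shows that the image $\BN_n(\CH_{F_n})$ corresponds to $\Gal(H^\nr_n{}^{\cyc}/F_n)$. Its index in $\CH_n^\cyc$ is therefore $[F_n : k_n^\cyc]$, which matches the diagram label $(\CH_n^\cyc)^{1-\sigma_n^\cyc} = \BN_n(\CH_{F_n})$. This gives $\order \Ker(\BN_n) = \order\CH_{F_n}\cdot[F_n:k_n^\cyc]/\order \CH_n^\cyc = \order \CH_{F_n}^{1-\sigma_n}$. Combined with the inclusion, this yields equality. The same argument applies to $\BN_n^\cyc$, using $H^\nr_n$ and $[F_n:K_n]$.

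\textbf{Step 2 ($\CA_n$).} Under Artin, $\Gal(H^\nr_{F_n}/H^{\cyc\,\nr}_n)$ corresponds to $\CH_{F_n}^{1-\sigma_n}$, by the genus-field identification of Step 1. Likewise, $\Gal(H^\nr_{F_n}/H_n^\nr)$ corresponds to $\CH_{F_n}^{1-\sigma_n^\cyc}$. Intersecting gives $\CA_n = \CH_{F_n}^{1-\sigma_n}\cap\CH_{F_n}^{1-\sigma_n^\cyc}$, because the compositum corresponds to the intersection.

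\textbf{Main obstacle.} The delicate point is the identification of the genus field of $F_n/k_n^\cyc$ with $H^{\cyc\,\nr}_n$. Surjectivity of the norm onto the subgroup of index $[F_n:k_n^\cyc]$ uses that $F_n \subseteq H^{\cyc\,\nr}_n$, so that $F_n/k_n^\cyc$ is unramified and $F_n$ is the genus field of $k_n^\cyc/k$. The identification also uses that $\sigma_n$ lifts and acts on $\Gal(H^\nr_{F_n}/F_n)$ by conjugation, so that the commutator subgroup over $k_n^\cyc$ is exactly the image of $\CH_{F_n}^{1-\sigma_n}$. This is standard for cyclic groups, and I would verify it carefully.
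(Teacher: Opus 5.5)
Your proof is correct, but it works on the Galois side, whereas the paper argues with ideals and elements.

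\textbf{The paper's route.} The paper proves $\Ker(\BN_n)\subseteq\CH_{F_n}^{1-\sigma_n}$ directly. If $\BN_n\mathfrak A=(\alpha_\cyc)$, then $\alpha_\cyc$ is the norm of an ideal in the unramified cyclic extension $F_n/k_n^\cyc$. It is therefore a local norm everywhere, hence a global norm $\BN_n(\theta)$ by Hasse's norm theorem. Then $\mathfrak A(\theta)^{-1}$ has norm $(1)$ and so is of the form $\mathfrak B^{1-\sigma_n}$. The claim on $\CA_n$ follows because, under Artin, the norm on classes corresponds to restriction.

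\textbf{Your route.} You identify the image of $\CH_{F_n}^{1-\sigma_n}$ with the commutator subgroup of $\Gal(H^\nr_{F_n}/k_n^\cyc)$, using that the quotient by the abelian normal subgroup $\Gal(H^\nr_{F_n}/F_n)$ is cyclic. You then use unramifiedness of $F_n/k_n^\cyc$ to recognize the genus field of $F_n/k_n^\cyc$ as $H^{\cyc\,\nr}_n$.

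\textbf{What each buys.} Your route avoids Hasse's theorem and directly yields the field identifications needed for $\CA_n$: $H^{\cyc\,\nr}_n$ and $H_n^\nr$ are the genus fields of $F_n/k_n^\cyc$ and $F_n/K_n$. The paper's route is more elementary and stays at the level of relations $\mathfrak A=(\theta)\mathfrak B^{1-\sigma_n}$, which are then manipulated in Lemma \ref{Anbis}.

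\textbf{Two small remarks.} First, your order count is redundant. Since $\BN_n$ corresponds to restriction, $\Ker(\BN_n)$ is already $\Gal(H^\nr_{F_n}/H^{\cyc\,\nr}_n)$, which you had identified with the image of $\CH_{F_n}^{1-\sigma_n}$. Second, the aside you discard is inaccurate. $\hat{\rm H}^{-1}(g_n,\CH_{F_n})$ is the kernel of the algebraic norm $\BJ\circ\BN_n$ modulo $\CH_{F_n}^{1-\sigma_n}$. This contains $\Ker(\BN_n)/\CH_{F_n}^{1-\sigma_n}$ but need not equal it when classes capitulate in $F_n/k_n^\cyc$. Since you do not use it, nothing in your proof is affected.
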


\begin{proof}
By definition, $\Gal(H_{F_n}^\nr/H^{\cyc\, \nr}_n) \simeq \Ker(\BN_n)$ 
and $\Gal(H_{F_n}^\nr/H_n^\nr) \simeq \Ker(\BN_n^\cyc)$; equivalently,  
$\BN_n(\CH_{F_n}) \simeq \Gal(H^{\cyc\, \nr}_n/F_n)$ and
$\BN_n^\cyc(\CH_{F_n}) \simeq \Gal(H^\nr_n/F_n)$ 
(cf. Diagram \ref{diagramFn}).

\smallskip
Let ${\mathfrak A}$ be an ideal of $F_n$ such that 
$\Ccl({\mathfrak A}) \in \Ker(\BN_n)$. Thus, we have
$\BN_n{\mathfrak A} = (\alpha_\cyc)$, $\alpha_\cyc \in 
k_n^\cyc$, and $\BN_n^\cyc{\mathfrak A} = (\alpha)$, $\alpha \in K_n$.
Since $\alpha_\cyc$ is norm of an ideal in the non-ramified extension
$F_n/k_n^\cyc$, it is norm of an element $\theta \in F^\times_n$, 
hence ${\mathfrak A} = (\theta) {\mathfrak B}^{1-\sigma_n}$, 
${\mathfrak B}$ ideal of $F_n$; then $\Ccl({\mathfrak A}) \in 
\CH_{F_n}^{1-\sigma_n}$ and $\Ker(\BN_n) = 
\CH_{F_n}^{1-\sigma_n}$. 
Similarly in $F_n/K_n$, if 
$\Ccl({\mathfrak A}) \in \Ker(\BN_n^\cyc)$, then
${\mathfrak A} = (\theta') {\mathfrak B}'^{1-\sigma_n^\cyc}$, 
$\theta' \in F^\times_n$, ${\mathfrak B}'$ ideal of $F_n$, and
$\Ker(\BN_n^\cyc) = \CH_{F_n}^{1-\sigma_n^\cyc}$. 
Whence $\CA_n \simeq \CH_{F_n}^{1-\sigma_n} \cap 
\CH_{F_n}^{1-\sigma_n^\cyc}$.
\end{proof}

\begin{lemma}\label{Anbis}
Let $\wt \CA_n \subseteq \CA_n$ be the image of 
$\CH_{F_n}^{(1-\sigma_n^\cyc)\cdot (1-\sigma_n^\cyc)}$.
The exponent of the quotient $\CA_n / \wt \CA_n$ divides 
$p^{\nu(K/k)}$. 
\end{lemma}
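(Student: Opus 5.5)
The plan is to read everything inside the finite abelian group $\CH_{F_n}$, with the two commuting automorphisms $\sigma_n$ and $\sigma_n^\cyc$ acting on it, and to compare $\CA_n$ with $\wt\CA_n$ through the norm map $\BN_n^\cyc$ down to $K_n$. By Proposition \ref{An}, $\CA_n=\CH_{F_n}^{1-\sigma_n}\cap \CH_{F_n}^{1-\sigma_n^\cyc}$ and $\Ker(\BN_n^\cyc)=\CH_{F_n}^{1-\sigma_n^\cyc}$. It therefore suffices to show that for every $a\in\CA_n$ one has $a^{p^{\nu(K/k)}}\in \wt\CA_n$, where I read $\wt\CA_n$ as $\CA_n\cap \CH_{F_n}^{(1-\sigma_n^\cyc)^2}$ (its image inside $\CA_n$).

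Write $a=b^{1-\sigma_n^\cyc}$ with $b\in\CH_{F_n}$. The key input is Corollary \ref{nu}: $\BN_n^\cyc(\CH_{F_n})\simeq\Gal(H_n^\nr/F_n)$ has order $p^{\nu(K/k)}$, since $K$ is non-exceptional. Consequently $\BN_n^\cyc(b)^{p^{\nu(K/k)}}=1$, that is, $b^{p^{\nu(K/k)}}\in\Ker(\BN_n^\cyc)=\CH_{F_n}^{1-\sigma_n^\cyc}$. Write $b^{p^{\nu(K/k)}}=c^{1-\sigma_n^\cyc}$. Then
$$a^{p^{\nu(K/k)}}=\big(b^{p^{\nu(K/k)}}\big)^{1-\sigma_n^\cyc}=c^{(1-\sigma_n^\cyc)^2}\in \CH_{F_n}^{(1-\sigma_n^\cyc)^2}.$$
This element also lies in $\CA_n$, because $\CA_n$ is a subgroup and $a\in\CA_n$. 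Hence $a^{p^{\nu(K/k)}}\in\wt\CA_n$, and the exponent of $\CA_n/\wt\CA_n$ divides $p^{\nu(K/k)}$.

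The step needing care is the identification of the norm image. The equality $\Ker(\BN_n^\cyc)=\CH_{F_n}^{1-\sigma_n^\cyc}$ comes from Proposition \ref{An}, which in turn uses the fact that $F_n/K_n$ is unramified (Lemma \ref{D01}) and cyclic. The order $[H_n^\nr:F_n]=p^{\nu(K/k)}$ uses that $F_n$ is the genus field of $K_n/k$ and that $\order\CH_n=p^{n+\nu(K/k)}$ for $n\gg0$. I would also verify that $\wt\CA_n$ is meant to be $\CA_n\cap\CH_{F_n}^{(1-\sigma_n^\cyc)^2}$. If the intended meaning is literally $\CH_{F_n}^{(1-\sigma_n^\cyc)^2}$, then one first checks that this group is contained in $\CA_n$. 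That containment needs $\CH_{F_n}^{(1-\sigma_n^\cyc)^2}\subseteq\CH_{F_n}^{1-\sigma_n}$, i.e. $\BN_n$ kills it. I expect this to be the main obstacle. I would try to derive it from the facts that $\BN_n$ commutes with $\sigma_n^\cyc$ and that $\sigma_n^\cyc$ acts on $\BN_n(\CH_{F_n})=(\CH_n^\cyc)^{1-\sigma_n^\cyc}$. If that fails, I would fall back on the intersection reading, for which the argument above is complete.
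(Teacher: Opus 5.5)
Your core mechanism is the paper's own. For any $b\in\CH_{F_n}$, the norm $\BN_n^\cyc(b)$ lies in $\BN_n^\cyc(\CH_{F_n})\simeq\Gal(H_n^\nr/F_n)$, which has order $p^{\nu(K/k)}$ (Corollary \ref{nu}); hence $b^{p^{\nu(K/k)}}\in\Ker(\BN_n^\cyc)=\CH_{F_n}^{1-\sigma_n^\cyc}$. The paper reaches this after a detour through Hilbert's Theorem 90 and $G_n$-invariant ideals of $K_n$, but in the end it only uses this order, so your shortcut is fine.

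The problem is which factorization of $a\in\CA_n$ you feed into it. You write $a=b^{1-\sigma_n^\cyc}$. This never uses $a\in\CH_{F_n}^{1-\sigma_n}$, and it lands in $\CH_{F_n}^{(1-\sigma_n^\cyc)^2}$. The paper instead writes $a=b^{1-\sigma_n}$, using the other half of $\CA_n=\CH_{F_n}^{1-\sigma_n}\cap\CH_{F_n}^{1-\sigma_n^\cyc}$. With $b^{p^{\nu(K/k)}}=d^{1-\sigma_n^\cyc}$ it obtains $a^{p^{\nu(K/k)}}=d^{(1-\sigma_n^\cyc)(1-\sigma_n)}$.

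That mixed operator is what $\wt\CA_n$ is meant to be. The repeated factor $(1-\sigma_n^\cyc)\cdot(1-\sigma_n^\cyc)$ in the statement is a misprint. The last line of the paper's proof shows this, and so does the following Corollary: there $\wt\CA_n$ is the image of $\CH_{F_n}$ under $(1-\sigma_n)(1-\sigma_n^\cyc)$, and $\wt\CA_K=\CH_{\,\wt k}^{(1-\wt\sigma_{k^\cyc})(1-\wt\sigma_K)}$. Under this reading, $\wt\CA_n\subseteq\CA_n$ is immediate because $\Gal(F_n/k)$ is abelian. Your argument, with the factorization switched, is then the complete proof.

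Your fallback reading, $\CA_n\cap\CH_{F_n}^{(1-\sigma_n^\cyc)^2}$, gives a true statement, but not the one the lemma (and the Corollary built on it) needs. The step you flagged as the main obstacle cannot be carried out, because the containment $\CH_{F_n}^{(1-\sigma_n^\cyc)^2}\subseteq\CH_{F_n}^{1-\sigma_n}$ is false in general:
\begin{itemize}
\item $\BN_n$ commutes with $\sigma_n^\cyc$ (via its restriction, a generator of $\Gal(k_n^\cyc/k)$), and $\BN_n(\CH_{F_n})=(\CH_n^\cyc)^{1-\sigma_n^\cyc}$.
\item Hence $\BN_n\bigl(\CH_{F_n}^{(1-\sigma_n^\cyc)^2}\bigr)=(\CH_n^\cyc)^{(1-\sigma_n^\cyc)^3}$.
\item If this were trivial, $\CH_n^\cyc$ would equal the third step of its filtration. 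That step has order at most $p^{n}\cdot p^{\delta}\cdot p^{\delta}$, by $\order(\CH_n^\cyc)^{G_n}=p^n$, $\order(\CH_n^2/\CH_n^1)=p^\delta$, and the injectivity of $1-\sigma_n^\cyc$ on successive quotients.
\item For large $n$ this contradicts $\lambda(k^\cyc/k)\geq 2$, which holds here since $\delta\geq 1$.
\end{itemize}
So $\CH_{F_n}^{(1-\sigma_n^\cyc)^2}\not\subseteq\Ker(\BN_n)=\CH_{F_n}^{1-\sigma_n}$ for $n$ large, and no argument along the lines you sketched can establish it. The correct fix is simply to use $a=b^{1-\sigma_n}$.
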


\begin{proof}
Let ${\mathfrak A}$ be an ideal of $F_n$ such that  
$\Ccl({\mathfrak A}) \in \CA_n$; then
${\mathfrak A} = (\theta) {\mathfrak B}^{1-\sigma_n} =
(\theta') {\mathfrak B}'^{1-\sigma_n^\cyc}$,
where ${\mathfrak B}$, ${\mathfrak B}'$ are 
ideals of $F_n$, then $\theta$, $\theta'$ are 
elements of $F_n^\times$.

\smallskip
Since $\BN_n^\cyc({\mathfrak A})
= \BN_n^\cyc(\theta') = \BN_n^\cyc((\theta){\mathfrak B}^{1-\sigma_n})
=\BN_n^\cyc(\theta) \cdot \BN_n^\cyc({\mathfrak B})^{1-\sigma_n}$, 
we get  $\BN_n^\cyc({\mathfrak B})^{1-\sigma_n} =
(\BN_n^\cyc(\beta))$ and similarly $\BN_n({\mathfrak B}')^{1-\sigma_n^\cyc} 
= (\BN_n(\beta'))$, $\beta, \beta' \in F_n$.
Having a symmetry, we consider, for instance, the relation
$\BN_n^\cyc({\mathfrak B})^{1-\sigma_n} = (\BN_n^\cyc(\beta))$ in $K_n$; 
taking the norm $\BN_n$ of this equality yields
$(1) = (\BN_n(\BN_n^\cyc(\beta)))$, thus $\BN_n(\BN_n^\cyc(\beta)) = 1$
since $E_k = 1$; so $\BN_n^\cyc(\beta) = \gamma^{1-\sigma_n}$, 
$\gamma \in K_n^\times$ (Hilbert's Theorem 90).

\smallskip
Thus, $\BN_n^\cyc({\mathfrak B})^{1-\sigma_n} = (\gamma)^{1-\sigma_n}$,
so that $\BN_n^\cyc({\mathfrak B}) (\gamma)^{-1}$ is an ideal of $K_n$
invariant by $G_n$, hence an ideal product of a principal ideal of $k$ 
by an element of $\CH_n^\ram \subseteq \CH_n$. 
This yields $\Ccl(\BN_n^\cyc({\mathfrak B})) \in \CH_n^{G_n} \cap
\CH_n^{1 - \sigma_n}$ using $\CH_n^{1 - \sigma_n}
= \BN_n^\cyc (\CH_{F_n})$; since $\Gal(H_n^\nr/F_n)$ is of order 
$p^{\nu(K/k)}$, $\Ccl({\mathfrak B})^{p^{\nu(K/k)}} \in 
\Gal(H_{F_n}^\nr/H_n^\nr) = \Ker(\BN_n^\cyc)
= \CH_{F_n}^{1-\sigma_n^\cyc}$ (Proposition \ref{An}).
Write ${\mathfrak B}^{p^{\nu(K/k)}} =  (\psi)
{\mathfrak D}^{1-\sigma_n^\cyc}$ in $F_n$.
Since ${\mathfrak A} = (\theta){\mathfrak B}^{1-\sigma_n}$,
we deduce ${\mathfrak A}^{p^{\nu(K/k)}} = (\theta^{p^{\nu(K/k)}})
((\psi) {\mathfrak D}^{1-\sigma_n^\cyc})^{1-\sigma_n}$, 
giving $\Ccl({\mathfrak A})^{p^{\nu(K/k)}} 
\in \CH_{F_n}^{(1-\sigma_n^\cyc)\cdot (1-\sigma_n)}$.
\end{proof}

\begin{lemma}
For all $m \geq n \geq 0$, the extensions $F_m/F_n$ do not contain 
any unramified sub-extension. Thus $\BN_{F_m/F_n} (\CA_m) = \CA_n$, 
for all $m \geq n \geq 0$ (cf. Corollary \ref{nu}).
\end{lemma}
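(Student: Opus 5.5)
The plan is to argue through inertia and decomposition groups in $\Gal(F_m/F_n)$, and then apply class field theory to the unramified part. Here $F_m = k_m^\cyc K_m$ and $F_n = k_n^\cyc K_n$. We are in the setting of the appendix, where $e = \ov e = 0$ and $K$ is linearly disjoint from $k^\cyc$.

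\textbf{Step 1 (inertia in $F_m/k$).} By Lemma \ref{D01} and its proof, $\Gal(F_m/k) \simeq (\Z/p^m\Z)^2$ splits as $I_{{\mathfrak p},m} \oplus I_{\ov{\mathfrak p},m}$. Each factor is cyclic of order $p^m$: it is the image of $I_{\mathfrak p} = \Gal(\wt k/L)$, resp. $I_{\ov{\mathfrak p}} = \Gal(\wt k/\ov L)$, and these generate $\Gal(\wt k/k)$.

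\textbf{Step 2 (inertia in $F_m/F_n$).} The subgroup $\Gal(F_m/F_n)$ is exactly $\Gal(F_m/k)^{p^n}$, because $F_n$ is the fixed field of the $p^n$-th powers, being the compositum of the $n$-th layers. Hence the inertia group of ${\mathfrak p}$ in $F_m/F_n$ is $I_{{\mathfrak p},m} \cap \Gal(F_m/k)^{p^n} = I_{{\mathfrak p},m}^{p^n}$, and likewise for $\ov{\mathfrak p}$. Together these two subgroups generate $(I_{{\mathfrak p},m} \oplus I_{\ov{\mathfrak p},m})^{p^n} = \Gal(F_m/F_n)$.

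\textbf{Step 3 (no unramified subextension).} Let $F_n \subseteq F' \subseteq F_m$ with $F'/F_n$ unramified. Then $\Gal(F_m/F')$ contains every inertia group of every prime above $p$. Since $F_m/k$ is abelian, all primes above ${\mathfrak p}$ share the same inertia group, and similarly for $\ov{\mathfrak p}$; no other primes ramify. By Step 2, $\Gal(F_m/F') = \Gal(F_m/F_n)$, so $F' = F_n$.

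\textbf{Step 4 (surjectivity of the norm).} Since $F_m/F_n$ is abelian with no unramified subextension, $F_m \cap H_{F_n}^\nr = F_n$. Class field theory then gives surjectivity of the norm map $\BN_{F_m/F_n}: \CH_{F_m} \to \CH_{F_n}$.

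\textbf{Step 5 ($\BN_{F_m/F_n}(\CA_m) = \CA_n$).} We use Proposition \ref{An}, $\CA_j = \CH_{F_j}^{1-\sigma_j} \cap \CH_{F_j}^{1-\sigma_j^\cyc}$, taking lifts compatible under restriction. The norm commutes with the Galois action, so $\BN(\CH_{F_m}^{1-\sigma_m}) = \CH_{F_n}^{1-\sigma_n}$, and similarly for $\sigma^\cyc$. The inclusion $\BN(\CA_m) \subseteq \CA_n$ is immediate.

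\textbf{Main obstacle: the reverse inclusion.} The image of an intersection need not be the intersection of the images, so I would argue with fields instead. By Proposition \ref{An}, $\CA_j$ corresponds to $\Gal(H_{F_j}^\nr/H_j^{\cyc\,\nr}H_j^\nr)$. Under the Artin map, the norm corresponds to restriction from $\Gal(H_{F_m}^\nr/F_m)$ to $\Gal(H_{F_n}^\nr/F_n)$. So it suffices to prove
$$H_m^{\cyc\,\nr}H_m^\nr \cap H_{F_n}^\nr = H_n^{\cyc\,\nr}H_n^\nr.$$
The extensions $k_m^\cyc/k_n^\cyc$ and $K_m/K_n$ are totally ramified at $p$. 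Hence $H_m^{\cyc\,\nr} \cap H_{F_n}^\nr = H_n^{\cyc\,\nr}$, and the analogous equality holds for $K$. The delicate point is the compositum. For it I would use Corollary \ref{nu}: the degrees $[H_n^\nr:F_n] = p^{\nu(K/k)}$ are stable, since $K/k$ is non-exceptional. Combined with $\order\CZ_n \leq p^\delta$ (Lemma \ref{zfinite}), this should let a degree count force equality for $n$ large. If the count does not close, the fallback is to state the equality only up to the bounded index coming from $\CZ_n$.
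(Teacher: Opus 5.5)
Your Steps 1--4 are correct and coincide with the paper's argument for the first assertion: the two inertia groups in $\Gal(F_m/F_n)$ meet trivially and generate it, so $F_m \cap H_{F_n}^\nr = F_n$ and $\BN_{F_m/F_n}$ is onto on $p$-class groups. The following parts of Step 5 are also fine:
the inclusion $\BN_{F_m/F_n}(\CA_m) \subseteq \CA_n$; the reduction of the reverse inclusion to $H_m^{\cyc\,\nr}H_m^\nr \cap H_{F_n}^\nr = H_n^{\cyc\,\nr}H_n^\nr$; and the equalities $H_m^{\cyc\,\nr}\cap H_{F_n}^\nr = H_n^{\cyc\,\nr}$ and $H_m^\nr \cap H_{F_n}^\nr = H_n^\nr$. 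For the first equality, the intersection is Galois over $k$, unramified over $k_n^\cyc$, and meets $k_m^\cyc$ in $k_n^\cyc$, hence is abelian over $k_n^\cyc$.

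The gap is the compositum identity itself, which you never prove. A degree count using $\order \CZ_n \le p^\delta$ has no mechanism to force it. The ``bounded index'' fallback is strictly weaker than $\BN_{F_m/F_n}(\CA_m)=\CA_n$.

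The missing idea is what Corollary \ref{nu} actually gives.

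First, take $m \geq n$ in the range where $[H_j^\nr : F_j] = p^{\nu(K/k)}$. Since $H_n^\nr K_m/K_m$ is unramified abelian and $F_m \subseteq H_m^\nr$, you have $H_n^\nr F_m \subseteq H_m^\nr$. Also $H_n^\nr \cap F_m = F_n$ by your Step 3, because this intersection is unramified over $F_n$. Hence $[H_n^\nr F_m : F_m] = p^{\nu(K/k)} = [H_m^\nr : F_m]$. It follows that $H_m^\nr = H_n^\nr F_m$, and therefore $H_m^{\cyc\,\nr}H_m^\nr = H_m^{\cyc\,\nr}H_n^\nr$.

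Next, put $\Omega := H_{F_m}^\nr$, which is Galois over $k$. Set $U := \Gal(\Omega/H_m^{\cyc\,\nr})$, $V := \Gal(\Omega/H_n^\nr)$ and $W := \Gal(\Omega/H_{F_n}^\nr)$; all are normal. Since $H_n^\nr \subseteq H_{F_n}^\nr$, we have $W \subseteq V$. Dedekind's modular law then gives $(U\cap V)W = UW \cap V$, that is,
$H_m^{\cyc\,\nr}H_n^\nr \cap H_{F_n}^\nr = (H_m^{\cyc\,\nr} \cap H_{F_n}^\nr)\,H_n^\nr = H_n^{\cyc\,\nr}H_n^\nr$.
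This is exactly what you need, and $\CZ_n$ plays no role. In class-group terms, $\Ker(\BN_{F_m/F_n}) \subseteq \Ker(\BN_{F_m/K_m}) = \CH_{F_m}^{1-\sigma_m^\cyc}$, and the modular law applies inside $\CH_{F_m}$.

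This proves the equality for $m \geq n$ beyond the index where $\order\CH_{K_n} = p^{n+\nu(K/k)}$. That is the range supported by the statement's own reference to Corollary \ref{nu}. The case $n=0$ is trivial since $\CH_k=1$.
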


\begin{proof}
In the same way as for the proof of Lemma \ref{D01}, one
sees that the two inertia groups in $F_m/F_n$ have a trivial
intersection and generate $\Gal(F_m/F_n)$. So, 
\end{proof}

\begin{corollary}
Taking the projective limits, we have the isomorphisms:
$$\wt \CA_K = \CH_{\,\wt k}^{(1-\wt\sigma_{k^\cyc}^{}) \cdot (1-\wt\sigma_K^{}) }
\simeq \CH_{\,\wt k}^{1-\wt\sigma_{k^\cyc}^{}} \big/
\CH_{\,\wt k}^{1-\wt\sigma_{k^\cyc}^{}} \cap \CH_{\,\wt k}^{G_K} \simeq
\CH_{\,\wt k}^{1-\wt\sigma_K^{}} \big /
\CH_{\,\wt k}^{1-\wt\sigma_K^{}} \cap \CH_{\,\wt k}^{G_{k^\cyc}} $$

\noindent
where $\wt\sigma_{k^\cyc}^{}$ and $\wt\sigma_K^{}$ are generators of
$G_{k^\cyc} = \Gal(\wt k/k^\cyc)$ and $G_K = \Gal(\wt k/K)$, respectively.

\noindent
The group $\CH_{\,\wt k}^{1-\wt\sigma_{k^\cyc}^{}} \cap \CH_{\,\wt k}^{G_K}$
is finite.
\end{corollary}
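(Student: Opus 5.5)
The plan is to obtain the three isomorphisms at the finite levels $F_n = k_n^\cyc K_n$ and then pass to the projective limit, using the previous lemma: since $\BN_{F_m/F_n}$ is surjective on the relevant pieces, all the groups involved form surjective projective systems of finite groups, and exact sequences are preserved in the limit. At level $n$, $\Gal(F_n/K_n)=\langle\sigma_n^\cyc\rangle$ is the image of $G_{k^\cyc}$ (with generator $\wt\sigma_{k^\cyc}^{}$) and $\Gal(F_n/k_n^\cyc)=\langle\sigma_n\rangle$ is the image of $G_K$; $\CH_{F_n}$ is a $\Z_p[\Gal(F_n/k)]$-module on which both operators act compatibly with norms.

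The core step is elementary module algebra. Consider the surjection $\CH_{F_n}^{1-\sigma_n^\cyc}\xrightarrow{1-\sigma_n}\CH_{F_n}^{(1-\sigma_n^\cyc)(1-\sigma_n)}$; since $\Gal(F_n/k)$ is abelian the operators commute. Its kernel is $\CH_{F_n}^{1-\sigma_n^\cyc}\cap \CH_{F_n}^{\langle\sigma_n\rangle}$, which gives the first isomorphism. Exchanging the roles of $\sigma_n$ and $\sigma_n^\cyc$ gives the second. In the limit, $\CH_{\,\wt k}^{G_K}=\varprojlim \CH_{F_n}^{\langle\sigma_n\rangle}$. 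Here I must be careful that invariants commute with $\varprojlim$, which is fine since invariants are a kernel, i.e.\ left exact. The image of $1-\wt\sigma$ in the limit is the limit of the images, by compactness (Mittag-Leffler for profinite groups).

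For finiteness of $\CH_{\,\wt k}^{1-\wt\sigma_{k^\cyc}^{}}\cap\CH_{\,\wt k}^{G_K}$, I will use non-exceptionality of $K$. By Proposition \ref{An}, $\CH_{F_n}^{1-\sigma_n^\cyc}=\Ker(\BN_n^\cyc)$. An element of this kernel fixed by $\sigma_n$ lies in $\CH_{F_n}^{\langle\sigma_n\rangle}$, whose order is bounded by Corollary \ref{nu}: $\order\CH_{F_n}^{g_n}=p^{(\lambda(k^\cyc/k)-1)n+\nu(k^\cyc/k)}$. That bound is not itself bounded, so it is the wrong route. Instead I apply the same kind of genus count in the unramified extension $F_n/K_n$ to the group $\CH_{F_n}^{\langle\sigma_n^\cyc\rangle}$, which has order $p^{\nu(K/k)}$, bounded. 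By the previous isomorphisms, the intersection is then isomorphic to a quotient of $\CH_{\,\wt k}^{1-\wt\sigma_K^{}}$. Alternatively I could use Theorem \ref{pseudo-null}: $\CH_{\,\wt k}$ is finitely generated over $\Z_p[[G_K]]$ with $\CH_{\,\wt k}/\CH_{\,\wt k}^{1-\wt\sigma_K^{}}$ finite, so the $G_K$-invariants are finite, because a finitely generated $\Z_p[[T]]$-module with finite $T$-coinvariants has finite $T$-invariants. Hence the intersection is finite.

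\textbf{Main obstacle.} The finiteness step requires relating invariants to coinvariants for a finitely generated $\Z_p[[T]]$-module, i.e.\ ruling out a $\Z_p[[T]]$-free or $T$-torsion infinite part. I would first try the structure theorem: finite coinvariants force $T$ to be coprime to the characteristic ideal, which then forces finite invariants.
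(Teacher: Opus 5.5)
Your isomorphism argument is the paper's: both factor the commuting operators $1-\sigma_n$ and $1-\sigma_n^\cyc$ on $\CH_{F_n}$ (the paper through the common kernel $Y_n^1$ of $(1-\sigma_n)(1-\sigma_n^\cyc)$), then pass to the limit using surjectivity of the norms in $F_m/F_n$. Your final finiteness argument also follows the paper: finite $G_K$-coinvariants, hence finite $G_K$-invariants, hence the intersection (a subgroup of $\CH_{\,\wt k}^{G_K}$) is finite. Your structure-theorem step justifies what the paper simply asserts, namely $\order(\CH_{\,\wt k})_{G_K}=\order\CH_{\,\wt k}^{G_K}$.

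\textbf{First correction: the restrictions are swapped.} $G_{k^\cyc}=\Gal(\wt k/k^\cyc)$ restricts onto $\Gal(F_n/k_n^\cyc)=\langle\sigma_n\rangle=g_n$, because $F_n\cap k^\cyc=k_n^\cyc$. Likewise $G_K$ restricts onto $\Gal(F_n/K_n)=\langle\sigma_n^\cyc\rangle=g_n^\cyc$. Your formula $\CH_{\,\wt k}^{G_K}=\varprojlim\CH_{F_n}^{\langle\sigma_n\rangle}$ is therefore wrong: that limit is $\CH_{\,\wt k}^{G_{k^\cyc}}$, whose finite levels have unbounded order since $\lambda(k^\cyc/k)\geq 2$. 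This swap is why your first route looked hopeless. With the correct labels, the finite-level intersection is $\CH_{F_n}^{1-\sigma_n}\cap\CH_{F_n}^{g_n^\cyc}\subseteq\CH_{F_n}^{g_n^\cyc}$. By Corollary \ref{nu} this group has order $p^{\nu(K/k)}$, which is exactly the bound you computed. Since $\CH_{\,\wt k}^{G_K}$ is the projective limit of the $\CH_{F_n}^{g_n^\cyc}$, you get $\order\CH_{\,\wt k}^{G_K}\leq p^{\nu(K/k)}$ directly, with no $\Lambda$-module structure theory needed. The isomorphism part is unaffected by the swap, since you prove both factorizations symmetrically.

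\textbf{Second correction: the citation.} Theorem \ref{pseudo-null} is proved for an auxiliary non-exceptional $\Z_p$-extension with $e\geq\delta$, not for the appendix's $K$ with $e=0$. What you need is the finiteness of $\CH_{\,\wt k}/\CH_{\,\wt k}^{1-\wt\sigma_K}$. This holds because that quotient is $\simeq\Gal(H_K^\nr/\wt k)$, as $H_K^\nr$ is the genus field of the unramified extension $\wt k/K$. That Galois group is finite because $K$ is non-exceptional, so $\CH_K$ has $\Z_p$-rank $1$ and $\mu=0$; this is exactly what the paper invokes. If you keep the structure-theorem route, finite generation of $\CH_{\,\wt k}$ over $\Z_p[[G_K]]$ then follows from this finiteness by topological Nakayama.
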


\begin{proof}
We have the exact sequences (the first one defining $Y_n^1$):
\begin{equation*}
\left \{ \begin{aligned}
& 1 \too Y_n^1 \too \CH_{F_n} \mathop 
{\tooooo}^{\hbox{\!\! \tiny\hbox{$(1\!-\!\sigma_n)(1\!-\!\sigma_n^\cyc)$}}}
\wt \CA_n \too 1, \\
& 1 \too Y_n^1 \too \CH_{F_n} \mathop 
{\toooo}^{\hbox{\tiny\hbox{$(1\!-\!\sigma_n)$}}}
\CH_{F_n}^{1-\sigma_n} / \CH_{F_n}^{1-\sigma_n} 
\cap (\CH_{F_n})^{g_n^\cyc} \too 1, \\
& 1 \too Y_n^1 \too \CH_{F_n} \mathop 
{\toooo}^{\hbox{\tiny\hbox{$(1\!-\!\sigma_n^\cyc)$}}}
\CH_{F_n}^{1-\sigma_n^\cyc} / \CH_{F_n}^{1-\sigma_n^\cyc} 
\cap (\CH_{F_n})^{g_n} \too 1,
\end{aligned}\right .
\end{equation*}

\noindent
Since $K/k$ is not exceptional, $\order (\CH_{\,\wt k})_{G_K}^{} =
\order (\CH_{\,\wt k}/\CH_{\,\wt k}^{1-\wt\sigma_K}) = 
\order \CH_{\,\wt k}{}^{G_K}_{}$ is finite,
which prove the claim. 
\end{proof}

By analogy with the computations of Lemma \ref{filtrebis}, one may 
put $\CH_{F_n}^{1-\sigma_n} \cap (\CH_{F_n})^{g_n^\cyc} =: 
\widehat \CH_n$ and $\CH_{F_n}^{1-\sigma_n^\cyc} 
\cap (\CH_{F_n})^{g_n} =: \widehat \CH_n^\cyc$;
therefore, a question arises: are they linked to the logarithmic 
class groups of the fields $K_n$ and $k_n^\cyc$ ?

\smallskip
Under the conditions $\CH_k = 1$ and $\delta \ne 0$, $\lambda(k^\cyc/k) \geq 2$
(Gold--Sands criterion), and the previous calculations leads, for $n$ large enough, to:
$$\order \CH_{F_n} = p_{}^{\, \lambda(k^\cyc/k) \cdot n  + \nu(k^\cyc/k) + \nu(K/k)}
\times  \order \CA_n \times (\order \CZ)^{-1} . $$

Only the subgroup $\CA_K$ is unknown and may be of order finite or not.
It is finite if and only if $\CH_{\,\wt k}^{1-\sigma_n}$ is finite.

\section{Numerical Computations}

We can not give computations in large degrees; so we are limited to take $p=3$,
with $e \in \{0, 1\}$ and $\delta \geq 1$ arbitrary. Moreover, to get splitting of 
${\mathfrak p}$ in $K_1$, for $K/k$ non Galois, necessarily $e = 1$, $\delta' = e = 1$, 
$K_1 = L_1$ is a non-Galois subfield of degree $3$ of the compositum 
$k^\cyc  k^\acyc$. This explains that many torsion groups are triviaux
(it is the case of $\CT_e^{\ov {\mathfrak p}}$).

\smallskip
Since $\CH_k = 1$, the mirror field $k^* = \Q(\sqrt{3 m})$ is also
$3$-principal and the radical needed for cyclic cubic 
extensions of $k$ is reduced to the fundamental unit $\varepsilon^*$ of $k^*$; 
then $\varepsilon^*$ gives rise to $k_1^\acyc$. From the polynomials defining
$k_1^\cyc$ and $k_1^\acyc$, one obtains easily the first layer $K_1$. Note that 
there exist infinitely many $\Z_p$-extensions $K/k$ such that $K_1 = L_1$.
One verifies that among the four fields $k_1^\cyc$, $k_1^\acyc$, $L_1$, 
$\ov L_1$, the required field $L_1$ has a minimal $3$-discriminant $3^4$,
that of $k_1^\cyc$ and $k_1^\acyc$ being $3^8$.

\subsection{Computation of \texorpdfstring{$\CT_e^{\mathfrak p}$}{Lg}} \label{torsion}

We use the {\sc pari/gp}-program, given in \cite[\S\,3.1.1]{Gra2019b}, 
computing, for any set $S$ of $p$-places of a given number field $K_1$
(via ${\sf bnfinit(PK1)}$), the $\Z_p$-rank of the Galois group $\CA_S$
(in ${\sf LA}$), of the maximal abelian $S$-ramified pro-$p$ extension 
of $K_1$, and its torsion group $\CT_S$ (in ${\sf LT}$). 
In the completely general case of incomplete $p$-ramification, the 
determination of the $\Z_p$-rank is only conjectural (\cite{Jau1998}, 
\cite{Gra2005}, \cite{Mai2005}).

\smallskip
In the numerical results, the data ${\sf S=List([d1, d2, d3, d4])}$,
$d_i \in \{0, 1\}$, gives the modulus ${\mathfrak m}_\0 = 
\prod_i {\mathfrak p}_i^{d_i}$, and the program computes the ray 
class group of modulus ${\mathfrak m} = {\mathfrak m}_\0^N$, for $N$ 
large enough such that the torsion part does appear in the structure of 
$\CA_S$. The number $\delta$ is given by the $p$-valuation of the 
order of the logarithmic class group $\wt \CH_k$ (via ${\sf bnflog(k,3)[1]}$).

\smallskip
Take care that when $m$ varies, {\sc pari/gp} does not always give 
${\mathfrak p}_\0$ (the unque prime of ramification index $3$) at the same 
place in the list of primes ${\mathfrak p}_i \mid p$ in $K_1$ (for instance,
for $m = 14$ one finds ${\sf List([P1,P2,P0,P3])}$, while for $m = 41$
one finds ${\sf List([P1,P0,P2,P3])}$). 

\smallskip
\ft\begin{verbatim}
{N=16;for(m=5,10^3,if(core(m)!=m,next);if(Mod(-m,3)!=1,next);P=x^2+m;
k=bnfinit(P);hk=k.no;if(Mod(hk,3)==0,next);Clogk=bnflog(k,3)[1];if(Clogk==[],next);
print();print("m=",m," h_k=",hk," Clogk=",Clogk);
\\ FUNDAMENTAL UNIT OF k* -- DEFINING POLYNOMIAL OF THE COMPOSITUM k^c.k^ac:
Pstar=x^2-3*m;kstar=bnfinit(Pstar);w=kstar.fu[1];Q=x^3-3*x-trace(w);
R=polcompositum(P,Q)[1];R=polredbest(R);print("Rac=",R);
kacyc=bnfinit(R);Rc=polcompositum(R,polsubcyclo(9,3))[1];
\\ DETERMINATION OF K_1 in  k^c.k^ac:
Kc=bnfinit(Rc);XX=nfsubfields(Kc,6);for(t=1,4,RQ=XX[t][1];
if(valuation(nfdisc(RQ),3)==4,print("PK1=",RQ);K1=bnfinit(RQ);
\\ COMPUTATIONS OF ALL THE S-MODULUS:
F=idealfactor(K1,3);print(F);d=matsize(F)[1];F1=component(F,1);
Sp=List(component(F,2));i=0;Pr=List([P1,P2,P3]);Ram=List;
for(t=1,4,c=Sp[t];if(c==1,i=i+1;listput(Ram,Pr[i]));if(c==3,listput(Ram,P0)));
print();print(Ram,":");for(z=2^d,2^(d+1)-1,bin=binary(z);mod=List;
for(j=1,d,listput(mod,bin[j+1],j));
\\ COMPUTATION OF THE S-RAY-CLASS GROUPS:
M=1;for(j=1,d,ch=mod[j];if(ch==1,F1j=F1[j];ej=F1j[3];
F1j=idealpow(K1,F1j,ej);M=idealmul(K1,M,F1j)));Idn=idealpow(K1,M,N);
Kpn=bnrinit(K1,Idn);Hpn=Kpn.cyc;LA=List;LT=List;RgA=0;RgT=0;e=matsize(Hpn)[2];
for(k=1,e,c=Hpn[e-k+1];w=valuation(c,3);if(w>=N-2,RgA=RgA+1;listinsert(LA,0,1));
if(w<N-2 & w>0,RgT=RgT+1;listinsert(LT,3^w,1)));
\\ WRITING OF THE RANKS AND TORSION GROUPS:
print(mod," rk_Z3(A_S)=",RgA," rk_3(A_T)=",RgT," LA=",LA," LT=",LT));break)))}

m=14 h_k=4 Clogk=[3] Qac=x^3-3*x+26 Rac=x^6+12*x^4+36*x^2+224
PK1=x^6-2*x^5+11*x^4-18*x^3+40*x^2-40*x+23
P1=[[3,[-1,-1,-1,0,-1,-1]~,1,1,[2,2,-4,5,6,0;0,3,-3,0,-3,0;2,-1,2,-1,0,3;
                             1,-2,4,4,0,-6;0,3,-3,-3,3,3;1,1,4,4,3,0]],1;
P2=[3,[-1,0,0,-1,1,-1]~,1,1,[2,-1,4,3,11,3;1,4,-4,3,4,0;0,-6,3,0,-3,0;
                             0,-3,0,-3,-3,-3;2,2,1,0,5,6;1,-5,5,0,-2,-3]],1;
P0=[3,[-1,1,1,0,-1,0]~,3,1,[0,0,-3,3,9,3;1,1,-4,1,-1,-1;2,-4,1,-1,-2,1;
                             0,-3,3,0,0,-6;1,4,-1,-2,2,5;1,-2,5,4,2,-4]],3;
P3=[3,[0,0,0,1,1,-1]~,1,1,[1,-4,2,11,11,1;2,4,-5,1,1,5;1,-4,-1,-4,-7,4;
                             2,-2,7,4,-2,-13;1,-1,-1,-7,2,7;0,0,9,6,0,-6]],1]
[P1,P2,P0,P3]
[0,0,0,0] rk_Z3(A_S)=0 rk_3(A_T)=0 LA=[]           LT=[]
[0,0,0,1] rk_Z3(A_S)=0 rk_3(A_T)=0 LA=[]           LT=[]
[0,0,1,0] rk_Z3(A_S)=1 rk_3(A_T)=0 LA=[00]         LT=[]
[0,0,1,1] rk_Z3(A_S)=2 rk_3(A_T)=0 LA=[00,00]      LT=[]
[0,1,0,0] rk_Z3(A_S)=0 rk_3(A_T)=0 LA=[]           LT=[]
[0,1,0,1] rk_Z3(A_S)=0 rk_3(A_T)=1 LA=[]           LT=[3]
[0,1,1,0] rk_Z3(A_S)=2 rk_3(A_T)=0 LA=[00,00]      LT=[]
[0,1,1,1] rk_Z3(A_S)=3 rk_3(A_T)=0 LA=[00,00,00]   LT=[]
[1,0,0,0] rk_Z3(A_S)=0 rk_3(A_T)=0 LA=[]           LT=[]
[1,0,0,1] rk_Z3(A_S)=0 rk_3(A_T)=1 LA=[]           LT=[3]
[1,0,1,0] rk_Z3(A_S)=2 rk_3(A_T)=0 LA=[00,00]      LT=[]
[1,0,1,1] rk_Z3(A_S)=3 rk_3(A_T)=0 LA=[00,00,00]   LT=[]
[1,1,0,0] rk_Z3(A_S)=0 rk_3(A_T)=1 LA=[]           LT=[3]
[1,1,0,1] rk_Z3(A_S)=1 rk_3(A_T)=1 LA=[00]         LT=[3]
[1,1,1,0] rk_Z3(A_S)=3 rk_3(A_T)=0 LA=[00,00,00]   LT=[]
[1,1,1,1] rk_Z3(A_S)=4 rk_3(A_T)=0 LA=[00,00,00,00]LT=[]
\end{verbatim}\ns

\smallskip
For our example, the three interesting lines are:

\ft\begin{verbatim}
[P1,P2,P0,P3]
[0,0,1,0] rk_Z3(A_S)=1 rk_3(A_T)=0 LA=[00]         LT=[]
[1,1,0,1] rk_Z3(A_S)=1 rk_3(A_T)=1 LA=[00]         LT=[3]
[1,1,1,1] rk_Z3(A_S)=4 rk_3(A_T)=0 LA=[00,00,00,00]LT=[]
\end{verbatim}\ns

\smallskip
For $k = \Q(\sqrt{-14})$, the units of $K_1$ are:

\smallskip\noindent
\ft\begin{verbatim}
Eps_1=Mod(7/31492800*x^5+109/2624400*x^4+47/17496*x^3+3007/36450*x^2 
                                                         +5923/4860*x+33361/4050,R),
Eps_2=Mod(11/10497600*x^5+289/1749600*x^4+29/2916*x^3+13469/48600*x^2 
                                                         +2797/810*x+11464/675,R).
\end{verbatim}\ns

\smallskip
One finds that $(\varepsilon_1^2-1) = (\varepsilon_2^2-1) =
{\mathfrak p}_1\,{\mathfrak p}_2 \, {\mathfrak p}_3\,
{\mathfrak p}_\0 = ({\mathfrak p}) \,{\mathfrak p}_\0$,
up to the product by prime to $p$ ideals of $K_1$, which gives 
$\delta_i(\varepsilon_j) = 0$ for $i \in [1, 3]$, $j \in [1, 2]$.

\smallskip
We compute that $\varepsilon_1 = \varepsilon_2^{\sigma_1}$ for a suitable generator 
$\sigma_1$ of $\Gal(K_1/k)$ and that $\eta_\0^{} = \varepsilon_2^{-2}$ which confirms 
the equality $E_e = \big\langle \eta_\0^{} \big\rangle^{}_{\Z_p[G_e]}$ of Proposition
\ref{eta0}.
We find that the $3$-parts of the ideals $(\eta_\0^{} - 1)$ and $(\eta_\0^{1-\sigma_1} - 1)$ 
are, respectively, ${\mathfrak p}_1 \cdot {\mathfrak p}_2 \cdot 
{\mathfrak p}_3 \cdot {\mathfrak p}_\0$ and
$(3) \cdot {\mathfrak p}_1 \cdot {\mathfrak p}_2 \cdot {\mathfrak p}_3^2 \cdot
{\mathfrak p}_\0$, so that $\iota_{\mathfrak p} (\eta_\0^{1-\sigma_1}) \in
(\CU_1^{\mathfrak p}{}^*)^3$.
In $\CU_1^{\mathfrak p}{}^*$, we see that the ${\mathfrak p}$-component of 
$(\eta_\0^2 - 1)$ is ${\mathfrak p} \cdot {\mathfrak p}_1 \cdot {\mathfrak p}_2 
\cdot {\mathfrak p}_3$, while that of $(\eta_\0^{1-\sigma_1} - 1)$ is 
${\mathfrak p} \cdot {\mathfrak p}_1 \cdot {\mathfrak p}_2 \cdot 
{\mathfrak p}_3^2$, giving the index $3$, so that
the index of $\iota_{\mathfrak p} (\langle \eta_\0^{} \rangle)$ in
$\CU_1^{\mathfrak p}{}^*$ is $3$ as expected.

\smallskip
Let's give other results giving some large torsion groups $\CT^{\mathfrak p}_1$:

\smallskip
\ft\begin{verbatim}
m=41 h_k=8 Clogk=[27] Qac=x^3+6*x-44
Rac=x^6-42*x^4+441*x^2+656
PK1=x^6+8*x^4-16*x^3+57*x^2-146*x+105
[P1,P0,P2,P3]:
[0,1,0,0] rk_Z3(A_S)=1 rk_3(A_T)=0 LA=[00]          LT=[]
[1,0,1,1] rk_Z3(A_S)=1 rk_3(A_T)=2 LA=[00]          LT=[9,9]
[1,1,1,1] rk_Z3(A_S)=4 rk_3(A_T)=0 LA=[00,00,00,00] LT=[]

m=86 h_k=10 Clogk=[3] Qac=x^3-3*x-514
Rac=x^6-114*x^4+3249*x^2+1376
PK1=x^6-10*x^4-10*x^3+111*x^2+394*x+369
[P1,P2,P0,P3]:
[0,0,1,0] rk_Z3(A_S)=1 rk_3(A_T)=0 LA=[00]          LT=[]
[1,1,0,1] rk_Z3(A_S)=1 rk_3(A_T)=2 LA=[00]          LT=[9,3]
[1,1,1,1] rk_Z3(A_S)=4 rk_3(A_T)=0 LA=[00,00,00,00] LT=[]

m=827 h_k=7 Clogk=[3] Qac= x^3-36*x-171
Rac=x^6-24*x^4+144*x^2+827
PK1=x^6-24*x^4-57*x^3+144*x^2+684*x+1019
[P1,P2,P0,P3]:
[0,0,1,0] rk_Z3(A_S)=1 rk_3(A_T)=0 LA=[00]          LT=[]
[1,1,0,1] rk_Z3(A_S)=1 rk_3(A_T)=2 LA=[00]          LT=[27,27]
[1,1,1,1] rk_Z3(A_S)=4 rk_3(A_T)=0 LA=[00,00,00,00] LT=[]

m=1577 h_k=28 Clogk=[3] Qac=x^3+45*x-396
Rac=x^6+30*x^4+225*x^2+6308
PK1=x^6+30*x^4-132*x^3+225*x^2-1980*x+5933
[P1,P0,P2,P3]:
[0,1,0,0] rk_Z3(A_S)=1 rk_3(A_T)=0 LA=[00]          LT=[]
[1,0,1,1] rk_Z3(A_S)=1 rk_3(A_T)=2 LA=[00]          LT=[81,27]
[1,1,1,1] rk_Z3(A_S)=4 rk_3(A_T)=0 LA=[00,00,00,00] LT=[]
\end{verbatim}\ns

\begin{remark}\label{Rnonnul}
As shown by the numerical results, for $e = 1$ and $p = 3$, the group 
$\CT^{\mathfrak p}_{\delta'}$ is never trivial when ${\delta'} \geq 1$; this comes 
from the following general fact, where $\{\varepsilon_1, \ldots, \varepsilon_\re\}$ 
is a system of fundamental $p$-principal units of $K_{\delta'}$ ($\re \geq p-1$ 
since $\delta' \ne 0$).

\smallskip
We have in $K_{\delta'}$, by extension of ${\mathfrak p}^{\hp} = (x)$, 
$\prod_{i=1}^{\spp} {\mathfrak p}_i^{\hp} = (x)$; then write $\varepsilon_j = 
1 + \pi \alpha_j$, $\alpha_j \in K_{\delta'}$, $j \in [1, \re]$, where $\pi$
is an uniformizing parameter for ${\mathfrak p}$ (one may suppose 
the $\alpha_i$'s prime to ${\mathfrak p}$ (but not to $\ov {\mathfrak p}$) 
otherwise the result is obvious); then, $\prod_{i=1}^{\re}\varepsilon_i^{a_i} 
\equiv 1+\pi \,\sum_{i=1}^{\re} a_i\,\alpha_i \pmod{{\mathfrak p}^2}$. Since the 
residue degrees of the $p$-places of $K_{\delta'}$ are equal to $1$, 
$\sum_{i=1}^{\re} a_i\,\alpha_i$ can not be prime to ${\mathfrak p}$ 
for all $a_i$, not all divisible by $p$, since $\re \geq 2$ for $p \geq 3$; 
thus, this implies the existence of some $p$-torsion in 
$\CU_{\delta'}^{\mathfrak p}{}^*/\ov E_{\delta'}^{\,\mathfrak p}$.

\smallskip
For instance, for $m = 41$ for which $\CT^{\mathfrak p}_1 \simeq
\Z/9\Z \times \Z/9\Z$, we find: 
\begin{equation}
\left\{\begin{aligned}
(\varepsilon_1^2 - 1) & = {\mathfrak a} \cdot {\mathfrak p}_1^3\,{\mathfrak p}_2^3 
\,{\mathfrak p}_3^4\, {\mathfrak p}_\0 = {\mathfrak a} \cdot ({\mathfrak p})
\cdot {\mathfrak p}_1^2\,{\mathfrak p}_2^2 \,{\mathfrak p}_3^3\, {\mathfrak p}_0, \\
(\varepsilon_2^2 - 1) & = {\mathfrak b} \cdot {\mathfrak p}_1^4\,
{\mathfrak p}_2^3\, {\mathfrak p}_3^3\, {\mathfrak p}_\0 =
{\mathfrak b} \cdot {\mathfrak p} \cdot {\mathfrak p}_1^3\,
{\mathfrak p}_2^2 \,{\mathfrak p}_3^2\, {\mathfrak p}_\0, 
\end{aligned}\right.
\end{equation}

\noindent
(with ${\mathfrak a}$, ${\mathfrak b}$ prime to $p$), justifying the
structure of $\CT^{\mathfrak p}_1$ since $\iota_{\mathfrak p}(\varepsilon_1)$ and
$\iota_{\mathfrak p}(\varepsilon_2)$ are in $(\CU_1^{\mathfrak p}{}^*)^9$
(see Lemma \ref{delta}).

\smallskip
Despite these congruences and the fact that $\varepsilon_1$ and $\varepsilon_2$
are relatively close to $1$ in $\CU_1^{\mathfrak p}{}^*$, the $3$-torsion group
$\CT_1 = \tor_{\Z_3} (\CU_1/\ov E_1)$ of $K_1$ is trivial since $K_1$ is by nature 
$3$-rational; roughly speaking, this is due to ${\mathfrak p}_\0$, totally ramified with
$v_{{\mathfrak p}_\0}(\varepsilon_i^2 - 1) = 1$ which can not give $u \in \CU_1$ such that 
$u^3 \in \ov E_1 \setminus \ov E_1^3$ (indeed, $u^3 \equiv 1 \pmod {{\mathfrak p}_\0^3}$).
\end{remark}

\subsection{Computations in the two first layers of 
\texorpdfstring{$k^\cyc$, $k = \Q(\sqrt{-23834})$}{Lg}}
\label{cyclo}

Since $k^\cyc$ can be exceptional, we can check some orders of magnitude, in the layers 
$k_1^\cyc$ and $k_2^\cyc$. We will compute, for $n = 1,2$, $\CH_n$, $\Ccl_n(S_n) := 
\langle \Ccl_n({\mathfrak P}_n), \Ccl_n(\ov{\mathfrak P}_n) \rangle$;
${\mathfrak P}_n \cdot \ov{\mathfrak P}_n$ is trivially principal in $\Q_n^\cyc$; 
thus, $\Ccl_n(S_n) = \langle \Ccl_n({\mathfrak P}_n) \rangle$.

\smallskip
Let's take, for $p=3$, the case of $k = \Q(\sqrt{-23834})$ for which $\CH_k = 1$ and
$\lambda(k^\cyc/k) = 10$ (given in Ozaki's paper as example). The following
obvious {\sc pari/gp} program can give results for the layers $k_1^\cyc$ and $k_2^\cyc$:
 
\ft\begin{verbatim}
{q=x^2+23834;k=bnfinit(q);for(n=1,2,Pcyc=polsubcyclo(3^(n+1),3^n);
Q=polcompositum(q,Pcyc)[1];Q=polredbest(Q);K=bnfinit(Q);print("Q=",Q);
print("Hn=",K.cyc," Clog=",bnflog(K,3));F=idealfactor(K,3);
F1=component(F,1);P1=F1[1];print("Cl(Pn)=",bnfisprincipal(K,P1)[1]))}

Q1=x^6+71496*x^4-2*x^3+1704178677*x^2+143010*x+13542540029547
Hn=[3132,36,4,2,2,2] 
Clog=[[81,9,3],[27],[9,9]] Cl(Pn)=[909,0,1,0,1,1]
Q2=x^18+214488*x^16+20447570079*x^14+1137143903665320*x^12+40655737576811246499*x^10
   -2*x^9+969070164048497993255928*x^8+1716066*x^7+15399817043842416811764049626*x^6
   -143160017418*x^5+157328931401635720374352785044160*x^4+2274931975284972*x^3
   +937641104284027054710612682252477593*x^2-5810155948876549410*x
   +2483707129129723239747767716046754925947
Hn=[2098451610324,108,36,6,6,6,6,6,2,2,2,2]
Clog=[[243,27,3,3,3,3,3,3,3],[27],[27,27,3,3,3,3,3,3]]
\end{verbatim}\ns

These data leads to the following results (the instruction ${\sf Cl(Pn)=
bnfisprincipal(K,P1)[1]}$ gives the relation $\Ccl_n({\mathfrak P}_n) =
\prod_{i} h_i^{c_i}$, where the $h_i$'s are the generating classes given 
by {\sc pari/gp}; so, $\CH_n^{S_1}$ may be deduced):
\ft\begin{equation*}
\left \{\begin{aligned}
\CH_k & = 1,\ \, \wt \CH_k \simeq \Z/27\Z, \ \,\delta_3(k) = 3, \\
\CH_1 & \simeq \Z/27\Z \times \Z/9\Z, \ \,
\wt \CH_1 \simeq \Z/81\Z \times \Z/9\Z \times \Z/3\Z, \\
\langle \Ccl_1(S_1) \rangle & = \langle \Ccl_1({\mathfrak P}_1) \rangle \simeq \Z/3\Z,\ \,
\CH_1^{S_1} \simeq \Z/9\Z \times \Z/9\Z, 
\end{aligned}\right.
\end{equation*}
\begin{equation*}
\ \ \ \,  \left \{\begin{aligned}
\CH_2 & \simeq \Z/81\Z \times \Z/27\Z \times \Z/9\Z \times (\Z/3\Z)^5, \\
\wt \CH_2 & \simeq \Z/243\Z \times \Z/27\Z  \times (\Z/3\Z)^7, \\
\langle \Ccl_2(S_2) \rangle & = \langle \Ccl_2({\mathfrak P}_2) \rangle \simeq \Z/9\Z,\ \,
\CH_2^{S_2} \simeq (\Z/27\Z)^2 \times (\Z/3\Z)^6. 
\end{aligned}\right.
\end{equation*}\ns

\medskip
Then, the fact that the $S$-Iwasawa invariant $\lambda^{S}(k^\cyc/k)$ 
is certainly non-trivial (due to a small $\Ccl_n(S_n)$), is coherent with a large 
$\lambda(k^\cyc/k)$. The data obtained go in the good direction.

\subsection{Computations of \texorpdfstring{$\langle \Ccl_1({\mathfrak P}), 
\Ccl_1(\ov{\mathfrak P}) \rangle$}{Lg} in the first layer of 
\texorpdfstring{$k^\acyc$}{Lg}}\label{acyclo}

From our computations of the first layer $k_1^\acyc$ of the anti-cyclotomic 
$\Z_3$-extension of an imaginary quadratic field \cite{Gra2026a}, 
we give examples with the following program giving ordinary
and logarithmic class groups, then the two components 
$\Ccl_1(S_1)^-$, $\Ccl_1(S_1)^+$, and the quotient $\CH_1^{S_1}$,
where $S_1 = \{{\mathfrak P}_1, \ov {\mathfrak P}_1\}$, in $k_1^\acyc$. 
A data of the form:

\ft\begin{verbatim}
Hacyc=[3,3]
Components of  Cl(P1/P2),Cl(P1*P2):[22,2],[0,0]
\end{verbatim}\ns

\noindent
means that, for $\CH_k = \langle h_1, h_2 \rangle \simeq \Z/3\Z \times \Z/3\Z$,
$\Ccl_1(S_1)^- = h_1^{22}h_2^2$ and $\Ccl_1(S_1)^+= h_1^0 h_2^0 = 1$,
whence $\Ccl_1(S_1)^-$ of order $3$ and $\Ccl_1(S_1)^+$ of order $1$.

\smallskip
\ft\begin{verbatim}
{p=3;for(m=5,10^4,if(core(m)!=m,next);if(Mod(-m,3)!=1,next);
P=x^2+m;k=bnfinit(P);hk=k.no;if(Mod(hk,3)==0,next);
\\ COMPUTATION OF delta:
Clog=bnflog(k,3);if(Clog[1]==[],next);dlog=matsize(Clog)[1];
delta=0;for(j=1,dlog,c=Clog[1][j];delta=delta+valuation(c,3));
print();print("m=",m," h_k=",hk," Clogk=",Clog," delta=",delta);
\\ COMPUTATION OF THE FUNDAMENTAL UNIT OF k* AND k_1^ac:
Pstar=x^2-3*m;kstar=bnfinit(Pstar,1);w=kstar.fu[1];
Q=x^3-3*x-trace(w);R=polcompositum(P,Q)[1];
R=polredbest(R);print("Racyc=",R);kacyc=bnfinit(R,1);
\\ COMPUTATION OF THE INVARIANTS OF k_1^ac:
Clogkacyc=bnflog(kacyc,3);
print("Clogkacyc=",Clogkacyc," Classkacyc=",kacyc.cyc);
hacyc=kacyc.no;hacyctame=hacyc/3^valuation(hacyc,3);
Hacyc=kacyc.cyc;d=matsize(Hacyc)[2];r=d;
H=List;for(j=1,d,c=Hacyc[j];v=valuation(c,3);if(v!=0,listput(H,3^v)));
\\ COMPUTATION OF THE CLASSES OF P1 AND P2:
Sideal1=component(component(idealfactor(kacyc,3),1),1);
Sideal1=idealpow(kacyc,Sideal1,hacyctame);
Sideal2=component(component(idealfactor(kacyc,3),1),2);
Sideal2=idealpow(kacyc,Sideal2,hacyctame);
Sminus=idealdiv(kacyc,Sideal1,Sideal2);Splus=idealmul(kacyc,Sideal1,Sideal2);
Pminus=bnfisprincipal(kacyc,Sminus);Pplus=bnfisprincipal(kacyc,Splus);
\\ COMPONENTS OF Cl(P)- AND Cl(P)+:
print("Hacyc=",H);print("Components of Cl(P1/P2), Cl(P1*P2):
",Pminus[1],", ",Pplus[1]))}

m=1370 h_k=44 Clogk=[[9],[9],[]] delta=2 Racyc=x^6+6*x^4-368*x^3+12339*x^2-9324*x+35226
Clogkacyc=[[9],[9],[3]] Classkacyc=[66,6]  Hacyc=[3,3]
Components of Cl(P1/P2), Cl(P1*P2):[22,2],[0,0]  (orders: 3, 1)

m=1379 h_k=16 Clogk=[[3],[3],[]] delta=1 Racyc=x^6+66*x^4+1089*x^2+5516
Clogkacyc=[[3],[],[3,3]] Classkacyc=[144,3]  Hacyc=[9,3]
Components of Cl(P1/P2), Cl(P1*P2):[0,0],[96,0]  (orders: 1, 3)

m=1451 h_k=13 Clogk=[[3],[3],[]] delta=1 Racyc=x^6-6*x^4+9*x^2+5804
Clogkacyc=[[3],[3],[3]] Classkacyc=[39,3]  Hacyc=[3,3]
Components of Cl(P1/P2),Cl(P1*P2):[26,2],[0,0]  (orders: 3, 1)

m=2762 h_k=26 Clogk=[[243],[243],[]] delta=5 Racyc=x^6-150*x^4+5625*x^2+1867112
Clogkacyc=[[243],[243],[3]] Classkacyc=[78,3]  Hacyc=[3,3]
Components of Cl(P1/P2), Cl(P1*P2):[0,1],[0,0]  (orders: 3, 1)

m=4865 h_k=56 Clogk=[[3],[3],[]] delta=1 Racyc=x^6+468*x^4+54756*x^2+6305040
Clogkacyc=[[3],[],[3,3]] Classkacyc=[126,6,2]  Hacyc=[9,3]
Components of Cl(P1/P2), Cl(P1*P2):[0,0,0],[84,0,0]  (orders: 1, 3)

m=30605 h_k=136 Clogk=[[3],[3],[]] delta=1 Racyc=x^6+12*x^4+36*x^2+489680
Clogkacyc=[[81,27],[3],[81,27]] Classkacyc=[5508,162]  Hacyc=[81,81]
Components of Cl(P1/P2), Cl(P1*P2):[0,108],[0,0]  (orders: 3, 1)

m=57830 h_k=284 Clogk=[[243],[243],[]] delta=5 Racyc=x^6-132*x^4+4356*x^2+925280
Clogkacyc=[[243,27,27],[243],[81,27]] Classkacyc=[11502,162]  Hacyc=[81,81]
Components of Cl(P1/P2), Cl(P1*P2):[0,54],[0,0]  (orders: 3, 1)

m=64790 h_k=304 Clogk=[[3],[3],[]] delta=1 Racyc=x^6+210*x^4+11025*x^2+6479000
Clogkacyc=[[81,27],[3],[81,27]] Classkacyc=[3078,162,2,2]  Hacyc=[81,81]
Components of Cl(P1/P2), Cl(P1*P2):[1026,0,0,0],[0,0,0,0]  (orders: 3, 1)
\end{verbatim}\ns

This alternation of cases does not occur for $k^\cyc$ since $\CH_{k_n^\cyc}^+$ 
is always trivial.

\smallskip
Let's give the case of $k_1^\acyc$ when $k$ is not assumed $3$-principal
and when $k^\acyc/k$ is totally ramified (${\sf hk3}$ is the order of $\CH_k$):

\ft\begin{verbatim}
m=107  hk3=3  Hacyc=[9,3]      Cl(P1/P2),Cl(P1*P2) : [2,0], [0,0]
m=302  hk3=3  Hacyc=[12,3]     Cl(P1/P2),Cl(P1*P2) : [0,1], [8,2]
m=503  hk3=3  Hacyc=[63,3]     Cl(P1/P2),Cl(P1*P2) : [14,0], [0,0]
m=509  hk3=3  Hacyc=[90,3]     Cl(P1/P2),Cl(P1*P2) : [20,0], [0,0]
m=533  hk3=3  Hacyc=[18,18,3]  Cl(P1/P2),Cl(P1*P2) : [2,8,0], [0,0,0]
m=602  hk3=3  Hacyc=[84,42]    Cl(P1/P2),Cl(P1*P2) : [28,0], [0,28]
m=617  hk3=3  Hacyc=[12,6,2]   Cl(P1/P2),Cl(P1*P2) : [4,4,0], [4,2,0]
m=713  hk3=3  Hacyc=[12,6]     Cl(P1/P2),Cl(P1*P2) : [8,4], [4,0]
m=863  hk3=3  Hacyc=[21,3]     Cl(P1/P2),Cl(P1*P2) : [7,1], [0,2]
m=1007 hk3=3  Hacyc=[30,15,5]  Cl(P1/P2),Cl(P1*P2) : [10,10,0], [0,10,0]
m=1046 hk3=3  Hacyc=[42,3]     Cl(P1/P2),Cl(P1*P2) : [0,1], [28,2]
m=1202 hk3=3  Hacyc=[72,3,3]   Cl(P1/P2),Cl(P1*P2) : [0,2,0], [48,0,0]
m=1319 hk3=9  Hacyc=[45,3]     Cl(P1/P2),Cl(P1*P2) : [10,2], [0,1]
m=1337 hk3=3  Hacyc=[36,6]     Cl(P1/P2),Cl(P1*P2) : [16,0], [0,0]
m=1355 hk3=3  Hacyc=[36,6,2]   Cl(P1/P2),Cl(P1*P2) : [32,0,0], [0,0,0]
m=1427 hk3=3  Hacyc=[15,3]     Cl(P1/P2),Cl(P1*P2) : [10,1], [10,0]
m=1454 hk3=3  Hacyc=[420,21]   Cl(P1/P2),Cl(P1*P2) : [280,0], [0,14]
m=1478 hk3=3  Hacyc=[30,3]     Cl(P1/P2),Cl(P1*P2) : [0,1], [20,1]
m=1547 hk3=3  Hacyc=[6,6]      Cl(P1/P2),Cl(P1*P2) : [2,2], [0,4]
m=1583 hk3=3  Hacyc=[33,3]     Cl(P1/P2),Cl(P1*P2) : [22,0], [0,1]
m=1619 hk3=3  Hacyc=[60,12,2,2]Cl(P1/P2),Cl(P1*P2) : [40,0,0,0], [0,8,0,0]
m=1865 hk3=3  Hacyc=[24,6]     Cl(P1/P2),Cl(P1*P2) : [8,2], [0,4]
m=1871 hk3=9  Hacyc=[90,6]     Cl(P1/P2),Cl(P1*P2) : [20,4], [60,2]
m=1895 hk3=3  Hacyc=[48,3]     Cl(P1/P2),Cl(P1*P2) : [16,1], [0,2]
m=1997 hk3=3  Hacyc=[42,6,2]   Cl(P1/P2),Cl(P1*P2) : [28,2,0], [0,4,0]
m=2081 hk3=3  Hacyc=[180,3,3]  Cl(P1/P2),Cl(P1*P2) : [120,0,0], [120,2,0]
m=2138 hk3=3  Hacyc=[126,3,3]  Cl(P1/P2),Cl(P1*P2) : [84,0,0], [42,2,0]
m=2141 hk3=3  Hacyc=[78,3]     Cl(P1/P2),Cl(P1*P2) : [0,2], [52,1]
m=2183 hk3=3  Hacyc=[42,6,6,3] Cl(P1/P2),Cl(P1*P2) : [0,4,4,0], [0,2,0,0]
m=2186 hk3=3  Hacyc=[42,3,3,3] Cl(P1/P2),Cl(P1*P2) : [28,2,2,0], [14,2,1,0]
m=2213 hk3=3  Hacyc=[42,3]     Cl(P1/P2),Cl(P1*P2) : [0,1], [28,2]
m=2273 hk3=3  Hacyc=[48,3]     Cl(P1/P2),Cl(P1*P2) : [16,2], [32,2]
m=2309 hk3=3  Hacyc=[198,3]    Cl(P1/P2),Cl(P1*P2) : [154,0], [0,0]
m=2351 hk3=9  Hacyc=[189,3]    Cl(P1/P2),Cl(P1*P2) : [28,1], [0,0]
m=2414 hk3=9  Hacyc=[36,6,2,2] Cl(P1/P2),Cl(P1*P2) : [28,4,0,0], [0,2,0,0]
m=2555 hk3=3  Hacyc=[6,6]      Cl(P1/P2),Cl(P1*P2) : [2,2], [0,4]
m=2627 hk3=3  Hacyc=[12,3]     Cl(P1/P2),Cl(P1*P2) : [0,1], [8,0]
m=2801 hk3=3  Hacyc=[60,3]     Cl(P1/P2),Cl(P1*P2) : [20,1], [20,2]
m=2822 hk3=3  Hacyc=[24,6]     Cl(P1/P2),Cl(P1*P2) : [0,4], [16,0]
m=2879 hk3=3  Hacyc=[57,3]     Cl(P1/P2),Cl(P1*P2) : [38,1], [19,1]
m=2915 hk3=3  Hacyc=[108,18,3] Cl(P1/P2),Cl(P1*P2) : [0,0,2], [72,0,0]
m=3206 hk3=3  Hacyc=[90,30,15] Cl(P1/P2),Cl(P1*P2) : [30,0,0], [30,10,0]
m=3359 hk3=3  Hacyc=[69,3]     Cl(P1/P2),Cl(P1*P2) : [23,1], [23,2]
m=3614 hk3=3  Hacyc=[30,6,6,6] Cl(P1/P2),Cl(P1*P2) : [0,4,2,0], [10,4,0,0]
m=3617 hk3=3  Hacyc=[60,3]     Cl(P1/P2),Cl(P1*P2) : [0,1], [40,2]
m=3647 hk3=27 Hacyc=[54,3]     Cl(P1/P2),Cl(P1*P2) : [26,2], [18,2]
m=3653 hk3=3  Hacyc=[30,6]     Cl(P1/P2),Cl(P1*P2) : [20,4], [20,0]
m=3665 hk3=3  Hacyc=[24,6,2,2] Cl(P1/P2),Cl(P1*P2) : [16,4,0,0], [0,4,0,0]
m=3695 hk3=9  Hacyc=[72,3]     Cl(P1/P2),Cl(P1*P2) : [64,1], [0,2]
m=3899 hk3=3  Hacyc=[24,3]     Cl(P1/P2),Cl(P1*P2) : [16,1], [0,2]
m=3917 hk3=3  Hacyc=[42,6,2,2,2]Cl(P1/P2),Cl(P1*P2): [28,4,0,0,0], [0,2,0,0,0]
m=4049 hk3=9  Hacyc=[72,6,2]   Cl(P1/P2),Cl(P1*P2) : [40,0,0], [0,4,0]
m=4103 hk3=3  Hacyc=[42,3]     Cl(P1/P2),Cl(P1*P2) : [14,2], [0,2]
m=4154 hk3=3  Hacyc=[48,6,2,2] Cl(P1/P2),Cl(P1*P2) : [16,4,0,0], [0,2,0,0]
m=4181 hk3=3  Hacyc=[30,6]     Cl(P1/P2),Cl(P1*P2) : [20,2], [20,4]
m=4229 hk3=3  Hacyc=[234,3]    Cl(P1/P2),Cl(P1*P2) : [52,0], [0,0]
m=4283 hk3=3  Hacyc=[63,3]     Cl(P1/P2),Cl(P1*P2) : [49,0], [0,0]
m=4415 hk3=3  Hacyc=[66,3]     Cl(P1/P2),Cl(P1*P2) : [22,2], [22,1]
m=4457 hk3=3  Hacyc=[60,3]     Cl(P1/P2),Cl(P1*P2) : [20,1], [40,1]
m=4478 hk3=3  Hacyc=[144,12,4] Cl(P1/P2),Cl(P1*P2) : [64,0,0], [0,0,0]
m=4574 hk3=3  Hacyc=[288,3,3]  Cl(P1/P2),Cl(P1*P2) : [96,0,0], [192,2,1]
m=4589 hk3=3  Hacyc=[42,6]     Cl(P1/P2),Cl(P1*P2) : [28,2], [0,2]
m=4595 hk3=3  Hacyc=[24,3]     Cl(P1/P2),Cl(P1*P2) : [16,2], [0,1]
m=4637 hk3=3  Hacyc=[234,3,3]  Cl(P1/P2),Cl(P1*P2) : [0,0,0], [0,1,0]
m=4790 hk3=3  Hacyc=[72,6,6,2] Cl(P1/P2),Cl(P1*P2) : [0,4,4,0], [48,0,0,0]
m=4799 hk3=9  Hacyc=[315,45,3] Cl(P1/P2),Cl(P1*P2) : [280,20,2], [105,0,0]
m=4826 hk3=3  Hacyc=[60,6]     Cl(P1/P2),Cl(P1*P2) : [20,4], [40,0]
m=4835 hk3=3  Hacyc=[270,9,3]  Cl(P1/P2),Cl(P1*P2) : [180,0,0], [90,0,1]
\end{verbatim}\ns

\subsection{The \texorpdfstring{$3$}{Lg}-class groups of the layers 
\texorpdfstring{$k_1^\cyc$ and $k_2^\cyc$}{Lg}}
The following program gives, for the $3$-principal imaginary quadratic fields
$k = \Q(\sqrt{-m})$, $-m \equiv 1 \pmod 3$, the number $\delta = \delta_3(k)$ and
the structure of the $3$-class groups of the layers $k_1^\cyc$ and $k_2^\cyc$
of the cyclotomic $\Z_3$-extension. This can illustrate the equivalence: $\delta = 0$
if and only if the Iwasawa invariants of $k^\cyc$ are minimal; indeed, if some regularities
are observed many structures are possible as soon as $\delta \ne 0$.
Since $\delta = 0$ implies $\CH_n \simeq \Z/3^n\Z$, we only print the 
non-trivial cases $\delta \geq 1$ and do not repeat identical structures:

\ft\begin{verbatim}
{p=3;for(m=2,10^4,if(core(m)!=m,next);if(Mod(-m,3)!=1,next);P=x^2+m;
k=bnfinit(P);hk=k.no;if(Mod(hk,3)==0,next);Clog=bnflog(k,3);if(Clog[1]==[],next);
dlog=matsize(Clog)[1];delta=0;for(j=1,dlog,c=Clog[2][j];delta=delta+valuation(c,3));
Pcyc=polcompositum(P,polsubcyclo(9,3))[1];kcyc=bnfinit(Pcyc,1);
H=kcyc.cyc;d=matsize(H)[2];Hcyc1=List;for(j=1,d,c=H[j];v=valuation(c,3);
if(v!=0,listput(Hcyc1,3^v)));Pcyc=polcompositum(P,polsubcyclo(27,9))[1];
kcyc=bnfinit(Pcyc,1);H=kcyc.cyc;d=matsize(H)[2];Hcyc2=List;
for(j=1,d,c=H[j];v=valuation(c,3);if(v!=0,listput(Hcyc2,3^v)));
print();print("m=",m," Clogk=",Clog);
print("delta=",delta," Hcyc1=",Hcyc1," Hcyc2=",Hcyc2))}

m=14 Clogk=[[3],[3],[]]     delta=1  Hcyc1=[3,3]  Hcyc2=[9,9]
m=41 Clogk=[[27],[27],[]]   delta=3  Hcyc1=[9,3]  Hcyc2=[27,9,3]
m=47 Clogk=[[9],[9],[]]     delta=2  Hcyc1=[3,3]  Hcyc2=[9,9]
m=86 Clogk=[[3],[3],[]]     delta=1  Hcyc1=[9,9]  Hcyc2=[27,27,3]
m=311 Clogk=[[3],[3],[]]    delta=1  Hcyc1=[9,3]  Hcyc2=[27,9,3,3]
m=323 Clogk=[[27],[27],[]]  delta=3  Hcyc1=[3,3]  Hcyc2=[9,9]
m=365 Clogk=[[81],[81],[]]  delta=4  Hcyc1=[9,9]  Hcyc2=[27,27,3,3]
m=458 Clogk=[[3],[3],[]]    delta=1  Hcyc1=[9,3]  Hcyc2=[27,9,3,3,3,3,3]
m=482 Clogk=[[3],[3],[]]    delta=1  Hcyc1=[9,3]  Hcyc2=[27,9,3]
m=641 Clogk=[[9],[9],[]]    delta=2  Hcyc1=[9,3]  Hcyc2=[27,9,3]
m=659 Clogk=[[3],[3],[]]    delta=1  Hcyc1=[27,9] Hcyc2=[81,27,3]
m=683 Clogk=[[3],[3],[]]    delta=1  Hcyc1=[9,9]  Hcyc2=[27,27,3]
m=806 Clogk=[[3],[3],[]]    delta=1  Hcyc1=[27,27]Hcyc2=[81,81,3]
m=1022 Clogk=[[3],[3],[]]   delta=1  Hcyc1=[9,3]  Hcyc2=[27,9,3,3,3,3]
\end{verbatim}\ns

\end{appendices}


\section*{Notations}\label{?}

We organize the notations regarding various  
categories of objects; $p \geq 3$ is a prime number split in $k$ into 
${\mathfrak p}\cdot {\ov {\mathfrak p}}$, with ${\ov {\mathfrak p}} 
:= {\mathfrak p}^\tau$ for the complex conjugation $\tau$.
Then $x$, $\ov x$ are the fundamental ${\mathfrak p}$
and $\ov {\mathfrak p}$-units, respectively.
In relative extensions $K_b/K_a$, $0 \leq a \leq b$, any writing 
of the form $O_{b/a}$ is relatve to $K_b/K_a$ (maps, Galois groups, etc.):

\smallskip
{\it Number fields}
\begin{itemize}
\item $k$: imaginary quadratic field,
\item $k^\cyc$: cyclotomic $\Z_p$-extension of $k$,
\item $k^\acyc$: anti-cyclotomic $\Z_p$-extension of $k$,
\item $\wt k$: compositum of the $\Z_p$-extensions of $k$,
\item $L, \ov L$: inertia fields of ${\mathfrak p}$, ${\ov {\mathfrak p}}$, 
in $\wt k/k$,
\item $K$: non-Galois $\Z_p$-extension of $k$, $K \ne L, \ov L$,
\item $K_n$: $n^{\rm th}$ layer of the $\Z_p$-extension $K$,
\item $K_e = K \cap L$: inertia field of ${\mathfrak p}$ in $K/k$; 
$K \cap \ov L = k$ ($\ov e = 0$),
\item $H_k^\nr$: $p$-Hilbert's class field of $k$,
\item $H_k^\pr$: maximal abelian $p$-ramified pro-$p$ extension of $k$,
\end{itemize}

{\it Units, $S$-units}
\begin{itemize}
\item $S_F, S \subseteq S_F$: sets of $p$-places of a number field $F$,
\item $E_F$: group of global units of $F$, $\ E_F^{S}$: group of global $S$-units, 
\item $X_F^S$: set of fundamental $S$-units, 
\item $\CU_F^v$: group of principal local units of the completion $F_v$,
\item $\CU_F^{\mathfrak p} = \hbox{$\bigoplus_{v \mid {\mathfrak p}}$} \CU_F^v$,
$\ \CU_F^S = \hbox{$\bigoplus_{v \in S}$} \CU_F^v$,
\item $\CU_F^{\mathfrak p}{}^*$: sub-module of elements 
$u \in \CU_F^{\mathfrak p}$ of norm $1$,
\item $\iota_S$: diagonal embeddings in $\CU_F^S$,
\item $\ov E_F^S$: topological closure of $\iota_S(E_F)$ in $\CU_F^S$,
\end{itemize}

\smallskip
{\it Class groups, $S$-class groups, Galois groups}
\begin{itemize}
\item $\CH_F$: $p$-class group of $F$, $\ \CH_F^{S}$ : $S$-class group of $F$, \, 
\item $\CT_k$: torsion $p$-group of $\Gal(H_k^\pr/ k)$,
\item $\CT^{\mathfrak p}_{\delta'}$: torsion $p$-group of 
$\Gal(H^{{\mathfrak p}\,\pr}_{\delta'}/K_{\delta'})$, in $F = K_{\delta'}$,
\item $\CR^{\mathfrak p}_{\delta'} = 
\CU_{\delta'}^{\mathfrak p}{}^*/\ov E_{\delta'}^{\,\mathfrak p}$, in $F = K_{\delta'}$,
\item $g_{b/a} = \Gal(K_b/K_a)$, $G_n = g_{n/0}$,
\item $G_K = \Gal(\wt k/K) = \langle \wt\sigma_K \rangle$
\end{itemize}

\smallskip
{\it Canonical maps}
\begin{itemize}
\item $\BN_{M/F}$: arithmetic norm in $M/F$, 
\item $\log_{\mathfrak p}$, $\log_{\ov {\mathfrak p}}$: $p$-adic
logarithms defined on the completions $k_{\mathfrak p}^\times$, 
$k_{\ov {\mathfrak p}}^\times$ of $k$,
\item $v_p$: $p$-adic valuation, $v_{\mathfrak p}$: ${\mathfrak p}$-adic valuation, 
\end{itemize}

\smallskip
{\it Class numbers and numerical invariants}
\begin{itemize}
\item $\hk$: class number of $k$,
\item $\hp \mid \hk$: order of the class of ${\mathfrak p} \mid p$ in $k$,
\item $\he$: class number of $K_e$ ($p$-principal),
\item $\delta =: v_{\mathfrak p}(\ov x^{\,p-1}-1) - 1$, $\ \delta' =: \min(e, \delta)$,
\item $\spp = p^{\delta'}$, $\re = \spp - 1$,
\end{itemize}

\smallskip
{\it Symbols}
\begin{itemize} 
\item $\big(\ffrac{\mb\,,\,F/k}{\mathfrak q}\big)$: norm residue symbol 
at ${\mathfrak q}$,
\item $\Omega_{n/e} := \big\{(s_i)_i \in \bigoplus_{i=0}^\spp \, g_{n/e},\ 
\prod_{i=1}^\spp s_i= 1 \big\}$, 
\item $\wt \Omega_{n/e}^{\,\mathfrak p} := \big\{(s_i)_i 
\in \bigoplus_{i=1}^\spp \, g_{n/e}\big\} = (g_{n/e})^\spp$,
\item $\omega_{n/e} := \omega_{K_n/K_e} : k^\times \to \wt \Omega_{n/e}^{\,\mathfrak p}$.
\end{itemize}

\end{document}